\newtheorem*{theorem*}{Theorem}
\newtheorem{theorem}{Theorem}[section]
\newtheorem{lemma}[theorem]{Lemma}
\newtheorem{corollary}[theorem]{Corollary}
\newtheorem{remark}[theorem]{Remark}
\newtheorem{definition}[theorem]{Definition}
\newtheorem{proposition}[theorem]{Proposition}
\newtheorem{conjecture}[theorem]{Conjecture}
\def\MA{Monge--Amp\`ere }
\def\i{\sqrt{-1}}
\def\del{\partial}
\def\dbar{\bar\partial}
\def\ddbar{\del\dbar}
\def\ra{\rightarrow}
\newcommand{\RR}{\mathbb{R}}
\newcommand{\Cc}{\mathcal{C}}
\def\del{\partial}
\newcommand{\psh}{{\rm PSH}}
\DeclareMathOperator{\Ric}{Ric}
\def\w{\wedge}
\def\o{\omega}
\def\Ent{{\rm Ent}}
\def\Ec{\mathcal{E}}
\def\vep{\varepsilon}
\def\AM{{\rm AM}}
\def\Kc{\mathcal{K}}
\def\Kcc{\mathcal{K}_{\chi}}
\title{Convexity of the extended K-energy and the large time behaviour of the weak Calabi flow \vspace{-0.1in}}
\author{Robert J. Berman, Tam\'as Darvas, Chinh H. Lu\thanks{The first and the third author are supported by the European Research Council. The third author is on leave from Chalmers University of Technology. The second author is supported by BSF grant 2012236.}}
\date{\vspace{-0.4in}}
\begin{document}
\maketitle

\begin{abstract}
Let $(X,\o)$ be a compact connected K\"ahler manifold and denote by $(\mathcal E^p,d_p)$ the metric completion of the space of K\"ahler potentials $\mathcal H_\o$ with respect to the $L^p$-type path length metric $d_p$. First, we show that the natural analytic extension of the (twisted) Mabuchi K-energy to $\mathcal E^p$ is a $d_p$-lsc functional that is convex along finite energy geodesics. Second, following the program of J. Streets, we use this to study the asymptotics of the weak (twisted) Calabi flow inside the  CAT(0) metric space $(\mathcal E^2,d_2)$. This flow exists for all times and coincides with the usual smooth (twisted) Calabi flow whenever the latter exists. 
We show that the weak (twisted) Calabi flow either diverges with respect to the $d_2$-metric or it $d_1$-converges to some minimizer of the K-energy inside $\mathcal E^2$. This  gives the first concrete result about the long time convergence of this flow on general K\"ahler manifolds, partially confirming a conjecture of Donaldson. 
Finally, we investigate the possibility of constructing destabilizing geodesic rays asymptotic to diverging weak (twisted) Calabi trajectories, and give a result in the case when the twisting form is K\"ahler. If the twisting form is only smooth, we reduce this problem to a conjecture on the regularity of minimizers of the K-energy on $\mathcal E^1$, known to hold in case of Fano manifolds.
\end{abstract}


\section{Introduction}

Given a compact connected K\"ahler manifold $(X,\o)$, we denote by $\mathcal H$ the space of smooth K\"ahler metrics in the cohomology class $[\o]$. As follows from the $\ddbar$-lemma of Hodge theory, up to a constant, this space is in a one-to-one correspondence with the space of K\"ahler potentials:
$$\mathcal H_\o = \{ u \in C^\infty(X) : \ \o_u:=\o + i\partial \bar \partial u >0\}.$$
As $\mathcal H_\o$ is an open subset of $C^\infty(X)$, it is a Fr\'echet manifold and it is possible to endow it with different $L^p$-type Finsler metrics $p \geq 1$:
\begin{equation}\label{FinslerDef}
\|\xi\|_{p,u}:=  \bigg( V^{-1}\int_X |\xi|^p \o_u^n \bigg)^{1/p}, \  \xi \in T_u \mathcal{H}_\o=C^\infty(X).
\end{equation}
For $p=2$ one recovers the Riemannian structure of Mabuchi which turns  $\mathcal H$ into a Riemannian symmetric space of constant negative curvature \cite{mab,do1,se}, 
but as will be explained below, the Finsler case $p=1$ will also play a key role in the present paper.

One of the central questions of K\"ahler geometry, going back to Calabi, is to understand under what conditions $\mathcal H$ contains a constant scalar curvature K\"ahler (csc-K) metric. From a variational point of view this amounts to looking for critical points (minimizers) of Mabuchi's K-energy functional $\mathcal K :\mathcal H_\o \to \Bbb R$ \cite{mab,do1}: whose first variation is defined by the following formula
$$\langle D\mathcal K(u), \delta u\rangle = V^{-1} \int_X \delta u (\bar S-S_{\o_u})\o^n_u,$$
where $V = \int_X \o^n$ is the total volume and $\bar S=nV^{-1}\int_M \Ric \o \wedge \o^{n-1}=V^{-1}\int_M S_\o  \o^{n}$ is the mean scalar curvature.  According to a formula of Chen and Tian the K-energy can be expressed explicitly in terms of the K\"ahler potential: 
\begin{equation}
\label{KEnergyDef}
\mathcal K(u):= 
\textup{Ent}(\o^n,\o^n_u)
+  \bar{S}\textup{AM}(u) 
-  n\textup{AM}_{\Ric \o}(u),
\end{equation}
where $\textup{Ent}(\o^n,\o^n_u)=V^{-1}\int_X \log(\o_u^n/\o^n)\o_u^n$ is the entropy of the measure $\o_u^n$ with respect to $\o^n$ and $\textup{AM},\textup{AM}_\gamma: \mathcal H_{\omega} \to \Bbb R$ is the  Aubin-Mabuchi (also Aubin-Yau) energy and its ``$\gamma$-contracted" version:
$$\textup{AM}(u)= \frac{1}{(n+1)V} \sum_{j=0}^{n} \int_X u \o_u^{j} \wedge \o^{n-j}, \ \ \ \textup{AM}_{\gamma}(u) = \frac{1}{nV}\sum_{j=0}^{n-1}\int_X u {\gamma} \wedge \o_u^{j} \wedge \o^{n-1-j}.$$
As shown by Mabuchi, the K-energy is convex along geodesics in $\mathcal H_\o$, when the geodesics are defined in terms of the corresponding $L^2$-Riemannian structure. However a major technical stumbling block in this infinite dimensional setting is that the Riemmanian
structure on $\mathcal H_\o$ is not geodesically complete and this is one of the reasons that we will be forced to work with various completions of  $\mathcal H_\o$, as discussed below. 

In the finite dimensional Riemannian setting a time-honoured approach of finding minimizers of convex functions is to follow their negative (downward) gradient flow. In the present infinite dimensional Riemannian setting the negative gradient flow of the K-energy is precisely the Calabi flow
  $t \to c_t$:
\begin{equation*}\label{CalabiFlowDef}
\frac{d}{dt} c_t = S_{\o_{c_t}} - \bar S.
\end{equation*}
Given arbitrary initial potential $c_0 \in \mathcal H_\o$, short time existence of the flow is due to Chen-He \cite{ch1}, but long time existence is still an open conjecture due to Calabi-Chen. In case $\dim X=1$, long time existence and convergence of the flow was first explored by Chruciel \cite{ch}. Fine used finite dimensional flows to approximate the Calabi flow \cite{fi}. Under various restrictive conditions, convergence and existence theorems for the Calabi flow have been extensively studied. We refer the reader to \cite{ch2, fh,he,hu1,hz,lwz,sz,tw} to cite a few works from a very fast growing literature.

The main motivation of our paper is the following conjecture of Donaldson  on the long time asymptotics and convergence of the Calabi flow, which roughly stated says the following:

\begin{conjecture} \textup{\cite{do2}} \label{DonaldsonConj} Let $[0,\infty) \ni t \to c_t \in \mathcal H$ be a Calabi flow trajectory. Exactly one of the following alternatives hold:
\vspace{-0.1in}
\begin{itemize}
\setlength{\itemsep}{1pt}
    \setlength{\parskip}{1pt}
    \setlength{\parsep}{1pt}   
\item[(i)]The curve $t \to c_t$ converges smoothly to some csc-K potential $c_\infty \in \mathcal H_\o$ as $t \to \infty$.
\item[(ii)] The curve $t \to c_t$ diverges as $t \to \infty$ and encodes destabilizing information about the K\"ahler structure.
\end{itemize} 
\end{conjecture}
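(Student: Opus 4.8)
The plan is to recognize the Calabi flow as the downward gradient flow of the K-energy $\mathcal K$ for the Mabuchi $L^2$ metric, and then to transport the problem to the metric completion $(\mathcal E^2, d_2)$, where abstract convexity theory takes over. Concretely, I would first use the fact (the first main result quoted in the abstract) that $\mathcal K$ extends to a $d_2$-lower semicontinuous (lsc) functional that is convex along finite-energy geodesics of $\mathcal E^2$. Since $(\mathcal E^2, d_2)$ is a CAT(0) (nonpositively curved) geodesic space, a convex lsc functional admits a canonical gradient-flow semigroup in the sense of Mayer, Jost and Bac\'ak; following Streets one takes this semigroup as the definition of the \emph{weak Calabi flow} $t \mapsto c_t$. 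This yields global existence for all $t \ge 0$, agreement with the smooth flow whenever the latter exists, and --- crucially --- the monotonicity $\frac{d}{dt}\mathcal K(c_t)\le 0$ together with the energy-dissipation estimate bounding the $d_2$-speed of the trajectory by the rate of decrease of $\mathcal K$.

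With this in place, the dichotomy is essentially the abstract convergence theorem for gradient flows of convex functionals on Hadamard spaces. The decisive alternative is whether the trajectory stays $d_2$-bounded. If it does, I would invoke the Jost--Mayer type result: a $d_2$-bounded gradient trajectory of a convex lsc functional on a CAT(0) space converges to a minimizer, and in particular the minimum is attained in $\mathcal E^2$. If instead no minimizer is reachable along the decreasing energy, the trajectory must leave every $d_2$-ball, i.e. it $d_2$-diverges. To obtain the stated $d_1$-convergence (rather than $d_2$) in the convergent case, I would exploit the better compactness of the weaker $d_1$-metric: a $d_2$-bounded trajectory has bounded entropy along the flow, and bounded-entropy, $d_2$-bounded families are $d_1$-precompact, so one can extract a $d_1$-limit and identify it, using $d_1$-lower semicontinuity and convexity, as a genuine minimizer of $\mathcal K$ lying in $\mathcal E^2$.

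It then remains to match the two cases to Donaldson's alternatives (i) and (ii). For (i), I would feed the weak minimizer into the regularity theory for minimizers of the K-energy: in the untwisted or K\"ahler-twisted setting a finite-energy minimizer with controlled entropy is smooth and csc-K, so $d_1$-convergence can be bootstrapped, via parabolic smoothing of the flow near a smooth csc-K potential, to the smooth convergence of alternative (i). For (ii), the divergent trajectory should be shown to be asymptotic to a unit-speed $d_2$-geodesic ray along which $\mathcal K$ has nonpositive asymptotic slope; such a ray is the sought destabilizing object, encoding the analytic obstruction to the existence of a csc-K metric.

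The main obstacle is the divergent case. Extracting a \emph{genuine} destabilizing geodesic ray from a merely $d_2$-divergent trajectory, and proving that $\mathcal K$ decreases along it, requires fine control on the asymptotics and regularity of finite-energy geodesic rays and of minimizers; this is precisely where the program stalls. When the twisting form is only smooth (rather than K\"ahler) the construction reduces to an open regularity conjecture for minimizers of the K-energy on $\mathcal E^1$ --- known in the Fano case but open in general --- so only a partial confirmation of Conjecture \ref{DonaldsonConj} is within reach; upgrading the weak $d_1$-convergence to the smooth convergence literally demanded by alternative (i) is a second, independent regularity issue of the same flavour.
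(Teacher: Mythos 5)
The statement you are asked about is not a theorem of the paper: it is Donaldson's conjecture, quoted from \cite{do2}, and the paper explicitly does not prove it --- it only obtains partial confirmations (Corollary \ref{CalabiFlowAlternativeCor}, Theorem \ref{thm: analog of Donaldson conjecture}, and the conditional Theorems \ref{thm: DRannouncement}--\ref{thm: Cal_flow_announcement} from the companion paper). Your proposal is an accurate reconstruction of the paper's program --- recast the flow as Mayer's gradient flow of the convex, $d_2$-lsc extension of $\mathcal K$ on the CAT(0) space $(\mathcal E^2,d_2)$, use entropy compactness to upgrade weak $d_2$-convergence to $d_1$-convergence, and in the divergent case extract a weakly asymptotic finite energy geodesic ray --- but it does not and cannot close the conjecture, and to your credit you concede this in your final paragraph. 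A reviewer should therefore not accept this as a proof of the stated dichotomy.

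The concrete gaps are exactly the ones the paper isolates. First, the conjecture concerns the \emph{smooth} Calabi flow in $\mathcal H$, whose long time existence is itself open (Calabi--Chen); your argument silently replaces it by the weak flow in $\mathcal E^2$, which is a legitimate reformulation but already a weakening of the statement. Second, in the convergent case the abstract theory delivers only a $d_1$-limit $c_\infty\in\mathcal M^2_\chi$; promoting this to a \emph{smooth} csc-K potential with \emph{smooth} convergence requires the regularity Conjecture \ref{DRConj} (open outside the Fano and K\"ahler-twisted cases), and the ``parabolic smoothing near a smooth csc-K potential'' you invoke is not established --- instant smoothing of the weak flow is itself only a speculation in the paper. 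Third, in the divergent case the paper produces a finite energy geodesic ray, weakly asymptotic to the trajectory, along which $\mathcal K_\chi$ decreases (Theorem \ref{thm: analog of Donaldson conjecture}), but only under Conjecture \ref{DRConj} or for K\"ahler twisting, and even then nothing identifies this ray with ``destabilizing information about the K\"ahler structure'' in the algebro-geometric sense Donaldson intends. So the proposal should be read as a correct summary of the partial results, not as a proof of the conjecture.
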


We refer to \cite{do2} for a precise statement and further details about this conjecture. To avoid the difficulties arising in PDE theory related to long time existence, we recast the Calabi flow in the metric completion of ${(\mathcal H_\o,d_2)}$ following Streets \cite{st1,st2}, who applied the work of Mayer \cite{may} and Ba\u c\'ak \cite{ba} concerning gradient flows of convex functionals on Hadamard spaces (i.e. CAT(0)-spaces) to the setting of the ``minimizing movement" Calabi flow. Before we can do this however, we need to understand how the K-energy extends to certain spaces of singular potentials.
The key new feature of our approach is that we take advantage of the fact that the corresponding abstract metric space (defined in terms of Cauchy sequences in \cite{st1}) can be realized concretely in terms of certain singular K\"ahler potentials, i.e. using pluripotential theory, which in particular allows us to improve on the abstract convergence result in \cite{st2}.  

\paragraph{Finite energy spaces and extensions of the twisted K-energy.} 

In order to briefly introduce our setting, we denote by $\mathcal{E}^{p}$ the space of $\omega$-psh functions on $X$ which have finite energy with respect to the standard $p-$homogenous weight, as introduced
by Guedj-Zeriahi \cite{gz1}. As shown in \cite{da2}, the abstract
metric completion of the $L^{p}-$type Finsler metric on $\mathcal{H_\o}$ \eqref{FinslerDef} may be identified with the finite energy space $\mathcal{E}^{p}$ equipped with a natural distance function that we will denote by $d_{p},$ which is comparable to an explicit energy type expression  \eqref{dpCharFormula}. When $p=2$, this identification was conjectured by Guedj in
\cite{gu}. Furthermore, in the case $p=1$, it yields a Finsler
realization $(\mathcal{E}^{1},d_{1})$ of the strong topology on $\mathcal{E}^{1}$ introduced in \cite{bbegz} (which can be seen as a higher dimensional ``non-linear" generalization of the classical strong topology defined by the Dirichlet norm on a Riemann surface). 
   
Moreover, as shown in \cite{da1,da2}, for any pair of potentials $u_0,u_1 \in \mathcal E^p$ one can construct a $d_p$-geodesic segment (in the metric sense) 
explicitly, as a decreasing pointwise limit of $C^{1,\bar 1}$-weak geodesics (in the sense of Chen $\cite{c}$, i.e. as $C^{1,\bar 1}$-solutions to certain complex Monge-Amp\`ere equations). These $d_p$-geodesic segments will be referred to as \emph{finite energy geodesics} in the future and we direct the reader to Section \ref{subsect: the space Ep} for more details. A recurrent theme in the present work is the interaction between the cases $p=2$ and  $p=1$, which in particular will allow us to exploit the energy/entropy compactness theorem from  \cite{bbegz} to get a convergence result for the Calabi flow with respect to the $d_1$-topology. This strengthens the general convergence result 
of \cite{st2}, concerning the weak $d_2$-topology, which does not imply any convergence in the sense of pluripotential theory (Remark \ref{rem: weakd2convergence}).

Our starting point is the observation that the K-energy functional $\mathcal K$ originally defined on $\mathcal H_\o$ admits a natural ``analytic extension" to the finite energy space  $\mathcal E^1$  (and hence by restriction to all spaces  $\mathcal E^p$). This is simply the extension obtained by interpreting the entropy part (the first term) and the energy part (the second two terms) in formula \eqref{KEnergyDef} in the general sense of  probability theory and pluripotential theory, respectively; essentially as in the Fano setting previously considered in \cite{brm1,bbegz}. As we will see, the energy part is $d_1$-continuous whereas the entropy part is only $d_1$-lsc, and in the particular  case of $C^{1,\bar 1}$-potentials, this extension coincides with the one introduced by Chen \cite{c5}. We then go on to show that the restriction to  $\mathcal E^p$  of the analytic extension coincides with the canonical "topological extension" of the K-energy, i.e. the greatest $d_p$-lsc extension from $\mathcal H_\o$. In particular, applied to the case $p=2$, which is the one relevant to the Calabi flow, this yields an analytic formula for Streets' extension of the K-energy.

The analytic extension formula allows us to establish the convexity of the extended K-energy along finite energy geodesics, using an approximation argument and the $C^{1,\bar 1}$-case recently settled in \cite[Theorem 1.1]{bb} (originally conjectured by Chen).  

Before we state our first theorem, recall that in various applications of K\"ahler geometry it is necessary to deal with the more general concept of twisted csc-K metrics and the corresponding twisted-K energy (e.g. \cite{c4, cpz,de,fi1,sto}). 
As it takes little extra effort, throughout this paper we work at this level of generality, with $\chi$ denoting a very general  twisting form \eqref{eq: ChiProp} and  $\mathcal K_\chi$ the corresponding twisted K-energy \eqref{KEnTwistAltDef}. The relevant terminology will be recalled in Section \ref{sec: Prelim the twisted K-energy}.

\begin{theorem}[Theorem \ref{ExtKEnergyThm}] \label{ExtKEnergyThmIntr} Suppose $(X,\o)$ is a compact connected K\"ahler manifold. The K-energy can be extended to a functional $\mathcal K:\mathcal E^1 \to (-\infty, \infty]$ using formula  \eqref{KEnergyDef}. 
The restricted functional $\mathcal K|_{\mathcal E^p}$ is the greatest $d_p$-lsc extension of $\mathcal K|_{\mathcal H_\o}$ for any $p\geq 1$. Additionally, $\mathcal K|_{\mathcal E^p}$ is convex along the finite energy geodesics of $\mathcal E^p$. If $\chi =\beta + i\ddbar f$ satisfies \eqref{eq: ChiProp}, the corresponding result also holds for the twisted K-energy $\mathcal K_\chi$.
\end{theorem}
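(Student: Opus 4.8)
The plan is to treat separately the two constituents of the Chen--Tian formula \eqref{KEnergyDef}: the energy part $\bar S\,\textup{AM}(u)-n\,\textup{AM}_{\Ric\o}(u)$ and the entropy part $\textup{Ent}(\o^n,\o_u^n)$. For the energy part I would invoke the pluripotential theory of Guedj--Zeriahi \cite{gz1}: the mixed Monge--Amp\`ere operators $\o_u^j\w\o^{n-j}$ are well defined for $u\in\mathcal E^1$, the Aubin--Mabuchi functionals $\textup{AM}$ and $\textup{AM}_{\Ric\o}$ are finite on $\mathcal E^1$, and they are $d_1$-continuous; since $d_p$-convergence implies $d_1$-convergence (as $d_1\le C\,d_p$), they are a fortiori $d_p$-continuous. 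For the entropy part I would interpret $\textup{Ent}(\o^n,\o_u^n)$ as the relative entropy of the probability measure $V^{-1}\o_u^n$ with respect to $V^{-1}\o^n$, which takes values in $[0,\infty]$ and is lower semicontinuous for weak convergence of measures. The one pluripotential input needed here is that $d_p$-convergence $u_j\to u$ forces the weak convergence $\o_{u_j}^n\to\o_u^n$; granting this, the entropy term is $d_p$-lsc, and combined with the continuity of the energy part this shows that the analytic extension $\mathcal K\colon\mathcal E^p\to(-\infty,\infty]$ is $d_p$-lower semicontinuous.

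Next I would identify $\mathcal K|_{\mathcal E^p}$ with the greatest $d_p$-lsc extension $G$ of $\mathcal K|_{\mathcal H_\o}$. Since $\mathcal H_\o$ is $d_p$-dense in $\mathcal E^p$ and $\mathcal K$ is continuous on $\mathcal H_\o$, one has the explicit formula $G(u)=\liminf_{v\to u,\,v\in\mathcal H_\o}\mathcal K(v)$. The lower semicontinuity established above shows at once that $\mathcal K|_{\mathcal E^p}\le G$. For the reverse inequality it suffices, at each $u$ with $\mathcal K(u)<\infty$ (the case $\mathcal K(u)=\infty$ being vacuous), to exhibit a \emph{recovery sequence}: $u_j\in\mathcal H_\o$ with $d_p(u_j,u)\to0$ and $\limsup_j\mathcal K(u_j)\le\mathcal K(u)$. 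Because the energy part is $d_p$-continuous, the whole content is to recover the entropy. I would do this by regularizing the measure rather than the potential: write $\o_u^n=f\o^n$ with $\int_X f\log f\,\o^n<\infty$, choose smooth strictly positive densities $f_j$ with $f_j\o^n\to f\o^n$ weakly and $\int_X f_j\log f_j\,\o^n\to\int_X f\log f\,\o^n$ (truncate to $[1/j,j]$, mollify, renormalize to mass $V$), solve the Monge--Amp\`ere equations $\o_{u_j}^n=f_j\o^n$ for $u_j\in\mathcal H_\o$ via Yau's theorem, and appeal to a stability estimate to conclude $u_j\to u$ in $d_p$. By construction $\textup{Ent}(\o^n,\o_{u_j}^n)\to\textup{Ent}(\o^n,\o_u^n)$, so $\mathcal K(u_j)\to\mathcal K(u)$.

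With lower semicontinuity and recovery sequences in hand, convexity follows by approximation from the $C^{1,\bar1}$ case. Given $u_0,u_1\in\mathcal E^p$ and the finite energy geodesic $t\mapsto u_t$ joining them, I may assume both endpoints have finite energy, otherwise the right-hand side of the convexity inequality is infinite. Choose recovery sequences $u_0^j,u_1^j\in\mathcal H_\o$ at the endpoints and let $t\mapsto u_t^j$ be the $C^{1,\bar1}$ weak geodesics of Chen \cite{c} joining them; by the construction of finite energy geodesics in \cite{da1,da2} and the $d_p$-continuity of geodesics with respect to their endpoints, $u_t^j\to u_t$ in $d_p$ for each $t$. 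Theorem 1.1 of \cite{bb} gives convexity along each $C^{1,\bar1}$ geodesic, $\mathcal K(u_t^j)\le(1-t)\mathcal K(u_0^j)+t\mathcal K(u_1^j)$. Letting $j\to\infty$ and using $d_p$-lsc on the left together with the endpoint convergence $\mathcal K(u_i^j)\to\mathcal K(u_i)$ on the right yields $\mathcal K(u_t)\le(1-t)\mathcal K(u_0)+t\mathcal K(u_1)$. For the twisted energy $\mathcal K_\chi$ with $\chi=\beta+i\ddbar f$ satisfying \eqref{eq: ChiProp}, the additional contribution is again an Aubin--Mabuchi type term which is $d_1$-continuous on $\mathcal E^1$, and the $C^{1,\bar1}$ convexity of \cite{bb} persists in the twisted setting; hence every step above applies verbatim.

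I expect the recovery-sequence step to be the main obstacle, and within it the passage from $d_1$- to $d_p$-convergence of the Monge--Amp\`ere solutions $u_j$: weak convergence of the approximating measures together with uniform entropy bounds yields, via the energy/entropy compactness of \cite{bbegz} and uniqueness of solutions, convergence in $d_1$, and upgrading this to $d_p$ for $p>1$ requires a quantitative stability estimate controlling the $p$-energy of $u_j-u$. The subsidiary fact that $d_p$-convergence implies weak convergence of the Monge--Amp\`ere measures, needed for entropy lower semicontinuity, is the other place where genuine pluripotential input is essential.
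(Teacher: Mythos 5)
Your overall architecture coincides with the paper's: split \eqref{KEnergyDef} into a $d_1$-continuous energy part and a $d_1$-lsc entropy part, identify the analytic extension with the greatest $d_p$-lsc extension via recovery sequences, and deduce convexity by approximating the endpoints, invoking the $C^{1,\bar 1}$ convexity of \cite{bb} along the approximating weak geodesics, the endpoint-stability of finite energy geodesics, and lower semicontinuity in the limit. However, there is a genuine gap exactly where you suspect one: the construction of the recovery sequence for $p>1$. Your proposal --- truncate and mollify the density $f=\o_u^n/\o^n$, solve $\o_{u_j}^n=f_j\o^n$ by Yau, and ``appeal to a stability estimate'' to get $d_p$-convergence --- is precisely the paper's Lemma \ref{EntAproxLemma}, and it only delivers $d_1$-convergence: the entropy bound gives compactness in $(\Ec^1,d_1)$ by \cite{bbegz}, and uniqueness of Monge--Amp\`ere solutions identifies the limit. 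The quantitative stability estimate you hope would upgrade this to $d_p$ does not exist; the paper explicitly remarks that the $d_p$-analogue of the energy/entropy compactness theorem fails for $p>1$, and the obstruction is concrete: the solutions $u_j$ of $\o_{u_j}^n=f_j\o^n$ come with no uniform lower bound by a fixed element of $\Ec^p$, so one cannot dominate $\int_X|u_j-u|^p(\o_{u_j}^n+\o_u^n)$ and \eqref{dpCharFormula} gives no control.

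The paper's fix (Theorem \ref{thm: Ep approximation with entropy}) is a three-step scheme built to manufacture exactly the missing lower bound and monotonicity. First one perturbs the equation to $\o_{u_\vep}^n=e^{\vep u_\vep+g}\o^n$ with $\o_u^n=e^g\o^n$; since $u-\sup_X u$ is a subsolution, the comparison principle (Lemma \ref{lem: comparison principle in E}) gives $u_\vep\geq u-\sup_X u$, which places $u_\vep$ in $\Ec^p$ and permits dominated convergence for the $d_p$-distance via Lemma \ref{lem: epsilon equation limit}. Second, truncating $g$ from above by $g_k=\min(g,k)$ produces continuous solutions \emph{decreasing} to $u_\vep$, and monotone convergence implies $d_p$-convergence by Proposition \ref{prop: monotone implies dp convergence}. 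Third, smoothing a bounded $g$ gives uniform convergence by Ko{\l}odziej's estimate. Without some such device your recovery-sequence step, and hence both the ``greatest $d_p$-lsc extension'' claim and the convexity argument for $p>1$, remains incomplete (for $p=1$ your argument is correct as written). A secondary remark: the $d_p$-continuity of finite energy geodesics in their endpoints, which you use in the convexity step, is not contained in \cite{da1,da2} but is itself a nontrivial lemma of this paper (Proposition \ref{FinEnGeodStabProp}), proved by sandwiching with monotone sequences and the domination principle for $\AM$.
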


An important ingredient in the proof of Theorem \ref{ExtKEnergyThmIntr} is understanding  approximation of potentials of $\mathcal E^p$ while also approximating entropy. In this direction, we note the following theorem. More precise results can be obtained using the flow techniques of \cite{gz2,dinl}, and will be discussed elsewhere. 

\begin{theorem}[Theorem \ref{thm: Ep approximation with entropy}] Suppose $u \in \mathcal E^p$ and $f$ is a usc function on $X$ satisfying $e^{-f}\in L^1(X,\omega^n)$. Then one can find $u_k \in \mathcal H_\o$ with $d_p(u_k,u) \to 0$  and $\textup{Ent}(e^{-f}\o^n,\o_{u_k}^n) \to \textup{Ent}(e^{-f}\o^n,\o_{u}^n).$
\end{theorem}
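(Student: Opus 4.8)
The plan is to construct the approximants by regularising the Monge--Amp\`ere measure $\mu:=\omega_u^n$ relative to the fixed reference measure $\nu:=e^{-f}\omega^n$, the point being that $\textup{Ent}(e^{-f}\omega^n,\cdot)$ is exactly the relative entropy with respect to $\nu$. If $\textup{Ent}(\nu,\mu)=+\infty$ there is nothing to do: a sequence $u_k\in\mathcal H_\omega$ with $d_p(u_k,u)\to 0$ exists because $(\mathcal E^p,d_p)$ is the completion of $\mathcal H_\omega$ \cite{da2}, such a sequence has $\omega_{u_k}^n\to\mu$ weakly, and weak lower semicontinuity of relative entropy forces $\textup{Ent}(\nu,\omega_{u_k}^n)\to+\infty$. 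So I assume $\textup{Ent}(\nu,\mu)<\infty$. Writing $g:=d\mu/d\nu$ and truncating, $g_k:=\min(g,k)$ renormalised to mass $V$, dominated convergence gives $\textup{Ent}(\nu,g_k\nu)\to\textup{Ent}(\nu,\mu)$, while $g_k\nu\uparrow\mu$ is dominated by $\mu=\omega_u^n$; the stability theory of the complex Monge--Amp\`ere operator in finite energy classes \cite{gz1,bbegz} then yields solutions $u^{(k)}\in\mathcal E^p$ of $\omega_{u^{(k)}}^n=g_k\nu$ with $d_p(u^{(k)},u)\to 0$. A diagonal argument therefore reduces the theorem to the case $\mu=h\nu$ with $0\le h\le C$ bounded.

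For bounded $h$ I would decouple the two sources of irregularity --- the weight $e^{-f}$ and the density $h$ --- by a double regularisation. Since $f$ is upper semicontinuous, choose smooth $f_j$ with $f\le f_j$ and $f_j\downarrow f$ pointwise; then $e^{-f_j}\le e^{-f}$, $e^{-f_j}\uparrow e^{-f}$ in $L^1(\omega^n)$, and each $\nu_j:=e^{-f_j}\omega^n$ has smooth strictly positive density. Mollifying $h$ with respect to $\nu_j$ produces smooth $h_{j,m}$ with $0\le h_{j,m}\le C$ and $h_{j,m}\to h$ in $L^1(\nu_j)$, so that $\mu_{j,m}:=h_{j,m}\,e^{-f_j}\omega^n$ has smooth strictly positive density with respect to $\omega^n$. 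The Calabi--Yau theorem then furnishes $u_{j,m}\in\mathcal H_\omega$ solving $\omega_{u_{j,m}}^n=\mu_{j,m}$.

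To recover the entropy I would use the cocycle identity
\[
\textup{Ent}(\nu,\mu_{j,m})=\textup{Ent}(\nu_j,\mu_{j,m})+V^{-1}\int_X (f-f_j)\,d\mu_{j,m}.
\]
Letting $m\to\infty$ with $j$ fixed, boundedness of $h_{j,m}$ and dominated convergence give $\textup{Ent}(\nu_j,\mu_{j,m})\to V^{-1}\int_X h\log h\,d\nu_j$ and $\int_X(f-f_j)\,d\mu_{j,m}\to\int_X(f-f_j)h\,d\nu_j$. Letting $j\to\infty$, the first term tends to $\textup{Ent}(\nu,\mu)$ by monotone convergence; for the second, the elementary bound $xe^{-x}\le e^{-1}$ applied with $x=f_j-f\ge 0$ gives $0\le(f_j-f)\,e^{-f_j}=xe^{-x}e^{-f}\le e^{-1}e^{-f}$, so the integrand is dominated by $Ce^{-1}e^{-f}\in L^1(\omega^n)$ and tends to $0$ pointwise, whence $\int_X(f-f_j)h\,d\nu_j\to 0$. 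A diagonal choice $m=m(j)$ then yields $\textup{Ent}(\nu,\mu_{j,m(j)})\to\textup{Ent}(\nu,\mu)$.

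It remains to arrange $d_p(u_{j,m(j)},u)\to 0$, and this is the step I expect to be the main obstacle, since entropy control and metric control must be achieved by the \emph{same} sequence. The crucial gain from the construction is the uniform domination $\mu_{j,m}\le Ce^{-f}\in L^1(\omega^n)$: it provides equi-integrability of the densities and, via the comparison principle, uniform finite-energy bounds on $u_{j,m}$. Together with $\mu_{j,m}\to\mu$ in $L^1(\omega^n)$ (dominated convergence) and uniqueness of solutions of the Monge--Amp\`ere equation in $\mathcal E^p$, this forces $u_{j,m}\to u$ in capacity, and the explicit description \eqref{dpCharFormula} of $d_p$ upgrades capacity convergence together with convergence of the relevant $p$-energy to $d_p$-convergence. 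Choosing the diagonal $m=m(j)$ compatible with both the entropy and the metric estimates, and combining with the truncation reduction of the first paragraph, completes the proof.
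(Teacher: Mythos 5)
The reduction in your first paragraph contains the gap that the paper's whole three-step construction is designed to avoid. You truncate $g_k=\min(g,k)$ and assert that ``the stability theory of the complex Monge--Amp\`ere operator in finite energy classes yields solutions $u^{(k)}$ of $\omega_{u^{(k)}}^n=g_k\nu$ with $d_p(u^{(k)},u)\to 0$.'' For $p=1$ one can indeed get this from the entropy/energy compactness theorem (this is exactly Lemma \ref{EntAproxLemma}), but the paper explicitly points out that the analogous compactness fails in $(\mathcal E^p,d_p)$ for $p>1$, and no stability result upgrades mere $L^1$-convergence of the densities $g_ke^{-f}\to ge^{-f}$ (the limit density being only of finite entropy, not in any $L^q$, $q>1$) to $d_p$-convergence of the solutions. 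Nor does the domination $g_k\nu\le\mu$ help: after renormalising to total mass $V$ the domination is lost, and in any case an inequality between Monge--Amp\`ere measures of the same total mass induces no ordering of the potentials, so Proposition \ref{prop: monotone implies dp convergence} cannot be invoked. The paper's way around this is to perturb the equation to $\omega_{u_k}^n=e^{\varepsilon u_k+g_k}\omega^n$: the zeroth-order term $e^{\varepsilon u_k}$ pins down the solution uniquely and, via the comparison principle (Lemma \ref{lem: comparison principle in E}), makes $u_k$ \emph{monotone decreasing} in $k$ as $g_k$ increases, which is what converts the truncation step into $d_p$-convergence; a separate step (Lemma \ref{lem: epsilon equation limit}) then removes the $\varepsilon$-perturbation. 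Your proposal has no substitute for this mechanism, and the sentence quoted above is precisely the unproved hard step.

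The second half of your argument (bounded density, double regularisation of $f$ and $h$, cocycle identity for the entropy) is sound in its entropy bookkeeping, and the metric step there is genuinely fixable: since $h_{j,m}e^{-f_j}\le Ce^{-f}$ and $e^{-f}\in L^q(\omega^n)$ for some $q>1$ by the openness theorem, dominated convergence gives $L^q$-convergence of the densities and Ko{\l}odziej's stability theory yields uniform convergence of the normalised potentials, hence $d_p$-convergence --- this matches Steps 2--3 of the paper. But you yourself flag the passage from ``convergence in capacity plus energy bounds'' to $d_p$-convergence as the main obstacle, and as written it is not an argument: controlling $\int_X|u_{j,m}-u|^p\,\omega_{u_{j,m}}^n$ against the varying measures requires either uniform $L^\infty$-bounds (available here via Ko{\l}odziej, but you should say so) or a monotonicity structure. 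In summary: the bounded-density case can be repaired, but the reduction from finite entropy to bounded density is a genuine gap for $p>1$, and closing it requires the $e^{\varepsilon u}$-perturbation idea (or an equivalent monotonisation of the approximating potentials) that is absent from your proposal.
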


Finally, as a consequence of Theorem \ref{ExtKEnergyThmIntr} we obtain that the space of finite $\chi$-entropy potentials $\textup{Ent}_\chi(X,\o)$ is geodesically closed, and in case $\Ric \o \geq \beta$ the twisted entropy is convex along finite energy geodesics, giving the K\"ahler analog of a central result of Lott-Sturm-Villani in optimal transport theory \cite{vil}. For details on notation and a detailed discussion on relationship with the literature, we refer to Section \ref{Sec: finite entropy section}.  

\begin{theorem}[Theorem \ref{FinEntGeodConvThm}]
If $\chi =\beta + i\ddbar f$ satisfies \eqref{eq: ChiProp}, then $(\textup{Ent}_\chi(X,\o),d_1)$ is a geodesic sub-metric space of $(\mathcal E^1(X,\o),d_1)$. Additionally, if $\Ric \o \geq \beta$ then the map $\textup{Ent}_\chi(X,\o) \ni u \to \textup{Ent}(e^{-f}\o^n,\o_u^n) \in \Bbb R$  is convex along finite energy geodesics.
\end{theorem}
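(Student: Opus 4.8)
The plan is to derive Theorem~\ref{FinEntGeodConvThm} as a corollary of Theorem~\ref{ExtKEnergyThmIntr} by isolating the entropy term in the Chen--Tian formula \eqref{KEnergyDef}. The central observation is that when $\chi = \beta + i\ddbar f$ and we look at the twisted K-energy $\mathcal{K}_\chi$, the energy part is a combination of $\AM$ and $\gamma$-contracted Aubin--Mabuchi functionals, all of which are $d_1$-continuous and affine (or at worst smooth and convex) along finite energy geodesics. Thus the map $u \mapsto \Ent(e^{-f}\o^n, \o_u^n)$ can be written as $\mathcal{K}_\chi(u)$ minus these energy terms, up to additive constants. Since Theorem~\ref{ExtKEnergyThmIntr} already gives that $\mathcal{K}_\chi$ is convex along finite energy geodesics and $d_1$-lsc on $\mathcal{E}^1$, the convexity and lower semicontinuity of the entropy itself should follow provided the remaining energy terms are themselves geodesically affine, i.e. both convex and concave.

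First I would make precise the decomposition: write $\Ent(e^{-f}\o^n, \o_u^n) = \Ent(\o^n,\o_u^n) + V^{-1}\int_X f\, \o_u^n$, and recall from the preliminaries that $V^{-1}\int_X f\,\o_u^n$ and the contracted energies $\AM_\gamma(u)$ are affine along $C^{1,\bar 1}$-weak geodesics; this affineness persists for finite energy geodesics by the decreasing approximation by $C^{1,\bar 1}$-geodesics together with the $d_1$-continuity of these energy functionals established earlier. The condition $\Ric\,\o \geq \beta$ enters precisely here: it guarantees that the coefficient of the remaining genuinely convex piece has the right sign, so that subtracting the affine energy terms from the convex $\mathcal{K}_\chi$ leaves a convex functional rather than introducing a spurious concave contribution. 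I would verify that under $\Ric\,\o \geq \beta$ the twisting term $n\,\AM_{\Ric\,\o - \beta}$ contracts a nonnegative form, preserving convexity of the combination.

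Second, for the geodesic-closedness claim, I would argue that $\Ent_\chi(X,\o)$ is the set where the entropy is finite, and use the just-established convexity: if $u_0, u_1$ have finite $\chi$-entropy, then along the connecting finite energy geodesic $t \mapsto u_t$ the entropy is bounded above by the convex combination $(1-t)\Ent_\chi(u_0) + t\,\Ent_\chi(u_1) < \infty$, so every $u_t$ has finite entropy and hence lies in $\Ent_\chi(X,\o)$. Combined with the fact that $(\mathcal{E}^1,d_1)$ is itself a geodesic space and that $\Ent_\chi$ inherits the restricted $d_1$-metric, this shows $(\Ent_\chi(X,\o),d_1)$ is a geodesic submetric space. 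One must also check that the finite energy geodesic of $\mathcal{E}^1$ between two points of $\Ent_\chi$ is the geodesic \emph{in} the submetric space, which follows because it realizes the same $d_1$-length and stays inside the subset.

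The main obstacle I anticipate is the careful handling of the affineness (not merely convexity) of the auxiliary energy terms along finite energy geodesics, since Theorem~\ref{ExtKEnergyThmIntr} is phrased only as a convexity statement for the full K-energy. To extract convexity of the entropy \emph{alone} I cannot simply subtract a convex functional from a convex functional; I genuinely need the energy terms to be affine (both convex and concave) along the finite energy geodesics. Establishing this affineness at the level of $\mathcal{E}^1$-geodesics — rather than only for smooth or $C^{1,\bar 1}$ geodesics where it follows from direct differentiation of the complex Monge--Amp\`ere equation — will require the approximation scheme by decreasing $C^{1,\bar 1}$-weak geodesics together with the $d_1$-continuity and monotone convergence properties of the Aubin--Mabuchi functionals. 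I expect this to be where the bulk of the technical care lies, and I would lean on the explicit energy formula \eqref{dpCharFormula} and the cocycle/variational properties of $\AM$ and $\AM_\gamma$ recalled in the preliminary sections to push the affineness through the limit.
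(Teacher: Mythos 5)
Your overall strategy --- solving for the entropy in the formula \eqref{KenExtThmFormula} for $\mathcal K_\chi$ and importing the convexity of $\mathcal K_\chi$ from Theorem \ref{ExtKEnergyThmIntr} --- is exactly the paper's, but two steps of your execution are wrong or gapped. First, the affineness you identify as the crux is both false and unnecessary. The functional $\AM_\gamma$ is \emph{not} affine along weak geodesics: its second variation along a subgeodesic is $\int \pi^*\gamma\wedge(\pi^*\o+i\ddbar u)^n$, and a geodesic only kills $(\pi^*\o+i\ddbar u)^{n+1}$, not $(\pi^*\o+i\ddbar u)^{n}$; already for $n=1$ and $\gamma$ K\"ahler, $\AM_\gamma(u_t)=V^{-1}\int_X u_t\,\gamma$ is strictly convex in $t$ along a typical geodesic since $t\mapsto u_t(x)$ is convex but not affine. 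So the program of ``pushing the affineness of $\AM_\gamma$ through the limit'' would fail. Fortunately you do not need it: solving \eqref{KenExtThmFormula} gives
$$\Ent(e^{-f}\o^n,\o^n_{u}) = \mathcal K_\chi(u) - \bar S_\chi\,\AM(u) + n\,\AM_{\Ric\o-\beta}(u) + \int_X f\o^n,$$
so the contracted term enters with a \emph{plus} sign. Hence only its convexity is required, which is precisely what $\Ric\o\ge\beta$ and Proposition \ref{AMtwistConvexity} (transported to finite energy geodesics by the approximation and endpoint-stability arguments of Propositions \ref{AMtwistextension} and \ref{FinEnGeodStabProp}) deliver; the only functional that must be affine is $\AM$ itself, and it is. Your middle paragraph gestures at the right sign condition, but your final paragraph reverts to demanding affineness, which is a dead end.

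Second, your proof of geodesic closedness of $\Ent_\chi(X,\o)$ invokes ``the just-established convexity'' of the entropy, which you only have under the extra hypothesis $\Ric\o\ge\beta$; the closedness statement, however, is asserted for every $\chi$ satisfying \eqref{eq: ChiProp}. The paper's route avoids this: convexity of $t\mapsto\mathcal K_\chi(u_t)$ (which needs no curvature hypothesis) together with finiteness at the endpoints forces $\mathcal K_\chi(u_t)<\infty$ for all $t$, and since $\AM(u_t)$ and $\AM_{\Ric\o-\beta}(u_t)$ are always finite on $\mathcal E^1$, the displayed identity yields $\Ent(e^{-f}\o^n,\o^n_{u_t})<\infty$. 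You should replace your closedness argument by this one. A smaller point: the splitting $\Ent(e^{-f}\o^n,\o_u^n)=\Ent(\o^n,\o_u^n)+V^{-1}\int_X f\o_u^n$ is best avoided on $\mathcal E^1$, since for singular $f$ the two pieces need not be separately finite; the paper works with $\Ent(e^{-f}\o^n,\cdot)$ as a single object.
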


\paragraph{Convergence and large time behaviour of the weak twisted Calabi flow.} As advertised above, using Theorem \ref{ExtKEnergyThmIntr}, we can run the weak twisted Calabi flow $[0,\infty) \ni t \to c_t \in \mathcal E^2$  for any starting point $c_0 \in \mathcal E^2$.  Indeed, $(\mathcal E^2,d_2)$ is a CAT(0)-space and the extended functional $\mathcal K_\chi$ is convex along $d_2$-geodesics, hence we are in the setting of \cite{may} as detailed in Section \ref{subsect: gradient flow}. 
This yields a flow of (possibly singular) K\"ahler potentials which is uniquely determined by the corresponding normalized Monge-Amp\`ere measures, which in turn yields a flow of probability measures which is regularizing in the sense that the entropy immediately becomes finite and in particular the measures have an $L^1$-density for positive times. 

When $\chi$ is smooth and $X$ is a  Riemann surface, the smooth twisted Calabi flow was recently explored in \cite{po}. To provide consistency, we will show that the weak twisted Calabi flow agrees with the smooth version whenever the latter exists (Proposition \ref{prop:consistence}), generalizing a result of Streets in case $\chi=0$ \cite{st2}. Providing additional consistency, as an application of  Theorem \ref{ExtKEnergyThmIntr}, in Section \ref{sec: weak twisted Calabi} we show that Street's a priori different minimizing movement Calabi flow coincides with our weak Calabi flow.

Generalizing twisted csc-K metrics, by  $\mathcal M^p_\chi$ we denote the minimizers of the extended K-energy on $\mathcal E^p$:
$$\mathcal M^p_\chi = \{u \in \mathcal E^p: \ \mathcal K_\chi(u) = \inf_{v \in \mathcal E^p}\mathcal K_\chi(v)\}.$$ 
In the case $\chi=0$ we will simply use $\mathcal M^p:=\mathcal M^p_0$. Concerning the convergence and blow-up behavior of the weak twisted Calabi flow we prove the following concrete result:

\begin{theorem}[Theorem \ref{thm: large time Calabi}\label{WeakCalFlowConv}] Suppose $(X,\o)$ is a compact connected K\"ahler manifold and $\chi=\beta + i\ddbar f$ satisfies \eqref{eq: ChiProp}. The following statements are equivalent:
\vspace{-0.1in}
\begin{itemize}
\setlength{\itemsep}{1pt}
    \setlength{\parskip}{1pt}
    \setlength{\parsep}{1pt}  
\item[(i)] $\mathcal M^2_\chi $ is nonempty.
\item[(ii)] For any weak twisted Calabi flow trajectory $t \to c_t$ there exists $c_\infty \in \mathcal M^2_\chi$ such that $d_1(c_t,c_\infty) \to 0$ and $\textup{Ent}(e^{-f}\o^n,\o_{c_t}^n) \to \textup{Ent}(e^{-f}\o^n,\o_{c_\infty}^n)$.
\item[(iii)] Any weak twisted Calabi flow trajectory $t \to c_t$ is $d_2$-bounded.
\item[(iv)] There exists a weak twisted Calabi flow trajectory $t \to c_t$ and $t_j \to \infty$ for which the sequence $\{ c_{t_j}\}_j$ is $d_2$-bounded.
\end{itemize}
\end{theorem}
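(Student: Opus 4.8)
The plan is to prove the four statements equivalent by closing the two cycles $(i)\Rightarrow(iii)\Rightarrow(iv)\Rightarrow(i)$ and $(i)\Rightarrow(ii)\Rightarrow(i)$, exploiting throughout that $(\mathcal{E}^2,d_2)$ is a CAT(0) space on which $\mathcal{K}_\chi$ is $d_2$-lsc and convex along $d_2$-geodesics by Theorem \ref{ExtKEnergyThmIntr}, so that $t\mapsto c_t$ is the Mayer gradient flow of $\mathcal{K}_\chi$ in the sense of \cite{may}. Two general features of such flows drive the whole argument: the evolution variational inequality $\frac{d^+}{dt}\tfrac12 d_2(c_t,z)^2 \le \mathcal{K}_\chi(z)-\mathcal{K}_\chi(c_t)$ valid for every $z\in\mathcal{E}^2$, and the regularizing estimate $\mathcal{K}_\chi(c_t)\le \mathcal{K}_\chi(z)+\tfrac{1}{2t}d_2(c_0,z)^2$. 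Infimizing the latter over $z$ and using monotonicity shows $\mathcal{K}_\chi(c_t)\downarrow \inf_{\mathcal{E}^2}\mathcal{K}_\chi$ along \emph{any} trajectory, whether or not a minimizer exists.

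The easy cycle goes as follows. For $(i)\Rightarrow(iii)$, pick $z\in\mathcal{M}^2_\chi$; since $\mathcal{K}_\chi(z)\le \mathcal{K}_\chi(c_t)$ the variational inequality forces $t\mapsto d_2(c_t,z)$ to be non-increasing, so the trajectory is $d_2$-bounded. The implication $(iii)\Rightarrow(iv)$ is immediate, and $(ii)\Rightarrow(i)$ is trivial since the limit produced in $(ii)$ is a minimizer. The content is in $(iv)\Rightarrow(i)$: along the given $d_2$-bounded sequence $c_{t_j}$ the Aubin--Mabuchi energy part of $\mathcal{K}_\chi$, being $d_1$-continuous and controlled by the $d_1$-distance to a fixed potential, is bounded (recall $d_1\le d_2$); since $\mathcal{K}_\chi(c_{t_j})\to\inf\mathcal{K}_\chi$ is bounded, the entropy $\textup{Ent}(e^{-f}\omega^n,\omega_{c_{t_j}}^n)$ is bounded as well. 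The energy/entropy compactness theorem of \cite{bbegz} then makes $\{c_{t_j}\}$ relatively $d_1$-compact, and passing to a $d_1$-limit $v$ and invoking the $d_1$-lower semicontinuity of $\mathcal{K}_\chi$ (Theorem \ref{ExtKEnergyThmIntr}) gives $\mathcal{K}_\chi(v)\le \inf\mathcal{K}_\chi$, whence $v\in\mathcal{M}^2_\chi$.

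The substantive implication is $(i)\Rightarrow(ii)$. By $(iii)$ the trajectory is $d_2$-bounded, so as above the entropy stays bounded, $\{c_t\}_{t\ge 1}$ is relatively $d_1$-compact, and $\mathcal{K}_\chi(c_t)\downarrow\inf\mathcal{K}_\chi$. Any $d_1$-subsequential limit $v$ is then a minimizer: $d_1$-continuity of the energy part gives convergence of that part, $d_1$-lsc gives $\textup{Ent}(v)\le\liminf \textup{Ent}(c_{t_{j_k}})$, and combining with $\mathcal{K}_\chi(c_{t_{j_k}})\to\inf\mathcal{K}_\chi$ forces $\mathcal{K}_\chi(v)=\inf\mathcal{K}_\chi$ and, simultaneously, genuine convergence of the entropy along the subsequence. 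To conclude that the \emph{whole} trajectory $d_1$-converges I would invoke the abstract weak $d_2$-convergence of $c_t$ to a single minimizer $c_\infty$ supplied by the Mayer--Ba\u c\'ak theory (the convergence result of \cite{st2}): the Fej\'er monotonicity $d_2(c_t,w)\downarrow$ for every $w\in\mathcal{M}^2_\chi$ identifies $c_\infty$ as the unique weak $d_2$-cluster point, and the compactness of \cite{bbegz} shows every sequence along the flow has a $d_1$-convergent subsequence whose limit must coincide with $c_\infty$; hence $c_t\to c_\infty$ in $d_1$ together with $\textup{Ent}(e^{-f}\omega^n,\omega_{c_t}^n)\to\textup{Ent}(e^{-f}\omega^n,\omega_{c_\infty}^n)$.

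The main obstacle is precisely this last identification, where the two relevant topologies pull in opposite directions: $d_1$-convergence controls the pluripotential (e.g.\ $L^1$) behaviour but not the $d_2$-geometry, while the abstract flow theory only delivers weak $d_2$-convergence, which by Remark \ref{rem: weakd2convergence} carries no pluripotential information on its own. Reconciling the two — showing that along a trajectory of uniformly bounded entropy the weak $d_2$-limit and the strong $d_1$-cluster points are one and the same — is exactly where the energy/entropy compactness of \cite{bbegz} must be used decisively to restore the compactness that weak $d_2$-convergence lacks, and is the step I expect to require the most care.
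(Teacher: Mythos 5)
Your overall architecture matches the paper's: the evolution variational inequality gives $d_2$-boundedness from a minimizer, Ba\u c\'ak's theorem (Theorem \ref{BacakThm}) gives weak $d_2$-convergence to a single $c_\infty\in\mathcal M^2_\chi$, the energy/entropy compactness (packaged in the paper as Corollary \ref{KChiCompactnessCor}) gives $d_1$-subsequential compactness, and $d_1$-lower semicontinuity identifies subsequential limits as minimizers. The cycle (i)$\Rightarrow$(iii)$\Rightarrow$(iv)$\Rightarrow$(i) and the implication (ii)$\Rightarrow$(i) are essentially correct; in (iv)$\Rightarrow$(i) you should additionally note that the $d_1$-limit $v$ lies in $\mathcal E^2$, which requires Proposition \ref{d2weaklimits} applied to the $d_2$-bounded sequence $\{c_{t_j}\}_j$ --- otherwise $v$ is a priori only in $\mathcal E^1$ and cannot be declared an element of $\mathcal M^2_\chi$.

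The genuine gap is in the final step of (i)$\Rightarrow$(ii). You have located it yourself, but you propose the wrong tool to close it. The compactness theorem of \cite{bbegz} only produces $d_1$-convergent subsequences whose limits are minimizers; since minimizers of $\mathcal K_\chi$ need not be unique, this alone does not exclude different subsequences converging to different minimizers, and Fej\'er monotonicity of $t\mapsto d_2(c_t,w)$ cannot be combined with $d_1$-convergence to identify the limit, because $d_1$-convergence gives no control on $d_2$. What is actually needed is the bridge stated as Theorem \ref{d1weakd2dominatethm}: for a $d_2$-bounded sequence, $d_1$-convergence to $u$ forces weak $d_2$-convergence to the same $u$. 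Applied to any $d_1$-convergent subsequence with limit $v$, this yields $v=c_\infty$, the weak $d_2$-limit furnished by Theorem \ref{BacakThm}, and only then does the whole trajectory $d_1$-converge. The proof of that bridge is not a compactness statement at all: it rests on the convexity of $t\mapsto d_1(u_t,v_t)$ along pairs of finite energy geodesics, hence on the fact that $d_1$-balls in $\mathcal E^2$ are $d_2$-convex and $d_2$-closed, so that weak $d_2$-limits cannot escape them (Proposition \ref{WeakdConvProp}(ii)). Without this ingredient your argument only yields subsequential $d_1$-convergence to possibly distinct minimizers, not statement (ii).
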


$\bullet$ By the consistency result discussed above, the previous theorem in particular applies to the smooth Calabi flow (when it exists) and it should be stressed that the result and its elaborations discussed below are new also in this smooth case. In particular it generalizes results of the first author on the smooth Calabi flow on Fano manifolds without non-trivial holomorphic vector-fields \cite{brm1}. One new feature of our result is that the latter assumption, which guarantees the uniqueness of csc-K metrics, is not needed. This means that the limit $c_\infty$ is not uniquely determined by $X$ and will, in general, depend on the initial data $c_0$.

$\bullet$  By \cite[Theorem 5]{da2} and part (ii) of the above theorem, if a csc-K potential exists in $\mathcal H_\o$ then the weak Calabi flow $t \to c_t$ converges pointwise a.e. to some potential $c_\infty \in \mathcal M^2$, and the measures $\o^n_{c_t}$ converge weakly and in entropy to $\o_{c_\infty}^n$. In the Fano case it additionally follows that $c_\infty$ is csc-K. However, due to progress on the regularity Conjecture \ref{DRConj} discussed in the companion paper \cite{bdl2}, this result also holds on general K\"ahler manifolds as well, making further progress on Donaldson's conjecture. (see Theorem \ref{thm: DRannouncement} and Theorem \ref{thm: Cal_flow_announcement} below). 
 

$\bullet$  Finally, in light of Theorem \ref{thm: dweakchar}, we mention that Theorem \ref{WeakdConvProp}(ii) strengthens the corresponding convergence result of Streets in \cite{st2}. Given a CAT(0) metric space $(M,d)$, it is possible to introduce a notion of weak $d$-convergence, generalizing the concept of weak convergence on Hilbert spaces (Section \ref{subsect: CAT0}). In general, little concrete is known about this type of convergence \cite{kp}. Streets however observed that one can adapt the result of Ba\u c\'ak \cite{ba} to our setting, i.e., whenever $\mathcal M^2$ is non-empty, each weak Calabi flow trajectory converges $d_2$-weakly to an element of  $\mathcal M^2$ \cite{st2}. Though weak $d_2$ convergence does not even imply weak $L^1$ convergence of the potentials (Remark \ref{rem: weakd2convergence}), we use this idea in the proof of the above theorem together with the following result, which sheds light on the relationship between all the different topologies involved:
\begin{theorem}[Theorem \ref{d1weakd2dominatethm}]\label{thm: dweakchar}
	 Suppose $\{u_k\}_k \subset \mathcal E^2$  is $d_2$-bounded and $u \in \mathcal E^2$. Then $d_1(u_k,u) \to 0$ if and only if $\| u_j - u\|_{L^1(X)} \to 0$ and  $u_k$ converges to $u$ $d_2$-weakly.  
\end{theorem}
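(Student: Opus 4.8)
The plan is to route everything through the metric characterization of $d_1$-convergence, combined with two semicontinuity properties of natural functionals with respect to weak $d_2$-convergence. The one external input I rely on is the fact (see \cite{da2,bbegz} and \eqref{dpCharFormula}) that $(\mathcal{E}^1,d_1)$ realizes the strong topology of \cite{bbegz}; concretely, for $u_k,u\in\mathcal{E}^1$ one has $d_1(u_k,u)\to 0$ if and only if $\|u_k-u\|_{L^1(X)}\to0$ and $\AM(u_k)\to\AM(u)$. Since $V^{-1}\o_u^n$ is a probability measure, Cauchy--Schwarz at the level of \eqref{FinslerDef} gives $\|\xi\|_{1,u}\le\|\xi\|_{2,u}$, hence $d_1\le d_2$; in particular every $d_2$-bounded set is $d_1$-bounded and every $d_2$-continuous functional is $d_2$-controlled. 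With this reduction in hand, once we know that $\|u_k-u\|_{L^1}\to0$, the whole theorem amounts to the equivalence, for $d_2$-bounded sequences, between $\AM(u_k)\to\AM(u)$ and weak $d_2$-convergence $u_k\to u$.

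The key structural fact is that the Aubin--Mabuchi energy is \emph{affine} along finite energy geodesics and $d_p$-continuous; being simultaneously convex and concave along geodesics and continuous, it is therefore weakly $d_2$-lower and upper semicontinuous by the CAT(0) analogue of Mazur's lemma (a geodesically convex, $d_2$-lsc function is weakly $d_2$-lsc; see \cite{ba} and Section \ref{subsect: CAT0}). Thus $\AM$ is weakly $d_2$-\emph{continuous}. Similarly, $d_1(\cdot,u)$ is $d_2$-Lipschitz (because $d_1\le d_2$) and convex along finite energy geodesics --- the nonlinear counterpart of the convexity of $t\mapsto\|\gamma(t)-u\|$ along segments in a normed space --- so it is weakly $d_2$-lower semicontinuous as well.

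Granting these, the reverse implication is immediate: if $u_k\to u$ $d_2$-weakly, then weak $d_2$-continuity of $\AM$ yields $\AM(u_k)\to\AM(u)$, and combined with the assumed $L^1$-convergence the characterization above gives $d_1(u_k,u)\to0$. For the forward implication, $d_1(u_k,u)\to0$ already forces $\|u_k-u\|_{L^1}\to0$ and $\AM(u_k)\to\AM(u)$; to obtain weak $d_2$-convergence I argue that every subsequence of the ($d_2$-bounded) sequence $\{u_k\}$ has a further subsequence $u_{k_j}$ that is weakly $d_2$-convergent to some $w\in\mathcal{E}^2$, since bounded sequences in a CAT(0) space are weakly precompact \cite{ba}. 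Weak $d_2$-lower semicontinuity of $d_1(\cdot,u)$ then gives $d_1(w,u)\le\liminf_j d_1(u_{k_j},u)=0$, so $w=u$; as every subsequence has a weak $d_2$-limit equal to $u$, the full sequence converges to $u$ $d_2$-weakly.

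I expect the main obstacle to lie exactly in the two weak-semicontinuity statements of the second paragraph, i.e. in transporting the geodesic convexity/affineness of $\AM$ and of $d_1(\cdot,u)$ into genuine semicontinuity for the abstract weak $d_2$-topology. This is the only place where the CAT(0) geometry of $(\mathcal{E}^2,d_2)$ is genuinely used, and it is essential: by Remark \ref{rem: weakd2convergence} weak $d_2$-convergence does not even imply weak $L^1$-convergence, so no pluripotential information can be extracted from it directly --- the bridge to $\AM$ and to $d_1$ must pass through geodesic convexity together with the Mazur-type lemma. The verification that $d_1(\cdot,u)$ is convex along finite energy geodesics (and not merely along $d_1$-geodesics) is the delicate point, and the affineness of $\AM$ along finite energy geodesics, while classical for smooth geodesics, must be propagated to the singular finite energy setting via the decreasing $C^{1,\bar 1}$-approximation recalled in Section \ref{subsect: the space Ep}.
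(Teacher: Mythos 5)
Your architecture is essentially the one the paper uses: the substance of both directions is that weak $d_2$-limits cannot escape $d_2$-closed, geodesically convex subsets of $\mathcal E^2$ (Proposition \ref{WeakdConvProp}(ii)), applied on the one hand to the slabs $\{c-\eps\le\AM\le c+\eps\}$ --- this is exactly your ``weak $d_2$-continuity of $\AM$'', which does follow from the linearity of $\AM$ along finite energy geodesics together with its $d_1$- (hence $d_2$-) continuity --- and on the other hand to the $d_1$-balls $B_\rho(u)$, which is your weak $d_2$-lower semicontinuity of $d_1(\cdot,u)$. The reduction of the whole statement to ``$L^1$-convergence plus $\AM(u_k)\to\AM(u)$ is equivalent to $d_1(u_k,u)\to 0$'' is also the paper's (via Theorem 5(ii) and Proposition 5.9 of \cite{da2}), and your subsequence bookkeeping for weak $d_2$-convergence matches the paper's.

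The gap is the one you flag yourself and then leave unresolved: the convexity of $t\mapsto d_1(u_t,u)$ along finite energy geodesics of $\mathcal E^2$. This is not a formal consequence of $d_1\le d_2$, nor of the CAT(0) property of $(\mathcal E^2,d_2)$ --- the distance in one metric need not be convex along the geodesics of another --- and it is precisely where the paper does the only genuinely new work in this section. The paper's argument goes through the rooftop envelope: if $t\to u_t$ and $t\to v_t$ are finite energy geodesics and $t\to w_t$ is the geodesic joining $P(u_a,v_a)$ to $P(u_b,v_b)$, the maximum principle gives $w_t\le P(u_{ta+(1-t)b},v_{ta+(1-t)b})$, so monotonicity of $\AM$ and its linearity along geodesics force $t\mapsto\AM(P(u_t,v_t))$ to be concave; combining this with the formula $d_1(u,v)=\AM(u)+\AM(v)-2\AM(P(u,v))$ from \cite[Corollary 4.14]{da2} yields the convexity of $t\mapsto d_1(u_t,v_t)$, and your statement is the special case $v_t\equiv u$. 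Until this lemma is supplied, the forward implication of your proof (identifying every weak $d_2$-subsequential limit with $u$) is incomplete; the rest is sound and amounts to a function-level rephrasing of the paper's set-level argument.
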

\paragraph{The conjectural picture of Donaldson.} Before we proceed, let us note a last corollary of Theorem \ref{WeakCalFlowConv}, a consequence of the equivalence between (i) and (iv):
\begin{corollary}\label{CalabiFlowAlternativeCor}
Suppose $(X,\o)$ is a compact connected K\"ahler manifold and $[0,\infty) \ni t \to c_t \in \mathcal E^2$ is a weak twisted Calabi flow trajectory. Exactly one of the following holds:
\vspace{-0.1in}
\begin{itemize}
\setlength{\itemsep}{1pt}
    \setlength{\parskip}{1pt}
    \setlength{\parsep}{1pt}  
\item[(i)] The curve $t \to c_t$ $d_1$-converges to some $c_\infty \in \mathcal M^2_\chi$.
\item[(ii)] $d_2(c_0,c_t) \to \infty$ as $t \to \infty.$
\end{itemize}
\end{corollary}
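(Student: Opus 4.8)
The plan is to deduce the corollary as a purely formal consequence of the equivalences already contained in Theorem \ref{WeakCalFlowConv}, organised as a dichotomy according to whether the real-valued function $t \mapsto d_2(c_0, c_t)$ diverges. No new analysis is needed; the entire content lies in tracking the quantifiers in the four equivalent conditions of that theorem, since the substantive work (compactness, lower semicontinuity, convexity along geodesics) has already been absorbed there.

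First I would establish that at least one alternative holds. Fix the given trajectory $t \to c_t$. If $d_2(c_0, c_t) \to \infty$, then alternative (ii) holds and there is nothing more to do. Otherwise $d_2(c_0, c_t) \not\to \infty$, so there is a sequence $t_j \to \infty$ and a constant $C$ with $d_2(c_0, c_{t_j}) \le C$; that is, $\{c_{t_j}\}_j$ is $d_2$-bounded. This is exactly hypothesis (iv) of Theorem \ref{WeakCalFlowConv}. Invoking the implication (iv) $\Rightarrow$ (ii) of that theorem, and reading off (ii) for our trajectory, produces a limit $c_\infty \in \mathcal M^2_\chi$ with $d_1(c_t, c_\infty) \to 0$, which is precisely alternative (i).

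Next I would check mutual exclusivity, so that \emph{exactly} one alternative occurs. Suppose alternative (i) is in force. Then $\mathcal M^2_\chi$ contains $c_\infty$, hence is nonempty, which is condition (i) of Theorem \ref{WeakCalFlowConv}. Through the equivalence (i) $\Leftrightarrow$ (iii), the whole trajectory $t \to c_t$ is then $d_2$-bounded, so $d_2(c_0, c_t)$ stays bounded and cannot tend to infinity; thus alternative (ii) fails. Combined with the previous paragraph, this shows that precisely one of (i), (ii) holds. It is worth noting that here the universal condition (iii) is essential: the existential condition (iv) alone would only control some trajectory, not the given one, so one genuinely uses the full chain of equivalences rather than the single equivalence (i) $\Leftrightarrow$ (iv).

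The only point demanding care — and the closest thing to an obstacle — is exactly this interplay of quantifiers in Theorem \ref{WeakCalFlowConv}: condition (iv) is existentially quantified over trajectories while conditions (ii) and (iii) are universally quantified. One must verify that the given trajectory, once shown to possess a $d_2$-bounded subsequence, legitimately witnesses (iv), and that the resulting universal statements may then be applied back to that same trajectory. Since everything reduces to this bookkeeping, the corollary follows at once.
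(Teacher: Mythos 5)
Your argument is correct and is exactly the route the paper intends: the paper offers no separate proof, simply noting the corollary follows from the equivalences in Theorem \ref{WeakCalFlowConv}, and your careful tracking of the quantifiers (using (iv) $\Rightarrow$ (ii) for existence of the dichotomy and (i) $\Leftrightarrow$ (iii) for exclusivity) fills in precisely the intended bookkeeping.
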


 Though this corollary is in line with Donaldson's conjectural picture, one would like to understand how a diverging Calabi flow trajectory 'destabilizes' the K\"ahler structure, as proposed in  Conjecture \ref{DonaldsonConj}. In this direction we recall the following concept from \cite{dh}: suppose $(M,d)$ is a geodesic metric space and $[0,\infty) \ni t\to \gamma_t \in  M$ is a continuous curve. We say that the unit speed $d$--geodesic ray $[0,\infty) \ni t\to g_t \in  M$ is $d$-\emph{weakly asymptotic} to the curve $t \to \gamma_t$, if there exists $t_j \to \infty$ and unit speed $d$--geodesic segments $[0,d(\gamma_0,\gamma_{t_j})] \ni t\to g^j_t \in  M$ connecting $\gamma_0$ and $\gamma_{t_j}$ such that $\lim_{j \to\infty}d(g^j_t,g_t)=0, \ t \in [0,\infty).$

Clearly, to have a geodesic ray weakly asymptotic to $t \to \gamma_t$, we need $t \to d(\gamma_0,\gamma_t)$ to be unbounded.  By the above corollary, this condition makes diverging weak Calabi flow trajectories $t\to c_t$ perfect candidates for this construction. However, more needs to be known about $t \to c_t$ before we can proceed. In \cite[Conjecture 2.8]{dr} it was pointed out that an important roadblock in resolving  Tian's properness conjecture for csc-K metrics is a conjecture about regularity of minimizers of $\mathcal K$. The twisted version of this conjecture should also hold:
\begin{conjecture} \cite{dr} \label{DRConj} Suppose $(X,\o)$ is a compact connected K\"ahler manifold and $\chi$ is smooth. Then $\mathcal M^1_\chi \subset \mathcal H_\o$, i.e. $\mathcal M^1_\chi$ contains only smooth twisted csc-K potentials.
\end{conjecture}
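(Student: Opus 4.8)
The plan is to show that any minimizer $u \in \mathcal M^1_\chi$ is in fact a finite-entropy weak solution of the twisted csc-K equation, and then to upgrade this weak solution to a smooth one by a priori estimates of Chen--Cheng type. The two halves of the argument are of quite different character: the first is soft and variational, relying on the structure of the extended K-energy established in Theorem \ref{ExtKEnergyThmIntr}, while the second is a hard PDE regularity statement for a fourth-order equation.

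First I would record that minimizers have finite entropy. Since $\mathcal H_\o \subset \mathcal E^1$ and $\mathcal K_\chi$ is real-valued on $\mathcal H_\o$, the infimum defining $\mathcal M^1_\chi$ is finite; as the Aubin--Mabuchi energy and twisting terms in \eqref{KEnergyDef} are $d_1$-continuous and finite on $\mathcal E^1$ while the entropy is bounded below (it is a relative entropy of probability measures), finiteness of $\mathcal K_\chi(u)$ forces $\textup{Ent}(\o^n,\o_u^n)<\infty$. Thus $\o_u^n = e^F\o^n$ with $e^F \in L\log L$, and the pluripotential estimates together with the finite-energy hypothesis already yield a $C^0$ bound and continuity of $u$. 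Next I would derive the Euler--Lagrange equation in weak form: because $\mathcal K_\chi$ is convex along finite energy geodesics (Theorem \ref{ExtKEnergyThmIntr}) and $u$ minimizes, perturbing $u$ along geodesics issuing from it and differentiating gives stationarity. The delicate point is the first variation of the entropy, which is only $d_1$-lsc; here the finite-entropy conclusion is exactly what licenses the computation, and it produces the weak twisted csc-K equation, conveniently decoupled as the system
$$\o_u^n = e^F\o^n, \qquad \Delta_{\o_u} F = \textup{tr}_{\o_u}(\Ric\,\o - \chi) - \underline S_\chi,$$
where $\underline S_\chi$ is the appropriate cohomological constant.

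With the weak equation and the entropy bound in hand, the core of the argument is the a priori estimate: a finite-entropy bound on the density $e^F$ yields, for this system, uniform $C^0$ and then $C^{2,\alpha}$ estimates for $u$ and $F$, after which Schauder theory and bootstrapping give smoothness. To make this rigorous for a merely weak minimizer I would approximate $u$ by $u_k \in \mathcal H_\o$ with $d_1(u_k,u)\to 0$ and $\textup{Ent}(\o^n,\o_{u_k}^n)\to\textup{Ent}(\o^n,\o_u^n)$, as provided by Theorem \ref{thm: Ep approximation with entropy}, set up the corresponding approximate twisted csc-K equations, apply the Chen--Cheng estimates uniformly in $k$, and pass to the limit; the limit is smooth and, by uniqueness for the weak equation, equals $u$. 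The smoothness of $\chi$ is used precisely to keep the right-hand side of the $F$-equation under control throughout these estimates.

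The main obstacle is this last step: extracting uniform higher-order estimates for the fourth-order (twisted) csc-K equation from only an entropy bound. Unlike the K\"ahler--Einstein case, the scalar-curvature equation admits no direct maximum-principle argument, so one must run the full Chen--Cheng scheme --- the $C^0$ estimate via an auxiliary complex Monge--Amp\`ere equation and an integral iteration, followed by the $W^{2,p}$ and $C^{2,\alpha}$ estimates. A secondary difficulty, flagged above, is justifying the first variation of the entropy at a singular minimizer; this is where the approximation-with-entropy theorem and the lsc structure of the extended K-energy from Theorem \ref{ExtKEnergyThmIntr} are indispensable. In the Fano case these obstacles are bypassed by the special structure of the Ding functional, which explains why the conjecture is already known there.
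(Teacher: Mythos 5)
This statement is labelled as a \emph{conjecture} in the paper, and the paper offers no proof of it: the authors only note that it is known in the Fano case by \cite{brm1,bbegz}, and they announce partial progress (under the extra hypothesis that $\mathcal H_\o$ already contains a csc-K potential) in the companion paper \cite{bdl2}. So there is no ``paper proof'' to compare against; what you have written is a strategy sketch for an open problem, and it contains genuine gaps at each of the points where the problem is actually hard.

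Concretely: (1) Your claim that finite entropy ``already yields a $C^0$ bound and continuity of $u$'' is false as stated. An entropy bound places the density $\o_u^n/\o^n$ in $L\log L$, which is strictly weaker than the $L^p$, $p>1$ (or high-order Orlicz) integrability that Ko{\l}odziej's estimate requires; for $n>1$ an $L\log L$ density does not force boundedness of the potential. (2) The derivation of the Euler--Lagrange system at a singular minimizer is not ``licensed'' by finite entropy alone. Convexity along finite energy geodesics gives one-sided monotonicity of $t\mapsto\mathcal K_\chi(u_t)$, but turning the vanishing of a one-sided derivative into the pointwise coupled system for $(u,F)$ requires differentiating the entropy term along a geodesic emanating from $u$, which presupposes regularity of $u$ and of the geodesic that you do not have --- this circularity is precisely the obstruction the conjecture isolates. (3) The approximation scheme does not close: the potentials $u_k$ supplied by Theorem \ref{thm: Ep approximation with entropy} are arbitrary smooth potentials with convergent entropy, not solutions of ``approximate twisted csc-K equations,'' and there is nothing canonical to feed into an a priori estimate; producing such approximate solutions with uniformly controlled data is essentially as hard as the existence problem itself. (4) The ``Chen--Cheng type'' estimates you invoke postdate this paper and, even granting them, they apply to (weak) solutions of the csc-K system with an entropy bound --- which returns you to gap (2). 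In short, the outline correctly identifies where the difficulty lies, but each of the steps you defer is the open content of the conjecture rather than a routine verification.
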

We note that this conjecture generalizes an earlier conjecture of Chen about $C^{1,1}$ minimizers of $\mathcal K$ \cite[Conjecture 6.3]{c3}. When $(X,\o)$ is Fano, Conjecture \ref{DRConj} was proved in \cite{brm1,bbegz}. The next result partially confirms Donaldson's conjecture in the Fano case and also in the case when $\chi$ is a K\"ahler form.

\begin{theorem}[Theorem \ref{thm: analog of Donaldson conjecture}]\label{thm: Donaldson 1}  Suppose $(X,\o)$ is a compact connected K\"ahler manifold, $\chi\geq 0$ is smooth and Conjecture \ref{DRConj} holds. Let $[0,\infty) \ni t \to c_t \in \mathcal E^2$ be a weak twisted Calabi flow trajectory. Exactly one of the following holds: 
\vspace{-0.05in}
\begin{itemize}
\setlength{\itemsep}{1pt}
    \setlength{\parskip}{1pt}
    \setlength{\parsep}{1pt}  
\item[(i)] The curve $t \to c_t$ $d_1$-converges to a smooth twisted csc-K potential $c_\infty$.
\item[(ii)] $d_1(c_0,c_t) \to \infty$ as $t \to \infty$ and the curve $t \to c_t$ is $d_1$-weakly asymptotic to a finite energy geodesic $[0,\infty) \ni t \to u_t \in \mathcal E^1$ along which $\mathcal K_\chi$ decreases.
\end{itemize}
\vspace{-0.05in}
If  $\chi>0$, then independently of Conjecture \ref{DRConj} exactly one of the following holds:
\vspace{-0.05in}
\begin{itemize}
\setlength{\itemsep}{1pt}
    \setlength{\parskip}{1pt}
    \setlength{\parsep}{1pt}  
\item[(i')] The curve $t \to c_t$ $d_1$-converges to a unique minimizer in $\mathcal E^1$ of $\mathcal K_\chi$.
\item[(ii')] $d_1(c_0,c_t) \to \infty$ as $t \to \infty$ and the curve $t \to c_t$ is $d_1$-weakly asymptotic to a finite energy geodesic $[0,\infty) \ni t \to u_t \in \mathcal E^1$ along which $\mathcal K_\chi$ strictly decreases.
\end{itemize}
\end{theorem}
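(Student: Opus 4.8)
The plan is to organize everything around the clean dichotomy already furnished by Corollary \ref{CalabiFlowAlternativeCor} and to sharpen each alternative into (i)/(ii) and (i')/(ii'). By Theorem \ref{WeakCalFlowConv} the flow $d_1$-converges to some $c_\infty \in \mathcal{M}^2_\chi$ precisely when $\mathcal{M}^2_\chi \neq \emptyset$, so I would split according to whether $\mathcal{M}^2_\chi$ is empty. In the convergent case Theorem \ref{WeakCalFlowConv}(ii) gives $d_1(c_t, c_\infty) \to 0$ with $c_\infty \in \mathcal{M}^2_\chi$. First I would record the elementary but crucial fact that, since $\mathcal{K}_\chi|_{\mathcal{E}^p}$ is the greatest $d_p$-lsc extension of $\mathcal{K}_\chi|_{\mathcal{H}_\o}$ (Theorem \ref{ExtKEnergyThmIntr}) and $\mathcal{H}_\o$ is $d_p$-dense, the infima coincide: $\inf_{\mathcal{H}_\o}\mathcal{K}_\chi = \inf_{\mathcal{E}^1}\mathcal{K}_\chi = \inf_{\mathcal{E}^2}\mathcal{K}_\chi$. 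Consequently $\mathcal{M}^2_\chi \subseteq \mathcal{M}^1_\chi$, so $c_\infty \in \mathcal{M}^1_\chi$, and Conjecture \ref{DRConj} forces $c_\infty \in \mathcal{H}_\o$ to be a smooth twisted csc-K potential, giving (i). When $\chi>0$ the $\chi$-dependent term of $\mathcal{K}_\chi$ is strictly convex along every nonconstant finite energy geodesic, so $\mathcal{K}_\chi$ is strictly convex there; this makes $\mathcal{M}^1_\chi$ a single point, yielding the uniqueness in (i') with no appeal to Conjecture \ref{DRConj} and no need for smoothness. Mutual exclusivity is immediate, since (i)/(i') force $d_1(c_0,c_t)$ bounded while (ii)/(ii') require $d_1(c_0,c_t)\to\infty$.

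Next I would treat the divergent case $\mathcal{M}^2_\chi = \emptyset$, whose main point is to upgrade the $d_2$-divergence of Corollary \ref{CalabiFlowAlternativeCor} to $d_1(c_0, c_t) \to \infty$. I would argue by contradiction: if $d_1(c_0, c_{t_j})$ stayed bounded along some $t_j \to \infty$, then since $\mathcal{K}_\chi(c_{t_j}) \downarrow \inf \mathcal{K}_\chi$ along the gradient flow (Mayer--Ba\u c\'ak theory, cf.\ Section \ref{subsect: gradient flow} and \cite{may,ba}) and the energy part of $\mathcal{K}_\chi$ is $d_1$-continuous, hence bounded on a $d_1$-bounded set, the entropy $\textup{Ent}(e^{-f}\o^n, \o^n_{c_{t_j}})$ would be bounded; the energy/entropy compactness of \cite{bbegz} then yields a $d_1$-limit $u \in \mathcal{E}^1$ which by $d_1$-lsc satisfies $\mathcal{K}_\chi(u) \le \inf \mathcal{K}_\chi$, i.e.\ $u \in \mathcal{M}^1_\chi$. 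But Conjecture \ref{DRConj} gives $\mathcal{M}^1_\chi \subset \mathcal{H}_\o \subset \mathcal{E}^2$, whence $u \in \mathcal{M}^2_\chi$ by the equality of infima, contradicting $\mathcal{M}^2_\chi = \emptyset$. For $\chi>0$ one instead invokes the regularity of minimizers of a strictly twisted K-energy, which verifies Conjecture \ref{DRConj} in this case outright, so the same contradiction applies without assuming it. Hence $d_1(c_0, c_t) \to \infty$.

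It then remains to construct the weakly asymptotic ray. Joining $c_0$ to $c_{t_j}$ by the unique finite energy geodesics $g^j$, reparametrized to unit $d_1$-speed on $[0, \ell_j]$ with $\ell_j = d_1(c_0, c_{t_j}) \to \infty$, convexity of $\mathcal{K}_\chi$ along $g^j$ together with $\mathcal{K}_\chi(c_{t_j}) \le \mathcal{K}_\chi(c_0)$ gives $\mathcal{K}_\chi(g^j_t) \le \mathcal{K}_\chi(c_0)$ for all $t$. For each fixed $t$ the points $g^j_t$ are $d_1$-bounded (at distance $t$ from $c_0$) with bounded entropy, so \cite{bbegz} makes $\{g^j_t\}_j$ $d_1$-precompact; a diagonal extraction over rational $t$ combined with the bound $d_1(g^j_s, g^j_t) = |s-t|$ produces a single subsequence along which $g^j_t \to u_t$ in $d_1$ for every $t \ge 0$, with $t \to u_t$ a unit-speed finite energy geodesic ray. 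By construction $u$ is $d_1$-weakly asymptotic to $t \to c_t$. Passing $\mathcal{K}_\chi(g^j_t) \le \mathcal{K}_\chi(c_0)$ to the limit by $d_1$-lsc gives $\mathcal{K}_\chi(u_t) \le \mathcal{K}_\chi(c_0) < \infty$ for all $t$; a convex function bounded above on $[0,\infty)$ is non-increasing, so $\mathcal{K}_\chi$ decreases along $u$, giving (ii). When $\chi>0$ the ray is nonconstant and $\mathcal{K}_\chi$ is strictly convex along it, hence strictly decreasing, giving (ii').

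The hard part will be the divergent case, and precisely the two interlocking points that hinge on the energy/entropy compactness of \cite{bbegz}: upgrading $d_2$-divergence to $d_1(c_0,c_t)\to\infty$, and extracting a genuine finite energy geodesic ray along which the energy is controlled. Both require the uniform a priori entropy bound along the geodesic segments, and one must check that the resulting pointwise $d_1$-limit is an honest finite energy geodesic rather than merely a $1$-Lipschitz curve. The role of Conjecture \ref{DRConj} becomes transparent here: it is exactly what excludes a minimizer in $\mathcal{E}^1 \setminus \mathcal{E}^2$, the one scenario that the gap between $d_1$ and $d_2$ would otherwise permit, which is why the general statement needs it while the strictly twisted case $\chi>0$ does not.
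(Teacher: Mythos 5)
Your treatment of the general case $\chi\geq 0$ follows essentially the paper's route: the dichotomy through existence of minimizers, Corollary \ref{KChiCompactnessCor} both to upgrade divergence to $d_1(c_0,c_t)\to\infty$ (with Conjecture \ref{DRConj} pushing an $\mathcal E^1$-minimizer into $\mathcal M^2_\chi$) and to extract the asymptotic ray from unit-speed geodesic segments via a diagonal argument over rational times, and lower semicontinuity plus convexity to get the decrease of $\mathcal K_\chi$ along the limit. The one step you flag but do not actually carry out --- that the pointwise $d_1$-limit of the segments is a genuine finite energy geodesic rather than a $1$-Lipschitz curve --- is exactly what Proposition \ref{FinEnGeodStabProp} (endpoint stability, resting on linearity of $\AM$ and the domination principle) is for; without it your (ii) is incomplete.

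The genuine gaps are concentrated in the $\chi>0$ case. You assert that $\AM_\chi$, hence $\mathcal K_\chi$, is \emph{strictly} convex along every nonconstant finite energy geodesic, and you use this twice: for uniqueness of the minimizer in (i') and for strict decrease in (ii'). This is not a formality. It is the content of the paper's Theorem \ref{thm: linear along geodesic}, whose proof needs the Monge--Amp\`ere measures $\o^n_{\varphi_t}$ along the geodesic to be absolutely continuous with respect to $\o^n$ (supplied by finite entropy via Theorem \ref{FinEntGeodConvThm}) together with the differentiation formula of Lemma \ref{lem: derivative of AM along geodesic}; as a blanket statement about arbitrary finite energy geodesics it is not available. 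Second, your claim that Conjecture \ref{DRConj} is ``verified outright'' for $\chi>0$ is unsupported --- regularity of minimizers is open even in the strictly twisted case, which is precisely why the paper states (i') with a minimizer in $\mathcal E^1$ rather than a smooth potential. This matters because your case division by $\mathcal M^2_\chi=\emptyset$ then breaks: a $d_1$-bounded subsequence only yields a minimizer in $\mathcal E^1$, which does not contradict $\mathcal M^2_\chi=\emptyset$ without regularity. The paper instead organizes the $\chi>0$ dichotomy around $\mathcal M^1_\chi$: if an $\mathcal E^1$-minimizer exists, the coercivity characterization of Remark \ref{remark: DRremark} and uniqueness (Theorem \ref{thm: uniqueness of twisted minimizer}) give (i'); otherwise entropy compactness forces $d_1$-divergence. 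For the strict decrease in (ii') the paper argues by contradiction: were $\mathcal K_\chi(u_t)$ eventually constant, then since $\AM$ is constant along the flow (Lemma \ref{lem:energi func is constant along cal fl}) and both $\mathcal K$ and $\AM_\chi$ are convex along the ray, $\AM_\chi$ would be linear there, and Theorem \ref{thm: linear along geodesic} would force the ray to be stationary, contradicting unit speed. You should replace your strict-convexity shortcut with this argument.
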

Though stated differently, when $(X,\o)$ is Fano and $\chi=0$ the analog of this result for the K\"ahler-Ricci flow has been obtained in \cite[Theorem 2]{dh}. There we have smooth convergence in (i) and the along the geodesic ray of (ii) the potentials are  bounded, all thanks to the Perelman estimates available for the K\"ahler-Ricci flow. It would be interesting to compare the above theorem to the results in \cite{cs}, where the authors construct in a specific situation a geodesic ray asymptotic to the Calabi flow  and are able to draw geometric conclusions based on this.

\paragraph{Concluding remarks and additional results.} Based on geometric considerations, and the analogous picture in case of the K\"ahler-Ricci flow \cite{gz2}, it is natural to speculate that for any starting point $c_0 \in \mathcal E^2$, the weak Calabi flow $t \to c_t$ is instantly smooth, i.e. $c_t \in \mathcal H_\o, \ t >0$ (see also \cite[Conjecture 3.5]{c4}). Such a result would instantly give the $\mathcal E^2$ version of Conjecture \ref{DRConj}, that $\mathcal E^2$-minimizers of $\mathcal K$ are smooth csc-K metrics. Indeed, by the general result of Mayer \cite{may}, the weak Calabi flow $t \to c_t$ starting at a minimizer $c_0 \in \mathcal E^2$ has to be stationary. If $t \to c_t$ was instantly smooth, then we could conclude that $c_0 \in \mathcal H_\o$.

In the companion paper \cite{bdl2} we make progress on Conjecture \ref{DRConj} using different techniques from the ones presented in this paper:

\begin{theorem}\label{thm: DRannouncement}Suppose $(X,\o)$ is a K\"ahler manifold and $\mathcal H_\o$ contains a csc-K potential. Then $\mathcal M^1$ contains only smooth csc-K potentials.
\end{theorem}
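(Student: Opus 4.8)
The plan is to reduce the regularity of an arbitrary minimizer $u \in \mathcal M^1$ to that of the given smooth csc-K potential, exploiting the geodesic convexity of $\mathcal K$ from Theorem \ref{ExtKEnergyThmIntr} together with the rigidity theorem of Berman--Berndtsson \cite{bb}. First I would record that a smooth csc-K potential $u_0 \in \mathcal H_\o$ is a critical point of $\mathcal K$, so that by convexity of $\mathcal K$ along finite energy geodesics and the fact that any two points of $\mathcal E^1$ are joined by such a geodesic, $u_0$ is in fact a global minimizer. Hence $\mathcal M^1 \neq \emptyset$ and every $u \in \mathcal M^1$ satisfies $\mathcal K(u) = \mathcal K(u_0) = \inf_{\mathcal E^1}\mathcal K$. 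Since the energy part of $\mathcal K$ is $d_1$-continuous, and thus finite on $\mathcal E^1$, the equality $\mathcal K(u) < \infty$ already forces $\textup{Ent}(\o^n,\o_u^n) < \infty$; so every minimizer has finite entropy and absolutely continuous Monge--Amp\`ere measure.

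Next, I would fix $u \in \mathcal M^1$ and take the finite energy geodesic $[0,1] \ni t \mapsto u_t$ from $u_0$ to $u_1 = u$. Convexity of $\mathcal K$ together with $\mathcal K(u_0) = \mathcal K(u_1) = \inf \mathcal K$ forces $t \mapsto \mathcal K(u_t)$ to be constant, hence affine. The energy terms $\bar S\,\textup{AM} - n\,\textup{AM}_{\Ric\o}$ are affine along finite energy geodesics, so the entropy $t \mapsto \textup{Ent}(\o^n,\o_{u_t}^n)$ must itself be affine. I would then appeal to the rigidity part of \cite{bb}: once the geodesic is known to be regular enough (say $C^{1,\bar 1}$), the affineness of $\mathcal K$ implies that $t \mapsto \o_{u_t}$ is generated by a one-parameter family of automorphisms in $\textup{Aut}_0(X)$, i.e. $\o_{u_t} = \sigma_t^*\o_{u_0}$. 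Since $u_0$ is smooth and the $\sigma_t$ are biholomorphisms, $u = u_1$ is smooth, and being a smooth minimizer it is csc-K.

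The main obstacle is precisely the regularity of the connecting geodesic near the a priori singular endpoint $u$: the Berman--Berndtsson rigidity is stated for $C^{1,\bar 1}$ geodesics, whereas a finite energy geodesic with one endpoint in $\mathcal E^1 \sm \mathcal H_\o$ need not enjoy this regularity. This is the analytic heart of Conjecture \ref{DRConj}, and it is where the extra hypothesis that a smooth csc-K potential exists becomes essential: anchoring the analysis at the smooth minimizer $u_0$ and using the finiteness and affineness of the entropy along the geodesic, one aims to bootstrap a priori estimates that propagate the regularity of $u_0$ along the segment. The different techniques of the companion paper \cite{bdl2} are devoted to carrying out exactly this regularization step; the convexity and lsc statements of the present paper enter only in the soft reductions above.
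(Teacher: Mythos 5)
There is a genuine gap, and you have in fact named it yourself: the entire analytic content of the theorem is deferred. Note first that the paper contains no proof of Theorem \ref{thm: DRannouncement} at all --- it is announced as the main result of the companion paper \cite{bdl2}, which the authors explicitly say is proved ``using different techniques from the ones presented in this paper.'' Your soft reductions are consistent with what this paper's machinery gives (the slope inequality of \cite{bb} shows a smooth csc-K potential minimizes $\mathcal K$ on $\mathcal H_\o$, and the greatest-lsc-extension property of Theorem \ref{ExtKEnergyThm} propagates this to all of $\mathcal E^1$; convexity then forces $t\mapsto \mathcal K(u_t)$ to be affine along the geodesic joining $u_0$ to a minimizer $u$). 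But the step where you invoke the Berman--Berndtsson rigidity is exactly where the argument stops being a proof: that rigidity statement requires a $C^{1,\bar 1}$ geodesic between sufficiently regular endpoints, and a finite energy geodesic with one endpoint only in $\mathcal E^1$ is not known to have this regularity. Establishing enough regularity of $u$ (or of the geodesic near $u$) to apply any rigidity/uniqueness statement is precisely the content of Conjecture \ref{DRConj} in the presence of a smooth minimizer, i.e.\ of the theorem itself. Saying that one ``aims to bootstrap a priori estimates'' and that \cite{bdl2} carries this out is an accurate description of the situation, but it means the proposal reduces the theorem to its own hard part rather than proving it.

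A secondary error: the assertion that the energy part $\bar S\,\textup{AM} - n\,\textup{AM}_{\Ric \o}$ is affine along finite energy geodesics is false in general. Only $\textup{AM}$ is affine; by Proposition \ref{AMtwistConvexity} the functional $\textup{AM}_\gamma$ is merely \emph{convex} along weak geodesics when $\gamma \geq 0$, and since $\Ric \o$ has no sign in general, $\textup{AM}_{\Ric\o}$ is only a difference of convex functions along the geodesic (its second distributional derivative is $\int \pi^*(\Ric\o)\wedge(\pi^*\o+i\ddbar v)^n$, which need not vanish). Consequently the affineness of $\mathcal K$ does not by itself imply that $t\mapsto \textup{Ent}(\o^n,\o_{u_t}^n)$ is affine; compare the proof of Theorem \ref{FinEntGeodConvThm}, where even convexity of the entropy requires the hypothesis $\Ric\o \geq \beta$. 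This particular claim is not load-bearing for your outline, but it should not be stated as a fact.
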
 

The consequences of this theorem related to K-stability and energy properness will be discussed in \cite{bdl2}. As $\mathcal M^2 \subset \mathcal M^1$, here we just mention the following consequence of this result and Theorem \ref{WeakCalFlowConv}(ii), making further progress on Conjecture \ref{DonaldsonConj} (see also \cite[Remark 1.10]{st2}):

\begin{theorem}\label{thm: Cal_flow_announcement} Suppose $(X,\o)$ is a K\"ahler manifold and $\mathcal H_\o$ contains a csc-K potential $u$. Then any weak Calabi flow trajectory $t \to c_t$ $d_1$-converges to a smooth csc-K potential $c_\infty \in \mathcal H_\o$. In addition, the densities $\o_{c_t}^n/\o^n$ converge in $L^1$ to the density $\o_{c_\infty}^n/\o^n$.
\end{theorem}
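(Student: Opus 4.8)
The plan is to combine the large-time convergence statement of Theorem~\ref{WeakCalFlowConv} with the regularity of minimizers furnished by Theorem~\ref{thm: DRannouncement}, after first checking that the hypothesis (i) of Theorem~\ref{WeakCalFlowConv} is met. We work in the untwisted case $\chi=0$ (so $f\equiv 0$ and $\mathcal M^2_\chi=\mathcal M^2$).

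First I would show that $\mathcal M^2$ is nonempty. Let $u\in\mathcal H_\omega$ be the given csc-K potential. Since $S_{\omega_u}\equiv \bar S$, the first variation $\langle D\mathcal K(u),\cdot\rangle$ vanishes, so $u$ is a critical point of $\mathcal K$. By Theorem~\ref{ExtKEnergyThmIntr} the functional $\mathcal K$ is convex along finite energy geodesics, and any $v\in\mathcal E^1$ can be joined to the \emph{smooth} potential $u$ by such a geodesic $t\mapsto u_t$; a standard computation of the right derivative of the convex function $t\mapsto\mathcal K(u_t)$ at the smooth endpoint $t=0$ shows it equals $V^{-1}\int_X \dot u_0(\bar S-S_{\omega_u})\omega_u^n=0$, whence $\mathcal K(u)\le\mathcal K(v)$. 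Thus $u$ minimizes $\mathcal K$ over $\mathcal E^1$, and since $\mathcal E^2\subset\mathcal E^1$ contains $u$, it also minimizes over $\mathcal E^2$; so $u\in\mathcal M^2$ and in particular $\mathcal M^2\neq\emptyset$.

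With hypothesis (i) of Theorem~\ref{WeakCalFlowConv} verified, its equivalent conclusion (ii) applies: every weak Calabi flow trajectory $t\mapsto c_t$ satisfies $d_1(c_t,c_\infty)\to 0$ for some $c_\infty\in\mathcal M^2$, together with $\textup{Ent}(\omega^n,\omega_{c_t}^n)\to\textup{Ent}(\omega^n,\omega_{c_\infty}^n)$. Now $\mathcal M^2\subset\mathcal M^1$, so by Theorem~\ref{thm: DRannouncement} the limit $c_\infty$ is a smooth csc-K potential, i.e.\ $c_\infty\in\mathcal H_\omega$. This already proves the first assertion.

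It remains to upgrade the convergence of measures to $L^1$ convergence of densities. Writing $g_t=\omega_{c_t}^n/\omega^n$ and $g=\omega_{c_\infty}^n/\omega^n$, the $d_1$-convergence yields weak convergence $\omega_{c_t}^n\to\omega_{c_\infty}^n$ of the normalized probability measures, while the entropy convergence above reads $\int_X g_t\log g_t\,\omega^n\to\int_X g\log g\,\omega^n$. The final step is the measure-theoretic lemma that weak convergence of probability densities together with convergence of relative entropies forces $\|g_t-g\|_{L^1(\omega^n)}\to 0$. I expect this last point to be the only genuinely technical part, and it is where the two hypotheses must be used jointly: weak convergence alone fails to give $L^1$ convergence (oscillating densities), and it is precisely the superlinear growth of $s\mapsto s\log s$, via the entropy, that rules this out. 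Concretely, one applies weak lower semicontinuity of the entropy to the midpoint densities $\tfrac12(g_t+g)$; strict convexity of $s\log s$ then forces the Jensen gap $\tfrac12 g_t\log g_t+\tfrac12 g\log g-\tfrac{g_t+g}{2}\log\tfrac{g_t+g}{2}$ to tend to $0$ in $L^1(\omega^n)$, and since this gap dominates a constant times $(g_t-g)^2/(g_t+g)$ pointwise, the Cauchy--Schwarz inequality (using that $\int_X(g_t+g)\,\omega^n$ is bounded) yields $\int_X|g_t-g|\,\omega^n\to 0$. The Cauchy--Schwarz step is needed exactly because the modulus of convexity of $s\log s$ degenerates for large $s$, so a naive Pinsker-type estimate does not suffice.
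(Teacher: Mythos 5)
Your proposal is correct and follows essentially the same route the paper intends: the theorem is stated there as a direct consequence of Theorem \ref{WeakCalFlowConv}(ii) and Theorem \ref{thm: DRannouncement} (via $\mathcal M^2\subset\mathcal M^1$), and your weak-convergence-plus-entropy-convergence argument is the standard way to supply the $L^1$ density claim that the paper leaves implicit. The only step I would tighten is the computation of the right derivative of $\mathcal K$ at the smooth endpoint of a finite energy geodesic to an arbitrary $v\in\mathcal E^1$ (not literally ``standard'' in that generality); it is cleaner to observe that, since $\mathcal K|_{\mathcal E^1}$ is the greatest $d_1$-lsc extension of $\mathcal K|_{\mathcal H_\o}$, one has $\inf_{\mathcal E^1}\mathcal K=\inf_{\mathcal H_\o}\mathcal K$, and $u$ minimizes over $\mathcal H_\o$ by convexity together with the slope inequality of \cite{bb}.
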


\vspace{-.15in}
\paragraph{Acknowledgments.} We would like to thank L\'aszl\'o Lempert for his careful suggestions regarding the presentation of the paper.
\vspace{-.15in}
\paragraph{Organization of the paper.} In the first part of Section \ref{sect: preliminaries} we recall recent results on  complex Monge-Amp\`ere theory which we will use in this paper. In the second part we briefly recall  Mayer's theory of gradient flows on non-positively curved metric spaces. The approximation of finite energy $\omega$-plurisubharmonic functions with convergent entropy is presented in Section \ref{sect: approximation}. The  twisted Mabuchi energy is studied in  Section \ref{sect: twisted K-energy}. The weak $d_2$ topology is explored in Section \ref{sec: weak d2 convergence}, while the last section is devoted to the weak twisted Calabi flow.  

\section{Preliminaries}\label{sect: preliminaries}
\subsection{The twisted K-energy}

Suppose $\chi$ is a closed positive $(1,1)$-current and $\beta$ is a smooth closed (1,1)-form in the same cohomology class as $\chi$. In most applications of K\"ahler geometry, the twisting current $\chi$ can be smooth, but in order to treat the case of smooth and singular canonical metrics (e.g. conical csc-K metrics) together, it is natural to ask for the following more general restriction on $\chi$:
\begin{equation}\label{eq: ChiProp}
\chi = \beta + i\ddbar f, \textup{ where } f \in \textup{PSH}(X,\beta) \textup{ with }\ e^{-f} \in L^{1}(X,\o^n).
\end{equation}
We observe that the integrability condition $e^{-f}\in L^1(X,\omega^n)$ implies that $e^{-f}\in L^p(X,\omega^n)$ for some $p>1$ as follows from the openness conjecture, recently proved  by Berndtsson (\cite{Bern13}, see also \cite{gzh}).  We note that some of our results, in particular Theorem \ref{ExtKEnergyThmIntr} above, hold for more general $\chi$. However, it is unlikely that greater generality will have applications, and we leave it to the  reader to find optimal conditions for $\chi$ in our theorems.

The twisted K-energy $\mathcal K_\chi:\mathcal H_\o \to \Bbb R$ can now be defined as follows:
\begin{equation}\label{KEnTwistDef}
\mathcal K_\chi(u) = \textup{Ent}(e^{-f}\o^n,\o^n_u)
+ \bar{S}_\chi\textup{AM}(u) - n\textup{AM}_{\Ric \o-\beta}(u) - \int_X f\o^n,
\end{equation}
where $\bar{S}_\chi = nV^{-1}\int_X (\Ric_\o - \chi) \wedge \o^{n-1}$. Notice that 
for $\beta =0, f=0$ we get back the usual $K$-energy \eqref{KEnergyDef}. Using the identity $n\textup{AM}_\chi(u)=n\textup{AM}_\beta(u) +\int {f} \o_u^n - \int f \o^n$ one can give an alternative formula for $\mathcal K_\chi$, perhaps more familiar from the literature:
\begin{equation}\label{KEnTwistAltDef}
\mathcal K_\chi(u) = \textup{Ent}(\o^n,\o^n_u)
+ \bar{S}_\chi\textup{AM}(u) - n\textup{AM}_{\Ric \o-\chi}(u).
\end{equation}
The virtue of this formula is that it shows that $\mathcal K_\chi$ is independent of the choice of $\beta$ and $f$. As it will be clear shortly, when trying to extend $\mathcal K_\chi$, our original definition is more advantageous however. Note that when $\chi$ is smooth, the first order variation of $\mathcal K_\chi$ is given by the following formula:
$$\langle D\mathcal K_\chi(u), \delta v\rangle = V^{-1} \int_X \delta v (\bar S_\chi-S_{\o_u} + \textup{Tr}^{\o_u}\chi)\o^n_u.$$
Hence, the critical points of this functional are the twisted csc-K potentials, as these satisfy $\bar S_\chi-S_{\o_u} + \textup{Tr}^{\o_u}\chi=0$. The smooth twisted Calabi flow is defined analogously.

\label{sec: Prelim the twisted K-energy}
\subsection{The Complete Geodesic Metric Spaces $(\mathcal E^p,d_p)$}\label{subsect: the space Ep}
In this section we mostly summarize results from \cite{da1,da2,begz, bbegz} needed the most in this paper. Formula \eqref{FinslerDef} introduces $L^p$-type weak Finsler metrics on the Fr\'echet manifold $\mathcal H_\o$. 
A curve $[0,1]\ni t \to \alpha_t \in \mathcal{H}_\o$ 
is called smooth if $\alpha(t,z)=\alpha_t(z) \in C^\infty([0,1] \times M)$. The $L^p$-length of a smooth curve $t \to \alpha_t$ is given by 
\begin{equation*}\label{curve_length_def}
l_p(\alpha):=\int_0^1\|\dot \alpha_t\|_{p,\alpha_t}dt.
\end{equation*}
\begin{definition}
The path length pseudo-distance of $(\mathcal H_\o,d_p)$ is defined by
$$
d_p({u_0},{u_1}):= 
\inf\{l_p(\alpha)\,:\,[0,1]\ni t \to \alpha_t\in \mathcal H_\o \textup{\ is a smooth curve with \ }
\alpha_0=u_0,\, \alpha_1=u_1\}.
$$
\end{definition}

It turns out $d_p$ is an honest metric
\cite[Theorem 3.5]{da2}. To state the result,
consider 
$[0,1]\times\RR\times X$ as a complex manifold of dimension
$n+1$, and denote by $\pi_2:[0,1]\times\RR\times X\ra X$ the natural projection.
\begin{theorem}
\label{d1Thm}
$(\mathcal H_\o, d_p)$ is a metric space.
Moreover for any $t \in [0,1]$, 
\begin{equation*}
\label{distgeod}
d_p(u_0,u_1)=\|\dot u_t \|_{u_t}\ge0,
\end{equation*}
where $\dot u_t = d u_t/dt$ is the 'tangent' at time $t$ of $t \to u_t$, the $\Bbb R$-invariant solution of the \MA equation,
\begin{equation}\label{MabuchiEq}
\varphi\in\textup{PSH}(\pi_2^\star\o, [0,1]\times\RR\times X),\ 
(\pi_2^\star\o+\i\ddbar \varphi)^{n+1}=0, \ 
 \varphi|_{\{i\}\times\RR}=u_i,\; i=0,1.
\end{equation}
\end{theorem}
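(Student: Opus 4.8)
The plan is to deduce the displayed formula from two properties of the weak geodesic $t\mapsto u_t$ joining $u_0$ and $u_1$: that it has constant $L^p$-speed, and that it minimizes $L^p$-length among all smooth competitors. The metric axioms will then follow easily. Throughout I would work with the $\eps$-geodesics $u^\eps_t$, i.e. the $\RR$-invariant smooth solutions of the non-degenerate equation $(\pi_2^\star\o+\i\ddbar\Phi^\eps)^{n+1}=\eps\, c_\eps\,\pi_2^\star\o^{n+1}$ with boundary data $u_0,u_1$. For $\eps>0$ these are genuine smooth curves in $\mathcal H_\o$, they decrease to the weak geodesic $u_t$ as $\eps\downarrow 0$, and they carry $C^{1,\bar 1}$-bounds uniform in $\eps$ by the a priori estimates of Chen \cite{c}; these estimates are the technical engine of the argument.

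The first step is a conservation law. For any smooth $g:\RR\ra\RR$, differentiating $t\mapsto \int_X g(\dot u^\eps_t)\,\o_{u^\eps_t}^n$ in $t$, using $\frac{d}{dt}\o_{u^\eps_t}^n=n\,\i\ddbar\dot u^\eps_t\w\o_{u^\eps_t}^{n-1}$, and integrating by parts gives
\[
\frac{d}{dt}\int_X g(\dot u^\eps_t)\,\o_{u^\eps_t}^n
=\int_X g'(\dot u^\eps_t)\Big(\ddot u^\eps_t\,\o_{u^\eps_t}^n-n\,\i\del\dot u^\eps_t\w\dbar\dot u^\eps_t\w\o_{u^\eps_t}^{n-1}\Big).
\]
The bracket is precisely the slice form of the left-hand side of the geodesic equation, hence equals the right-hand side $\eps\, c_\eps\,\o^n$; using the uniform bound on $\dot u^\eps$ the whole integral is $O(\eps)$. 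Taking $g(x)=|x|^p$ and letting $\eps\downarrow 0$ (passing to the limit via the uniform estimates) shows that $t\mapsto\|\dot u_t\|_{p,u_t}$ is constant. This also yields the upper bound: each $u^\eps$ is an admissible competitor, so $d_p(u_0,u_1)\le l_p(u^\eps)=\int_0^1\|\dot u^\eps_t\|_{p,u^\eps_t}\,dt$, and since the integrand is within $O(\eps)$ of a constant converging to $\|\dot u_t\|_{p,u_t}$, letting $\eps\downarrow 0$ gives $d_p(u_0,u_1)\le\|\dot u_t\|_{p,u_t}$.

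The matching lower bound --- that the weak geodesic minimizes $L^p$-length --- is the main obstacle. Here I would fix an arbitrary smooth curve $\alpha:[0,1]\ra\mathcal H_\o$ with the same endpoints and compare it with the $\eps$-geodesic foliation. The idea is to run the above computation in reverse: viewing $\dot u^\eps$ as a velocity field transverse to the level sets of $\Phi^\eps$, one bounds $l_p(\alpha)$ from below by projecting the velocity of $\alpha$ onto the geodesic direction and invoking the pointwise convexity of $x\mapsto|x|^p$, obtaining $l_p(\alpha)\ge l_p(u^\eps)-o(1)$ as $\eps\downarrow 0$. Making this rigorous is exactly where Chen's $C^{1,\bar 1}$ estimates and the comparison principle for the degenerate \MA operator enter; for $p=2$ this is Chen's theorem, and the general Finsler case is the content of \cite{da2}.

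Finally, the metric axioms. Symmetry is clear and the triangle inequality holds automatically for any path-length pseudo-distance, so only non-degeneracy remains, and it follows directly from the established formula. Indeed, if $d_p(u_0,u_1)=\|\dot u_t\|_{p,u_t}=0$ then $\dot u_t=0$ almost everywhere with respect to $\o_{u_t}^n$ for every $t$; since $t\mapsto u_t(x)$ is convex (the geodesic being $\pi_2^\star\o$-psh and $\RR$-invariant) and $\o_{u_0}^n$ is a non-degenerate smooth volume form, this forces $t\mapsto u_t$ to be stationary, i.e. $u_0\equiv u_1$. Hence $d_p$ is an honest metric.
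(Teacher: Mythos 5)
First, a point of reference: the paper does not prove Theorem \ref{d1Thm} at all --- it is quoted from \cite[Theorem 3.5]{da2} as part of the preliminaries --- so there is no in-paper argument to measure your proposal against. Judged on its own terms, your outline does follow the strategy of the cited proof: approximate the weak geodesic by smooth $\eps$-geodesics, derive the conservation law showing that $t\mapsto\|\dot u_t\|_{p,u_t}$ is constant up to $O(\eps)$ (your integration by parts with $g(x)=|x|^p$ is the right computation, though since $|x|^p$ is not $C^1$ at the origin for $p=1$ one must first smooth $g$ and pass to the limit), deduce the upper bound $d_p(u_0,u_1)\le\|\dot u_t\|_{p,u_t}$ from the $\eps$-geodesics being admissible competitors, and extract non-degeneracy from the formula. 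Two smaller slips: the right-hand side of the $\eps$-geodesic equation cannot be $\eps\,c_\eps\,\pi_2^\star\o^{n+1}$, which vanishes identically because $\o$ is pulled back from the $n$-dimensional $X$; one needs a genuine volume form on the $(n+1)$-dimensional product. And in the non-degeneracy step you invoke only the non-degeneracy of $\o_{u_0}^n$: convexity of $t\mapsto u_t(x)$ gives $u_1-u_0\ge\dot u_0^+$ and $u_1-u_0\le\dot u_1^-$, so using only $t=0$ yields $u_1\ge u_0$, not equality; you must also use the vanishing of $\dot u_1^-$ against the non-degenerate measure $\o_{u_1}^n$ (the intermediate measures $\o_{u_t}^n$ may be very degenerate). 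Both points are easily repaired.

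The genuine gap is the lower bound, namely that every smooth competitor $\alpha$ satisfies $l_p(\alpha)\ge\|\dot u_t\|_{p,u_t}$. This is the entire content of the theorem --- everything else is a computation --- and your paragraph on it is a gesture rather than an argument: ``projecting the velocity of $\alpha$ onto the geodesic direction'' does not by itself produce an inequality, because the comparison is not pointwise but runs through differentiating the length of the family of ($\eps$-)geodesics joining $u_0$ to the moving endpoint $\alpha_t$, which requires controlling the variation of a degenerate \MA solution in its boundary data, uniformly in $\eps$. You close that step by citing \cite{da2} for ``the general Finsler case,'' which is precisely the theorem being proved, so the argument is circular exactly where it matters. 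As an annotated outline of why the theorem is true your text is accurate and matches the architecture of the proof in \cite{da2}; as a self-contained proof it is incomplete at its crucial step.
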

Some comments are in order. We note that by the main result of \cite{c} (see also \cite{bl1}), the equation \eqref{MabuchiEq} has a unique $\Bbb R$-invariant solution for which $u(t,x)=u_t(x)$ has bounded Laplacian in $[0,1]\times \Bbb R \times X$. We can look at this solution as a curve
$$[0,1]\ni t \to u_t \in \mathcal H^{\Delta}_\o=\{u \in \textup{PSH}(X,\o),  \ \Delta_{\o}u \in L^\infty(X)\}.$$
We call this curve the \emph{weak geodesic} connecting $u_0,u_1 \in \mathcal H_\o$.
Recall that
$$
\textup{PSH}(X,\o)=\{ \varphi \in L^1(X,\o^n): \varphi \textup{ is usc and } \o_\varphi\geq 0 \}.
$$
Given $\varphi_k\in \textup{PSH}(X,\o), \ k=1\ldots n$,  one can introduce the following \emph{non-pluripolar product} \cite{begz}, generalizing the Beford-Taylor
product \cite{bt} concerning the case with bounded potentials:
\begin{equation}\label{eq: nonpluripolar product}
\o_{\varphi_1} \wedge \o_{\varphi_2} \wedge \ldots \wedge \o_{\varphi_n}:=\lim_{j\ra-\infty}
{\bf 1}_{\cap_k\{\varphi_k>j\}}
\o_{\max(\varphi_1,j)} \wedge \o_{\max(\varphi_2,j)} \wedge \ldots \wedge \o_{\max(\varphi_n,j)}.
\end{equation}
The measures $\o_{\max(\varphi_1,j)}  \wedge \ldots \wedge \o_{\max(\varphi_n,j)}$ are defined by the
work of Bedford--Taylor \cite{bt} since $\max\{\varphi,j\}$ is bounded. Restricted to $\cap_k\{\varphi_k>j\}$ these measures are increasing, hence the above limit is well defined \cite{gz1,begz} and $\int_X \o_{\varphi_1} \wedge \ldots \wedge \o_{\varphi_n} \leq \int_X \o^n$.

Following Guedj-Zeriahi \cite[Definition 1.1]{gz1} we introduce the class of potentials with ``full volume" $
\mathcal E(X,\o):=
\big\{
\varphi\in \textup{PSH}(X,\o):\int_X \o_{\varphi}^n= \int_X \o^n
\big\}$ and the corresponding finite energy classes:
$$
\mathcal E^p:=\big\{\varphi\in\mathcal E(M,\omega)\,:\, \int|\varphi|^p\o^n_{\varphi}<\infty\big\}.
$$
The next result characterizes the $d_p$-metric completion of $\mathcal H_\o$:
\begin{theorem}\textup{\cite[Theorem 2]{da2}}
\label{dpCompletionThm}
The metric completion of $(\mathcal H_\o,d_p)$ equals $(\mathcal E^p,{d_p})$,
where 
$$
d_p(u_0,u_1):=\lim_{k\ra\infty}
d_p(u_0^k,u_1^k),
$$
for any smooth decreasing sequences $\{u_i^k \}_{k\in\Bbb N}\subset\mathcal H_\o$
converging pointwise to $u_i \in \mathcal E^p, i=0,1$.
Moreover, for each $t\in(0,1)$, define
\begin{equation*}\label{EpGeodDef}
u_t:= \lim_{k \to\infty}u_t^k, \ t \in (0,1),
\end{equation*}
where $u_t^k$ is the weak geodesic connecting $u_0^k, u_1^k$. Then $u_t\in \mathcal E^p$, the curve $[0,1] \ni t \to u_t \to \mathcal E^p$ is well-defined independently of the choices
of approximating sequences and is a $d_p$-geodesic.
\end{theorem}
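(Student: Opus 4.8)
The plan is to realize $(\mathcal{E}^p, d_p)$ as the abstract metric completion of $(\mathcal{H}_\omega, d_p)$ by first producing, on $\mathcal{H}_\omega$, a quasi-explicit \emph{energy comparison} for $d_p$ that extends continuously to $\mathcal{E}^p$, and then verifying the three defining properties of a completion: an isometric inclusion of $\mathcal{H}_\omega$, density of its image, and completeness of the target. The engine behind everything is the fact, recorded in Theorem \ref{d1Thm}, that along the weak geodesic $t \mapsto u_t$ joining $u_0, u_1 \in \mathcal{H}_\omega$ the Finsler speed $\|\dot u_t\|_{p,u_t}$ is constant in $t$, so that $d_p(u_0,u_1)^p = \int_X |\dot u_0|^p \omega_{u_0}^n$ may be read off at an endpoint.

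First I would treat \emph{comparable} potentials. If $u_0 \ge u_1$ the weak geodesic is decreasing in $t$, and I would estimate the one-sided derivative $\dot u_0$ in terms of $u_0 - u_1$ to obtain two-sided bounds
$$\tfrac{1}{C}\int_X |u_0 - u_1|^p(\omega_{u_0}^n + \omega_{u_1}^n) \le d_p(u_0,u_1)^p \le C \int_X |u_0 - u_1|^p (\omega_{u_0}^n + \omega_{u_1}^n),$$
with $C=C(n,p)$ (and an exact formula when $p=1$). To reach arbitrary $u_0,u_1\in\mathcal{H}_\omega$ I would introduce the rooftop envelope $P(u_0,u_1)=\sup\{v\in\mathrm{PSH}(X,\omega): v\le \min(u_0,u_1)\}$, which is bounded and hence in $\mathcal{E}^p$, together with a Pythagorean-type identity
$$d_p(u_0,u_1)^p = d_p\big(u_0,P(u_0,u_1)\big)^p + d_p\big(u_1,P(u_0,u_1)\big)^p,$$
reducing the general case to the comparable one. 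This produces the comparison \eqref{dpCharFormula} between $d_p$ and the explicit energy expression $I_p(u_0,u_1)=\int_X |u_0-u_1|^p(\omega_{u_0}^n+\omega_{u_1}^n)$, valid on all of $\mathcal{H}_\omega$.

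Next I would run the approximation argument. Given $u_0,u_1\in\mathcal{E}^p$, choose smooth decreasing $u_i^k\searrow u_i$; by the comparison principle for the homogeneous complex \MA equation the weak geodesics $u_t^k$ are monotone in $k$, hence decrease to a limit $u_t$. The energy comparison forces $d_p(u_i^k,u_i^{k'})\to 0$ as $k,k'\to\infty$, since $I_p$ of a decreasing sequence with finite-energy limit tends to $0$ by continuity of the Monge--Amp\`ere energy along decreasing sequences; by the triangle inequality the numbers $d_p(u_0^k,u_1^k)$ are then Cauchy, and their limit defines $d_p(u_0,u_1)$. Interleaving two competing approximating sequences shows this limit is independent of the choice, and the same estimates give $u_t\in\mathcal{E}^p$ with uniformly controlled energy. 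Non-degeneracy of $d_p$ follows from the lower bound in the comparison, density of $\mathcal{H}_\omega$ is immediate, and for completeness I would show that a $d_p$-Cauchy sequence has uniformly bounded energy, extract an $L^1$- and a.e.-convergent subsequence from the compactness of energy sublevel sets, identify the limit in $\mathcal{E}^p$, and upgrade a.e. convergence to $d_p$-convergence via the comparison and lower semicontinuity of the energy.

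Finally, for the geodesic statement I would check that the limiting curve $t\mapsto u_t$ has constant $d_p$-speed and total length $d_p(u_0,u_1)$: the lengths of the smooth geodesics equal $d_p(u_0^k,u_1^k)$, and the segment-additivity $d_p(u_s,u_{s'})=\lim_k d_p(u_s^k,u_{s'}^k)$ identifies $t\mapsto u_t$ as a genuine constant-speed $d_p$-geodesic, independent of the approximants. I expect the main obstacle to be the pluripotential analysis controlling the limit: establishing the two-sided energy comparison (hence non-degeneracy of $d_p$), and — most delicately — showing that the decreasing limit of the weak geodesics loses no Monge--Amp\`ere mass, so that $u_t\in\mathcal{E}^p$ and the distances and lengths genuinely pass to the limit. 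This is precisely where finite-energy pluripotential theory (full-mass classes, continuity and semicontinuity of the energy along monotone sequences) carries the weight, and it is also the source of the completeness of $(\mathcal{E}^p,d_p)$.
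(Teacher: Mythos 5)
The paper itself offers no proof of this statement---it is imported verbatim from \cite[Theorem 2]{da2}---so the only comparison available is with the argument in that reference, and your outline reconstructs it faithfully: constant Finsler speed along weak geodesics, the two-sided energy comparison \eqref{dpCharFormula} obtained by first treating comparable potentials and then reducing the general case via the rooftop envelope $P(u_0,u_1)$ and the Pythagorean identity, monotone approximation to extend $d_p$ and the geodesic segments to $\mathcal E^p$, and compactness of energy sublevel sets for completeness. The one point your sketch glosses over is that $P(u_0,u_1)$ is not smooth even for smooth $u_0,u_1$ (it only has bounded Laplacian), so the Pythagorean step requires first extending $d_p$ to $\mathcal H^{\Delta}_\o$; this is an organizational wrinkle handled in \cite{da1,da2}, not a gap in your strategy.
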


Note that by \cite{Dem}, \cite{bk} it is always possible to find approximating sequences $\{u_0^k\}_k, \{u_1^k\}_k$ as in the above theorem. We now recall \cite[Theorem 3]{da2}, giving a concrete characterization of the growth of all $d_p$ metrics:

\begin{theorem}\label{thm: comparison d2 and I2}
There exists $C>1$ such that for all $u,v\in\mathcal E^p$,
\begin{equation}
\label{dpCharFormula}
C^{-1} d_p(u,v) \leq \bigg(\int_X |u-v|^p\o_u^n\bigg)^{\frac{1}{p}} + \bigg(\int_X |u-v|^p\o_v^n\bigg)^{\frac{1}{p}}
\le C d_p(u,v).
\end{equation}
\end{theorem}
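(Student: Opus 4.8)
The plan is to reduce everything to smooth potentials joined by the weak geodesic, and then to read off \eqref{dpCharFormula} from the constancy of the Finsler speed together with the convexity of the geodesic in the time variable. Write $I_p(u,v):=\int_X|u-v|^p\o_u^n+\int_X|u-v|^p\o_v^n$, so that the middle term of \eqref{dpCharFormula} is comparable to $I_p(u,v)^{1/p}$. By Theorem \ref{dpCompletionThm}, if $u^k\searrow u$ and $v^k\searrow v$ are decreasing smooth approximants then $d_p(u^k,v^k)\to d_p(u,v)$, while the non-pluripolar energies converge to $I_p(u,v)$ by the continuity of such masses along decreasing sequences in the finite energy classes \cite{gz1,begz}. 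Hence it suffices to prove \eqref{dpCharFormula} for $u_0=u,u_1=v\in\mathcal H_\o$ with a constant $C=C(n,p)$ independent of the pair. For such a pair let $t\mapsto u_t$ be the $C^{1,\bar1}$ weak geodesic of Theorem \ref{d1Thm}. Its speed is constant, so
\begin{equation*}
d_p(u,v)^p=V^{-1}\int_X|\dot u_0|^p\,\o_{u_0}^n=V^{-1}\int_X|\dot u_1|^p\,\o_{u_1}^n,
\end{equation*}
and since $t\mapsto u_t(x)$ is convex, comparing with its tangent lines at the two endpoints yields the pointwise bounds $\dot u_0\le u_1-u_0\le\dot u_1$.

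I would first treat comparable endpoints, say $u\le v$; this is the technical core. Then the geodesic is monotone in $t$, so $\dot u_t\ge0$, and writing $h:=v-u\ge0$ we obtain $0\le\dot u_0\le h\le\dot u_1$. Integrating $\dot u_0\le h$ against $\o_u^n$ and $h\le\dot u_1$ against $\o_v^n$ gives
\begin{equation*}
\int_X h^p\,\o_v^n\ \le\ V\,d_p(u,v)^p\ \le\ \int_X h^p\,\o_u^n .
\end{equation*}
The left half already yields the lower bound $C^{-1}d_p(u,v)\le I_p(u,v)^{1/p}$ of \eqref{dpCharFormula}, since $V d_p(u,v)^p$ is dominated by the $\o_u^n$-mass, and the inequality $\int_Xh^p\o_v^n\le V d_p(u,v)^p$ settles the $\o_v^n$-part of the upper bound. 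The single estimate left open in the ordered case is therefore $\int_X h^p\,\o_u^n\le C\,d_p(u,v)^p$: the speed must control the larger, lower-endpoint mass $\int_Xh^p\o_u^n$ as well.

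For general $u,v$ I would descend to the ordered case through the lattice operations on $\psh(X,\o)$. As $\max(u,v)$ is $\o$-psh and dominates both potentials, with $\max(u,v)-u=(v-u)_+$ and $\max(u,v)-v=(u-v)_+$, the triangle inequality
\begin{equation*}
d_p(u,v)\le d_p\big(u,\max(u,v)\big)+d_p\big(\max(u,v),v\big)
\end{equation*}
together with the easy half of the ordered case bounds $d_p(u,v)$ by $C\,I_p(u,v)^{1/p}$, establishing the lower bound of \eqref{dpCharFormula} in full generality. For the reverse inequality the triangle inequality is the wrong tool, and I would instead use the rooftop envelope $P(u,v):=\sup\{w\in\psh(X,\o):w\le\min(u,v)\}$ together with a Pythagorean-type comparison $d_p(u,v)^p\asymp d_p(u,P(u,v))^p+d_p(v,P(u,v))^p$, reducing once more to the two ordered pairs $P(u,v)\le u$ and $P(u,v)\le v$. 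The genuine obstacle in this scheme is the lone ordered-case estimate $\int_Xh^p\o_u^n\le C\,d_p(u,v)^p$ flagged above: the pointwise geodesic bounds are one-sided and control only the upper-endpoint mass, so closing this gap calls for quantitative integration-by-parts estimates for the degenerate complex \MA operator that exploit the two-sided constraint $-\o_u\le\i\ddbar h\le\o_v$ forced by $\o_u,\o_v\ge0$. It is this mass comparison, rather than the reductions, that carries the analytic content of \eqref{dpCharFormula}.
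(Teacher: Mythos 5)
The paper itself does not prove this statement: Theorem \ref{thm: comparison d2 and I2} is quoted verbatim as \cite[Theorem 3]{da2}, so the comparison can only be with the proof given there. Your reductions do match the architecture of that proof --- decreasing smooth approximation, the ordered case $u\le v$ treated via the weak geodesic, the general case assembled from $\max(u,v)$ and the rooftop envelope $P(u,v)$ --- and the endpoint bounds $\dot u_0\le v-u\le\dot u_1$ coming from fibrewise convexity, combined with the constancy of the speed, correctly give
\begin{equation*}
\int_X (v-u)^p\,\o_v^n\ \le\ V\,d_p(u,v)^p\ \le\ \int_X (v-u)^p\,\o_u^n\qquad(u\le v),
\end{equation*}
hence the full lower bound of \eqref{dpCharFormula} and the $\o_v^n$-half of the upper bound, exactly as you say.

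However, the proposal is a plan rather than a proof. The estimate you yourself flag as left open, $\int_X(v-u)^p\o_u^n\le C\,V\,d_p(u,v)^p$ for $u\le v$, is where essentially all of the analytic content of the theorem sits, and no argument is offered for it. Note that it cannot be closed by comparing the two endpoint masses: taking $v=0$ and $u=\max(\psi,-A)$ for a potential $\psi$ with a logarithmic pole, one has $\int_X|u-v|^p\o_u^n\sim A^p$ while $\int_X|u-v|^p\o_v^n$ stays bounded as $A\to\infty$, so the lower-endpoint mass is genuinely the larger one and must be controlled by $\int_X\dot u_0^p\o_{u_0}^n$ directly. In \cite{da2} this is achieved through a fine analysis of the initial tangent $\dot u_0$ of the weak geodesic by envelope techniques (identifying the behaviour of $\dot u_0$ $\o_{u_0}^n$-almost everywhere via an obstacle-problem description of $u_t$ for small $t$), not by the soft convexity of $t\mapsto u_t(x)$, and your suggestion that ``integration by parts exploiting $-\o_u\le i\ddbar h$'' would do it is speculative. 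A secondary gap: the Pythagorean comparison $d_p(u,v)^p\asymp d_p(u,P(u,v))^p+d_p(P(u,v),v)^p$ that you invoke to treat unordered pairs is itself one of the main theorems of \cite{da2} and cannot be assumed here. The skeleton is right, but the theorem remains unproved as written.
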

The inequalities in (\ref{dpCharFormula}) have an important consequence: $|\sup_X u|\leq C d_p(u,0)$ for all $u\in \Ec^p$. Also, when $p=1$, $d_1$-convergence is  equivalent to convergence with respect to the  quasi-distance  $I(u,v)=\int_X (u-v)(\omega_v^n-\omega_u^n)$ introduced in \cite{bbegz} as shown in \cite[Theorem 5.5]{da2}. 

Monotonic sequences behave well with respect to all $d_p$-metrics \cite[Proposition 4.9]{da2}:

\begin{proposition}\label{prop: monotone implies dp convergence} Suppose $u_k,u \in \mathcal E^p$. If $\{u_k\}_k$ is monotone decreasing/increasing and converges to $u$ a.e. then $d_p(u_k,u) \to 0$.
\end{proposition}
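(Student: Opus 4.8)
The plan is to deduce everything from the explicit comparison \eqref{dpCharFormula} of Theorem~\ref{thm: comparison d2 and I2}. By its lower bound,
$$
C^{-1} d_p(u_k,u) \le \bigg(\int_X |u_k-u|^p\,\omega_{u_k}^n\bigg)^{1/p} + \bigg(\int_X |u_k-u|^p\,\omega_u^n\bigg)^{1/p},
$$
so it suffices to prove that each of the two integrals on the right tends to $0$. Let $v$ denote the extreme term of the monotone sequence (the largest term $u_0$ in the decreasing case, the smallest term $u_0$ in the increasing case). Then $|u_k-u|\le|v-u|$ pointwise, and since monotone convergence of quasi-psh functions is pointwise outside a pluripolar set, $u_k\to u$ off a pluripolar set; as the measures $\omega_u^n,\omega_{u_k}^n$ put no mass on pluripolar sets (all potentials lie in $\mathcal{E}$), we have $|u_k-u|^p\to0$ both $\omega_u^n$- and $\omega_{u_k}^n$-a.e. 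Finally, applying \eqref{dpCharFormula} to the pair $v,u\in\mathcal{E}^p$ shows $|v-u|^p\in L^1(\omega_u^n)$.

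The fixed-measure integral is then immediate: the base measure $\omega_u^n$ does not vary, $|u_k-u|^p\to0$ $\omega_u^n$-a.e., and $|u_k-u|^p\le|v-u|^p\in L^1(\omega_u^n)$, so dominated convergence gives $\int_X|u_k-u|^p\,\omega_u^n\to0$. The genuine content of the proposition is therefore the varying-measure integral $\int_X|u_k-u|^p\,\omega_{u_k}^n$, and here I would treat the two monotonicity directions separately.

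The decreasing case is clean. For $u\le u_k$ set $\phi:=u_k-u\ge0$, so that $\omega_{u_k}=\omega_u+\i\ddbar\phi$. Expanding the difference of top powers and integrating by parts for non-pluripolar products (valid on $\mathcal{E}^p$ by the theory of \cite{begz}) yields
$$
\int_X \phi^p\,\omega_{u_k}^n - \int_X \phi^p\,\omega_u^n = -\,p\int_X \phi^{p-1}\,\i\del\phi\w\dbar\phi\w\sum_{j=0}^{n-1}\omega_{u_k}^j\w\omega_u^{n-1-j}\le0,
$$
since every factor in the gradient term is nonnegative. Hence $\int_X(u_k-u)^p\,\omega_{u_k}^n\le\int_X(u_k-u)^p\,\omega_u^n\to0$ by the fixed-measure step, finishing this case.

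The increasing case is where the difficulty lies, and I expect it to be the main obstacle: when $u_k\uparrow u$ the integration-by-parts inequality above runs the wrong way (it lower-bounds, rather than upper-bounds, the $\omega_{u_k}^n$-integral), so monotonicity of energies alone does not suffice. My plan is to combine the weak convergence $\omega_{u_k}^n\to\omega_u^n$, which holds for increasing full-mass sequences in $\mathcal{E}$, with a uniform-integrability argument: the sandwich $u_0\le u_k\le u$ bounds the energies of the $u_k$ uniformly, which should force $\{|v-u|^p\}$ to be uniformly integrable against the family $\{\omega_{u_k}^n\}_k$. Splitting $\int_X(u-u_k)^p\,\omega_{u_k}^n$ over $\{u-u_k<\varepsilon\}$ (where the integrand is $\le\varepsilon^p$ and the total mass is $V$) and over $\{u-u_k\ge\varepsilon\}$ (whose $\omega_{u_k}^n$-measure shrinks to $0$), one then lets $k\to\infty$ and afterwards $\varepsilon\to0$. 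Justifying the passage to the limit against the $k$-dependent, a priori unbounded integrand — i.e. the uniform integrability controlled by the finite-energy bounds of \cite{gz1,begz} — is the technical crux of the argument.
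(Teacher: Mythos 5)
The paper does not actually prove this proposition: it imports it verbatim from \cite[Proposition 4.9]{da2}, so there is no internal argument to compare against, and your proposal has to stand on its own. Up to and including the decreasing case it does. Reducing to the two integrals via the left-hand inequality of \eqref{dpCharFormula} is legitimate (within this paper, where Theorem \ref{thm: comparison d2 and I2} is taken as known -- though be aware that in \cite{da2} that estimate is developed alongside the monotone convergence results, so as a standalone proof of the proposition this route is not independent). The fixed-measure integral follows from dominated convergence exactly as you say, and the inequality $\int_X\phi^p\,\omega_{u_k}^n\le\int_X\phi^p\,\omega_u^n$ for $\phi=u_k-u\ge0$ is correct; for unbounded $\mathcal E^p$ potentials the integration by parts must be run through canonical cutoffs before passing to the limit, which you only gesture at, but this is standard.

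The increasing case is a genuine gap, and the route you sketch does not close it. The smallness of $\omega_{u_k}^n(\{u-u_k\ge\varepsilon\})$ is not available: Chebyshev against $\omega_{u_k}^n$ is circular, and the comparison principle gives $\int_{\{u_k<u-\varepsilon\}}\omega_{u}^n\le\int_{\{u_k<u-\varepsilon\}}\omega_{u_k}^n$, i.e.\ a \emph{lower} bound on the quantity you need to be small. Likewise, uniform integrability of $(u-u_0)^p$ against the varying family $\{\omega_{u_k}^n\}_k$ is essentially equivalent to the statement being proved, so invoking it is not progress. What closes this case is the reverse comparison inequality with a dimensional constant, one of the basic estimates of \cite{da1,da2}: for $u\le v$ in $\mathcal E^p$ one has $\int_X|u-v|^p\,\omega_u^n\le(p+1)^n\int_X|u-v|^p\,\omega_v^n$. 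Applied with $u_k\le u$ this yields $\int_X(u-u_k)^p\,\omega_{u_k}^n\le(p+1)^n\int_X(u-u_k)^p\,\omega_u^n$, reducing the varying-measure integral to the fixed-measure one you already control, after which dominated convergence finishes. You must either cite this inequality or prove it; note that it does not follow from the one-line integration by parts you used in the decreasing case (as you observed, that inequality points the wrong way), and its proof requires a more careful induction on the mixed terms $\int\phi^p\,\omega_u^j\wedge\omega_v^{n-j}$ in which the gradient error terms are reabsorbed, which is where the factor $(p+1)^n$ comes from.
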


Given $u_0,u_1,\ldots,u_k \in \textup{PSH}(X,\o)$, by $P(u_0,u_1,\ldots,u_k) \in \textup{PSH}(X,\o)$ we denote the following upper envelope:
$$P(u_0,u_1, \ldots,u_k) = \sup \{ v \in \textup{PSH}(X,\o) \textup{ such that } v \leq u_0, \ldots , v\leq u_k\}.$$

According to the next proposition it is possible to sandwich a subsequence of any $d_p$-convergent sequence between two monotone sequences converging to the same limit.

\begin{proposition}\label{MonotoneDominationProp}. Suppose $u_k,u \in \mathcal E^p$. If $d_p(u_k,u) \to 0$ then there exists a subsequence $k_j \to \infty$ and $\{w_{k_j}\}_j \subset \mathcal E^p$ decreasing, $\{v_{k_j}\}_j \subset \mathcal E^p$ increasing with $v_{k_j} \leq u_{k_j} \leq w_{k_j}$ and $d_p(w_{k_j},u),d_p(v_{k_j},u) \to 0 $.
\end{proposition}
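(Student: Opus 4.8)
The plan is to sandwich a subsequence of $\{u_k\}$ between the natural supremum-envelope from above and the rooftop $P$-envelope from below, and to reduce the $d_p$-convergence of both envelopes to Proposition \ref{prop: monotone implies dp convergence} by exploiting the fact that $\max$ and $P$ are non-expansive (up to a fixed constant) in the $d_p$-metric. First I would pass to a subsequence, still written $\{u_{k_j}\}_j$, with $d_p(u_{k_j},u)\le 2^{-j}$, and set $\epsilon_j:=\sup_{i\ge j}d_p(u_{k_i},u_{k_j})$, so that $\epsilon_j\to 0$ by the triangle inequality. The bound $|\sup_X u_{k_i}|\le C\,d_p(u_{k_i},0)$ coming from \eqref{dpCharFormula} shows the $u_{k_i}$ are uniformly bounded above, so the envelopes below are genuine elements of $\psh(X,\o)$.

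For the decreasing majorant I would put $w_{k_j}:=\big(\sup_{i\ge j}u_{k_i}\big)^\star$. This lies in $\psh(X,\o)$, satisfies $w_{k_j}\ge u_{k_j}$, and is decreasing in $j$ since the index set shrinks. To control it, approximate by the finite maxima $w^m_{k_j}:=\max(u_{k_j},\dots,u_{k_m})\in\mathcal E^p$, which increase in $m$ to $\sup_{i\ge j}u_{k_i}$, hence to $w_{k_j}$ off a pluripolar set. The Lipschitz-type estimate for $\max$ from \cite{da1,da2} gives $d_p(w^m_{k_j},u_{k_j})\le C\max_{j\le i\le m}d_p(u_{k_i},u_{k_j})\le C\epsilon_j$, uniformly in $m$. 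Letting $m\to\infty$ along the increasing sequence $\{w^m_{k_j}\}_m$ and using completeness of $(\mathcal E^p,d_p)$ (Theorem \ref{dpCompletionThm}) together with Proposition \ref{prop: monotone implies dp convergence} yields $w_{k_j}\in\mathcal E^p$ and $d_p(w_{k_j},u_{k_j})\le C\epsilon_j$. Combined with $d_p(u_{k_j},u)\to 0$ this gives $d_p(w_{k_j},u)\to 0$.

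For the increasing minorant I would take $v_{k_j}:=P(u_{k_j},u_{k_{j+1}},\dots)$, the largest $\omega$-psh function below every tail potential; then $v_{k_j}\le u_{k_j}$ and $v_{k_j}$ is increasing in $j$ since the constraint set shrinks. Symmetrically, approximate by the finite rooftops $v^m_{k_j}:=P(u_{k_j},\dots,u_{k_m})\in\mathcal E^p$, which decrease in $m$ to $v_{k_j}$ (one checks the decreasing limit is $\omega$-psh and lies below every tail potential, hence equals $v_{k_j}$). The Pythagorean/rooftop estimate of \cite{da1,da2} gives $d_p(v^m_{k_j},u_{k_j})\le C\epsilon_j$ uniformly in $m$, and passing to the decreasing limit, again via completeness and Proposition \ref{prop: monotone implies dp convergence}, gives $v_{k_j}\in\mathcal E^p$ and $d_p(v_{k_j},u_{k_j})\le C\epsilon_j$, whence $d_p(v_{k_j},u)\to 0$. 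Putting the halves together, $v_{k_j}\le u_{k_j}\le w_{k_j}$ with $\{v_{k_j}\}_j$ increasing, $\{w_{k_j}\}_j$ decreasing, both in $\mathcal E^p$ and both $d_p$-converging to $u$, which is exactly the claim.

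The main obstacle is the uniform-in-$m$ control of the two envelope estimates, together with the accompanying membership statements: one must know that $\max$ and $P$ are non-expansive up to a fixed constant in $d_p$ with no dependence on the number of potentials, and that the monotone limits in $m$ remain inside $\mathcal E^p$ rather than merely in $\psh(X,\o)$. Both points rest on the finite-energy envelope calculus of \cite{da1,da2}. As an alternative that sidesteps the quantitative estimates, one could first upgrade $d_p(u_k,u)\to 0$ to a.e.\ convergence of a subsequence and then identify the monotone limits of $w_{k_j}$ and $v_{k_j}$ with $u$ using the negligible-sets theorem for suprema of $\omega$-psh functions and the domination principle; there the delicate point is that \eqref{dpCharFormula} only yields $\omega_u^n$-a.e.\ convergence, so passing from ``equal $\omega_u^n$-a.e.'' to ``equal quasi-everywhere'' for the limit genuinely requires the domination principle.
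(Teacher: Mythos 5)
Your overall architecture coincides with the paper's: sandwich a subsequence between the usc-regularized supremum of the tails from above and the rooftop envelope $P(u_{k_j},u_{k_{j+1}},\dots)$ from below, choose the subsequence so that $d_p(u_{k_j},u)\le 2^{-j}$, and reduce everything to Proposition \ref{prop: monotone implies dp convergence}. Your treatment of the increasing minorant is essentially the paper's argument: the existence of the infinite rooftop as a decreasing limit of finite ones, its membership in $\mathcal E^p$, and the needed distance control all come from the completeness machinery of \cite{da1,da2} (the Pythagorean formula gives $d_p(u_1,P(u_1,v))\le d_p(u_1,v)$, which telescopes into a \emph{sum} $\sum_i d_p(u_{k_i},u_{k_{i+1}})$ that is uniformly bounded precisely because of the $2^{-j}$ choice). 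That half is fine.

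The genuine gap is in your primary argument for the decreasing majorant. The estimate you invoke, $d_p\bigl(\max(u_{k_j},\dots,u_{k_m}),u_{k_j}\bigr)\le C\max_{j\le i\le m}d_p(u_{k_i},u_{k_j})$ with $C$ independent of $m$, is not in \cite{da1,da2} and is doubtful: while $d_p(\max(u,v),u)\le C\,d_p(u,v)$ does hold for two potentials (via \eqref{dpCharFormula} and locality of the Monge--Amp\`ere operator on $\{v>u\}$), the same computation for $m$ potentials produces $\bigl(\sum_{i}d_p(u_{k_i},u_{k_j})^p\bigr)^{1/p}$, and for fixed $j$ each summand $d_p(u_{k_i},u_{k_j})$ tends to $d_p(u,u_{k_j})>0$ as $i\to\infty$, so the sum diverges in $m$; unlike the rooftop, there is no Pythagorean identity to telescope against. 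You correctly flag this as ``the main obstacle,'' but it is not closed, and the quantitative route is also unnecessary. The paper's fix is soft: since $u_{k_j}\le w_{k_j}\le C$ with $C$ the uniform bound on $\sup_X u_{k_i}$ from \eqref{dpCharFormula}, the monotonicity of finite energy classes from \cite{gz1} (the same fact used in Proposition \ref{d2weaklimits}) gives $w_{k_j}\in\mathcal E^p$ directly; one then only needs that $w_{k_j}$ decreases a.e.\ to $u$, which follows because $d_p$-convergence implies $L^1(X,\o^n)$-convergence and hence Lebesgue-a.e.\ convergence along a further subsequence, after which Proposition \ref{prop: monotone implies dp convergence} applies. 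Your closing ``alternative'' is in fact this argument, but your worry there about upgrading from $\o_u^n$-a.e.\ to quasi-everywhere is unfounded: two $\o$-psh functions that agree Lebesgue-a.e.\ agree everywhere, so no domination principle is needed for the majorant. Promote that alternative to the main argument for $w_{k_j}$ and the proof is complete.
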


\begin{proof}By \eqref{dpCharFormula} there exists $C>0$ such that $|\sup_X u_j| \leq C, \  j \geq 1.$ We introduce the following sequence:
$$w_k = \textup{usc}\Big({\sup_{j \geq k} u_j}\Big).$$
As $u_k \leq w_k \leq C$, by \cite{gz1} it follows $w_k \in \mathcal E^p$. As $d_p(u_k,u) \to 0$, we have that $u_k \to u$ pointwise a.e., hence $w_k$ decreases  to $u$. Proposition \ref{prop: monotone implies dp convergence} then gives $d_p(w_k,u) \to 0$.

Now we construct the increasing sequence $v_{k_j}$. To do this first take a subsequence $u_{k_j}$ of $u_k$ satisfying $d_p(u_{k_j},u) \leq 2^{-j}$. As follows from the proof of \cite[Theorem 4.17]{da2} and \cite[Theorem 9.2]{da1}, the following limit exists
$$
v_{k_j} = P(u_{k_j},u_{k_{j+1}},u_{k_{j+2}},\ldots) =: \lim_{h \to \infty}P(u_{k_j},u_{k_{j+1}},\ldots, u_{k_{j+h}})
$$
Additionally, $\{v_{k_j}\}_j \subset \mathcal E^p$ and and $v_{k_j}$ increases a.e. to $u$. The previous proposition now gives $d_p(u,v_{k_j})\to 0$.
\end{proof}

Though stated differently, the next proposition is essentially contained contained in \cite{begz}:

\begin{proposition} \label{d2weaklimits} Suppose $p \geq 1$, $\{ u_j\}_j \subset \mathcal E^p$ is a $d_p$--bounded sequence and $u \in \textup{PSH}(X,\o)$ with $\| u_j -u\|_{L^1(X,\o^n)}\to 0$. Then $u \in \mathcal E^p$.
\end{proposition}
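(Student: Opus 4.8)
The plan is to deduce the statement from the closedness of $\mathcal E^p$ under decreasing limits of uniformly bounded energy (a pluripotential-theoretic fact from \cite{gz1,begz}), after replacing the given $d_p$-bounded sequence by a monotone one via an upper envelope.

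First I would turn $d_p$-boundedness into the two quantitative bounds it encodes. Applying the upper bound in \eqref{dpCharFormula} of Theorem \ref{thm: comparison d2 and I2} with $v=0$, so that $\o_v^n=\o^n$, gives a uniform energy bound
\begin{equation*}
\int_X |u_j|^p\,\o_{u_j}^n \le \big(C\,d_p(u_j,0)\big)^p \le A < \infty ,
\end{equation*}
while the consequence of \eqref{dpCharFormula} recorded just after Theorem \ref{thm: comparison d2 and I2} gives $|\sup_X u_j|\le C\,d_p(u_j,0)\le C'$. Since $|u_j-C'|^p\le 2^p(|u_j|^p+(C')^p)$, subtracting the constant $C'$ preserves both bounds up to a multiplicative constant, so I may assume $u_j\le 0$ and $u\le 0$. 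As $\|u_j-u\|_{L^1(X,\o^n)}\to 0$, after passing to a subsequence I may also assume $u_j\to u$ pointwise a.e.

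Next I would produce a decreasing sequence in $\mathcal E^p$ with limit $u$. Following the envelope construction in the proof of Proposition \ref{MonotoneDominationProp}, set $\phi_k:=\textup{usc}\big(\sup_{l\ge k}u_l\big)$. Then $\phi_k$ is $\o$-psh with $u_k\le\phi_k\le 0$, so $\phi_k\in\mathcal E^p$ by the monotonicity of the finite energy classes \cite{gz1}. Since $u_l\to u$ a.e., the functions $\phi_k$ decrease a.e. to $(\limsup_l u_l)^*=u$, and because $u\in\textup{PSH}(X,\o)$ is not identically $-\infty$ we indeed have $\phi_k\searrow u$.

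The step I expect to be the genuine obstacle is the uniform energy bound $\sup_k\int_X(-\phi_k)^p\,\o_{\phi_k}^n<\infty$. This is where the measure and the integrand must be controlled simultaneously, and I would obtain it from the monotonicity of the weighted Monge--Amp\`ere energy under the pointwise order established in \cite{gz1,begz}: since $u_k\le\phi_k\le 0$ with both potentials in $\mathcal E^p$, the weighted energy of $\phi_k$ is dominated, up to a constant depending only on $n$ and $p$, by that of $u_k$, hence by $A$. Granting this bound, $\{\phi_k\}$ is a decreasing sequence in $\mathcal E^p$ converging to $u\in\textup{PSH}(X,\o)$ with uniformly bounded weighted energy; the closedness of $\mathcal E^p$ under such limits \cite{gz1,begz} then yields $u\in\mathcal E^p$ together with $\int_X(-u)^p\o_u^n\le\liminf_k\int_X(-\phi_k)^p\o_{\phi_k}^n<\infty$, which is exactly the claim. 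Thus the $d_p$-geometry enters only through the initial reduction, and the analytic heart is the order-monotonicity of the weighted energy.
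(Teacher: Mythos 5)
Your proposal is correct and follows essentially the same route as the paper: both reduce to $u_j\le 0$ via \eqref{dpCharFormula}, form the decreasing envelopes $\textup{usc}\big(\sup_{l\ge k}u_l\big)$, control their weighted energy by that of $u_k$ through the order-monotonicity of the energy (\cite[Lemma 3.5]{gz1}), and conclude by closedness of $\mathcal E^p$ under decreasing, energy-bounded limits (the paper invokes \cite[Lemma 4.16]{da2} for this last step). Your explicit passage to an a.e.\ convergent subsequence is a small point of extra care that the paper leaves implicit.
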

\begin{proof}  $d_p$--boundedness implies that $|\sup_X u_j| \leq B$ for some $B \in \Bbb R$ \eqref{dpCharFormula}. For simplicity  assume that $B=0$. The following sequence converges a.e. to $u$:
$$w_k = \textup{usc}\left(\sup_{j \geq k}u_{j}\right) \leq 0.$$
This sequence is additionally decreasing and because $u_k \leq w_k \leq 0$, we have that $w_k \in \mathcal E^p$. If we could argue that $\{ w_k\}_k$ is uniformly $d_p$-bounded then we would be finished by \cite[Lemma 4.16]{da2}. But $d_p$-boundedness follows from \eqref{dpCharFormula}. Indeed $\int_X |w_k|^p \o^n \leq \int_X |u_k|^p \o^n$ and $\int_X |w_k|^p \o^n_{w_k} \leq C(p)\int_X |u_k|^p \o^n_{u_k}$ by \cite[Lemma 3.5]{gz1}, hence by \eqref{dpCharFormula}
the quantity $d_p(0,w_k)$ is uniformly bounded.
\end{proof}

Given two Borel measures $\mu,\nu$ on $X$, if $\nu$ is not subordinate to $\mu$, then by definition $\textup{Ent}(\mu,\nu)=\infty$. On the other hand, if $\nu$ is subordinate to $\mu$ then $\textup{Ent}(\mu,\nu)=\int_X \log(f) \nu$, where $f$ is the Radon-Nikodym density of $\nu$ with respect to $\mu$. The entropy functional $\mu \to \textup{Ent}(\mu,\nu)$ is lsc with respect to weak convergence of measures \cite{dz}. Related to entropy we recall the following crucial compactness result  \cite[Theorem 2.17]{bbegz}:

\begin{theorem} 
\label{EntropyCompactnessThm}
Let $p>1$ and 
suppose $\mu = f \o^n$ is a probability measure with 
$f \in L^p(X,\o^n)$.  
Suppose there exists $C>0$ such that $\{u_k\}_k \subset \mathcal E^1$ satisfies
$$
|\sup_X u_k| < C, \  \textup{Ent}(\mu,\o_{u_k}^n) < C. 
$$
Then $\{ u_k\}_k$ contains a $d_1$-convergent subsequence. 
\end{theorem}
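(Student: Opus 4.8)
The plan is to combine the $L^1$-compactness of $\o$-plurisubharmonic functions with two quantitative consequences of the entropy bound: an energy estimate obtained from convex (Gibbs--Jensen) duality, and a uniform integrability statement obtained from de la Vall\'ee--Poussin. First I would use that $|\sup_X u_k|<C$ makes $\{u_k\}$ relatively compact in $L^1(X,\o^n)$; passing to a subsequence, $u_k\to u$ in $L^1(\o^n)$ and $\o^n$-a.e., with $u\in\textup{PSH}(X,\o)$ (the lower bound on $\sup_X u_k$ rules out $u\equiv-\infty$). Since the $u_k$ have uniformly bounded supremum, they are bounded in every $L^q(\o^n)$, so in fact $u_k\to u$ in each $L^q(\o^n)$, $q<\infty$; as $\mu=f\o^n$ with $f\in L^p$, H\"older against the conjugate exponent upgrades this to $u_k\to u$ in $L^1(\mu)$.

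Next I would promote this to $d_1$-boundedness. Writing $\nu_k:=V^{-1}\o_{u_k}^n=g_k\mu$ (a probability measure, whose entropy $\textup{Ent}(\mu,\nu_k)$ is bounded together with $\textup{Ent}(\mu,\o_{u_k}^n)$), the Gibbs--Jensen inequality gives, for every $s>0$,
$$s\int_X|u_k|\,d\nu_k\le \textup{Ent}(\mu,\nu_k)+\log\int_X e^{s|u_k|}\,d\mu\le C+\log\Big(\|f\|_{L^p}\,\big\|e^{s|u_k|}\big\|_{L^{p'}(\o^n)}\Big).$$
Choosing $s$ so small that $sp'$ lies below the uniform Skoda integrability exponent of the relatively compact family $\{u_k\}$, the right-hand side is bounded independently of $k$; hence $\int_X|u_k|\,\o_{u_k}^n$ is uniformly bounded, and \eqref{dpCharFormula} shows that $\{u_k\}$ is $d_1$-bounded. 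Proposition \ref{d2weaklimits} then guarantees $u\in\mathcal E^1$.

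The heart of the matter---and the step I expect to be the main obstacle---is to rule out loss of mass in the limit. This is where the finiteness of the entropy is indispensable: since $t\mapsto t\log t$ is superlinear, the bound on $\int_X g_k\log g_k\,d\mu$ forces the densities $\{g_k\}$ to be uniformly $\mu$-integrable (de la Vall\'ee--Poussin). Because $\mu$ is absolutely continuous with an $L^p$-density---here $p>1$ is essential---this uniform integrability prevents the $\nu_k$ from concentrating on pluripolar (hence $\mu$-null) sets, and should force $\o_{u_k}^n\to\o_u^n$ weakly. Concretely, I would form the increasing envelopes $v_k=P(u_k,u_{k+1},\dots)$ as in the proof of Proposition \ref{MonotoneDominationProp} (well defined and $d_1$-bounded by the previous step) and use the uniform integrability of $\{g_k\}$ to show that $v_k\nearrow u$, rather than to some strictly smaller potential. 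It is precisely this increase to the full limit $u$ that can fail under a mere supremum bound, and whose rescue by the entropy is the crux of the proof.

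Finally I would close by a monotone sandwich. The decreasing envelopes $w_k=\textup{usc}(\sup_{j\ge k}u_j)$ satisfy $w_k\searrow u$, lie in $\mathcal E^1$, and are $d_1$-bounded (exactly as in the proof of Proposition \ref{d2weaklimits}), so $d_1(w_k,u)\to0$ by Proposition \ref{prop: monotone implies dp convergence}; likewise $d_1(v_k,u)\to0$ once $v_k\nearrow u$ is known. Since $v_k\le u_k\le w_k$, the order-monotonicity of $d_1$ yields $d_1(v_k,u_k)\le d_1(v_k,w_k)\le d_1(v_k,u)+d_1(u,w_k)\to0$, whence $d_1(u_k,u)\to0$ along the subsequence. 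Equivalently, one may read off the same conclusion directly from \eqref{dpCharFormula} by checking that both $\int_X|u_k-u|\,\o_u^n$ and $\int_X|u_k-u|\,\o_{u_k}^n$ tend to zero, the latter integral being exactly the no-loss-of-mass statement established above.
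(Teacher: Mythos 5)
The paper does not prove this statement at all --- it is imported verbatim from \cite[Theorem 2.17]{bbegz} --- so there is no in-paper argument to compare against; I can only assess your proposal on its own terms. The first half is sound and is essentially the standard route: the supremum bound gives $L^1(\o^n)$-compactness and hence (after passing to a subsequence) $u_k\to u$ in every $L^q(\o^n)$; the Gibbs--Jensen duality $s\int|u_k|\,d\nu_k\le \textup{Ent}(\mu,\nu_k)+\log\int e^{s|u_k|}d\mu$, combined with H\"older against $f\in L^p$ and the uniform ($\alpha$-invariant) Skoda bound for small $s$, correctly yields $\int_X|u_k|\,\o_{u_k}^n\le C$, hence $d_1$-boundedness via \eqref{dpCharFormula} and $u\in\mathcal E^1$ via Proposition \ref{d2weaklimits}. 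You also correctly locate the real difficulty.

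But that difficulty is not resolved, and the concrete device you propose for it is circular. By \eqref{dpCharFormula} (or by \cite[Proposition 5.9]{da2}) what remains is exactly to show $\int_X|u_k-u|\,\o_{u_k}^n\to 0$, equivalently $\textup{AM}(u_k)\to\textup{AM}(u)$; the decreasing envelopes $w_k$ only give the easy inequality $\limsup_k\textup{AM}(u_k)\le\textup{AM}(u)$. For the increasing envelopes, the construction of $v_{k_j}=P(u_{k_j},u_{k_{j+1}},\dots)$ in Proposition \ref{MonotoneDominationProp} requires first extracting a subsequence with $d_1(u_{k_j},u)\le 2^{-j}$: the summability of these distances is what guarantees both that the envelope is not identically $-\infty$ and, crucially, that it increases to $u$ rather than to a strictly smaller potential. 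You cannot invoke that construction as a tool to \emph{prove} $d_1$-convergence, since it presupposes it. Nor does uniform integrability of the densities $g_k=d\nu_k/d\mu$ by itself close the gap: the natural estimate, the pointwise inequality $s\phi_k g_k\le g_k\log g_k-g_k+e^{s\phi_k}$ with $\phi_k=(u-u_k)^+$ integrated over $A_k=\{\phi_k>\eps\}$, only yields $\limsup_k\int_{A_k}\phi_k g_k\,d\mu\le (C+e^{-1})/s$ for the \emph{fixed} small admissible $s$, because the entropy integral over $A_k$ is merely bounded, not small, as $\mu(A_k)\to 0$. This is precisely the point where the BBEGZ argument has to work harder (identifying the weak $L^1(\mu)$-limit of the densities with $\o_u^n$ and deducing convergence of the energies), and your proposal replaces it with ``should force'' and ``I would use \dots to show''. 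As it stands the proof is incomplete at its decisive step.
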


\subsection{The complex Monge-Amp\`ere equation in $\Ec^p$}
We summarize in this section basic results concerning solutions of degenerate complex Monge-Amp\`ere equations. 

A subset $E\subset X$ is called pluripolar if it is contained in the singular set of a function $\varphi\in \psh(X,\omega)$, i.e. $E\subset \{\varphi=-\infty\}$. Let $\mu$ be a positive measure on $X$  with total mass $\mu(X)=\int_X \omega^n$. We consider the following equation 
\begin{equation}\label{eq: MA Ep}
	\omega_\varphi^n = \mu. 
\end{equation} 
It was proved in \cite{gz1}, by approximation with the smooth case established in the seminal work of Yau and in \cite{bbgz} by a direct variational approach that when $\mu$ does not charge pluripolar sets the equation (\ref{eq: MA Ep}) has a solution  $\varphi\in \Ec(X,\omega)$. The solution turns out to be unique up to an additive constant \cite{Diw}. For each $\vep>0$, the same variational approach as in \cite{bbgz} applied for the functional 
$$
F_{\vep}(u) := {\rm AM} (u) -\frac{1}{\vep} \log \int_X e^{\vep u} d\mu, \ u\in \Ec^1,
$$
shows that there exists a solution  $\varphi_{\vep}\in \Ec^1$ to the equation
\begin{equation}\label{eq: MA Ep epsilon}
	\omega_{\varphi_{\vep}}^n = e^{\vep \varphi_{\vep}}\mu. 
\end{equation} 
The solution is uniquely determined as follows from the comparison principle (see \cite[Proposition 4.1]{bg}). The following version of the comparison principle will be useful later. 
\begin{lemma}
	\label{lem: comparison principle in E}
	Let $\vep>0$. Assume that $\varphi\in \Ec(X,\omega)$ is a solution of (\ref{eq: MA Ep epsilon}) while $\psi\in \Ec(X,\omega)$ is a subsolution, i.e. $\omega_{\psi}^n \geq e^{\vep \psi}\mu.$
	Then $\varphi\geq \psi$ on $X$. 
\end{lemma}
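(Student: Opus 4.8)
The plan is to run the classical comparison-principle argument, now carried out inside the finite energy class $\Ec(X,\o)$, where both the comparison and the domination principles for the non-pluripolar \MA operator are available (see \cite{gz1,begz}). Set $U=\{\varphi<\psi\}$, a Borel set. Since $\varphi,\psi\in\Ec(X,\o)$, the comparison principle gives
\[
\int_U \o_\psi^n \;\le\; \int_U \o_\varphi^n .
\]
First I would feed in the data: the equation $\o_\varphi^n=e^{\vep\varphi}\mu$ turns the right-hand side into $\int_U e^{\vep\varphi}\,d\mu$, while the subsolution inequality $\o_\psi^n\ge e^{\vep\psi}\mu$ bounds the left-hand side from below by $\int_U e^{\vep\psi}\,d\mu$. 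Chaining these with the comparison inequality yields
\[
\int_U e^{\vep\psi}\,d\mu \;\le\; \int_U \o_\psi^n \;\le\; \int_U \o_\varphi^n \;=\; \int_U e^{\vep\varphi}\,d\mu ,
\]
so that $\int_U \bigl(e^{\vep\psi}-e^{\vep\varphi}\bigr)\,d\mu \le 0$.

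The next step exploits the strict monotonicity of $t\mapsto e^{\vep t}$, where the hypothesis $\vep>0$ is essential. On $U$ we have $\psi>\varphi$ pointwise, hence $e^{\vep\psi}-e^{\vep\varphi}>0$ everywhere on $U$ (using the convention $e^{-\infty}=0$, which also covers the pluripolar locus $\{\varphi=-\infty\}$, on which $\psi$ remains finite). An integral of a strictly positive function that is $\le 0$ forces $\mu(U)=0$. Consequently $\o_\varphi^n(U)=\int_U e^{\vep\varphi}\,d\mu=0$; in other words $\varphi\ge\psi$ holds $\o_\varphi^n$-almost everywhere.

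Finally I would upgrade this almost-everywhere statement to a pointwise one by invoking the domination principle in $\Ec(X,\o)$: if $u,v\in\Ec(X,\o)$ and $u\ge v$ holds $\o_u^n$-a.e., then $u\ge v$ on all of $X$. Applied with $u=\varphi$ and $v=\psi$, this gives $\varphi\ge\psi$ everywhere, as claimed. The one genuinely delicate point is precisely this last passage: the comparison-principle computation only controls the contact set up to the measure $\o_\varphi^n$, and it is the domination principle that converts this into the desired pointwise inequality. I expect this to be the main obstacle, in the sense that its applicability must be justified — namely that both potentials lie in the full-mass class $\Ec(X,\o)$, which holds by assumption, and that $\mu$ does not charge pluripolar sets, which is automatic since $\o_\varphi^n=e^{\vep\varphi}\mu$ puts no mass on pluripolar sets while $e^{\vep\varphi}>0$ off the pluripolar set $\{\varphi=-\infty\}$.
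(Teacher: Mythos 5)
Your proof is correct and follows essentially the same route as the paper's: the comparison principle in $\Ec(X,\o)$ on the set $\{\varphi<\psi\}$, the chain of inequalities using the equation and the subsolution property together with the strict monotonicity of $t\mapsto e^{\vep t}$, and finally Dinew's domination principle to upgrade the $\mu$-a.e.\ (equivalently $\o_\varphi^n$-a.e.) inequality to a pointwise one. The only difference is cosmetic: the paper closes the inequality chain into a loop and concludes all terms are equal, whereas you integrate the strictly positive difference $e^{\vep\psi}-e^{\vep\varphi}$ over $\{\varphi<\psi\}$ to force $\mu(\{\varphi<\psi\})=0$.
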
 
This result might be well known to experts in Monge-Amp\`ere theory. As a courtesy to the reader we give a proof below.
\begin{proof}
	By the comparison principle for the class $\Ec(X,\omega)$ (see \cite{gz1}) we have 
	$$
	\int_{\{\varphi<\psi\}} \omega_{\psi}^n \leq \int_{\{\varphi<\psi\}} \omega_{\varphi}^n.
	$$
	As $\varphi$ is a solution and $\psi$ is a subsolution to (\ref{eq: MA Ep epsilon}) we also have
	$$
	\int_{\{\varphi<\psi\}} e^{\vep \psi} d\mu \leq \int_{\{\varphi<\psi\}} \omega_{\psi}^n \leq \int_{\{\varphi<\psi\}} \omega_{\varphi}^n = \int_{\{\varphi<\psi\}} e^{\vep \varphi} d\mu\leq \int_{\{\varphi<\psi\}} e^{\vep \psi} d\mu.
	$$
	It follows that all inequalities above are equalities, hence $\varphi\geq \psi$ $\mu$-almost everywhere on $X$. By Dinew's domination principle \cite[Propostion 5.9]{blle} we get $\varphi\geq \psi$ everywhere on $X$.
\end{proof}
One might wonder whether the solution of (\ref{eq: MA Ep}) arises as a limit of solutions of (\ref{eq: MA Ep epsilon}) as $\vep\to 0$. The following result (a version of which is stated as an exercise in \cite{GZbook}) answers this affirmatively. 
\begin{lemma}\label{lem: epsilon equation limit}
	Let $p\geq 1$. Assume that $\mu= (\omega+i\ddbar \varphi)^n$ with $\varphi\in \Ec^p$ and $\int_X \varphi d\mu =0$. For each $\vep>0$, let $\varphi_{\vep}\in \Ec^1$ be the unique  solution to (\ref{eq: MA Ep epsilon}). Then in fact $\varphi_{\vep}\in \Ec^p$ and $d_p(\varphi_{\vep},\varphi)\to 0$ as $\vep \to 0$.
\end{lemma}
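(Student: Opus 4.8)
The plan is to first secure, for each fixed $\vep>0$, membership $\varphi_\vep\in\Ec^p$ together with a lower bound independent of $\vep$, and then to prove a uniform upper bound on $\sup_X\varphi_\vep$; granting these, the convergence $d_p(\varphi_\vep,\varphi)\to 0$ will follow from an elementary argument. Write $A:=\sup_X\varphi<\infty$. Since $\varphi-A\leq 0$ we have $\o_{\varphi-A}^n=\mu\geq e^{\vep(\varphi-A)}\mu$, so $\varphi-A$ is a subsolution of (\ref{eq: MA Ep epsilon}); Lemma \ref{lem: comparison principle in E} then gives $\varphi_\vep\geq \varphi-A$ on $X$, uniformly in $\vep$. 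As $\varphi_\vep\in\Ec(X,\o)$ is bounded above and satisfies $\varphi-A-\sup_X\varphi_\vep\leq \varphi_\vep-\sup_X\varphi_\vep\leq 0$ with the left-hand side in $\Ec^p$, the monotonicity of the weighted energy (\cite[Lemma 3.5]{gz1}) forces $\varphi_\vep\in\Ec^p$.

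The heart of the matter, and the step I expect to be the main obstacle, is a uniform (in $\vep$) bound $\sup_X\varphi_\vep\leq C$. Set $M_\vep:=\sup_X\varphi_\vep\geq 0$ and $\hat\varphi_\vep:=\varphi_\vep-M_\vep\leq 0$, so $\sup_X\hat\varphi_\vep=0$. From the normalization $\int_X e^{\vep\varphi_\vep}\,d\mu=\int_X\o_{\varphi_\vep}^n=V$ one gets $\frac1V\int_X e^{\vep\hat\varphi_\vep}\,d\mu=e^{-\vep M_\vep}$, and Jensen's inequality applied to the probability measure $\mu/V$ yields
\begin{equation*}
M_\vep\leq \frac1V\int_X(-\hat\varphi_\vep)\,d\mu .
\end{equation*}
On the other hand $\varphi_\vep$ maximizes $F_\vep$, so testing against $v=0$ gives $F_\vep(\varphi_\vep)\geq F_\vep(0)$, i.e. $\textup{AM}(\varphi_\vep)\geq 0$; since $\textup{AM}(\varphi_\vep)=\textup{AM}(\hat\varphi_\vep)+M_\vep$, writing $E(\hat\varphi_\vep):=-\textup{AM}(\hat\varphi_\vep)\geq 0$ we obtain $E(\hat\varphi_\vep)\leq M_\vep$. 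The plan is now to close a bootstrap: by the Hölder--Young energy estimates of \cite{begz} (domination of the mixed energy pairing by a product of energies), with $\theta=\tfrac1{n+1}<1$ there is $C>0$ such that
\begin{equation*}
\int_X(-\hat\varphi_\vep)\,d\mu=\int_X(-\hat\varphi_\vep)\,\o_{\varphi-A}^n\leq C\,E(\hat\varphi_\vep)^{\theta}\,E(\varphi-A)^{1-\theta}.
\end{equation*}
Combining the three displays gives $M_\vep\leq C' M_\vep^{\theta}$ with $\theta<1$ and $C'$ depending only on $X,\o,\varphi$, whence $M_\vep\leq C''$ uniformly in $\vep$. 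Verifying the precise form of this energy inequality, and checking that $\hat\varphi_\vep$ (controlled above but not uniformly below) is admissible in it, is the technical crux; all the other pieces are routine.

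Once $0\leq M_\vep\leq C''$ is in hand, the family is uniformly $d_p$-bounded: $\o_{\varphi_\vep}^n=e^{\vep\varphi_\vep}\mu\leq e^{C''}\mu$, so by (\ref{dpCharFormula}) and $\varphi-A\leq \varphi_\vep\leq C''$ one has $\int_X|\varphi_\vep|^p\o_{\varphi_\vep}^n\leq e^{C''}\!\int_X|\varphi_\vep|^p\,d\mu\leq C\!\int_X(1+|\varphi|^p)\,d\mu<\infty$ uniformly. To get the convergence I would use the functional $I(u,v)=\int_X(u-v)(\o_v^n-\o_u^n)\geq 0$, whose vanishing is equivalent to $d_1$-convergence (\cite[Theorem 5.5]{da2}). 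Here
\begin{equation*}
0\leq I(\varphi_\vep,\varphi)=\int_X(\varphi_\vep-\varphi)\big(1-e^{\vep\varphi_\vep}\big)\,d\mu .
\end{equation*}
Splitting $X=\{\varphi_\vep\geq -K\}\cup\{\varphi_\vep<-K\}$: on the first set $|1-e^{\vep\varphi_\vep}|\leq e^{\vep\max(C'',K)}-1\to 0$ as $\vep\to 0$, while $\int_X|\varphi_\vep-\varphi|\,d\mu$ is uniformly bounded; the second set lies in $\{\varphi<A-K\}$ (as $\varphi_\vep\geq\varphi-A$), so its contribution is at most $\int_{\{\varphi<A-K\}}(2|\varphi|+A+C'')\,d\mu$, a tail of an $L^1(\mu)$ function that is small for $K$ large, uniformly in $\vep$. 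Letting $\vep\to 0$ and then $K\to\infty$ gives $I(\varphi_\vep,\varphi)\to 0$, hence $d_1(\varphi_\vep,\varphi)\to 0$.

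Finally I would upgrade to $d_p$. From $d_1(\varphi_\vep,\varphi)\to 0$ we get $\varphi_\vep\to\varphi$ in $L^1(X,\o^n)$, hence a.e.\ along subsequences; since $\varphi-A\leq\varphi_\vep\leq C''$, the integrand $|\varphi_\vep-\varphi|^p\leq (2|\varphi|+A+C'')^p$ is dominated by an $L^1(\mu)$ function, so dominated convergence yields $\int_X|\varphi_\vep-\varphi|^p\,\o_\varphi^n\to 0$, and using $\o_{\varphi_\vep}^n\leq e^{C''}\mu$ also $\int_X|\varphi_\vep-\varphi|^p\,\o_{\varphi_\vep}^n\to 0$. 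By (\ref{dpCharFormula}) this gives $d_p(\varphi_\vep,\varphi)\to 0$ along the subsequence, and since the limit is always $\varphi$ a standard subsequence argument yields convergence of the whole family, completing the proof.
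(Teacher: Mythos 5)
Your architecture is reasonable and several pieces are fine: the lower bound $\varphi_\vep\geq\varphi-\sup_X\varphi$ via Lemma \ref{lem: comparison principle in E} is exactly the paper's first step, and your endgame (the $I$-functional computation $I(\varphi_\vep,\varphi)=\int(\varphi_\vep-\varphi)(1-e^{\vep\varphi_\vep})\,d\mu$ with the $K$-truncation, then dominated convergence to upgrade to $d_p$) is a correct and in fact more explicit route to the convergence than the paper's closing sentence. However, the step you yourself identify as the crux --- the uniform bound on $M_\vep=\sup_X\varphi_\vep$ --- is a genuine gap. The inequality you invoke, $\int_X(-\hat\varphi_\vep)\,d\mu\leq C\,E(\hat\varphi_\vep)^{1/(n+1)}E(\varphi-A)^{n/(n+1)}$, is not a result of \cite{begz}: a pure product bound with no additive constant cannot hold (take the second potential constant, so its energy vanishes while the left side need not), and the estimates that are actually available for the pairing of a normalized potential against a Monge--Amp\`ere measure are \emph{linear} in the energies. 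Indeed Lemma \ref{lem: bounds for AM} together with $\int_X(-v)\o_v^n\leq(n+1)V\,E(v)$ gives precisely $\frac1V\int_X(-u)\o_v^n\leq E(u)+nE(v)$ for $u,v\leq0$, where $E=-\AM$; feeding this into your bootstrap with $E(\hat\varphi_\vep)\leq M_\vep$ returns only $M_\vep\leq M_\vep+nE(\varphi-A)$, which closes nothing. A sublinear substitute of the form $\frac1V\int_X(-u)\o_v^n\leq C\bigl(1+\max(E(u),E(v))^{1/2}E(v)^{1/2}\bigr)$ can be extracted by integrating by parts in each factor of $\o_v$ and applying Cauchy--Schwarz together with the comparability of mixed energies from \cite{gz1}, and that version would close your bootstrap; but it is not the inequality you cited, and as written the central estimate of your proof is unsupported.

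For comparison, the paper obtains the upper bound with no energy estimates at all: Jensen's inequality applied to $\int_X e^{\vep\varphi_\vep}\,d\mu=\mu(X)$ gives $\int_X\varphi_\vep\,d\mu\leq0$ (which you also derive), and this is played off against the uniform, $\mu$-integrable lower bound $\varphi_\vep\geq\varphi-\sup_X\varphi$ and the $L^1$-compactness of the normalized family $\varphi_\vep-\sup_X\varphi_\vep$ to exclude $\sup_X\varphi_\vep\to+\infty$ by contradiction; the identification of the limit then goes through \cite[Corollary 2.21]{begz} and uniqueness rather than through $I$. One further small point in your last paragraph: almost-everywhere convergence extracted from $\|\varphi_\vep-\varphi\|_{L^1(X,\o^n)}\to0$ is a.e.\ with respect to $\o^n$, which does not give $\mu$-a.e.\ convergence when $\mu=\o_\varphi^n$ has a singular part; instead use that \eqref{dpCharFormula} with $p=1$ already yields $\int_X|\varphi_\vep-\varphi|\,\o_\varphi^n\to0$, hence $\mu$-a.e.\ convergence along subsequences, before applying dominated convergence in $L^p(\mu)$.
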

\begin{proof}
	As $\varphi-\sup_X \varphi$ is a subsolution of (\ref{eq: MA Ep epsilon}), it follows from Lemma \ref{lem: comparison principle in E} that $\varphi_{\vep}\geq \varphi-\sup_X \varphi, \ \forall \vep>0$, hence $\varphi_{\vep}\in \Ec^p$. We claim that $\varphi_{\vep}$ is uniformly bounded from above for $\vep\in [0,1]$. Assume on the contrary that we can extract a subsequence denoted by $\varphi_j=\varphi_{\vep_j}$ such that $\sup_X \varphi_j\to +\infty$. The sequence $\psi_j:=\varphi_j -\sup_X \varphi_j$ stays in a compact set in $L^1(X,\omega^n)$, hence a subsequence (still denoted by $\varphi_j$) converges  to some $\psi\in \psh(X,\omega)$. It then follows that $\varphi_j=\psi_j + \sup_X \varphi_j$ converges uniformly to $+\infty$. In the other hand, by Jensen's inequality (for simplicity we may assume that $\mu(X)=1$) we have
	$$
	\int_X \varphi_j d\mu \leq 0.
	$$
	Since $\varphi_j$ is bounded from below by $\varphi-\sup_X \varphi$, which is integrable with respect to $d\mu$, the above inequality contradicts with the fact that $\varphi_j$ converges uniformly to $+\infty$. Hence the claim follows. 
	
	Now the family $\varphi_{\vep}$ stays in a compact set of $L^1(X,\omega^n)$. As $\vep \to 0$ each cluster point $\varphi_0$   satisfies 
	$$
	\omega_{\varphi_{0}}^n \geq \left(\liminf_{\vep \to 0} e^{\vep \varphi_{\vep}} \right) \mu =\mu,
	$$
	as follows from \cite[Corollary 2.21]{begz}. As the two measures have the same total mass, one obtains equality. That $\varphi_0=\varphi$ follows from uniqueness of complex Monge-Amp\`ere measures and the following identity:
	$$
	0=\lim_{\vep\to 0} \frac{1}{\vep} \log \int_{X} e^{\vep \varphi_{\vep}} d\mu = \int_X \varphi_0 d\mu.   
	$$
	The last statement follows from \eqref{dpCharFormula}, \eqref{eq: MA Ep epsilon} and the dominated convergence theorem. 
\end{proof}

\subsection{Weak convergence in a CAT(0) space}\label{subsect: CAT0}

Let us recall that a geodesic metric space $(M,d)$ is a metric space for which any two points can be connected with a geodesic. By a geodesic connecting two points $a,b \in M$ we understand a curve $\alpha: [0,1]\to M$ such that $\alpha(0)=a$, $\alpha(1)=b$ and
\begin{equation*}
d(\alpha(t_1),\alpha(t_2))=|t_1 -t_2|d(a,b),
\end{equation*}
for any $t_1,t_2 \in [0,1]$.
Furthermore, a geodesic metric space $(M,d)$ is non-positively curved (in the sense of Alexandrov) or CAT(0) if for any distinct points $q,r \in M$ there exists a geodesic $\gamma:[0,1] \to M$ joining $q,r$ such that for any $s \in \{ \gamma \}$  and $p \in M$ the following inequality is satisfied:
\begin{equation*}
d(p,s)^2 \leq \lambda d(p,r)^2 +(1-\lambda) d(p,q)^2 - \lambda(1-\lambda)d(q,r)^2,
\end{equation*}
where $\lambda = d(q,s)/ d(q,r)$. A basic property of CAT(0) spaces is that geodesic segments joining different points are unique. For more about these spaces we refer to \cite{bh}.

Let $\{x_n\}_n$ be a bounded sequence in a CAT(0) metric space $(M,d)$. For $x \in M$, we set
$$r(x,\{x_n\}_n) = \limsup d(x, x_n).$$

The \emph{asymptotic radius} $\{x_n\}_n$ is given by $r(\{x_n\}_n) = \inf\{r(x,\{x_n\}_n) : x \in M\},$
and the \emph{asymptotic center} $A(\{x_n\})$ of ${x_n}$ is the set
$$A(\{x_n\}_n) = \{x \in M : r(x,\{x_n\}_n) = r(\{x_n\}_n)\}.$$
It is well known (see, e.g., \cite[Lemma 4.3]{st2}) that in a CAT(0) space, $A(\{x_n\}_n)$ consists of exactly one point. A sequence $\{ x_n\}_n$ \emph{converges $d$-weakly} to $x \in M$, if $x$ is the asymptotic center of all subsequences of $\{ x_n\}_n$. The next result collects some facts about weak $d$-convergence.

For a more detailed account on weak $d$-convergence we refer to \cite{kp}, and for results related to the Calabi flow, see \cite[Section 4]{st2}.
If $(M,d)$ is a Hilbert space then weak $d$-convergence is the same as weak convergence in the sense of Hilbert spaces. With this in mind the contents of the next result may seem less surprising:

\begin{proposition}\label{WeakdConvProp} Suppose $(M,d)$ is a CAT(0) space. The following hold:
\vspace{-0.1in}
\begin{itemize}
\setlength{\itemsep}{1pt}
    \setlength{\parskip}{1pt}
    \setlength{\parsep}{1pt}  
\item[(i)] \cite[Proposition 3.5]{kp} If $\{ x_n\}_n$ is a $d$-bounded sequence then it has a weak $d$-convergent subsequence.
\item[(ii)] \cite[Proposition 3.2]{kp} Suppose $C \subset M$ is a geodesically convex closed set and $\{ x_n\}_n \subset C$ converges $d$-weakly  to $x \in M$. Then $x \in C$.
\end{itemize}
\end{proposition}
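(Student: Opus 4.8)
The plan is to derive both statements from the two structural facts already recorded above: in a (complete) CAT(0) space every bounded sequence has a \emph{unique} asymptotic center, and the quadratic CAT(0) inequality holds along the (unique) geodesics. I would prove (i) by extracting a \emph{regular} subsequence and showing that regularity alone forces $d$-weak convergence, and I would prove (ii) by testing the asymptotic radius against the nearest-point projection onto $C$. Throughout I use the elementary monotonicity $r(y,\{v_n\}_n)\le r(y,\{u_n\}_n)$ whenever $\{v_n\}_n$ is a subsequence of $\{u_n\}_n$, which upon taking infima over $y$ gives $r(\{v_n\}_n)\le r(\{u_n\}_n)$; note also that asymptotic radii are tail properties, unaffected by deleting finitely many terms.

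For (i), call a bounded sequence \emph{regular} if every subsequence has the same asymptotic radius. I would first show that every bounded $\{x_n\}_n$ admits a regular subsequence by a diagonal argument: one builds nested subsequences $S_1\supseteq S_2\supseteq\cdots$, where $S_{k+1}$ nearly realizes the infimum $\rho_k$ of the asymptotic radii of all subsequences of $S_k$; by the monotonicity above the $\rho_k$ increase and are bounded, hence converge to some $\rho$, and the diagonal subsequence $\{u_n\}_n$ has the property that every one of its subsequences has asymptotic radius squeezed to $\rho$, i.e. it is regular. The crux is then that a regular $\{u_n\}_n$ $d$-weakly converges to its asymptotic center $u=A(\{u_n\}_n)$. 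Given any subsequence $\{v_n\}_n$ with center $v=A(\{v_n\}_n)$, let $m$ be the midpoint of the geodesic from $u$ to $v$ and apply the CAT(0) inequality recorded above with $p=v_n$ and $\lambda=1/2$:
\[
d(v_n,m)^2\le \tfrac12 d(v_n,u)^2+\tfrac12 d(v_n,v)^2-\tfrac14 d(u,v)^2 .
\]
Taking $\limsup_n$ and using that $u$ is the center of $\{u_n\}_n$ (so $\limsup_n d(v_n,u)\le r(u,\{u_n\}_n)=r(\{u_n\}_n)$), that $\limsup_n d(v_n,v)=r(\{v_n\}_n)$, and the regularity identity $r(\{v_n\}_n)=r(\{u_n\}_n)=:r$, one obtains $r^2\le r(m,\{v_n\}_n)^2\le r^2-\tfrac14 d(u,v)^2$, forcing $u=v$. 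Thus every subsequence of $\{u_n\}_n$ has center $u$, which is precisely $d$-weak convergence.

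For (ii), I would work in the complete setting (as in the cited source, and as holds for $(\mathcal E^2,d_2)$), so that the nearest-point projection $P_C:M\to C$ onto the closed geodesically convex set $C$ is well defined and single-valued and satisfies the Bruhat--Tits obtuse-angle estimate $d(P_C x,z)^2+d(x,P_C x)^2\le d(x,z)^2$ for all $z\in C$. Applying this with $z=x_n\in C$ and taking $\limsup_n$ yields
\[
r(x,\{x_n\}_n)^2\ge d(x,P_C x)^2+r(P_C x,\{x_n\}_n)^2 .
\]
Since $x$ is the asymptotic center of $\{x_n\}_n$ we have $r(x,\{x_n\}_n)=r(\{x_n\}_n)\le r(P_C x,\{x_n\}_n)$, so the displayed inequality gives $d(x,P_C x)^2\le 0$; hence $x=P_C x\in C$.

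The monotonicity and $\limsup$ bookkeeping are routine. The main obstacle is the first step of (i), the diagonal construction of a regular subsequence, where one must check carefully that after passing to the diagonal \emph{every} further subsequence inherits the common limiting asymptotic radius $\rho$ (both the lower bound $\ge\rho$ from being eventually a subsequence of each $S_m$, and the upper bound $\le\rho$ from the near-infimal choice of $S_{m+1}$). The other delicate point is the reliance in (ii) on the existence, single-valuedness, and obtuse-angle property of $P_C$, which is exactly where completeness of the CAT(0) space is used.
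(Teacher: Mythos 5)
The paper gives no proof of this proposition, merely citing Kirk--Panyanak, and your argument correctly reconstructs the standard proofs from that reference: the diagonal extraction of a regular subsequence plus the midpoint/CAT(0) squeeze $r^2\le r^2-\tfrac14 d(u,v)^2$ for (i), and the obtuse-angle property of the nearest-point projection $P_C$ for (ii), with all the asymptotic-radius bookkeeping done correctly. The only caveat, which you already flag, is that existence and uniqueness of asymptotic centers and of $P_C$ require the CAT(0) space to be complete; this hypothesis is implicit in the paper, whose relevant space $(\mathcal E^2,d_2)$ is a metric completion and whose cited uniqueness of asymptotic centers likewise presupposes completeness.
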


\subsection{General weak gradient flows}\label{subsect: gradient flow}

Let  $G$ be a $d$-lsc function
on a complete metric space $(M,d).$ In this generality there are,
as explained in \cite{ags}, various notions of weak gradient flows
$u_{t}$ for $G,$ emanating from an initial point $u_{0}$ in $M.$
A natural approximation scheme (the so called Minimizing Movement)
for obtaining such a candidate $t \to c_{t}$ was introduced by De Giorgi.
It can be seen as a variational formulation of the (backward) Euler
scheme: given $t\in[0,\infty)$ and a positive integer $m$ one first
defines a discrete version $c_{t}^{m}$ of $c_{t}$ as the $m$th step in the following ($m$-dependent) iteration with initial data $c^{m,0}_t=c_{0}$: given
$c^{m,j}_t\in Y$ the next step $c^{m,j+1}_t$ is obtained by minimizing the
following functional on $Y$ 
\begin{equation}\label{eq: minimizing movement eq}
v\to\frac{1}{2}d(v,c^{m,j}_t)^{2}+\frac{t}{m}G(v).
\end{equation}
If such a minimizer always exists then the corresponding Minimizing
Movement $c_{t}$ is defined as the large $m$ limit of $c_{t}^{m}=c_{t}^{m,m},$
if the limit exists in $(M,d).$ As shown by Mayer \cite{may}, if $(M,d)$ is a CAT(0) metric space and $G$ is convex this procedure indeed produces a unique limit $c_{t}$ with a number of useful properties. 
\begin{theorem}
\label{MayerThm}(\cite{may}) If $(M,d)$ is CAT(0), $G$ is a $d$-lsc convex function on $(M,d),$ then for any initial
point $c_{0}$ with  $G(c_0) < \infty$
the corresponding minimizing
movement $t \to c_{t}$ exists and defines a contractive continuous semi-group
(which is locally Lipchitz continuous on $[0,\infty)$). 
\end{theorem}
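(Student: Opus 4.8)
The plan is to follow Crandall--Liggett style generation theory for nonlinear resolvents, adapted to Hadamard spaces. The first step is to make sense of a single iteration of \eqref{eq: minimizing movement eq}: for $\tau>0$ and $x\in M$ with $G(x)<\infty$, let $J_\tau(x)$ denote the \emph{resolvent}, i.e. the minimizer over $v\in M$ of $\tfrac1{2\tau}d(v,x)^2+G(v)$, so that one step of the scheme reads $c^{m,j+1}_t=J_{t/m}(c^{m,j}_t)$. The crucial structural input is that in a CAT(0) space the very inequality defining non-positive curvature says precisely that $v\mapsto d(v,x)^2$ is $2$-strongly convex along geodesics. Hence the functional minimized in \eqref{eq: minimizing movement eq} is strongly convex, and for any minimizing sequence a parallelogram-type estimate bounds $d(v_i,v_j)^2$ by the gap between the functional values and their average, forcing the sequence to be $d$-Cauchy. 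Combined with the $d$-lower semicontinuity of $G$ and completeness of $M$, this yields existence and uniqueness of $J_\tau(x)$; the same strong convexity, applied along the geodesic joining two resolvent values, gives the fundamental \emph{contraction} estimate $d(J_\tau(x),J_\tau(y))\le d(x,y)$.

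Next I would establish the discrete a priori estimates needed to send $m\to\infty$. Testing the minimality of $c^{m,j+1}_t$ against the competitor $c^{m,j}_t$ gives the monotonicity $G(c^{m,j+1}_t)\le G(c^{m,j}_t)$ together with the discrete energy-dissipation bound
$$\sum_{j}\frac{m}{2t}\,d(c^{m,j}_t,c^{m,j+1}_t)^2\le G(c_0)-\inf_M G,$$
which controls $d(c^{m,j}_t,c_0)$ uniformly in $m$. The heart of the argument is then a Crandall--Liggett resolvent-consistency estimate: using only the contraction property and convexity, one compares the iterated resolvents $J_{t/m}^{(m)}(c_0)$ and $J_{t/n}^{(n)}(c_0)$ and shows they form a $d$-Cauchy family as $m,n\to\infty$, uniformly for $t$ in compact intervals. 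This produces the limit curve $c_t:=\lim_{m}J_{t/m}^{(m)}(c_0)$.

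Finally I would read off the asserted properties from the limit. The semigroup identity $c_{t+s}=S_t(S_s(c_0))$ follows by regrouping resolvent iterations; passing the contraction $d(J_\tau(x),J_\tau(y))\le d(x,y)$ to the limit shows each $S_t$ is $d$-contractive; and the energy-dissipation estimate, together with the convexity-based regularization, yields a quantitative modulus of continuity for $t\mapsto c_t$ and hence the local Lipschitz continuity claimed in the statement.

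The main obstacle is precisely the convergence of the discrete scheme in the absence of any compactness: in a general Hadamard space one cannot extract convergent subsequences, so the Cauchy property of $\{J_{t/m}^{(m)}(c_0)\}_m$ must be proved \emph{quantitatively}, and this is exactly where the CAT(0) convexity---both the $2$-convexity of the squared distance and the resulting non-expansiveness of the resolvent---does the essential work. All of this is carried out in detail in \cite{may}, which we simply invoke.
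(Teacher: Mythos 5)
Your proposal is correct and matches the paper's treatment: the paper states this result as a direct citation of Mayer \cite{may} without reproducing a proof, and your sketch (resolvent existence via $2$-strong convexity of $d(\cdot,x)^2$ in CAT(0) spaces, nonexpansiveness of the resolvent, and a Crandall--Liggett consistency estimate to pass $m\to\infty$ without compactness) is an accurate outline of Mayer's actual argument, which you then invoke just as the paper does.
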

Moreover, as shown in \cite{may}, the curve $t \to c_{t}$ can be thought
of as the curve of steepest descent with respect to $G$ in the sense
that 
\begin{equation}
-\frac{d}{dt}(G(c_{t}))=\left|(\partial G)(c_{t})\right|\left|\frac{dc_{t}}{dt}\right|,\,\,\,\,\,\,\,\left|\frac{dc_{t}}{dt}\right|=|(\partial G)(c_{t})|\label{eq:relations equi to gf}
\end{equation}
for almost every $t,$ where $\left|(\partial G)(y)\right|$ is the
local upper gradient of $G$ at $y$ and $\left|\frac{dc_{t}}{dt}\right|$
is the metric derivative of $t \to c_{t}$ at $t$ (in the sense of \cite{ags}):
\begin{equation*}
\left|(\partial G)(y)\right|:=\limsup_{z\rightarrow y}\frac{\left(G(y)-G(z)\right)^{+}}{d(y,z)},\,\,\,\,\,\,\,\left|\frac{dc_{t}}{dt}\right|=\lim_{s\rightarrow t}\left|\frac{d(c_{s},c_{t})}{s-t}\right|\label{eq:def of local upper gradient}
\end{equation*}
In the case when $(M,d)$ is a finite dimensional Riemannian manifold and $G$ is smooth the relations \eqref{eq:relations equi to gf} are equivalent to the usual gradient flow formulation for $G.$ In the terminology of \cite{ags} the relations \eqref{eq:relations equi to gf} imply that the Minimizing Movement $t \to c_{t}$ provided by Mayer's theorem is a curve of\emph{ maximal slope} with respect to the upper gradient $\left|(\partial G)\right|$ (see \cite[Definition 1.3.2]{ags}). Moreover, by \cite[Theorem 4.0.4]{ags} the curve $t \to c_{t}$ is the unique solution of the following \emph{evolution variational inequality:}
\begin{equation}
\frac{1}{2}\frac{d}{dt}d^{2}(c_{t},v)\leq G(v)-G(c_t),\,\,\,\,\mbox{a.e.\,\,\ensuremath{t>0,\,\,\,\forall v:\, G(v)<\infty}}\label{eq:evi}
\end{equation}
among all locally absolutely continuous curves in $(M,d)$ such that 
$\lim_{t\rightarrow0}c_t=c_{0}$. Among other things, the above inequality shows that 
$$\lim_{t \to \infty}G(c_t)= \inf_{y \in M}G(y).$$
\begin{remark}\label{remark: initial potential}
A necessary condition for the solvability of the minimization steps \eqref{eq: minimizing movement eq} is to have $G(c_0) < \infty$. An approximation argument using contractivity of the minimizing movement yields that it is possible to uniquely define $t \to c_t$ for any $c_0$ in the $d$-closure of the set $\{G < \infty\}$. This slightly more general movement satisfies all the above mentioned properties and additionally $G(c_t) < \infty$ for any $t >0$ (for more details see \cite{ags}). 
\end{remark}

Lastly, we recall a theorem of Ba\u c\'ak, central in our later developments:

\begin{theorem} \label{BacakThm}\cite[Theorem 1.5]{ba} Given  a CAT(0) space $(M, d)$ and a $d$-lsc convex function $G : M \to (-\infty, \infty]$, assume that $G$ attains its minimum on $M$. Then any minimizing movement trajectory $t \to c_t$  weakly $d$-converges to some minimizer of $G$ as $t \to \infty$.
\end{theorem}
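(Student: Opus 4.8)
The plan is to follow the classical Opial argument for the weak convergence of gradient-flow trajectories, transplanted to the CAT(0) setting through the notion of asymptotic centers. Write $\bar G = \min_M G$, which is attained by hypothesis, and let $S = \{v \in M : G(v) = \bar G\}$ be the nonempty set of minimizers. Since $G$ is $d$-lsc and convex, each sublevel set $\{G \le \alpha\}$ is closed and geodesically convex; in particular $S = \{G \le \bar G\}$ is a closed geodesically convex set, and it is exactly these two properties of $S$ that will drive the argument.

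First I would establish Fej\'er monotonicity of the trajectory with respect to every minimizer. Fixing $v \in S$ in the evolution variational inequality \eqref{eq:evi} gives $\frac12 \frac{d}{dt} d^2(c_t, v) \le G(v) - G(c_t) = \bar G - G(c_t) \le 0$ for a.e.\ $t > 0$. Since $t \mapsto c_t$ is locally Lipschitz by Theorem \ref{MayerThm}, the function $t \mapsto d(c_t, v)$ is non-increasing, so the limit $\ell(v) := \lim_{t\to\infty} d(c_t, v)$ exists for every $v \in S$. In particular the trajectory is $d$-bounded, hence by Proposition \ref{WeakdConvProp}(i) every sequence $c_{t_j}$ with $t_j \to \infty$ has a weakly $d$-convergent subsequence.

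Next I would show that every weak subsequential limit lies in $S$. The discussion following \eqref{eq:evi} already gives $G(c_t) \to \bar G$. If $c_{s_j} \to x$ weakly, then for any $\alpha > \liminf_j G(c_{s_j})$ a subsequence sits inside the weakly closed set $\{G \le \alpha\}$ (Proposition \ref{WeakdConvProp}(ii)) and still converges weakly to $x$, forcing $G(x) \le \alpha$; letting $\alpha \downarrow \bar G$ yields $x \in S$. It then remains to prove that the weak limit is unique, which is the heart of the matter. Suppose $x,y$ are two weak subsequential limits; both lie in $S$, and by convexity of $G$ their geodesic midpoint $m$ again lies in $S$, so $\ell(x),\ell(y),\ell(m)$ all exist. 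Because $x$ is the asymptotic center of the subsequence converging to it, $\ell(x) = r(x,\cdot) \le r(m,\cdot) = \ell(m)$, and symmetrically $\ell(y) \le \ell(m)$; here I use that the relevant $\limsup$'s are genuine limits thanks to Fej\'er monotonicity. Applying the CAT(0) inequality along the geodesic $[x,y]$ with apex $c_t$ and passing to the limit $t\to\infty$ gives $\ell(m)^2 \le \tfrac12 \ell(x)^2 + \tfrac12 \ell(y)^2 - \tfrac14 d(x,y)^2 \le \ell(m)^2 - \tfrac14 d(x,y)^2$, whence $d(x,y)=0$. Thus all weak subsequential limits coincide with a single minimizer $x_\infty$.

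The main obstacle is exactly this uniqueness step: weak $d$-convergence is not induced by a linear pairing, so the Hilbert-space computation is unavailable. The substitute is the Opial-type comparison of asymptotic radii together with the CAT(0) midpoint estimate, which is why both the defining minimality property of the asymptotic center and the geodesic convexity of $S$ are indispensable. A final technical point to verify is that passing from ``all subsequential weak limits agree'' to weak $d$-convergence of the full bounded trajectory is legitimate; this follows from the definition of weak $d$-convergence via asymptotic centers combined with the subsequence extraction of Proposition \ref{WeakdConvProp}(i), and I would spell out that routine extraction at the end.
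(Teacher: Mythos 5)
Your argument is correct. Note that the paper itself gives no proof of this statement: it is imported verbatim from Ba\v c\'ak's work (cited as Theorem 1.5 of that reference), so there is no in-paper argument to compare against. What you have written is the standard Fej\'er-monotonicity/Opial-type proof, which is essentially the strategy of the cited source adapted from the discrete proximal point algorithm to the continuous-time trajectory: the evolution variational inequality \eqref{eq:evi} with a minimizer as test point gives that $t\mapsto d(c_t,v)$ is non-increasing for every $v$ in the (closed, geodesically convex) minimizer set $S$, the trajectory is therefore bounded with $G(c_t)\to\inf G$, weak subsequential limits lie in $S$ by Proposition \ref{WeakdConvProp}(ii), and uniqueness of the weak limit follows from the comparison of asymptotic radii at the two candidate limits and their midpoint via the CAT(0) quadrilateral inequality. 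All of these steps check out, including your observation that Fej\'er monotonicity upgrades the relevant $\limsup$'s to genuine limits, which is what makes $r(x,\{c_{s_j}\})=\ell(x)$ and hence the Opial comparison legitimate. The only step you leave as ``routine'' is the passage from uniqueness of subsequential weak limits to weak convergence of the full trajectory; this does deserve the two lines you promise, namely that for any subsequence with asymptotic center $z$ one extracts a further weakly convergent subsequence with limit $x_\infty$, and the chain $\ell(x_\infty)\le r(z,\{c_{s_j}\})\le r(x_\infty,\{c_{s_j}\})=\ell(x_\infty)$ together with uniqueness of asymptotic centers forces $z=x_\infty$.
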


\section{Approximation in $d_p$ with convergent entropy}
\label{sect: approximation}
The approximation results in this section will be used in the proof of Theorem \ref{ExtKEnergyThmIntr}. Our main tools will come from the Sections 2.1 and 2.2. We begin with the simplified situation of approximation in $\Ec^1$:
\begin{lemma} \label{EntAproxLemma}Suppose $f$ is usc on $X$ with $e^{-f} \in L^1(X,\o^n)$. Given $u \in \mathcal{E}^1$, there exists $u_k \in \mathcal H_\o$ such that $d_1(u_k,u) \to 0$ and $\textup{Ent}(e^{-f}\o^n,\o_{u_k}^n) \to \textup{Ent}(e^{-f}\o^n,\o_{u}^n).$
\end{lemma}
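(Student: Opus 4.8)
The plan is to reduce everything to the solvability of smooth Monge--Amp\`ere equations (Yau's theorem) combined with the entropy compactness Theorem \ref{EntropyCompactnessThm}, splitting according to whether $\textup{Ent}(e^{-f}\o^n,\o_u^n)$ is finite. In the case $\textup{Ent}(e^{-f}\o^n,\o_u^n)=+\infty$ there is nothing to arrange: using \cite{Dem,bk} I would pick $u_k\in\mathcal H_\o$ decreasing to $u$, so that $d_1(u_k,u)\to 0$ by Proposition \ref{prop: monotone implies dp convergence}, while Bedford--Taylor monotone convergence gives $\o_{u_k}^n\to\o_u^n$ weakly. Since $\nu\mapsto\textup{Ent}(e^{-f}\o^n,\nu)$ is lower semicontinuous for the weak topology of measures \cite{dz}, $\liminf_k\textup{Ent}(e^{-f}\o^n,\o_{u_k}^n)\ge\textup{Ent}(e^{-f}\o^n,\o_u^n)=+\infty$, and the entropies converge to $+\infty$ as required.

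Assume now $\textup{Ent}(e^{-f}\o^n,\o_u^n)<\infty$ and write $\mu=\o_u^n=h\,\o^n$, so the entropy is the integral against $\o^n$ of $\Phi(h):=h\log h+hf$. The crucial structural point is that, writing $g:=he^f$ for the density of $\mu$ with respect to $\nu:=e^{-f}\o^n$, one has $\Phi(h)=(g\log g)\,e^{-f}\ge -e^{-1}e^{-f}$, so the negative part of the entropy integrand is dominated by the fixed integrable function $e^{-1}e^{-f}$. My first step is to approximate $\mu$ by measures $\mu_k=f_k\o^n$, with $f_k\in C^\infty(X)$ strictly positive and $\int_X\mu_k=V$, chosen so that $f_k\to h$ in $L^1(\o^n)$ (hence $\mu_k\to\mu$ weakly) and $\limsup_k\textup{Ent}(\nu,\mu_k)\le\textup{Ent}(\nu,\mu)$; combined with lower semicontinuity this already forces $\textup{Ent}(\nu,\mu_k)\to\textup{Ent}(\nu,\mu)$, so only the upper bound must be produced by hand. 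To build $f_k$ I would first truncate, replacing $h$ by $h^{(k)}:=\max(\min(h,k),1/k)$ (renormalized to mass $V$), which lies between positive constants and tends to $h$ in $L^1(\o^n)$; the integrable lower bound on $\Phi$ lets me apply Fatou's lemma to $-\Phi(h^{(k)})+e^{-1}e^{-f}$ and conclude $\limsup_k\int_X\Phi(h^{(k)})\,\o^n\le\int_X\Phi(h)\,\o^n$. I would then mollify $h^{(k)}$ in charts through a partition of unity to reach a smooth strictly positive density, staying within a fixed two-sided bound so that, using $f\in\psh(X,\beta)\subset L^1(X,\o^n)$, both $\int_X f_k\log f_k\,\o^n$ and $\int_X f f_k\,\o^n$ vary continuously; a diagonal choice yields the desired $f_k$. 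Solving $\o_{u_k}^n=\mu_k$ by Yau's theorem produces $u_k\in\mathcal H_\o$ with $\textup{Ent}(e^{-f}\o^n,\o_{u_k}^n)\to\textup{Ent}(e^{-f}\o^n,\o_u^n)$.

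It remains to upgrade this to $d_1(u_k,u)\to 0$. Normalizing each $u_k$ so that $\int_X u_k\,\o^n=\int_X u\,\o^n=:m$, the standard estimate $\sup_X\varphi\le V^{-1}\int_X\varphi\,\o^n+C_\o$ together with $\sup_X u_k\ge V^{-1}\int_X u_k\,\o^n$ gives $|\sup_X u_k|\le C$. Since $e^{-f}\in L^p(X,\o^n)$ for some $p>1$ by the openness theorem \cite{Bern13}, and the entropies $\textup{Ent}(e^{-f}\o^n,\o_{u_k}^n)$ are bounded, Theorem \ref{EntropyCompactnessThm} applies, so every subsequence of $\{u_k\}$ admits a further $d_1$-convergent subsequence $u_{k_l}\to w\in\mathcal E^1$. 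As $d_1$-convergence is known to force weak convergence of the Monge--Amp\`ere measures together with $L^1(\o^n)$-convergence of the potentials, we obtain $\o_w^n=\lim_l\mu_{k_l}=\mu=\o_u^n$ and $\int_X w\,\o^n=m$. Uniqueness of solutions of the Monge--Amp\`ere equation \cite{Diw} then gives $w=u$, so the full sequence $d_1$-converges to $u$, completing the proof.

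The main obstacle is the middle step: producing smooth strictly positive densities that approximate the possibly very singular measure $\o_u^n$ \emph{while controlling the entropy relative to the singular weight} $e^{-f}\o^n$. What makes it work is the observation that the entropy integrand $\Phi(h)$ has a negative part bounded below by the fixed integrable function $e^{-1}e^{-f}$, which is exactly what powers the Fatou estimate for the hand-made upper bound; the matching lower bound then comes for free from lower semicontinuity. Everything after this step is a soft compactness-plus-uniqueness argument.
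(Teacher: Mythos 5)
Your proposal is correct and follows essentially the same route as the paper: truncate the density of $\o_u^n$, smooth it to a positive $C^\infty$ density with controlled entropy, solve $\o_{u_k}^n=\mu_k$ by Yau's theorem, and then identify the $d_1$-limit via the entropy compactness theorem together with uniqueness of Monge--Amp\`ere solutions. The only (harmless) differences are cosmetic: you use a two-sided truncation with a Fatou-plus-lower-semicontinuity squeeze where the paper uses a one-sided truncation with dominated convergence, and you fix the additive constant by normalizing $\int_X u_k\,\o^n$ rather than adjusting constants at the end.
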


\begin{proof} If $\textup{Ent}(e^{-f}\o^n,\o_{u}^n)=\infty$ then  any sequence $u_k \in \mathcal H_\o$ with $d_1(u_k,u) \to 0$ satisfies the requirements, as the entropy is  $d_1$-lsc. We can suppose that $\textup{Ent}(e^{-f}\o^n,\o_{u}^n) < \infty$.
Let $g = \o_{u}^n/\o^n \geq 0$ be the density function of $\o_u^n$. We will show that there exists positive functions $g_k \in C^\infty(X)$ such that $|g-g_k|_{L^1} \to 0$ and 
$$\int_M g_k \log\frac{g_k}{e^{-f}}\o^n  \to \int_M g \log\frac{g}{e^{-f}}\o^n=\textup{Ent}(e^{-f}\o^n,\o_u^n).$$
First introduce $h_k = \min \{k,g\}, \ k \in \Bbb N$. As $\phi(t) =t\log(t), \ t > 0$ is bounded from below by $-e^{-1}$ and increasing for $t > 1$, we get 
$$
-e^{-1} e^{-f} \leq h_k \log\frac{h_k}{e^{-f}} 
\leq \max \{ 0, g \log\frac{g}{e^{-f}}\}. $$ 
Clearly  $|h_k -g|_{L^1} \to 0$, and as $e^{-f}\in L^1(X,\o^n),g \log\frac{g}{e^{-f}} \in L^1(X,\o^n)$, the Lebesgue dominated  convergence theorem gives that
\begin{equation*}
\int_M h_k \log\frac{h_k}{e^{-f}}\o^n  \to \int_M g \log\frac{g}{e^{-f}}\o^n=\textup{Ent}(e^{-f}\o^n,\o_u^n).
\end{equation*}
Using the density of $C^\infty(M)$ in $L^1(M)$, by another application of the  dominated convergence theorem , we find a positive sequence $g_k \in C^\infty(X)$  such that $|g_k-h_k|_{L^1} \leq 1/k$ and 
\begin{equation*}
\Big|\int_M h_k \log\frac{h_k}{e^{-f}}\o^n  - \int_M g_k \log\frac{g_k}{e^{-f}}\o^n\Big| \leq \frac{1}{k}.
\end{equation*}
Using the Calabi-Yau theorem we find potentials $v_k \in \mathcal H_\o$ with $\sup_M v_k=0$ and $\o_{v_k}^n = g_k \o^n/\int_M g_k \o^n$. Proposition \ref{EntropyCompactnessThm} now guarantees that (after possibly passing to a subsequence) $d_1(v_k,h) \to 0$ for some $h \in \mathcal E_1(X)$. But \cite[Theorem 5 (i)]{da2} implies the equality 
of measures $\o_h^n = \o^n_u$. Finally, by  the uniqueness theorem 
\cite[Theorem B]{gz1} we get that $h$ and $u$ 
can differ by at most a constant. Hence, after possibly adding a constant, 
we can suppose that $d_1(v_k,u) \to 0$.
\end{proof}

The key point in the above proof is that a bound on the entropy implies compactness in $(\Ec^1,d_1)$. There are examples showing that the $d_2$ version of this compactness result does not hold in general. Therefore, to approximate functions in $(\Ec^p,d_p), p>1$ with convergent entropy, a new approach is necessary: 

\begin{theorem}\label{thm: Ep approximation with entropy}
Suppose $\varphi\in \Ec^p, \ p \geq 1$ and $f$ is usc on $X$ with $e^{-f}\in L^1(X,\omega^n)$.  Then there exists $\varphi_j \in \mathcal H_\o$ such that $d_p(\varphi_j,\varphi)\to 0$ as $j\to +\infty$ and $\Ent(e^{-f} \omega^n , \omega_{\varphi_j}^n) \to \Ent(e^{-f} \omega^n , \omega_{\varphi}^n)$ as $j\to +\infty$.
\end{theorem}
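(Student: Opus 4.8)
The plan is to combine the $\varepsilon$-regularisation of \eqref{eq: MA Ep epsilon} with a smoothing of the measure $\o_\varphi^n$, using the comparison principle of Lemma \ref{lem: comparison principle in E} as a substitute for the entropy compactness of Theorem \ref{EntropyCompactnessThm}, which is available only when $p=1$. First I would dispose of the trivial case: if $\textup{Ent}(e^{-f}\o^n,\o_\varphi^n)=+\infty$, then by $d_p$-lower semicontinuity of the entropy any sequence $\varphi_j\in\mathcal H_\o$ with $d_p(\varphi_j,\varphi)\to0$ (which exists by Theorem \ref{dpCompletionThm}) automatically satisfies $\textup{Ent}(e^{-f}\o^n,\o_{\varphi_j}^n)\to+\infty$, so I assume the entropy finite. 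Since both $d_p$ and the entropy are invariant under adding constants, I may also normalise $\mu:=\o_\varphi^n$ so that $\int_X\varphi\,d\mu=0$, putting us in the setting of Lemma \ref{lem: epsilon equation limit}.

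The $\varepsilon$-equation enters twice. For each $\varepsilon>0$ let $\varphi_\varepsilon\in\mathcal E^p$ solve $\o_{\varphi_\varepsilon}^n=e^{\varepsilon\varphi_\varepsilon}\mu$; by Lemma \ref{lem: epsilon equation limit} we have $d_p(\varphi_\varepsilon,\varphi)\to0$ as $\varepsilon\to0$, and since $\varphi_\varepsilon$ is uniformly bounded from above and bounded below by $\varphi-\sup_X\varphi$, the densities $e^{\varepsilon\varphi_\varepsilon}g$ (where $g=d\mu/d\o^n$) are dominated by $Cg\in L^1$ while $e^{\varepsilon\varphi_\varepsilon}\to1$ pointwise a.e.; a dominated convergence argument then gives $\textup{Ent}(e^{-f}\o^n,\o_{\varphi_\varepsilon}^n)\to\textup{Ent}(e^{-f}\o^n,\mu)$. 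Thus it suffices, for each fixed $\varepsilon$, to approximate $\varphi_\varepsilon$ by smooth potentials in $d_p$ and in entropy, and then diagonalise in $\varepsilon$.

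To approximate $\varphi_\varepsilon$ at fixed $\varepsilon$, I truncate the density. Put $g_k:=\min(g,k)\nearrow g$ and let $w_{\varepsilon,k}$ solve $\o_{w_{\varepsilon,k}}^n=e^{\varepsilon w_{\varepsilon,k}}g_k\,\o^n$. Because $g_k$ is increasing, Lemma \ref{lem: comparison principle in E} forces $w_{\varepsilon,k}$ to be decreasing in $k$ and bounded below by $\varphi_\varepsilon$ (as $\varphi_\varepsilon$ is a subsolution of the $g_k$-equation); the monotone limit solves the $\mu$-equation, so by uniqueness $w_{\varepsilon,k}\searrow\varphi_\varepsilon$, whence $d_p(w_{\varepsilon,k},\varphi_\varepsilon)\to0$ by Proposition \ref{prop: monotone implies dp convergence}. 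Each $g_k$ is bounded, so in a second, inner step I smooth it: choosing smooth positive $h_{k,l}\to g_k$ with $0<h_{k,l}\le k+1$, the solutions $u_{\varepsilon,k,l}$ of $\o_u^n=e^{\varepsilon u}h_{k,l}\,\o^n$ are smooth, i.e. lie in $\mathcal H_\o$ by the Aubin--Yau theorem, and the same comparison/monotonicity mechanism, now within a uniformly bounded regime, yields $d_p(u_{\varepsilon,k,l},w_{\varepsilon,k})\to0$ as $l\to\infty$. Tracking the entropy exactly as in the proof of Lemma \ref{EntAproxLemma} — truncating and then smoothing the density while invoking dominated convergence, and using the uniform sup-bounds to control the harmless factor $e^{\varepsilon u}$ — shows that the entropies converge along each of these limits. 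A diagonal extraction in $(\varepsilon,k,l)$, together with the triangle inequality for $d_p$, then produces the desired sequence $\varphi_j\in\mathcal H_\o$.

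The main obstacle is the $d_p$-stability of the complex Monge--Amp\`ere solution map: since $\mu=\o_\varphi^n$ may be singular, the approximating measures can converge to it only weakly (or in $L^1$ of densities), and such convergence does not by itself control $d_p$ of the solutions — this is precisely where the $p=1$ argument of Lemma \ref{EntAproxLemma}, resting on the entropy compactness of Theorem \ref{EntropyCompactnessThm}, breaks down for $p>1$. The resolution is to route all convergences through the $\varepsilon$-equation, whose comparison principle (Lemma \ref{lem: comparison principle in E}) converts the monotone convergence $g_k\nearrow g$ of densities into monotone convergence of potentials, after which Proposition \ref{prop: monotone implies dp convergence} supplies genuine $d_p$-convergence. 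The remaining bookkeeping — arranging the smoothing to preserve positivity and boundedness and to keep the entropy convergent — is routine once this monotone mechanism is in place.
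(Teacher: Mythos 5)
Your proposal follows essentially the same route as the paper: reduce to finite entropy, normalise so that $\int_X\varphi\,\o_\varphi^n=0$, pass through the $\eps$-regularised equation \eqref{eq: MA Ep epsilon} via Lemma \ref{lem: epsilon equation limit}, truncate the density and use the comparison principle of Lemma \ref{lem: comparison principle in E} to turn $g_k\nearrow g$ into monotone convergence of potentials, smooth the bounded density to land in $\mathcal H_\o$ by Aubin--Yau, track the entropy by dominated convergence as in Lemma \ref{EntAproxLemma}, and diagonalise. This is exactly the paper's three-step scheme, including the key idea of using the comparison principle as a substitute for the entropy compactness that is only available for $p=1$.

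The one place where your argument as stated does not go through is the innermost smoothing step. You claim that ``the same comparison/monotonicity mechanism'' gives $d_p(u_{\eps,k,l},w_{\eps,k})\to0$ as $l\to\infty$, but the truncated density $g_k=\min(g,k)$ is merely a bounded measurable function, and a decreasing limit of smooth (or continuous) functions is necessarily usc; hence one cannot in general choose smooth $h_{k,l}$ decreasing pointwise to $g_k$, and for a non-monotone approximation $h_{k,l}\to g_k$ the comparison principle gives no ordering of the solutions. The paper closes this step differently: it takes $h_{k,l}\to g_k$ in $L^2(X)$ (with uniform bounds) and invokes Ko{\l}odziej's stability estimate \cite{Kol98} to get \emph{uniform} convergence $u_{\eps,k,l}\to w_{\eps,k}$, which trivially implies $d_p$-convergence; this is precisely where your remark ``now within a uniformly bounded regime'' should be cashed in. With that substitution the rest of your bookkeeping (positivity and boundedness of the smoothed densities, entropy convergence by dominated convergence, diagonal extraction) is sound.
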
 

\begin{proof} 
We divide the approximation procedure in three steps.
\medskip

\noindent{\bf Step 1.} Assume that $u\in \Ec^p$ has finite twisted entropy $\Ent(e^{-f}\omega^n,\omega_{u}^n)<+\infty$ and 
$$
(\omega +i\ddbar u)^n =e^{g} \omega^n,
$$
for some measurable function $g$. We also normalize $u$ so that $\int_X u (\omega+i\ddbar u)^n=0$. For each $\varepsilon>0$ let $u_{\varepsilon}\in \Ec^p(X,\omega)$ be the unique solution to 
$$
(\omega +i\ddbar u_{\vep})^n =e^{\vep u_{\vep} + g} \omega^n. 
$$
Then we claim that $d_p(u_{\vep},u)\to 0$ as $\vep \to 0$ and $\Ent(e^{-f}\omega^n,\omega_{u_{\vep}}^n)\to \Ent(e^{-f}\omega^n,\omega_{u}^n)$ as $\vep \to 0$.

Indeed, from Lemma \ref{lem: epsilon equation limit}  $u_{\vep}$ is uniformly bounded from above for $\vep\in [0,1]$, and converges in $d_p$ to $\varphi$ as $\vep\to 0$.  Also, by the  comparison principle (Lemma \ref{lem: comparison principle in E}), $u_{\vep}\geq \varphi-\sup_X \varphi$. As in the proof of Lemma \ref{EntAproxLemma} we can show using dominated convergence theorem that $\Ent(e^{-f}\omega^n,\omega_{u_{\vep}}^n)$ converges to $\Ent(e^{-f}\omega^n,\omega_{u}^n)$ as $\vep\to 0$.

\medskip

\noindent {\bf Step 2.} Let $g$ be a measurable function such that $\int_X e^{g}\omega^n <+\infty$. Assume that $u\in \Ec^p$ has finite twisted entropy $\Ent(e^{-f}\omega^n, \omega_u^n)<+\infty$ and 
$$
(\omega +i\ddbar u)^n =e^{\vep u +g}\omega^n, 
$$
for some constant $\vep>0$. Consider $g_k:=\min(g,k)$, $k\in \mathbb{N}$. Let $u_{k}\in \psh(X,\omega)\cap \Cc^0(X)$ be the unique solution to 
$$
(\omega +i \ddbar u_{k})^n = e^{\vep u_k + g_k}\omega^n.
$$
The fact that $u_k$ is continuous follows from Ko{\l}odziej's $\Cc^0$ estimate \cite{Kol98}. 
By the comparison principle $u_{k}$ is decreasing in $k$ and converges to $u$ as $k\to +\infty$. It follows from Proposition \ref{prop: monotone  implies dp convergence} that $d_p(u_k,u)\to 0$ as $k\to +\infty$. Again, the proof of Lemma \ref{EntAproxLemma} shows that $\Ent(e^{-f}\omega^n,\omega_{u_k}^n)$ converges to $\Ent(e^{-f}\omega^n,\omega_{u}^n)$ as $k\to +\infty$. 

\medskip

\noindent {\bf Step 3.} Assume that $g$ is bounded, $u\in \Ec^p$ has finite twisted entropy $\Ent(e^{-f}\omega^n, \omega_u^n)<+\infty$ and 
$$
(\omega +i\ddbar u)^n =e^{\vep u +g}\omega^n, 
$$
for some constant $\vep>0$. Let $\{g_k\}_{k\in \mathbb{N}}$ be a sequence of smooth functions converging in $L^2(X)$ to $g$. Let $u_{k}\in \mathcal H_\o$ be the unique smooth solution to 
$$
(\omega +i \ddbar u_{k})^n = e^{\vep u_k + g_k}\omega^n.
$$
The fact that $u_k$ is smooth on $X$ is well-known (see \cite{Aub}  or \cite{SzT}, \cite[Chapter 14]{GZbook} for other proofs). By Ko{\l}odziej's work \cite{Kol98} $u_k$ converges uniformly to $u$ as $k\to +\infty$, in particular $d_p(u_k,u)\to 0$. One can also check using dominated convergence theorem that the twisted entropy also converges. 

\medskip

Now, we come back to the proof of Theorem \ref{thm: Ep approximation with entropy}. If $\Ent(e^{-f}\omega^n,\omega_{\varphi}^n)=+\infty$ then any decreasing sequence $\varphi_j\in \mathcal H_\o$ which converges pointwise to $\varphi$ satisfies our requirement since the entropy is lsc with respect to weak convergence of measures. We can thus assume that $\Ent(e^{-f}\omega^n,\omega_{\varphi}^n)<+\infty$. Then we can write $\omega_{\varphi}^n =e^{g}\omega^n$. We can also assume that $\int_X \varphi \omega_{\varphi}^n=0$. Fix $\delta>0$ arbitrarily small. Denoting $\varphi_0 = \varphi$, by the three steps above we can find $\varphi_1,\varphi_2,\varphi_3 \in \Ec^p$ with $\varphi_3 \in \mathcal H_\o$, such that 
$$
d_p(\varphi_j,\varphi_{j+1})\leq \delta  \ \ \textup{ and } \ \ 
\left|\Ent\left(e^{-f}\omega^n, \omega_{\varphi_j}^n\right)-\Ent\left(e^{-f}\omega^n, \omega_{\varphi_{j+1}}^n\right)\right | \leq \delta, \ j=0,1,2.
$$
From this the result follows.
\end{proof}

\section{Extension of the twisted K-energy} \label{sect: twisted K-energy}

The main goal of this section is to prove Theorem \ref{ExtKEnergyThmIntr}. Before we can attempt a proof, we need to understand the $d_1$-continuity properties of each functional appearing in right hand side of \eqref{KEnTwistDef}. Some of the preliminary results below are well known, but as a courtesy to the reader we give a detailed account. 

\subsection{The $\AM$ functional} \label{subsect: AM functional}

The Aubin-Mabuchi functional is given by the following formula \cite[Theorem 2.3]{mab},
\begin{equation}\label{eq: AMdef}
\textup{AM}(u):=
\frac{V^{-1}}{n+1}\sum_{j=0}^{n}\int_X u\, \o^j \wedge \o_u^{n-j}, \ \ u \in \mathcal H_\o.
\end{equation}
A series of integrations by parts gives
\begin{equation}
\label{AMDifferenceEq}
\textup{AM}(v)-\textup{AM}(u)
=
\frac{V^{-1}}{n+1}\int_X(v-u)\sum_{k=0}^n \o_u^{n-k}\w \o_v^k, \ \  u,v \in \mathcal H_\o.
\end{equation}
Among other things, this formula shows that 
\begin{equation*}
u \leq v \ \Rightarrow \  \textup{AM}(u) \leq \textup{AM}(v),
\end{equation*} 
and by computing $\lim_{t \to 0}(\textup{AM}(v_t)-\textup{AM}(v))/t$ we arrive at the first order variation of $\textup{AM}$:
\begin{equation}\label{AMVarEq}
\langle D\textup{AM}(v), \delta v \rangle = V^{-1} \int_X \delta v \o^n_v, \ \  v \in \mathcal H_\o, \delta v \in C^\infty(X).
\end{equation} 
Suppose $u \in \mathcal E^1$ and let $u_j \in \mathcal H_\o$ be pointwise decreasing to $u$. Using Proposition \ref{prop: monotone implies dp convergence} we have $d_1(u,u_j)\to 0$. We hope to extend $\textup{AM}$ to $\mathcal E^1$ the following way:
\begin{equation}\label{AMExtensionFormula}
\textup{AM}(u)= \lim_j \textup{AM}(u_j).
\end{equation}
As it turns out, this extension is rigorous confirmed in the following precise result:

\begin{proposition}\label{AMextension} The map $\textup{AM} : \mathcal H_\o \to \Bbb R$ is $d_1$-Lipschitz continuous. Thus, \eqref{AMExtensionFormula} gives $d_1$-Lipschitz extension of $\textup{AM}$ to $\mathcal E^1$.
\end{proposition}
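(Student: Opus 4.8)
The plan is to obtain a two-sided estimate for the increment $\textup{AM}(v)-\textup{AM}(u)$ that is linear in $v-u$, and then convert it into a $d_1$-estimate through the characterization \eqref{dpCharFormula}. Rather than bounding the mixed terms in \eqref{AMDifferenceEq} individually, I would exploit that $\textup{AM}$ is concave along affine segments. Since $\mathcal H_\o$ is convex, for $u,v\in\mathcal H_\o$ the path $u_t=(1-t)u+tv$, $t\in[0,1]$, stays in $\mathcal H_\o$, and \eqref{AMVarEq} gives $\tfrac{d}{dt}\textup{AM}(u_t)=V^{-1}\int_X (v-u)\,\o_{u_t}^n$. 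Differentiating once more and integrating by parts yields $\tfrac{d^2}{dt^2}\textup{AM}(u_t)=-nV^{-1}\int_X \i\,\del(v-u)\w\dbar(v-u)\w\o_{u_t}^{n-1}\le 0$, so $t\mapsto\textup{AM}(u_t)$ is concave. Comparing the increment with the slopes at the two endpoints gives the sandwich
$$V^{-1}\int_X (v-u)\,\o_v^n \;\le\; \textup{AM}(v)-\textup{AM}(u)\;\le\; V^{-1}\int_X (v-u)\,\o_u^n.$$

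From this the Lipschitz bound on $\mathcal H_\o$ is immediate: the increment lies between two quantities each bounded in absolute value by $V^{-1}\big(\int_X|v-u|\,\o_u^n+\int_X|v-u|\,\o_v^n\big)$, whence
$$|\textup{AM}(v)-\textup{AM}(u)|\le V^{-1}\Big(\int_X|v-u|\,\o_u^n+\int_X|v-u|\,\o_v^n\Big)\le V^{-1}C\,d_1(u,v),$$
the last step being the right-hand inequality of \eqref{dpCharFormula} at $p=1$. This shows $\textup{AM}$ is $d_1$-Lipschitz on $\mathcal H_\o$.

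For the extension, let $u\in\Ec^1$ and pick $u_j\in\mathcal H_\o$ decreasing to $u$ (such sequences exist by \cite{Dem,bk}); by Proposition \ref{prop: monotone implies dp convergence} we have $d_1(u_j,u)\to0$, so $\{u_j\}_j$ is $d_1$-Cauchy, and by the Lipschitz bound just proved $\{\textup{AM}(u_j)\}_j$ is Cauchy in $\Bbb R$ and therefore converges. The limit does not depend on the chosen sequence: if $u_j'\in\mathcal H_\o$ is another sequence with $d_1(u_j',u)\to0$, then $d_1(u_j,u_j')\to0$ forces $|\textup{AM}(u_j)-\textup{AM}(u_j')|\to0$. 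Hence \eqref{AMExtensionFormula} well-defines $\textup{AM}$ on $\Ec^1$. To see the extension remains $d_1$-Lipschitz, take $u,v\in\Ec^1$ with approximants $u_j,v_j\in\mathcal H_\o$ and pass to the limit in $|\textup{AM}(u_j)-\textup{AM}(v_j)|\le V^{-1}C\,d_1(u_j,v_j)$, using continuity of $d_1$ to get $|\textup{AM}(u)-\textup{AM}(v)|\le V^{-1}C\,d_1(u,v)$.

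The only genuinely delicate point is justifying the integration by parts in the concavity computation; on $\mathcal H_\o$ all potentials are smooth, so this is routine, and crucially the passage to $\Ec^1$ is handled entirely at the level of the metric (Lipschitz continuity plus density) rather than through the mixed non-pluripolar products directly, which sidesteps any convergence question for those singular integrals. I therefore expect the substance of the argument to be the two-sided estimate together with \eqref{dpCharFormula}, with everything else being soft.
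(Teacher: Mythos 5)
Your argument is correct, but it takes a different route from the paper's. The paper proves the Lipschitz bound directly from the definition of $d_1$ as an infimum of path lengths: for any smooth curve $\gamma$ joining $u_0$ to $u_1$, the first variation formula \eqref{AMVarEq} gives $|\textup{AM}(u_1)-\textup{AM}(u_0)|=\big|V^{-1}\int_0^1\int_X\dot\gamma_t\,\o_{\gamma_t}^n\,dt\big|\le l_1(\gamma)$, and taking the infimum over $\gamma$ yields the sharp constant $|\textup{AM}(u_1)-\textup{AM}(u_0)|\le d_1(u_0,u_1)$ with no further input. You instead establish concavity of $\textup{AM}$ along affine segments, extract the two-sided slope estimate $V^{-1}\int_X(v-u)\o_v^n\le\textup{AM}(v)-\textup{AM}(u)\le V^{-1}\int_X(v-u)\o_u^n$, and then convert to a $d_1$-bound via the comparison \eqref{dpCharFormula}. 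Both are valid; the trade-offs are that the paper's path-length argument is softer (it needs only \eqref{AMVarEq} and the definition of $d_1$, not the nontrivial Theorem \ref{thm: comparison d2 and I2}) and yields Lipschitz constant $1$, whereas your route costs the constant $C$ from \eqref{dpCharFormula} but produces as a byproduct exactly the sandwich inequalities the paper records separately as Lemma \ref{lem: bounds for AM} (proved there from \eqref{AMDifferenceEq} and $\int_X(u-v)\,i\ddbar(u-v)\wedge T\le0$, which is the same negativity your second-derivative computation exploits). The extension step in both treatments is identical soft metric-space reasoning from density of $\mathcal H_\o$ in $(\mathcal E^1,d_1)$.
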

\begin{proof}First we argue that $|\textup{AM}(u_0)-\textup{AM}(u_1)| \leq d_1(u_0,u_1)$ for $u_0,u_1 \in \mathcal H_\o$. Let $[0,1] \ni t \to \gamma_t \in \mathcal H_\o$ be a smooth curve connecting $u_0,u_1$. By \eqref{AMVarEq} we can write:
$$|\textup{AM}(u_1)-\textup{AM}(u_0)| = \Big|V^{-1}\int_0^1 \int_X \dot \gamma_t \o_{\gamma_t}^n\Big|dt \leq V^{-1}\int_0^1 \int_X |\dot \gamma_t |\o_{\gamma_t}^n dt=l(\gamma).$$
Taking infimum over all smooth curves connecting $u_0,u_1$ we obtain that $|\textup{AM}(u_1)-\textup{AM}(u_0)|\leq d_1(u_0,u_1)$. The density of $\mathcal H_\o$ in $\mathcal E^1$ implies that $\textup{AM}$ extends to $\mathcal E^1$ using the formula \eqref{AMExtensionFormula}. The extension has to be $d_1$-Lipschitz continuous.
\end{proof}

Before we proceed, we mention that the ``abstract" $d_1$-continuous extension $\AM : \mathcal E^1 \to \Bbb R$ given by the above result is the same as the ``concrete" one given by the expression of \eqref{eq: AMdef} after replacing the smooth products $\o^j \wedge \o^{n-j}_u$ with the non-pluripolar products from \eqref{eq: nonpluripolar product}, as done in \cite{begz}. Moving on, we give a kind of ``domination principle" for the extended Aubin-Mabuchi energy on $\mathcal E^1$:

\begin{proposition}\label{AMdominProp} Suppose $\psi,\psi \in \mathcal E^1 $ with $\phi \geq \psi$. If $\textup{AM}(\phi) = \textup{AM}(\psi)$, then $\phi = \psi$.
\end{proposition}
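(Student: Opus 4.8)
The plan is to exploit the difference formula \eqref{AMDifferenceEq} in its extension to $\mathcal E^1$, where the smooth mixed forms are replaced by the non-pluripolar products \eqref{eq: nonpluripolar product}. As noted right after Proposition \ref{AMextension}, the $d_1$-continuous extension of $\AM$ coincides with the concrete one built from non-pluripolar products as in \cite{begz}; in particular, for $\phi,\psi\in\mathcal E^1$ with $\psi\le\phi$ one has
\begin{equation*}
\AM(\phi)-\AM(\psi)=\frac{V^{-1}}{n+1}\int_X(\phi-\psi)\sum_{k=0}^n\o_\psi^{n-k}\w\o_\phi^k,
\end{equation*}
each of the $n+1$ integrals being finite (since $\phi$ is bounded above and $\psi\in\mathcal E^1$, the nonnegative function $\phi-\psi$ is integrable against every mixed measure appearing). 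I would establish this identity by taking smooth sequences $\phi_j\downarrow\phi$, $\psi_j\downarrow\psi$, applying \eqref{AMDifferenceEq} on $\mathcal H_\o$, and passing to the limit via the $d_1$-continuity of $\AM$ (Proposition \ref{AMextension} together with Proposition \ref{prop: monotone implies dp convergence}) on the left and the continuity of non-pluripolar products along decreasing sequences on the right.

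With the formula in hand the argument is a positivity observation. Since $\phi\ge\psi$, the factor $\phi-\psi\ge0$ and each $\o_\psi^{n-k}\w\o_\phi^k$ is a positive measure, so every one of the $n+1$ integrals is nonnegative. The hypothesis $\AM(\phi)=\AM(\psi)$ forces the whole sum to vanish, hence each integral vanishes. Selecting the $k=0$ term gives $\int_X(\phi-\psi)\,\o_\psi^n=0$. As $\phi-\psi\ge0$ and $\o_\psi^n\ge0$, this yields $\phi=\psi$ almost everywhere with respect to $\o_\psi^n$; in particular $\psi\ge\phi$ on a set of full $\o_\psi^n$-measure.

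Finally, I would invoke Dinew's domination principle \cite[Proposition 5.9]{blle}, exactly as in the proof of Lemma \ref{lem: comparison principle in E}: since $\psi,\phi\in\mathcal E(X,\o)$ and $\psi\ge\phi$ holds $\o_\psi^n$-almost everywhere, it follows that $\psi\ge\phi$ everywhere on $X$. Combined with the standing assumption $\phi\ge\psi$, this gives $\phi=\psi$, as desired.

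The only genuinely delicate point is the rigorous extension of \eqref{AMDifferenceEq} to $\mathcal E^1$: the potentials are unbounded and non-pluripolar products are discontinuous under general weak limits, so one must work with decreasing (monotone) approximations, for which the convergence of the mixed \MA measures and the finiteness of the energy integrals are available from \cite{begz,gz1}. Once this identity is secured, the positivity step and the domination principle are immediate; alternatively, one may simply cite the cocycle/difference identity for $\AM$ on $\mathcal E^1$ directly from \cite{begz} and proceed straight to the positivity argument.
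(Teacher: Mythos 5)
Your proposal is correct and follows essentially the same route as the paper: both arguments extract the $k=0$ term of the difference formula to obtain $\int_X(\phi-\psi)\,\o_\psi^n=0$ and then conclude via Dinew's domination principle. The only (minor) difference is technical: rather than establishing the full non-pluripolar identity for $\AM(\phi)-\AM(\psi)$ in $\mathcal E^1$, the paper passes to the limit only in the one-sided inequality $0\le\frac{V^{-1}}{n+1}\int_X(\phi_k-\psi_k)\o_{\psi_k}^n\le\AM(\phi_k)-\AM(\psi_k)$ along decreasing smooth approximants with $\phi_k\ge\psi_k$, using the $d_1$-continuity of $\AM$ and \cite[Lemma 5.2]{da2} for the left-hand side, which is a lighter step than the equality you propose to prove.
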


\begin{proof} Suppose $\phi_k,\psi_k \in \mathcal H_\o$ are sequences pointwise decreasing to $\phi$ and $\psi$ respectively with $\phi_k \geq \psi_k$. Then \eqref{AMDifferenceEq} gives that
$$0 \leq \frac{1}{(n+1)V} \int_X (\phi_k - \psi_k) \o^n_{\psi_k} \leq \textup{AM}(\phi_k) - \textup{AM}(\psi_k)$$
Using the previous proposition and \cite[Lemma 5.2]{da2} with $\chi(t)=|t|$, $v_k = \phi_k, \ u_k = \psi_k, \ w_k = \psi_k$ we may take the limit in this estimate to obtain:
$$0 \leq \frac{1}{(n+1)V} \int_X (\phi - \psi) \o^n_{\psi} \leq \textup{AM}(\phi) - \textup{AM}(\psi)=0,$$
hence $\psi \geq \phi$ a.e. with respect to $\o^n_{\psi}$. The domination principle of the class $\mathcal E$ \cite[Propostion 5.9]{blle}  gives now that $\psi \geq \phi$ globally on $X$, hence $\psi = \phi$.
\end{proof}

The last result of this subsection points out that the family of finite energy geodesics inside $\mathcal E^p$ is in fact 'endpoint-stable'. We note that in the case $p=2$ this follows from the fact that $(\mathcal E^2,d_2)$ is CAT(0) \cite{bh}.

\begin{proposition} \label{FinEnGeodStabProp} Suppose $[0,1] \ni t \to u^j_t \in \mathcal E^p$ is a sequence of finite energy geodesic segments such that $d_p(u^j_0,u_0),d_p(u^j_1,u_1) \to 0$. Then $d_p(u^j_t,u_t) \to 0$ for all $t \in [0,1]$, where $[0,1] \ni t\to u_t \in \mathcal E^p$ is the finite energy geodesic segment connecting $u_0,u_1$.
\end{proposition}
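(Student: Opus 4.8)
The plan is to bypass the CAT(0) structure, which settles the case $p=2$ via Busemann convexity of $d_2$, and instead exploit the monotonicity of finite energy geodesics in their endpoints, reducing the general statement to a sandwich between two monotone families of geodesics. Since it suffices to show that every subsequence of $\{u^j_t\}_j$ admits a further subsequence that is $d_p$-convergent to $u_t$, I may freely pass to subsequences. Applying Proposition \ref{MonotoneDominationProp} to $\{u^j_0\}_j$ and $\{u^j_1\}_j$ (and diagonalizing to treat both endpoints at once), I obtain after relabeling decreasing sequences $w^j_0 \searrow u_0$, $w^j_1 \searrow u_1$ and increasing sequences $v^j_0 \nearrow u_0$, $v^j_1 \nearrow u_1$ with $v^j_i \le u^j_i \le w^j_i$ and $d_p(w^j_i, u_i), d_p(v^j_i, u_i) \to 0$ for $i=0,1$. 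Let $W^j_t, V^j_t \in \mathcal E^p$ denote the finite energy geodesics joining $w^j_0, w^j_1$ and $v^j_0, v^j_1$ respectively.

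The first key point is the monotonicity of finite energy geodesics with respect to their endpoints: if two finite energy geodesics have pointwise ordered endpoints, then they remain pointwise ordered for all $t$. This follows from the characterization of the weak (hence finite energy) geodesic joining $a_0,a_1$ as the upper envelope of all $\pi_2^\star\o$-subgeodesics with boundary data dominated by $a_0$ and $a_1$ (see \cite{c,da1,da2}): enlarging the boundary data enlarges the competing family and hence the envelope. In particular $V^j_t \le u^j_t \le W^j_t$ for all $t \in [0,1]$. Moreover, since $w^{j+1}_i \le w^j_i$ and $v^{j+1}_i \ge v^j_i$, the same monotonicity shows that $\{W^j_t\}_j$ decreases and $\{V^j_t\}_j$ increases, with $W^j_t \ge u_t \ge V^j_t$. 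It remains to identify the monotone limits: approximating each $w^j_i$ (resp.\ $v^j_i$) by smooth decreasing potentials and using that, by Theorem \ref{dpCompletionThm}, finite energy geodesics are the pointwise decreasing limits of the corresponding weak geodesics, a diagonal argument yields $W^j_t \searrow u_t$ and $V^j_t \nearrow u_t$ pointwise. Proposition \ref{prop: monotone implies dp convergence} then gives $d_p(W^j_t,u_t)\to 0$ and $d_p(V^j_t,u_t)\to 0$.

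Finally I must deduce $d_p(u^j_t,u_t)\to 0$ from the sandwich $V^j_t \le u^j_t \le W^j_t$ together with $d_p(V^j_t,u_t), d_p(W^j_t,u_t)\to 0$. By the triangle inequality it is enough to show $d_p(u^j_t,W^j_t)\to 0$, and by the comparison estimate \eqref{dpCharFormula} this amounts to showing that both $\int_X (W^j_t-u^j_t)^p\,\o_{W^j_t}^n$ and $\int_X (W^j_t-u^j_t)^p\,\o_{u^j_t}^n$ tend to $0$. Since $0 \le W^j_t-u^j_t \le W^j_t-V^j_t$, the first integral is bounded by $\int_X (W^j_t-V^j_t)^p\,\o_{W^j_t}^n$, which by \eqref{dpCharFormula} is controlled by $(C\,d_p(V^j_t,W^j_t))^p \le (C(d_p(V^j_t,u_t)+d_p(u_t,W^j_t)))^p \to 0$. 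The genuinely delicate term is the second, where the measure $\o_{u^j_t}^n$ is attached to the ``middle'' potential rather than to an endpoint of a geodesic; I expect this to be the main obstacle, since naive monotone or dominated convergence is unavailable. Here one must invoke a mixed-integral convergence lemma in the spirit of \cite[Lemma 5.2]{da2} (the same tool used in Proposition \ref{AMdominProp}) to conclude that $\int_X (W^j_t-V^j_t)^p\,\o_{u^j_t}^n \to 0$ as $V^j_t, u^j_t, W^j_t$ all $d_p$-converge to $u_t$. Granting this, the estimate \eqref{dpCharFormula} yields $d_p(u^j_t,u_t)\to 0$, completing the subsequential argument and hence the proof.
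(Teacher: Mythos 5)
Your overall architecture --- sandwiching $u^j_t$ between geodesics with monotone endpoints obtained from Proposition \ref{MonotoneDominationProp} and using the comparison principle --- is exactly the paper's, but two of your key steps have genuine gaps. First, the identification of the monotone limits. For the decreasing family $W^j_t$ a diagonal argument over smooth decreasing approximants can be arranged, but for the increasing family the claim that ``a diagonal argument yields $V^j_t \nearrow u_t$'' is unjustified: Theorem \ref{dpCompletionThm} characterizes finite energy geodesics as \emph{decreasing} limits of weak geodesics, and the increasing data $v^j_i \nearrow u_i$ does not produce any smooth sequence decreasing to $u_i$ whose geodesics control $\lim_j V^j_t$ from below. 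A priori one only gets $\textup{usc}(\lim_j V^j_t) \le u_t$, and nothing you write rules out strict inequality. The paper closes this gap with the Aubin--Mabuchi energy: $\AM$ is $d_p$-continuous and linear along finite energy geodesics, so $\AM(\lim_j V^j_t) = (1-t)\AM(u_0)+t\AM(u_1) = \AM(u_t)$, and Proposition \ref{AMdominProp} then upgrades ``$\le$ together with equal energies'' to equality. Without this (or some substitute), the increasing half of your sandwich is not anchored at $u_t$.

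Second, your final step is incomplete and, as written, circular. You reduce to showing $\int_X (W^j_t - V^j_t)^p \o^n_{u^j_t} \to 0$ and propose to grant a mixed-integral convergence lemma whose hypotheses you state as ``$V^j_t, u^j_t, W^j_t$ all $d_p$-converge to $u_t$'' --- but the $d_p$-convergence of $u^j_t$ is precisely the conclusion of the proposition, and the tool you gesture at (\cite[Lemma 5.2]{da2}, as used in Proposition \ref{AMdominProp}) is formulated for monotone sequences, which $u^j_t$ is not. The paper avoids the mixed integral entirely: by \cite[Lemma 4.2]{da2}, ordered potentials $a \le b \le c$ in $\mathcal E^p$ satisfy $d_p(a,b) \le d_p(a,c)$, so the sandwich gives $d_p(V^j_t, u^j_t) \le d_p(V^j_t, W^j_t) \to 0$ and the triangle inequality finishes. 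Substituting this one-line monotonicity of $d_p$ for your last paragraph, and the $\AM$-argument for your limit identification, recovers the paper's proof.
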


\begin{proof} Let $t \in [0,1]$. Notice that we only have to show that any subsequence of $\{u^j_t\}_j$ contains a subsubsequence $d_p$-converging to $u_t$.

Let $\{u^{j_k}_t\}_k$ be an arbitrary subsequence of $\{u^j_t\}_j$
Let $j_{k_l}$ be the subsequence of $j_k$ satisfying the next property: there exists $v^{j_{k_l}}_i  \leq u^{j_{k_l}}_i \leq w^{j_{k_l}}_i$ with $v^{j_{k_l}}_i,w^{j_{k_l}}_i \to_{d_p} u_i$ and $\{v^{j_{k_l}}_i\}_l,\{w^{j_{k_l}}_i\}_l$ is monotone increasing/decreasing $i=0,1$. This is possible to arrange according to Proposition \ref{MonotoneDominationProp}. 
 
By $[0,1] \ni t \to v^{j_{k_l}}_t,w^{j_{k_l}}_t \in \mathcal E^p$ we denote finite energy geodesics connecting $v^{j_{k_l}}_0,v^{j_{k_l}}_1$ and $w^{j_{k_l}}_0,w^{j_{k_l}}_1$. By the maximum principle of finite energy geodesics we can write:
$$v_t := \textup{usc}\Big(\lim_l v^{j_{k_l}}_t\Big) \leq u_t \leq w_t:= \lim_l w^{j_{k_l}}_t.$$
As $\textup{AM}$ is $d_p$-continuous it follows that $\lim_l\textup{AM}(v^{j_{k_l}}_i)= \textup{AM}(u_i)=\lim_l\textup{AM}(w^{j_{k_l}}_i)$ for $i=0,1$. As $\textup{AM}$ is also linear along finite energy geodesics we get $\textup{AM}(v_t)= \textup{AM}(u_t)=\textup{AM}(w_t)$ for any $t \in [0,1]$. Proposition \ref{AMdominProp} gives that $v_t =u_t = w_t$, hence $d_p(v^{j_{k_l}}_t,u_t)\to 0$ and $d_p(w^{j_{k_l}}_t,u_t)\to 0$. Using  $v^{j_{k_l}}_t  \leq u^{j_{k_l}}_t \leq w^{j_{k_l}}_t$, \cite[Lemma 4.2]{da2} gives that $d_p(v^{j_{k_l}}_t, u^{j_{k_l}}_t) \leq d_p(v^{j_{k_l}}_t,w^{j_{k_l}}_t) \to 0$, hence $d_p(u^{j_{k_l}}_t,u_t)\to 0$ as desired at the beginning of the proof.
\end{proof}

\subsection{The $\AM_\gamma$ functional} \label{subsect: twisted AM functional}

For the moment we fix a closed (1,1)-current $\gamma$ on $X$, not necessarily postive. Recall from the introduction that the $\AM_\gamma$ is defined as follows:
\begin{equation}\label{eq: AMtwistdef}
\textup{AM}_\gamma(u):=
\frac{1}{nV}\sum_{j=0}^{n-1}\int_X u\, \gamma \wedge \o^j \wedge \o_u^{n-1-j}, u \in \mathcal H_\o.
\end{equation}
Similarly to $\AM$, integrating by parts gives
\begin{equation}
\label{AMtwistDifferenceEq}
\textup{AM}_\gamma(v)-\textup{AM}_\gamma(u)
=
\frac{1}{nV}\int_X(v-u)\sum_{k=0}^{n-1} \gamma \wedge \o_u^{n-k-1}\w \o_v^k.
\end{equation}
When $\gamma \geq 0$ this last formula gives: 
\begin{equation*}
u \leq v \ \Rightarrow \  \textup{AM}_\gamma(u) \leq \textup{AM}_\gamma(v).
\end{equation*} 
By computing $\lim_{t \to 0}(\textup{AM}_\gamma(v_t)-\textup{AM}_\gamma(v))/t$ we arrive at the first order variation of $\textup{AM}_\gamma$:
\begin{equation}\label{AMTwistVarEq}
\langle D\textup{AM}_\gamma(v), \delta v \rangle = V^{-1} \int_X \delta v \gamma \wedge \o^{n-1}_v, \ \  v \in \mathcal H_\o, \delta v \in C^\infty(X).
\end{equation} 
\paragraph{Extension of $\textup{AM}_\gamma$ to $\mathcal E^1$ when $\gamma$ is smooth.}
For this paragraph suppose $\gamma$ is smooth. Suppose $u \in \mathcal E^1$ and let $u_j \in \mathcal H_\o$ be pointwise decreasing to $u$. Using Proposition \ref{prop: monotone implies dp convergence} we have $d_1(u,u_j)\to 0$. We hope to extend $\textup{AM}_\gamma$ to $\mathcal E^1$ the following way:
\begin{equation}\label{AMTwistExtensionFormula}
\textup{AM}_\gamma(u)= \lim_j \textup{AM}_\gamma(u_j).
\end{equation}
As it turns out, this extension is rigorous as we have the following precise result:

\begin{proposition}\label{AMtwistextension} Formula \eqref{AMTwistExtensionFormula} gives a $d_1$-continuous functional $\textup{AM}_\gamma : \mathcal E^1 \to \Bbb R$. Additionally, $\textup{AM}_\gamma$ thus extended is bounded on $d_1$-bounded subsets of $\mathcal E_1$.
\end{proposition}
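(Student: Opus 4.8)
The plan is to imitate the strategy behind Proposition~\ref{AMextension}, with one essential modification: since the first-variation measure $\gamma\w\o_v^{n-1}$ in \eqref{AMTwistVarEq} is \emph{not} comparable to the probability measure $\o_v^n$ as $v$ degenerates, the clean $d_1$-Lipschitz estimate used there is unavailable, and $\textup{AM}_\gamma$ will be $d_1$-continuous but not Lipschitz. First I would reduce to $\gamma\geq 0$: since $X$ is compact and $\gamma$ is smooth there is $A>0$ with $-A\o\leq\gamma\leq A\o$, so $\gamma=(\gamma+A\o)-A\o$ is a difference of two smooth \emph{positive} forms; as $\textup{AM}_\gamma$ is linear in $\gamma$, it suffices to treat $\gamma$ smooth with $0\leq\gamma\leq A\o$. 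In this range \eqref{AMtwistDifferenceEq} shows $\textup{AM}_\gamma$ is monotone on $\mathcal H_\o$, and this monotonicity is what replaces the Lipschitz bound.

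The elementary input I would isolate is a single-potential mixed-energy estimate: for $w\in\mathcal H_\o$ with $\sup_X w=0$ and $0\le j\le n$, one has $\int_X |w|\,\o^{n-j}\w\o_w^{j}\le \int_X |w|\,\o_w^n$. This follows by integration by parts, comparing consecutive terms:
\[
\int_X w\,\o^{n-j+1}\w\o_w^{j-1}-\int_X w\,\o^{n-j}\w\o_w^{j}
= -\int_X w\,\i\ddbar w\w\o^{n-j}\w\o_w^{j-1}
= \int_X \i\del w\w\dbar w\w\o^{n-j}\w\o_w^{j-1}\ \ge 0,
\]
so the quantities $\int_X w\,\o^{n-j}\w\o_w^{j}$ decrease in $j$ and, all being $\le 0$, their absolute values increase, giving the claim. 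Combined with \eqref{dpCharFormula} this yields $\int_X|w|\,\o^{n-j}\w\o_w^{j}\le C\,d_1(w,0)$. From here both well-definedness of \eqref{AMTwistExtensionFormula} and boundedness on $d_1$-bounded sets are routine: for $d_1$-bounded $u$ put $\tilde u=u-\sup_X u\le 0$ (here $|\sup_X u|\le C\,d_1(u,0)$ and $d_1(\tilde u,u)=|\sup_X u|$); adding a constant changes $\textup{AM}_\gamma$ by $(\sup_X u)V^{-1}\int_X\gamma\w\o^{n-1}$, and applying \eqref{AMtwistDifferenceEq} between $\tilde u$ and $0$ with $\gamma\le A\o$ and the mixed-energy bound controls $|\textup{AM}_\gamma(u)|$ linearly in $d_1(u,0)$. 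For $u_k\downarrow u$ in $\mathcal H_\o$, monotonicity makes $\{\textup{AM}_\gamma(u_k)\}_k$ decreasing and the same bound keeps it bounded below, so the limit exists; independence of the decreasing approximant follows by interlacing two such sequences and using monotonicity again.

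For the continuity statement I would reduce, via the squeezing device of Proposition~\ref{MonotoneDominationProp}, to continuity along monotone sequences: given $d_1(u_k,u)\to 0$, every subsequence has a further subsequence sandwiched as $v_{k_j}\le u_{k_j}\le w_{k_j}$ with $v_{k_j}\uparrow u$ and $w_{k_j}\downarrow u$ in $d_1$; monotonicity of $\textup{AM}_\gamma$ then squeezes $\textup{AM}_\gamma(u_{k_j})$ between $\textup{AM}_\gamma(v_{k_j})$ and $\textup{AM}_\gamma(w_{k_j})$, so it suffices to prove both of the latter converge to $\textup{AM}_\gamma(u)$. For a monotone sequence, say $w_m\downarrow u$, I would extend \eqref{AMtwistDifferenceEq} to $\Ec^1$ through the non-pluripolar products and estimate
\[
0\ \le\ \textup{AM}_\gamma(w_m)-\textup{AM}_\gamma(u)\ \le\ \frac{A}{nV}\int_X (w_m-u)\sum_{k=0}^{n-1}\o\w\o_u^{n-1-k}\w\o_{w_m}^{k},
\]
and show the right-hand side tends to $0$.

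The hard part is precisely controlling these \emph{two-potential} mixed integrals $\int_X(w_m-u)\,\o\w\o_u^{a}\w\o_{w_m}^{b}$, where the single-potential integration-by-parts trick above no longer applies verbatim. The plan is to integrate by parts, rewriting the background factor $\o=\o_u-\i\ddbar u$ repeatedly so as to express each such integral through mixed non-pluripolar products of $\o_u$ and $\o_{w_m}$ alone; these are continuous under the monotone convergence $w_m\downarrow u$ (monotone convergence of non-pluripolar products, as in \cite{begz}), and together with $0\le w_m-u\downarrow 0$ and the finite-energy integrability guaranteed by $u,w_m\in\Ec^1$ this forces each term to vanish — concretely each integral is dominated by a constant times $d_1(w_m,u)\to 0$, in the spirit of \eqref{dpCharFormula} and \cite[Lemma 5.2]{da2}. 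This mixed-product continuity, rather than any of the bookkeeping above, is where the real content lies; once it is in place the squeezing argument closes the proof for $\gamma\ge 0$, and linearity in $\gamma$ delivers the statement for arbitrary smooth $\gamma$.
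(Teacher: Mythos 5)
Your proof takes a genuinely different route from the paper's. The paper establishes a single uniform-continuity estimate on $d_1$-bounded subsets of $\mathcal H_\o$: starting from \eqref{AMtwistDifferenceEq} and $-C\o\le\gamma\le C\o$, it absorbs every mixed term $\o\w\o_{u_0}^{n-1-k}\w\o_{u_1}^{k}$ into a single Monge--Amp\`ere measure via the identity $\o_{(u_0+u_1)/4}=\o/2+\o_{u_0}/4+\o_{u_1}/4$, obtaining $|\AM_\gamma(u_0)-\AM_\gamma(u_1)|\le C'\int_X|u_0-u_1|\,\o^n_{(u_0+u_1)/4}$, and then invokes \cite[Corollary 5.7]{da2} to bound $\int_X|v-w|\o_h^n$ by $f_R(d_1(v,w))$ uniformly for $v,w,h$ in a $d_1$-ball. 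That one estimate delivers well-definedness of the extension, continuity, and boundedness on bounded sets simultaneously, with no need for the reduction to $\gamma\ge0$, monotonicity, or the squeeze through Proposition \ref{MonotoneDominationProp}. Your preliminary steps (the decomposition $\gamma=(\gamma+A\o)-A\o$, the single-potential inequality $\int_X|w|\,\o^{n-j}\w\o_w^{j}\le\int_X|w|\,\o_w^n$ for $\sup_Xw=0$, the resulting bound on $d_1$-bounded sets, the well-definedness along decreasing sequences, and the reduction of continuity to monotone sequences) are all correct.

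There is, however, a genuine gap at exactly the point you flag as the hard part: showing $\int_X(w_m-u)\,\o\w\o_u^{n-1-k}\w\o_{w_m}^{k}\to0$ for $w_m\downarrow u$ in $\Ec^1$ (and the increasing analogue). The assertion that each such integral ``is dominated by a constant times $d_1(w_m,u)$ in the spirit of \eqref{dpCharFormula}'' does not follow from \eqref{dpCharFormula}, which controls only $\int|w_m-u|\o_{w_m}^n$ and $\int|w_m-u|\o_u^n$; the entire difficulty is the background factor $\o$ and the mixed powers, which neither of those two measures dominates. The clean way to close this is precisely the paper's averaging device: since $\o+\o_u+\o_{w_m}=3\,\o_{(u+w_m)/3}$, each mixed term is bounded by a constant times $\o^n_{(u+w_m)/3}$, and \cite[Corollary 5.7]{da2} then applies --- but once you use this you may as well run the paper's two-line argument and discard the monotonicity scaffolding. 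Your alternative plan (rewrite $\o=\o_u-\i\ddbar u$, integrate by parts down to mixed energies $\int\phi\,\o_u^{c}\w\o_{w_m}^{d}$, and invoke monotone continuity of non-pluripolar products from \cite{begz}) can also be completed, but it requires justifying integration by parts and the convergence of mixed energies for unbounded $\Ec^1$ potentials, neither of which you supply; as written, the decisive estimate is asserted rather than proved.
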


\begin{proof} We argue that for any $R>0$ there exists  $f_R:\Bbb R \to \Bbb R$ continuous with $f_R(0)=0$ such that
\begin{equation} \label{PropMainEst}
|\textup{AM}_\gamma(u_0)-\textup{AM}_\gamma(u_1)| \leq f_R(d_1(u_0,u_1)),
\end{equation}
for any $u_0,u_1 \in \mathcal H_\o\cap\{v\,:\,d_1(0,v)\le R\}$.  We have $-C\o \leq \gamma \leq C\o$ for some $C>1$. Using \eqref{AMtwistDifferenceEq} and  the observation
$\o_{(u_0+u_1)/{4}}=\o/2+\o_{u_0}/4+\o_{u_1}/4$ it follows that
$$|\textup{AM}_\gamma(u_0)-\textup{AM}_\gamma(u_1)| \leq C \int_X|u_0-u_1| \o^n_{(u_0 + u_1)/4}$$
By \cite[Corollary 5.7]{da2} and its proof, for each $R>0$ there exists a continuous function 
$f_R:\Bbb R \to \Bbb R$ with $f_R(0)=0$ such that
\begin{equation*}
\int_X |v-w|\o_h^n \leq f_R(d_1(v,w)),
\end{equation*}
for any $v,w,h \in \mathcal E_1\cap\{v\,:\,d_1(0,v)\le R\}$.
Using this last fact, to argue that \eqref{PropMainEst} holds, it is enough to show that $d_1(0,(u_0+u_1)/4)$ is bounded in terms of $d_1(0,u_0)$ and $d_1(0,u_1)$. We recall  \cite[Lemma 5.3]{da2} that says that there exists $D>1$ such that $d_1(a,(a+b)/2) \leq D d_1(a,b)$ for any $a,b \in \mathcal E_1$. Using this several times along with the triangle inequality, we can write
\begin{flalign*}
d_1(0 ,(u_0+u_1)/4) \leq & Cd_1(0,(u_0+u_1)/2) \leq C(d_1(0,u_0)  + d_1(u_0,(u_0+u_1)/2)) \\
\leq & C^2(d_1(0,u_0) + d_1(u_0,u_1)) \leq 2C^2(d_1(0,u_0) + d_1(0,u_1)),
\end{flalign*}
finishing the proof.
\end{proof}

As in the case of $\AM$,  we mention that the ``abstract" $d_1$-continuous extension $\AM_\gamma : \mathcal E^1 \to \Bbb R$ given by the above result is identical to the one given by the ``concrete" expression of \eqref{eq: AMtwistdef} after replacing the smooth products $\gamma \wedge \o^j \wedge \o^{n-j-1}_u$ with non-pluripolar products similar to \eqref{eq: nonpluripolar product}.

\paragraph{Convexity and extension of $\textup{AM}_\chi$ to $\mathcal H^\Delta_\o$ when $\chi$ satisfies \eqref{eq: ChiProp}.} Suppose $\chi=\beta +i\ddbar f$ is a (1,1)-current satisfying \eqref{eq: ChiProp}. Observe that it is not possible to extend $\textup{AM}_\chi$ to $\mathcal H^\Delta_\o$ using the techniques of the previous paragraph directly. Instead, using integration by parts, we notice that given $u \in \mathcal H_\o$ we have an alternative formula for $\textup{AM}_\chi(u)$:
\begin{flalign}
\textup{AM}_\chi(u) &= 
\frac{1}{nV}\sum_{j=0}^{n-1}\int_X u\, \beta \wedge \o^j \wedge \o_u^{n-1-j} + \frac{1}{nV}\int_X f (\o^n_u - \o^n) \nonumber\\
&=\textup{AM}_\beta(u) + \frac{1}{nV}\int_X f (\o^n_u - \o^n).\label{AMtwistAltDef}
\end{flalign}
As $\beta$ is smooth, $\textup{AM}_\beta$ extends $d_1$-continuously to $\mathcal H^\Delta_\o$ by the previous paragraph. The map $u \to \int_X f \o^n_u$ clearly makes sense and is finite for all $u \in \mathcal H^\Delta_\o$, hence using \eqref{AMtwistAltDef} it is possible to extend $\textup{AM}_\chi$ to $\mathcal H^\Delta_\o$. Though not needed, it can be further shown that this  extension is independent of the choice of $\beta$ and $f$. 
 
Given $u_0,u_1 \in \mathcal H_\o$, for the weak geodesic $[0,1] \ni t \to u_t \in \mathcal H^\Delta_\o$ connecting $u_0,u_1$ we would like to show that $t \to \textup{AM}_\chi(t)$ is convex. When $\chi$ is smooth this follows from the results of \cite{bb} and \cite{c2}. It turns out that for more general $\chi$ the same proof gives an analogous result:

\begin{proposition}\label{AMtwistConvexity}
Suppose $\chi=\beta +i\ddbar f \geq 0$ satisfies \eqref{eq: ChiProp}. Equation \eqref{AMtwistAltDef} gives an extension $\textup{AM}_\chi: \mathcal H^\Delta_\o \to \Bbb R$ for which $t \to\textup{AM}_\chi(u_t)$ is convex for any weak geodesic segment $[0,1] \ni t \to u_t \in \mathcal H^\Delta_\o$.
\end{proposition}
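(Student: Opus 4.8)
The plan is to reduce the statement to the already-known smooth positive case by a regularization of the potential $f$, exploiting the decomposition \eqref{AMtwistAltDef}, which isolates the singular part $\frac{1}{nV}\int_X f(\o^n_u-\o^n)$ from the smooth form $\beta$. Throughout, fix $u_0,u_1\in\mathcal H_\o$ and let $[0,1]\ni t\to u_t\in\mathcal H^\Delta_\o$ be the connecting weak geodesic, which is in fact $C^{1,\bar 1}$ by \cite{c}. Recall that $t\to\textup{AM}_\chi(u_t)$ is convex whenever $\chi\ge 0$ is a \emph{smooth} form, by \cite{bb,c2}; the mechanism is that, after integration by parts and use of the geodesic equation $\ddot u_t=|\partial\dot u_t|^2_{\o_{u_t}}$, the second variation reduces to a pointwise nonnegative expression governed by $\chi\ge 0$.

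First I would regularize $f$. Since $f\in\textup{PSH}(X,\beta)$ and $X$ is compact K\"ahler, Demailly's regularization theorem \cite{Dem,bk} yields a decreasing sequence of smooth functions $f_j\downarrow f$ with $\beta+i\ddbar f_j\ge -\eps_j\o$ for some $\eps_j\downarrow 0$. Setting $\beta_j:=\beta+\eps_j\o$ and $\chi_j:=\beta_j+i\ddbar f_j=\chi+\eps_j\o+i\ddbar(f_j-f)$, each $\chi_j\ge 0$ is a \emph{smooth positive} form, so by the smooth case $t\to\textup{AM}_{\chi_j}(u_t)$ is convex. Using the linearity of $\textup{AM}_\gamma$ in $\gamma$ together with \eqref{AMtwistAltDef},
$$\textup{AM}_{\chi_j}(u_t)=\textup{AM}_\beta(u_t)+\eps_j\,\textup{AM}_\o(u_t)+\frac{1}{nV}\int_X f_j\,(\o^n_{u_t}-\o^n),$$
and it remains to pass to the limit $j\to\infty$ for each fixed $t$.

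The convergence is where the bounded Laplacian hypothesis enters. For fixed $t$ one has $u_t\in\mathcal H^\Delta_\o$, and since $\o_{u_t}\ge 0$ has bounded trace with respect to $\o$, all its eigenvalues are bounded, so $\o_{u_t}\le C\o$ and hence $\o^n_{u_t}\le C^n\o^n$; as $f\in L^1(X,\o^n)$ this gives $f\in L^1(\o^n_{u_t})$. Because $f_j\downarrow f$ with the $f_j$ uniformly bounded above, the monotone convergence theorem yields $\int_X f_j\,\o^n_{u_t}\to\int_X f\,\o^n_{u_t}$ and $\int_X f_j\,\o^n\to\int_X f\,\o^n$; moreover $\eps_j\,\textup{AM}_\o(u_t)\to 0$ since $u_t$ is bounded and thus $\textup{AM}_\o(u_t)$ is finite. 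Therefore $\textup{AM}_{\chi_j}(u_t)\to\textup{AM}_\chi(u_t)$ for every $t$, and as a pointwise limit of convex functions of one real variable is convex, $t\to\textup{AM}_\chi(u_t)$ is convex, as desired.

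The main obstacle is the regularization step: one needs a \emph{decreasing} smooth approximation of the $\beta$-psh function $f$ with loss of positivity $\eps_j\to 0$, and one must then verify that the compensating term $\eps_j\o$ does not survive in the limit. Both are handled above, and the remaining estimates are routine once one observes $\o^n_{u_t}\le C^n\o^n$. I would also remark that the integrability assumption $e^{-f}\in L^1(X,\o^n)$ from \eqref{eq: ChiProp} is not used here; only $f\in\textup{PSH}(X,\beta)$ (which already forces $f\in L^1(X,\o^n)$) enters, consistent with the fact that $e^{-f}\in L^1$ is really needed for the entropy term of the K-energy rather than for $\textup{AM}_\chi$.
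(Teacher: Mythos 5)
Your strategy---regularize the twisting current and pass to the limit in \eqref{AMtwistAltDef}---breaks down at the very first step: the regularization you invoke does not exist under the hypotheses \eqref{eq: ChiProp}. A decreasing sequence of \emph{smooth} $f_j\downarrow f$ with $\beta+i\ddbar f_j\ge -\eps_j\o$ and $\eps_j\downarrow 0$ would exhibit smooth forms in the class $[\beta]$ with arbitrarily small negative part, i.e.\ it would force $[\beta]$ to be nef; and even on a nef class Demailly's theorem only yields smooth approximants whose loss of positivity at $x$ is bounded below by a multiple of the Lelong number $\nu(\chi,x)$, which need not vanish. The integrability $e^{-f}\in L^1(X,\o^n)$ does not save you in dimension $n\ge 2$: on the blow-up $X$ of $\mathbb{P}^2$ at a point with exceptional divisor $E$, take $\chi=\tfrac12[E]$, $\beta$ a smooth representative of $\tfrac12 c_1(\calO(E))$ and $f=\tfrac12\log|s_E|^2_h$, so that $e^{-f}=|s_E|_h^{-1}\in L^1(X,\o^n)$ and \eqref{eq: ChiProp} holds; yet any smooth $\chi_j=\beta+i\ddbar f_j\ge-\eps_j\o$ would give $-\tfrac12=\int_E\chi_j\ge-\eps_j\int_E\o\to0$, a contradiction. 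Such divisorial/conical twistings are precisely what \eqref{eq: ChiProp} is designed to allow, so this is not a removable technicality. A fixed-loss regularization $\beta+i\ddbar f_j\ge-C\o$ does exist, but it only gives convexity of $t\mapsto\textup{AM}_\chi(u_t)+C\,\textup{AM}_\o(u_t)$, and since $\textup{AM}_\o$ is itself strictly convex (not linear) along nontrivial geodesics you cannot subtract it off.

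The remainder of your argument (linearity of $\gamma\mapsto\textup{AM}_\gamma$, the identity \eqref{AMtwistAltDef} for smooth data, monotone convergence via $\o^n_{u_t}\le C^n\o^n$, and convexity of pointwise limits of convex functions) is sound, and your closing remark that only $f\in\textup{PSH}(X,\beta)$ is really used here is correct. The paper regularizes the \emph{other} object: it approximates the weak geodesic by Chen's smooth $\eps$-geodesics $t\to u^\eps_t$, decreasing in $\eps$ with uniformly bounded Monge--Amp\`ere densities, observes that for a smooth subgeodesic $v$ the second-variation identity
$$\frac{d}{dt}\Big|_{t_1}\textup{AM}_\chi(v_t)-\frac{d}{dt}\Big|_{t_0}\textup{AM}_\chi(v_t)=\int_{S_{t_0,t_1}\times X}\pi^*\chi\wedge(\pi^*\o+i\ddbar v)^n$$
still makes sense and is nonnegative for an \emph{arbitrary} closed positive current $\chi$ (a positive current paired with a smooth positive form), and then passes to the limit in \eqref{AMtwistAltDef} using the uniform density bound, much as in your last paragraph. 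To repair your proof you would either have to add the hypotheses that $[\beta]$ is nef and $\nu(\chi,\cdot)\equiv 0$, or switch to regularizing the geodesic rather than the current.
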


\begin{proof}Suppose $t_1 \geq t_0$. When $\chi$ is smooth, it is well known that for $[0,1] \ni t\to v_t \in \mathcal H_\o$ smooth subgeodesic (i.e. $\pi^* \o + i\ddbar v \geq 0$) we actually have
$$\frac{d}{dt}\big|_{t=t_1}\textup{AM}_\chi(v_t) - \frac{d}{dt}\big|_{t=t_0}\textup{AM}_\chi(v_t)= \int_{S_{t_0,t_1} \times X} \pi^*\chi \wedge (\pi^*\o + i\ddbar v)^{n}, $$
where $S_{t_0,t_1} \subset \Bbb C$ is the strip $\{ t_0 \leq \textup{Re }z \leq t_1\}.$ Hence, $t \to \textup{AM}_\chi(v_t)$ is convex. We claim that the same proof goes through for any positive closed current $\chi=\beta + i\ddbar f$ as well.

 When dealing with a weak geodesic $[0,1] \ni t \to u_t \in \mathcal H^\Delta_\o$, it is possible to approximate it uniformly with a decreasing sequence of smooth subgeodesics $t \to u^\varepsilon_t$ called $\varepsilon$-geodesics (see \cite{c}). As all measures $\o^n_{u^\varepsilon_t}$ have uniformly bounded density and converge weakly to $\o^n_{u_t}$ we have
 $$\lim_{\varepsilon \to 0}\int_X f \o^n_{u^\varepsilon_t}=\int_X f \o^n_{u_t} \ \textup{ and } \ \lim_{\varepsilon \to 0}\textup{AM}_\beta(u^\varepsilon_t)=\textup{AM}_\beta(u_t).$$
Hence, after taking limit in \eqref{AMtwistAltDef} it follows that $t \to \lim_{\varepsilon \to 0} AM_\chi(u^\varepsilon_t)=\textup{AM}_\chi(u_t)$ is convex.
\end{proof}
Finally, we note the following useful inequality for $\AM_\gamma$.
\begin{lemma}\label{lem: bounds for AM}
Let $\psi\in \Ec^1$ and set $\theta=\omega_\psi$. For any $u,v\in \Ec^1$ we have
	$$
	\frac{1}{V}\int_X (u-v) \omega_u^{n-1}\wedge \theta \leq \AM_{\theta}(u)-\AM_{\theta}(v) \leq \frac{1}{V}\int_X (u-v) \omega_v^{n-1}\wedge \theta.
	$$
For $\AM$ we have similar inequalities
	$$
	\frac{1}{V}\int_X (u-v) \omega_u^{n}\leq \AM(u)-\AM(v) \leq \frac{1}{V}\int_X (u-v) \omega_v^{n}.
	$$
\end{lemma}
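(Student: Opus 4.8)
The plan is to reduce both two-sided estimates to a single monotonicity statement for a finite sequence of mixed Monge--Amp\`ere integrals, obtained by one integration by parts, and then to recover $\AM_\theta(u)-\AM_\theta(v)$ (resp. $\AM(u)-\AM(v)$) as the arithmetic \emph{mean} of the two endpoints of that sequence. Since a mean always lies between its extreme values, the chain of inequalities follows immediately once the sequence is shown to be monotone.

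Concretely, I would first treat smooth $u,v,\psi\in\mathcal H_\o$ and write $w=u-v$. Interchanging the roles of $u,v$ in the difference formula \eqref{AMtwistDifferenceEq} gives
$$\AM_\theta(u)-\AM_\theta(v)=\frac{1}{nV}\sum_{k=0}^{n-1}c_k,\qquad c_k:=\int_X w\,\theta\w\o_v^{n-1-k}\w\o_u^{k}.$$
Because $\o_u-\o_v=\i\ddbar w$, a single integration by parts yields, for $0\le k\le n-2$,
$$c_{k+1}-c_k=\int_X w\,\i\ddbar w\w\theta\w\o_v^{n-2-k}\w\o_u^{k}=-\int_X \i\del w\w\dbar w\w\theta\w\o_v^{n-2-k}\w\o_u^{k}\le 0,$$
where the last integrand is nonnegative since $\theta=\o_\psi\ge0$ and $\o_u,\o_v\ge0$. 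Thus $c_0\ge c_1\ge\cdots\ge c_{n-1}$, so its mean lies between $c_{n-1}=\int_X w\,\theta\w\o_u^{n-1}$ and $c_0=\int_X w\,\theta\w\o_v^{n-1}$; dividing by $V$ gives exactly the claimed bound. The untwisted case is identical, using \eqref{AMDifferenceEq} with the sequence $b_k:=\int_X w\,\o_v^{n-k}\w\o_u^{k}$, $k=0,\dots,n$, which is again decreasing by the same integration by parts, and whose mean is $\frac{1}{(n+1)V}\sum_k b_k=\AM(u)-\AM(v)$, squeezed between $\frac1V\int_X w\,\o_u^{n}$ and $\frac1V\int_X w\,\o_v^{n}$.

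To pass from smooth potentials to general $u,v,\psi\in\Ec^1$, I would take decreasing sequences $u_j,v_j,\psi_j\in\mathcal H_\o$ converging pointwise to $u,v,\psi$ (possible by \cite{Dem,bk}), set $\theta_j:=\o_{\psi_j}$, and apply the smooth inequality to $(u_j,v_j,\theta_j)$. Here it is convenient to read $\AM_\theta$ through the concrete formula \eqref{eq: AMtwistdef} with non-pluripolar products: then every quantity in sight, both the difference $\AM_\theta(u_j)-\AM_\theta(v_j)$ and the two endpoint integrals $\int_X(u_j-v_j)\,\o_{u_j}^{n-1}\w\theta_j$ and $\int_X(u_j-v_j)\,\o_{v_j}^{n-1}\w\theta_j$, is a mixed integral of the single weight $u_j-v_j$ against products of $\o_{\psi_j},\o_{u_j},\o_{v_j}$. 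Along decreasing approximants in $\Ec^1$ all such mixed integrals converge to their $\Ec^1$-counterparts by the monotone-convergence/stability properties of mixed Monge--Amp\`ere measures (as in \cite{begz} and \cite[Lemma 5.2, Corollary 5.7]{da2}); the accompanying $d_1$-convergence $d_1(u_j,u),d_1(v_j,v)\to0$ is provided by Proposition \ref{prop: monotone implies dp convergence}, and $d_1$-continuity of $\AM$ and $\AM_\gamma$ by Propositions \ref{AMextension} and \ref{AMtwistextension}.

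The main obstacle I anticipate is not the algebra but this limiting step, namely justifying the integration-by-parts inequality and the convergence of the endpoint integrals when all three potentials lie only in $\Ec^1$, so that $\theta,\o_u,\o_v$ may be singular and every wedge must be interpreted non-pluripolarly. The cleanest remedy is precisely to establish the monotonicity $c_{k+1}\le c_k$ only at the level of smooth (or bounded) approximants, where Bedford--Taylor integration by parts is classical, and to invoke finite-energy analysis solely through monotone approximation — the one regime in which both the energies and the mixed integrals are known to converge — so that no delicate direct integration-by-parts theorem for $\Ec^1$ potentials is needed.
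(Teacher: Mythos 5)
Your proposal is correct and follows essentially the same route as the paper: the paper's (two-line) proof likewise combines the difference formulas \eqref{AMDifferenceEq}, \eqref{AMtwistDifferenceEq} with the inequality $\int_X (u-v)\,i\ddbar(u-v)\wedge T\le 0$ for $T$ a product of forms $\o_{\varphi_j}$ with $\varphi_j\in\Ec^1$, which is exactly your monotonicity of the $c_k$ (resp.\ $b_k$). The only difference is that the paper invokes this integration-by-parts inequality directly at the level of non-pluripolar products in $\Ec^1$, whereas you derive it by smooth approximation; both are fine.
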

\begin{proof}
	Using (\ref{AMDifferenceEq}) and (\ref{AMtwistDifferenceEq}) the desired inequalities simply follow from the fact that 
	$$
	\int_X (u-v) i\ddbar (u-v) \wedge T\leq 0,
	$$
	for any $T=\omega_{\varphi_1}\wedge \ldots \wedge \omega_{\varphi_{n-1}}$ with $\varphi_j\in \Ec^1, \forall j$.
\end{proof}

\subsection{The twisted K-energy} \label{sect: the twisted K-energy}

For the remainder of the paper suppose $\chi = \beta + i\ddbar f$ satisfies \eqref{eq: ChiProp} unless specified otherwise.
Recall that the twisted K-energy $\mathcal K_\chi: \mathcal H_\o \to \Bbb R$ is defined as follows:
\begin{flalign*}
\mathcal K_\chi &=\textup{Ent}(e^{-f}\o^n,\o^n_u)
+ \overline{S}_\chi \textup{AM}(u) - n\textup{AM}_{\Ric \o-\beta}(u)-\int_X f \o^n.
\end{flalign*}
When $f$ is smooth, recall the following formula for the variation of the entropy:
$$\langle D\textup{Ent}(e^{-f}\o^n,\o^n_v),\delta v\rangle = nV^{-1}\int_X \delta v  (\Ric \o - \Ric \o_v + i\ddbar f) \wedge \o_v^{n-1}.$$
When $\chi$ is smooth, putting the above formula, \eqref{AMVarEq} and \eqref{AMTwistVarEq} together we obtain:
\begin{flalign}\nonumber 
\langle D\mathcal K_\chi(v),\delta v\rangle &= \frac{n}{V}\int_X \delta v  (\overline{S}_\chi\o_v - \Ric \o_v  + \chi) \wedge \o_v^{n-1}\nonumber \\
&=V^{-1}\int_X \delta v (\overline{S}_\chi - S_{\o_v}  + \textup{Tr}^{\o_u}\chi) \o_v^{n}.\nonumber
\end{flalign}
We arrive at the main theorem of this section:

\begin{theorem} \label{ExtKEnergyThm}  Suppose $(X,\o)$ is a compact connected K\"ahler manifold and $\chi=\beta + i\ddbar f$ satisfies \eqref{eq: ChiProp}. The twisted K-energy can be extended to a functional $\mathcal K_\chi:\mathcal E^1 \to \Bbb R \cup \{ \infty\}$ using the formula:
\begin{equation}\label{KenExtThmFormula}
\mathcal K_\chi(u) = \textup{Ent}(e^{-f}\o^n,\o^n_u)
+ \overline{S}_\chi \textup{AM}(u) - n\textup{AM}_{\Ric \o-\beta}(u) - \int_X f\o^n.
\end{equation}
Thus extended, $\mathcal K_\chi|_{\mathcal E^p}$ is the greatest $d_p$-lsc extension of $\mathcal K_\chi |_{\mathcal H_\o}$ for any $p\geq 1$. Additionally, $\mathcal K_\chi|_{\mathcal E^p}$ is convex along the finite energy geodesics of $\mathcal E^p$.
\end{theorem}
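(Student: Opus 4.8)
The plan is to treat the four terms in \eqref{KenExtThmFormula} separately and then assemble the three assertions (well-definedness, maximality among $d_p$-lsc extensions, and convexity) in that order. For the \textbf{well-definedness and lower semicontinuity}, I would first note that the constant $-\int_X f\o^n$ is finite, that $\textup{AM}$ extends $d_1$-Lipschitz continuously to $\mathcal E^1$ by Proposition \ref{AMextension}, and that $\textup{AM}_{\Ric\o-\beta}$ extends $d_1$-continuously by Proposition \ref{AMtwistextension} since $\Ric\o-\beta$ is smooth; thus the three energy terms are finite and $d_1$-continuous. The entropy $\textup{Ent}(e^{-f}\o^n,\o_u^n)$ is bounded below by Jensen's inequality and may equal $+\infty$, so the sum is well-defined with values in $\mathbb R\cup\{\infty\}$. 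Since $\|\cdot\|_{1,u}\le\|\cdot\|_{p,u}$ gives $d_1\le d_p$, the energy terms are in fact $d_p$-continuous; and $d_1(u_k,u)\to0$ forces the weak convergence $\o_{u_k}^n\to\o_u^n$, so lower semicontinuity of the relative entropy under weak convergence of measures yields $\textup{Ent}(e^{-f}\o^n,\o_u^n)\le\liminf_k\textup{Ent}(e^{-f}\o^n,\o_{u_k}^n)$. Adding, $\mathcal K_\chi|_{\mathcal E^p}$ is $d_p$-lsc.

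For \textbf{maximality}, recall that the greatest $d_p$-lsc extension of $\mathcal K_\chi|_{\mathcal H_\o}$ is $\widehat{\mathcal K}_\chi(u)=\inf\{\liminf_k\mathcal K_\chi(u_k): u_k\in\mathcal H_\o,\ d_p(u_k,u)\to0\}$. The bound $\mathcal K_\chi|_{\mathcal E^p}\le\widehat{\mathcal K}_\chi$ is immediate from the lower semicontinuity just established, applied along any approximating sequence drawn from $\mathcal H_\o$. For the reverse inequality I would invoke Theorem \ref{thm: Ep approximation with entropy}, which produces for each $u\in\mathcal E^p$ a sequence $u_k\in\mathcal H_\o$ with $d_p(u_k,u)\to0$ and $\textup{Ent}(e^{-f}\o^n,\o_{u_k}^n)\to\textup{Ent}(e^{-f}\o^n,\o_u^n)$; since the energy terms are $d_p$-continuous, $\mathcal K_\chi(u_k)\to\mathcal K_\chi(u)$, whence $\widehat{\mathcal K}_\chi(u)\le\mathcal K_\chi(u)$. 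This recovery sequence is the crucial input and the one place where convergence of the entropy, not merely its lower semicontinuity, is indispensable.

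For \textbf{convexity along $C^{1,\bar1}$ geodesics}, I would first treat weak geodesics $t\mapsto u_t\in\mathcal H^\Delta_\o$ joining smooth endpoints, which are $C^{1,\bar1}$ by \cite{c}. Using the alternative formula \eqref{KEnTwistAltDef} and the linearity of $\gamma\mapsto\textup{AM}_\gamma$, one can write $\mathcal K_\chi=\mathcal K+(\overline S_\chi-\overline S)\,\textup{AM}+n\,\textup{AM}_\chi$, where $\mathcal K$ is the untwisted K-energy. Along such geodesics $\mathcal K$ is convex by \cite[Theorem 1.1]{bb}, $\textup{AM}$ is affine, and $\textup{AM}_\chi$ is convex by Proposition \ref{AMtwistConvexity} (using $\chi\ge0$); as $n>0$, the sum is convex.

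Finally, for \textbf{convexity along finite energy geodesics}, given $u_0,u_1\in\mathcal E^p$ and the finite energy geodesic $t\mapsto u_t$, I would approximate the endpoints by $u_i^k\in\mathcal H_\o$ with $d_p(u_i^k,u_i)\to0$ and convergent entropy (Theorem \ref{thm: Ep approximation with entropy}), so that $\mathcal K_\chi(u_i^k)\to\mathcal K_\chi(u_i)$ for $i=0,1$. The $C^{1,\bar1}$ geodesics $t\mapsto u_t^k$ joining $u_0^k,u_1^k$ satisfy $\mathcal K_\chi(u_t^k)\le(1-t)\mathcal K_\chi(u_0^k)+t\mathcal K_\chi(u_1^k)$; endpoint stability (Proposition \ref{FinEnGeodStabProp}) gives $d_p(u_t^k,u_t)\to0$ for each $t$, and the $d_p$-lower semicontinuity from the first step lets me pass to the limit to obtain $\mathcal K_\chi(u_t)\le(1-t)\mathcal K_\chi(u_0)+t\mathcal K_\chi(u_1)$. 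Applying the same argument to the restriction of $t\mapsto u_t$ to an arbitrary subinterval, which is again a finite energy geodesic, promotes this chord inequality to genuine convexity on $[0,1]$. The hard part will be exactly the simultaneous handling of two different limits: lower semicontinuity at the interior point $u_t$ against full convergence of $\mathcal K_\chi$ at the endpoints. Monotone approximations, the natural tool for the $d_p$-geometry, do not converge in entropy, and it is precisely the combination of Theorem \ref{thm: Ep approximation with entropy} with Proposition \ref{FinEnGeodStabProp} that circumvents this obstacle.
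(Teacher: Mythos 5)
Your proposal is correct and follows essentially the same route as the paper: $d_1$-continuity of the energy terms (Propositions \ref{AMextension}, \ref{AMtwistextension}) plus lower semicontinuity of the entropy for the lsc claim, Theorem \ref{thm: Ep approximation with entropy} as the recovery sequence for maximality, the decomposition $\mathcal K_\chi=\mathcal K+(\overline S_\chi-\overline S)\textup{AM}+n\,\textup{AM}_\chi$ together with \cite[Theorem 1.1]{bb} and Proposition \ref{AMtwistConvexity} for convexity along weak geodesics with smooth endpoints, and Proposition \ref{FinEnGeodStabProp} combined with lower semicontinuity at the interior point versus full convergence at the endpoints to pass to finite energy geodesics. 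The only cosmetic difference is that the paper first reduces to $p=1$ and phrases the $C^{1,\bar 1}$ step via the extension of $\textup{AM}_\chi$ to $\mathcal H^\Delta_\o$, which your citation of Proposition \ref{AMtwistConvexity} already covers.
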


\begin{proof}First we argue that the expression given by \eqref{KenExtThmFormula} does give a $d_1$-lsc function on $\mathcal E^1$. Indeed, by Propositions \ref{AMextension} and \ref{AMtwistextension} the functionals $\textup{AM}$ and $\textup{AM}_{\Ric \o -\beta}$ admit a $d_1$-continuous extension to $\mathcal E^1$. Lastly, as $d_1$-convergence of a potentials implies weak convergence of the corresponding complex Monge-Amp\`ere measures, it follows that the correspondence $u \to  \textup{Ent}(e^{-f}\o^n,\o^n_u)$ is $d_1$-lsc. When restricted to $\mathcal E^p$, \eqref{KenExtThmFormula} is additionally $d_p$-lsc, because $d_p$-convergence dominates $d_1$-convergence for any $p>1$.

We now show that thus extended $\mathcal K_\chi|_{\mathcal E^p}$  is indeed the greatest $d_p$-lsc extension of $\mathcal K_\chi|_{\mathcal H_\o}$. For this we only have to argue that for any $u \in \mathcal E^p$ there exists $u_j \in \mathcal H_\o$ such that $d_p(u_j,u) \to 0$ and 
$$
\mathcal K_\chi(u) = \lim_j \mathcal K_\chi(u_j).
$$
As $\textup{AM}(\cdot)$ and $\textup{AM}_{\Ric -\beta}(\cdot)$ is $d_p$-continuous, this is exactly the content of Theorem \ref{thm: Ep approximation with entropy}.

Since finite energy geodesics of $\Ec^p$ are also finite energy geodesics in $\Ec^1$, it remains  to show that for any finite energy geodesic $[0,1] \ni t \to u_t \in \mathcal E^1$ the curve $t \to \mathcal K_\chi(u_t)$ is convex and continuous. 

Suppose $t_0,t_1 \in [0,1]$ with $t_0 \leq t_1$. As $\mathcal K_\chi$ was extended in the greatest $d_1$-lsc manner, we can find 
$u^k_{t_0},u^k_{t_0} \in \mathcal H_\o$ with $d_1(u^k_{t_0},u_{t_0})\to 0$, $d_1(u^k_{t_1},u_{t_1})\to 0$ and
$$\mathcal K_\chi(u_{t_0})= \lim_k \mathcal K_\chi(u^k_{t_0}), \ \ \mathcal K_\chi(u_{t_1})= \lim_k \mathcal K_\chi(u^k_{t_1}).$$

Let $[t_0,t_1] \ni t \to u^k_t \in \mathcal H^\Delta_\o$ be the weak geodesics connecting $u^k_{t_0},u^k_{t_1}$. By Proposition \ref{FinEnGeodStabProp} we get that $d_1(u^k_{t},u_t) \to 0$ for any $t \in [t_0,t_1]$. 
Note that for $u \in \mathcal H^\Delta_\o$ we can write:
$$\mathcal K_\chi(u) = \textup{Ent}(\o^n,\o^n_u)
+ \bar{S}_\chi\textup{AM}(u) - n\textup{AM}_{\Ric \o}(u)+\Big(n\textup{AM}_{\beta}(u) + \frac{1}{V}\int_X f\o_u^n\Big).$$
Using this, Proposition \ref{AMtwistConvexity}, \cite[Theorem 1.1]{bb} and the linearity of $\textup{AM}$ along finite energy geodesics, it follows that $t \to \mathcal K_\chi(u^k_t)$ is convex on $[0,1]$. As $\mathcal K_\chi:\mathcal E^1 \to \Bbb R \cup \{\infty\}$ is $d_1$-lsc it follows that
\begin{flalign*}
\mathcal K_\chi(u_t) \leq \liminf_k \mathcal K_\chi(u^k_t) &\leq \frac{t-t_0}{t_1 - t_0} \lim_k \mathcal K_\chi(u^k_{t_0}) + \frac{t_1-t}{t_1 - t_0} \lim_k \mathcal K_\chi(u^k_{t_1})\\
& \leq \frac{t-t_0}{t_1 - t_0} \mathcal K_\chi(u_{t_0}) + \frac{t_1-t}{t_1 - t_0} \mathcal K_\chi(u_{t_1}),
\end{flalign*}
hence $[0,1] \ni t \to \mathcal K_\chi(u_t) \in (-\infty,\infty]$ is convex. As $\mathcal K_\chi$ is $d_1$-lsc it follows additionally that $t \to \mathcal K_\chi(u_t)$ is continuous up to the boundary of $[0,1]$.
\end{proof}

Finally, we bring Theorem \ref{EntropyCompactnessThm} in a form that will be most convenient to use in our later developments:
\begin{corollary} \label{KChiCompactnessCor} Suppose $\chi = \beta + i\ddbar f$ satisfies \eqref{eq: ChiProp} and $\{u_k\}_k \subset \mathcal E^1$ is a sequence for which the following holds:
$$
d_1(0,u_k) < C, \  \mathcal K_\chi({u_k}) < C. 
$$
Then $\{ u_k\}_k$ contains a $d_1$-convergent subsequence. 
\end{corollary}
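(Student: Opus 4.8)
The plan is to reduce the statement directly to the entropy compactness theorem (Theorem \ref{EntropyCompactnessThm}) by repackaging the two hypotheses $d_1(0,u_k)<C$ and $\mathcal K_\chi(u_k)<C$ into the two hypotheses required there: a uniform bound on $|\sup_X u_k|$ and a uniform upper bound on an entropy $\textup{Ent}(\mu,\o_{u_k}^n)$ against a fixed probability measure $\mu$ whose density lies in $L^p$ for some $p>1$. The $d_1$-bound handles the sup bound for free: by the inequality $|\sup_X u|\le C' d_1(u,0)$ recorded after Theorem \ref{thm: comparison d2 and I2}, the assumption $d_1(0,u_k)<C$ gives $|\sup_X u_k|<C'C$, so the first hypothesis of Theorem \ref{EntropyCompactnessThm} holds.

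Next, I would solve the defining formula \eqref{KenExtThmFormula} for the entropy term:
$$\textup{Ent}(e^{-f}\o^n,\o_{u_k}^n)=\mathcal K_\chi(u_k)-\overline{S}_\chi\,\textup{AM}(u_k)+n\,\textup{AM}_{\Ric \o-\beta}(u_k)+\int_X f\o^n.$$
Here $\int_X f\o^n$ is a fixed constant, $\textup{AM}$ is $d_1$-Lipschitz (Proposition \ref{AMextension}), and $\textup{AM}_{\Ric \o-\beta}$ is bounded on $d_1$-bounded sets (Proposition \ref{AMtwistextension}, applicable since $\Ric \o-\beta$ is smooth); hence all three correction terms are uniformly bounded on the $d_1$-ball $\{v:d_1(0,v)<C\}$. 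Combined with $\mathcal K_\chi(u_k)<C$, this yields a uniform upper bound $\textup{Ent}(e^{-f}\o^n,\o_{u_k}^n)<C''$.

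Finally, to put this in the exact form of Theorem \ref{EntropyCompactnessThm}, I would set $m=\int_X e^{-f}\o^n\in(0,\infty)$ and $\mu=m^{-1}e^{-f}\o^n$, a probability measure; by the openness conjecture quoted after \eqref{eq: ChiProp}, $e^{-f}\in L^p(X,\o^n)$ for some $p>1$, so the density of $\mu$ lies in $L^p$. Since $\o_{u_k}^n$ has fixed total mass $V$, rescaling the reference measure only shifts the entropy by a constant, namely $\textup{Ent}(\mu,\o_{u_k}^n)=\textup{Ent}(e^{-f}\o^n,\o_{u_k}^n)+V\log m$, so the upper bound from the previous step transfers to $\textup{Ent}(\mu,\o_{u_k}^n)$. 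Both hypotheses of Theorem \ref{EntropyCompactnessThm} now hold, and it directly produces the desired $d_1$-convergent subsequence.

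As for the main difficulty, there is essentially none beyond the bookkeeping above: all of the analytic substance is already contained in Theorem \ref{EntropyCompactnessThm} and in the continuity and boundedness properties of the $\textup{AM}$ functionals established earlier in this section. The only point requiring a moment's care is the passage to a genuine probability measure with $L^p$ density, which is exactly where the openness conjecture (equivalently, $e^{-f}\in L^p$ for some $p>1$) enters; everything else is a rearrangement of \eqref{KenExtThmFormula} together with the $d_1$-ball bounds.
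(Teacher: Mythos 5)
Your proposal is correct and follows essentially the same route as the paper: bound $|\sup_X u_k|$ via the $d_1$-characterization \eqref{dpCharFormula}, isolate the entropy term in \eqref{KenExtThmFormula} using the boundedness of $\textup{AM}$ and $\textup{AM}_{\Ric\o-\beta}$ on $d_1$-bounded sets (Propositions \ref{AMextension} and \ref{AMtwistextension}), and invoke Theorem \ref{EntropyCompactnessThm}. Your extra care with normalizing $e^{-f}\o^n$ to a probability measure with $L^p$ density is a detail the paper leaves implicit, but it changes nothing of substance.
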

\begin{proof} By \eqref{dpCharFormula} it follows that $|\sup_X u_k |< C$. From \eqref{KenExtThmFormula} and Propositions \ref{AMextension} and \ref{AMtwistextension} we get that $\textup{Ent}(e^{-f}\o^n,\o_{u_k}^n)$ is also uniformly bounded. Now we can invoke Theorem \ref{EntropyCompactnessThm} to finish the argument.
\end{proof}

\subsection{Convexity
in the finite entropy space.}\label{Sec: finite entropy section}
Suppose $\chi=\beta + i\ddbar f$ satisfies \eqref{eq: ChiProp}. Denote by $\mbox{Ent}_\chi{(X,\o)}$ the space of finite entropy potentials:
$$
\mbox{Ent}_\chi{(X,\o)}= \{ u \in \mathcal E(X,\o), \ \ \mbox{Ent}(e^{-f}\o^n,\o_u^n) < \infty\}.
$$
Observe that $\mbox{Ent}_\chi(X,\o)$ is independent of the choice of $\beta$ and $f$. Also, as we show that $\mbox{Ent}_\chi(X,\o)$ is contained in the finite energy space $\mathcal{E}^{1}$:
\begin{lemma}\label{FinEntFinEnLemma} Suppose $\chi=\beta + i\ddbar f$ satisfies \eqref{eq: ChiProp}. Then $\textup{Ent}_\chi{(X,\o)} \subset \mathcal E^1$.
\end{lemma}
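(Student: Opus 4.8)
The plan is to reduce the statement to a single scalar integrability estimate by means of an elementary convex-duality inequality, so that the only genuine analytic input is the finiteness of $\int_X e^{-u-f}\o^n$. First I would normalize: since adding a constant to $u$ changes neither $\o_u^n$ (hence the twisted entropy) nor membership in $\mathcal{E}^1$, I may assume $\sup_X u = 0$, so $-u \geq 0$. Fix $u \in \textup{Ent}_\chi(X,\o)$; by definition $u \in \mathcal{E}(X,\o)$ and $\textup{Ent}(e^{-f}\o^n,\o_u^n)<\infty$, so $\o_u^n$ is subordinate to $e^{-f}\o^n$ and I may write $\o_u^n = g\,e^{-f}\o^n$ for a Radon--Nikodym density $g\geq 0$ satisfying $\int_X (\log g)\,\o_u^n = \textup{Ent}(e^{-f}\o^n,\o_u^n)<\infty$. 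Because $u\in\mathcal{E}(X,\o)$ by hypothesis, the inclusion $u\in\mathcal{E}^1$ is equivalent to $\int_X (-u)\,\o_u^n < \infty$ (using $u\leq 0$), so this is the only bound I need to produce.

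The key step is the pointwise Young inequality $xy \leq x\log x - x + e^{y}$, valid for all $x\geq 0$ and $y\in\mathbb{R}$, which encodes the Legendre duality between $x\log x - x$ and $e^{y}$. Applying it with $x=g$ and $y=-u$, multiplying by $e^{-f}\geq 0$, and integrating against $\o^n$ yields
\[
\int_X (-u)\,\o_u^n \;\leq\; \textup{Ent}(e^{-f}\o^n,\o_u^n)\;-\;V\;+\;\int_X e^{-u-f}\,\o^n,
\]
where I used $\int_X g\,e^{-f}\o^n = \int_X \o_u^n = V$ and identified $\int_X (\log g)\,g\,e^{-f}\o^n$ with the entropy. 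The first two terms on the right are finite by hypothesis, so the entire argument reduces to the finiteness of the last integral.

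The main obstacle is thus to establish $\int_X e^{-u-f}\o^n<\infty$, and here the two structural hypotheses combine. On the one hand, the integrability condition $e^{-f}\in L^1(X,\o^n)$ upgrades, via the openness theorem of Berndtsson already invoked after \eqref{eq: ChiProp}, to $e^{-f}\in L^p(X,\o^n)$ for some $p>1$. On the other hand, since $u\in\mathcal{E}(X,\o)$ has vanishing Lelong numbers at every point, Skoda's integrability theorem gives $e^{-qu}\in L^1(X,\o^n)$ for \emph{every} $q>0$, in particular for the conjugate exponent $q=p/(p-1)$. Hölder's inequality then bounds $\int_X e^{-u-f}\o^n \leq \|e^{-u}\|_{L^q}\,\|e^{-f}\|_{L^p}<\infty$, completing the estimate and hence the proof.

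I expect the delicate point to be precisely the interplay between these two integrability thresholds: the openness theorem only guarantees $e^{-f}\in L^p$ for \emph{some} $p>1$, possibly very close to $1$, so the conjugate $q$ may be arbitrarily large, and it is essential that the zero Lelong numbers of $\mathcal{E}$-potentials make $e^{-qu}$ integrable for \emph{all} $q$. Everything else — the normalization, the application of the Young inequality, and the identification of the integrals — is routine bookkeeping, and the Legendre-duality step is what makes the entropy bound transfer directly to the energy bound $\int_X(-u)\,\o_u^n<\infty$.
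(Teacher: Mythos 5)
Your argument is correct and follows essentially the same route as the paper's proof: both rest on the Legendre duality between $t\log t$ and $e^s$ to convert the entropy bound into the energy bound $\int_X|u|\,\o_u^n<\infty$, and both control the resulting exponential term by combining Skoda's integrability theorem (using that potentials in $\mathcal{E}(X,\o)$ have zero Lelong numbers) with the openness-theorem upgrade $e^{-f}\in L^p$ for some $p>1$ and H\"older/Young. Your choice of taking the density $g$ with respect to $e^{-f}\o^n$ and the conjugate pair $x\log x - x$ versus $e^y$ is a slightly cleaner bookkeeping of the same estimate (the paper uses $(t+1)\log(t+1)-t$ and $e^t-t-1$ applied to $he^{f}$, at the cost of an extra comparison $\phi(t)\le 2t\log t$), but the analytic content is identical.
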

\begin{proof}Suppose $u \in \textup{Ent}_\chi{(X,\o)}$ with $\o_u^n = h \o^n$. Note that  the functions $\phi,\psi:[0,\infty) \to [0,\infty)$ given by $\phi(t)=(t+1)\log(t+1)-t$ and $\psi(t)=e^t -t -1$ are convex conjugates of each other, implying that $ab \leq \phi(a) + \psi(b)$. Using this we can write:
\begin{flalign*}
\int_X |u|\o^n_u =&\int_X |u| (he^f)e^{-f} \o^n\\
\leq & \int_X (e^{|u|} -|u|-1)e^{-f}\o^n +  \int_X ((he^{f}+1)\log(he^{f}+1)-h)e^{-f}\o^n.
\end{flalign*}
To finish the proof it is enough to argue that both terms in this last expression are bounded. For the first term, suppose $1/p + 1/q=1$. Using Young's inequality we arrive at
$$\int_X (e^{|u|} -|u|-1)e^{-f}\o^n \leq \frac{1}{q}\int_X (e^{|u|} - |u|-1)^q \o^n + \frac{1}{p} \int_X e^{-pf} \o^n.$$
As $u$ has zero Lelong numbers \cite[Corollary 1.8]{gz1}, the first integral is finite as follows from Skoda's theorem. For an appropriate $p$ the second integral is bounded, as $e^{-f} \in L^p(X,\o^n)$ for some $p > 1$.

For the second term, observe that for $t$ big enough $\phi(t) \leq 2 t \log t$, hence we can write:
\begin{flalign*}
\int_X ((he^{f}+1)\log(he^{f}+1)-h)e^{-f}\o^n &\leq 2 \int_X h\log(he^f)\o^n+C\\
&=2V\textup{Ent}(e^{-f}\o^n,\o_u^n) + C.
\end{flalign*}
\end{proof}
As a consequence of Theorem \ref{ExtKEnergyThm} we obtain that  $\textup{Ent}_\chi(X,\o) \subset \mathcal E^1$ is to some extent  ``geodesically convex'': 

\begin{theorem}
\label{FinEntGeodConvThm}
Suppose $\chi =\beta + i\ddbar f$ satisfies \eqref{eq: ChiProp}. Then $(\textup{Ent}_\chi(X,\o),d_1)$ is a geodesic sub-metric space of $(\mathcal E^1(X,\o),d_1)$. Additionally, if $\Ric \o \geq \beta$ then the map $\textup{Ent}_\chi(X,\o) \ni u \to \textup{Ent}(e^{-f}\o^n,\o_u^n) \in \Bbb R$  is convex along finite energy geodesics.
\end{theorem}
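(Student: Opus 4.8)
The plan is to read off both assertions from the extended K-energy formula \eqref{KenExtThmFormula} together with the convexity and finiteness facts already assembled in Theorem \ref{ExtKEnergyThm} and Propositions \ref{AMextension}, \ref{AMtwistextension}, \ref{AMtwistConvexity}. The starting point is to solve \eqref{KenExtThmFormula} for the entropy, giving the identity
$$\textup{Ent}(e^{-f}\o^n,\o^n_u) = \mathcal K_\chi(u) - \bar S_\chi \textup{AM}(u) + n\textup{AM}_{\Ric \o - \beta}(u) + \int_X f\o^n,$$
which is meaningful on all of $\mathcal E^1$ by Lemma \ref{FinEntFinEnLemma}. On the right-hand side the functionals $\textup{AM}$ and $\textup{AM}_{\Ric \o - \beta}$ are $d_1$-continuous, hence finite, on $\mathcal E^1$, and $\int_X f\o^n$ is a fixed constant; so finiteness of the entropy is equivalent to finiteness of $\mathcal K_\chi$.

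For the first statement I would take $u_0,u_1 \in \textup{Ent}_\chi(X,\o) \subset \mathcal E^1$ and let $t \to u_t$ be the finite energy geodesic joining them, which is a genuine $d_1$-geodesic by Theorem \ref{dpCompletionThm}. Since the entropy is finite at the endpoints and the remaining terms above are finite on $\mathcal E^1$, both $\mathcal K_\chi(u_0)$ and $\mathcal K_\chi(u_1)$ are finite. By the convexity of $\mathcal K_\chi$ along finite energy geodesics (Theorem \ref{ExtKEnergyThm}),
$$\mathcal K_\chi(u_t) \leq (1-t)\,\mathcal K_\chi(u_0) + t\,\mathcal K_\chi(u_1) < \infty, \qquad t \in [0,1].$$
Feeding this back into the displayed identity, and using that the $\textup{AM}$ terms stay finite along the geodesic, yields $\textup{Ent}(e^{-f}\o^n,\o^n_{u_t}) < \infty$; as the entropy is bounded below, it is real-valued, so $u_t \in \textup{Ent}_\chi(X,\o)$. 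Thus $\textup{Ent}_\chi(X,\o)$ is geodesically closed inside $(\mathcal E^1,d_1)$ and is a geodesic sub-metric space.

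For the second statement, under $\Ric \o \geq \beta$ the form $\gamma := \Ric \o - \beta$ is a smooth closed \emph{positive} $(1,1)$-form. I would inspect the three non-constant terms of the identity along a finite energy geodesic $t \to u_t$: the term $\mathcal K_\chi(u_t)$ is convex by Theorem \ref{ExtKEnergyThm}; the term $-\bar S_\chi \textup{AM}(u_t)$ is affine, since $\textup{AM}$ is linear along finite energy geodesics (as used in the proofs of Theorem \ref{ExtKEnergyThm} and Proposition \ref{FinEnGeodStabProp}); and $\textup{AM}_{\gamma}(u_t)$ is convex because $\gamma \geq 0$. The last point is exactly Proposition \ref{AMtwistConvexity}, applied with $f=0$ and $\beta=\gamma$, for the weak geodesics in $\mathcal H^\Delta_\o$ that decrease to $u_t$; passing to the limit and invoking the $d_1$-continuity of $\textup{AM}_\gamma$ from Proposition \ref{AMtwistextension}, together with the fact that a pointwise limit of convex functions is convex, transfers convexity to the finite energy geodesic. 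Since $n>0$, the term $n\,\textup{AM}_\gamma(u_t)$ is convex, and the entropy is a sum of a convex, an affine, a convex and a constant function of $t$, hence convex.

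The only genuine inputs are the hard convexity facts—convexity of $\mathcal K_\chi$ and of $\textup{AM}_\gamma$ for positive $\gamma$—and these are already in place; everything else is finiteness bookkeeping. The one step requiring a little care is the passage of the convexity of $\textup{AM}_\gamma$ from the approximating weak geodesics in $\mathcal H^\Delta_\o$ to the finite energy geodesic in $\mathcal E^1$, but this is immediate once the $d_1$-continuity of $\textup{AM}_\gamma$ is used to take the limit.
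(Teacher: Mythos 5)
Your proposal is correct and follows essentially the same route as the paper: both deduce geodesic closedness by solving \eqref{KenExtThmFormula} for the entropy, using finiteness of the $\AM$-terms on $\mathcal E^1$ together with convexity of $\mathcal K_\chi$ along finite energy geodesics, and both obtain convexity of the entropy from the decomposition into $\mathcal K_\chi$ (convex), $-\bar S_\chi\AM$ (linear) and $n\AM_{\Ric\o-\beta}$ (convex via Proposition \ref{AMtwistConvexity} when $\Ric\o\geq\beta$). Your explicit remark on passing the convexity of $\AM_\gamma$ from the approximating weak geodesics in $\mathcal H^\Delta_\o$ to the finite energy geodesic via $d_1$-continuity is a welcome clarification of a step the paper leaves implicit, but it is not a different argument.
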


\begin{proof} Suppose $u_0,u_1 \in \textup{Ent}_\chi(X,\o)$. Let $[0,1] \ni t \to u_t \in \mathcal E^1$ be the finite energy geodesic connecting $u_0,u_1$. By Theorem \ref{ExtKEnergyThm} it follows that $t \to \mathcal K_\chi(u_t)$ is convex on $[0,1]$ hence $\mathcal K_\chi(u_t)$ is finite for all $t \in [0,1]$. Using the finiteness of $\textup{AM}$ and $\textup{AM}_{\Ric \o -  \beta}$, this necessarily gives that $\textup{Ent}(e^{-f}\o^n,\o_{u_t}^n)$ is also finite for all $t \in [0,1]$. 

For the last statement notice that $t \to n \textup{AM}_{\Ric \o -\beta}(u_t) - \overline{S}_\chi \textup{AM}(u_t)$ is convex, as follows from Proposition \ref{AMtwistConvexity}. As $t \to \mathcal K_\chi(u_t)$ is also convex, from \eqref{KenExtThmFormula} it follows that $t \to \textup{Ent}(e^{-f}\o^n,\o_{u_t}^n)$ is also convex.
\end{proof}
In the case $\beta =0$, this convexity result can be seen as the complex version of one of the central results of the theory of optimal transport of measure, which says that, if $g_0$ is a given Riemannian metric on a compact real manifold $X$ with non-negative Ricci curvature and whose normalized volume form is denoted by $\mu_0$, then the relative entropy function $\mu \to \textup{Ent}(\mu_0,\mu)$ is convex along curves $t \to \mu_t$ defined by McCann's displacement interpolation (which may be formulated in terms of optimal transport maps). The latter curves can be seen as weak geodesics for Otto's Riemannian metric on the
space of all normalized volume forms on X. More precisely, the curves $t \to \mu_t$ are the geodesics in the metric space $(\mathcal P(M),d_{W_2})$ defined by the space $\mathcal P(M)$ of all probability measures on $X$ equipped with the  Wasserstein 2-metric, which can be viewed
as a completion of Otto's Riemannian structure \cite{vil}. Hence, the role of Otto's Riemmanian metric is on the present complex
setting played by Mabuchi's Riemannian metric.

\subsection{Uniqueness of twisted K-energy minimizers}
In this subsection we suppose $\chi$ is a K\"ahler form. We are going to prove that there is at most one minimizer in $\Ec^1$ of the twisted $K$-energy $\Kcc$. We need the following result which may be of independent interest.
\begin{lemma}\label{lem: derivative of AM along geodesic}
	Let $\varphi_0,\varphi_1\in \Ec^1$ and $[0,1]\ni t\to\varphi_t$ be the finite energy geodesic connecting  $\varphi_0$ and $\varphi_t$. Suppose that $\omega_{\varphi_t}^n$ is absolutely continuous with respect to $\omega^n$ for every $t\in [0,1]$. Then for almost every $t\in (0,1)$ we have
	\begin{equation}
		\label{eq: derivative of geodesic}
		\AM(\varphi_1)-\AM(\varphi_0)=\frac{1}{V}\int_X \dot{\varphi}^+_t \omega_{\varphi_t}^n=\frac{1}{V}\int_X \dot{\varphi}^-_t  \omega_{\varphi_t}^n,
	\end{equation}
	where, for fixed $x\in X$, $\dot{\varphi}^+_t(x)$ and $\dot{\varphi}^-_t(x)$ are the right  and left derivative of $t\to \varphi(t,x)$ respectively. 
\end{lemma}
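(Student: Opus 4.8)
The plan is to combine three facts: that $\AM$ is affine along finite energy geodesics, the comparison inequalities of Lemma~\ref{lem: bounds for AM}, and the convexity of $t\mapsto\varphi(t,x)$ for fixed $x$. Recall first that $\AM$ is affine along finite energy geodesics, so setting $c:=\AM(\varphi_1)-\AM(\varphi_0)$ we have $\AM(\varphi_s)-\AM(\varphi_t)=(s-t)c$ for all $s,t\in[0,1]$. Recall also that the potential $\varphi(t,x)$ of the geodesic is $\pi_2^\star\o$-psh and $\RR$-invariant, hence $t\mapsto\varphi(t,x)$ is convex for each fixed $x$; in particular the one-sided derivatives $\dot\varphi_t^\pm(x)$ exist, the forward quotients $h^{-1}(\varphi_{t+h}-\varphi_t)$ decrease to $\dot\varphi_t^+$ as $h\downarrow0$, and the backward quotients $k^{-1}(\varphi_t-\varphi_{t-k})$ increase to $\dot\varphi_t^-$ as $k\downarrow0$. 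Finally I will use that any two potentials in $\Ec^1$ are mutually integrable (this is implicit in the finiteness of $I(u,v)=\int_X(u-v)(\o_v^n-\o_u^n)$ on $\Ec^1$), so that $\varphi_s\in L^1(\o_{\varphi_t}^n)$ for all $s,t$.

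First I would establish, for every $t\in(0,1)$, the two-sided bound
\begin{equation*}
\frac1V\int_X\dot\varphi_t^-\,\o_{\varphi_t}^n\ \le\ c\ \le\ \frac1V\int_X\dot\varphi_t^+\,\o_{\varphi_t}^n.
\end{equation*}
For the upper bound, apply the $\AM$-inequality of Lemma~\ref{lem: bounds for AM} with $u=\varphi_{t+h}$, $v=\varphi_t$, divide by $h>0$, and use $\AM(\varphi_{t+h})-\AM(\varphi_t)=ch$ to get $c\le V^{-1}\int_X h^{-1}(\varphi_{t+h}-\varphi_t)\,\o_{\varphi_t}^n$; letting $h\downarrow0$ and invoking dominated convergence (the quotients are squeezed, uniformly in $h$, between the $\o_{\varphi_t}^n$-integrable functions $t^{-1}(\varphi_t-\varphi_0)$ from below and $(1-t)^{-1}(\varphi_1-\varphi_t)$ from above, by monotonicity of secant slopes) yields $c\le V^{-1}\int_X\dot\varphi_t^+\,\o_{\varphi_t}^n$. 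The lower bound is symmetric, running the same computation with $u=\varphi_{t-k}$, $v=\varphi_t$ and $k>0$. Here the hypothesis that $\o_{\varphi_t}^n$ is absolutely continuous with respect to $\o^n$ enters: it guarantees that the pointwise convergence of the quotients, which holds off the $\o^n$-null pluripolar set $\{\varphi_t=-\infty\}\cup\{\varphi_{t\pm h}=-\infty\}$, in fact holds $\o_{\varphi_t}^n$-almost everywhere.

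It then remains to upgrade the two-sided bound to the equalities $V^{-1}\int_X\dot\varphi_t^{\pm}\,\o_{\varphi_t}^n=c$ for almost every $t$, which amounts to showing that the nonnegative gap $A^+(t)-A^-(t):=V^{-1}\int_X(\dot\varphi_t^+-\dot\varphi_t^-)\,\o_{\varphi_t}^n$ vanishes for a.e. $t$. I would integrate this gap over $t\in(0,1)$, write $\o_{\varphi_t}^n=f_t\,\o^n$ using the absolute-continuity hypothesis, and apply Tonelli's theorem on $(0,1)\times X$ with the fixed product measure $dt\otimes\o^n$:
\begin{equation*}
\int_0^1\big(A^+(t)-A^-(t)\big)\,dt=\frac1V\int_X\Big(\int_0^1(\dot\varphi_t^+(x)-\dot\varphi_t^-(x))f_t(x)\,dt\Big)\o^n(x).
\end{equation*}
For $\o^n$-a.e. fixed $x$ the function $t\mapsto\varphi(t,x)$ is a finite convex function, so $\dot\varphi_t^+(x)=\dot\varphi_t^-(x)$ for all but countably many $t$; hence the inner integral vanishes for $\o^n$-a.e. $x$, the whole integral is zero, and since $A^+-A^-\ge0$ we conclude $A^+(t)=A^-(t)=c$ for a.e. $t$, which is the assertion.

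The main obstacle is this last step. Besides the routine dominated-convergence book-keeping of the first part, the delicate point is the legitimacy of the Tonelli interchange: one must know that $(t,x)\mapsto f_t(x)$ — equivalently, the measure-valued map $t\mapsto\o_{\varphi_t}^n$ — is jointly measurable, which I would deduce from the weak continuity of $t\mapsto\o_{\varphi_t}^n$ along the geodesic. The absolute-continuity hypothesis must be used in an essential way both to reduce the moving family of measures $\o_{\varphi_t}^n$ to the single reference $\o^n$ and to render the $\o^n$-null pluripolar sets of non-differentiability negligible for every $\o_{\varphi_t}^n$. Without absolute continuity the convexity argument still gives that $\dot\varphi_t^+(x)-\dot\varphi_t^-(x)$ vanishes in $t$ for each fixed $x$, but one can no longer transfer this to an almost-everywhere statement in $t$ after integrating against the varying measures $\o_{\varphi_t}^n$.
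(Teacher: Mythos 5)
Your proposal is correct and follows essentially the same route as the paper's proof: the two-sided bound $\frac1V\int_X\dot\varphi_t^-\,\o_{\varphi_t}^n\le \AM(\varphi_1)-\AM(\varphi_0)\le\frac1V\int_X\dot\varphi_t^+\,\o_{\varphi_t}^n$ obtained from Lemma \ref{lem: bounds for AM}, linearity of $\AM$ along the geodesic and monotonicity of the convex difference quotients, followed by the Fubini/Tonelli argument on $X\times[a,b]$ using the countability of non-differentiability points of $t\mapsto\varphi(t,x)$ and the absolute-continuity hypothesis to conclude equality for a.e.\ $t$. Your extra care about joint measurability of $(t,x)\mapsto f_t(x)$ and the dominated-convergence bookkeeping only makes explicit what the paper leaves implicit.
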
 
\begin{proof}
For simplicity we assume that $V=1$.
Fix two real numbers $a,b$ such that $0<a<b<1$. We first observe that for $t\in (a,b)$ and $h>0$ small enough, by convexity we have
$$
\frac{\varphi_t-\varphi_0}{t} \leq \frac{\varphi_{t+h}-\varphi_t}{h}\leq \frac{\varphi_1-\varphi_t}{1-t}. 
$$ 
It thus follows that both $\dot{\varphi}^+_t$ and $\dot{\varphi}^-_t$ are integrable with respect to $\omega_{\varphi_t}^n$. 
From Lemma \ref{lem: bounds for AM} we obtain 
$$
\AM(\varphi_{t+h})-\AM(\varphi_t) \leq \int_X (\varphi_{t+h}-\varphi_t) \omega_{\varphi_t}^n. 
$$
Since $\AM$ is linear along the weak geodesic $\varphi_t$, by dividing the above inequality by $h$ and letting $h\to 0$ we obtain
\begin{equation}
		\label{eq: derivative of geodesic 1}
		\int_X \dot{\varphi}^-_t  \omega_{\varphi_t}^n \leq \AM(\varphi_1)-\AM(\varphi_0)\leq \int_X \dot{\varphi}^+_t \omega_{\varphi_t}^n.
\end{equation}
For each $x\in X$ the function $t\to \varphi_t(x)$ is convex, hence differentiable almost every where in $[0,1]$. It follows that the set
$$
\{(x,t)\in X\times [a,b] : \dot{\varphi}^-_t(x)<\dot{\varphi}^+_t(x)\}
$$
has zero measure (where the measure here is the product of  $\omega^n$ and $dt$). Let $f(t,x)$ be the density of the Monge-Amp\`ere measure   $(\omega+i\ddbar \varphi_t)^n$. We then have 
\begin{equation}
		\label{eq: derivative of geodesic 2}
		\int_{X\times [a,b]} \dot{\varphi}^-_t f(t,x)\omega^n dt = \int_{X\times[a,b]} \dot{\varphi}^+_t f(t,x)\omega^n dt.
\end{equation}
Now, by Fubini's theorem, (\ref{eq: derivative of geodesic 1}) and (\ref{eq: derivative of geodesic 2}) we see that the inequalities in (\ref{eq: derivative of geodesic 1}) become equalities for almost every $t$ in $[a,b]$, completing the proof.
\end{proof}

\begin{theorem}\label{thm: linear along geodesic}
Let $\alpha$ be a K\"ahler form. Let $\varphi_0,\varphi_1\in \Ec^1$ and $\varphi_t$ be the finite energy geodesic connecting $\varphi_0$ and $\varphi_1$. Suppose that $\omega_{\varphi_t}^n$ is subordinate to $\omega^n$ for any $t\in [0,1]$. If  $\AM_{\alpha}$ is linear along $\varphi_t$ then $\varphi_1-\varphi_0$ is constant. 	
\end{theorem}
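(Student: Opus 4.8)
The plan is to show that the hypothesis forces the second variation of $\AM_\alpha$ along $\varphi_t$ to vanish identically, and that this second variation is a genuinely positive Dirichlet-type energy of $\dot\varphi_t$ weighted by $\alpha$; positivity of $\alpha$ will then force $\dot\varphi_t$ to be constant in the space variable, which is exactly the assertion $\varphi_1-\varphi_0\equiv \mathrm{const}$.

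First I would set up the convexity formula on the product space, exactly as in the proof of Proposition \ref{AMtwistConvexity}. After reducing (via Theorem \ref{dpCompletionThm}) to bounded, and then smooth, endpoints, approximate the weak geodesic $\Phi(t,x)=\varphi_t(x)$ on $[0,1]\times\RR\times X$ by a decreasing family of smooth subgeodesics $u^\vep$ (the $\vep$-geodesics of \cite{c}), and write $\Omega^\vep=\pi^*\o+i\ddbar u^\vep>0$. For each $\vep$ the identity of Proposition \ref{AMtwistConvexity} gives, for $0\le t_0<t_1\le 1$,
\[
\frac{d}{dt}\Big|_{t_1}\AM_\alpha(u^\vep_t)-\frac{d}{dt}\Big|_{t_0}\AM_\alpha(u^\vep_t)=\int_{S_{t_0,t_1}\times X}\pi^*\alpha\w(\Omega^\vep)^n\ \geq 0,
\]
the right-hand side being nonnegative since $\alpha\geq 0$.

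Next I would exploit linearity. Since $\AM_\alpha(u^\vep_t)\to\AM_\alpha(\varphi_t)$ and the limit $t\mapsto\AM_\alpha(\varphi_t)$ is affine by hypothesis, the convex functions $t\mapsto\AM_\alpha(u^\vep_t)$ converge to an affine function, so their nonnegative second-derivative measures tend to zero. Hence the masses $\int_{S_{t_0,t_1}\times X}\pi^*\alpha\w(\Omega^\vep)^n\to 0$, and passing to the limit (weak continuity of the mixed Monge--Amp\`ere along the decreasing $\vep$-geodesics) yields
\[
\pi^*\alpha\w\Omega^n=0\quad\text{as a positive measure, where }\Omega=\pi^*\o+i\ddbar\Phi.
\]
On the locus where $\o_{\varphi_t}^n$ is a nondegenerate absolutely continuous measure, the density of $\pi^*\alpha\w\Omega^n$ can be read off by a pointwise computation: diagonalising $\o_{\varphi_t}$ and letting $a_p>0$ be the eigenvalues of $\alpha$ with respect to $\o_{\varphi_t}$, this density is a positive multiple of $\sum_p a_p|\partial_p\dot\varphi_t|^2$. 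Equivalently, the second-variation integrand is the quadratic form
\[
|\partial\dot\varphi_t|^2_{\o_{\varphi_t}}\,\alpha\w\o_{\varphi_t}^{n-1}-(n-1)\,\alpha\w i\partial\dot\varphi_t\w\bar\partial\dot\varphi_t\w\o_{\varphi_t}^{n-2}\ \geq 0,
\]
whose defect equals $(n-1)!\big(\sum_p a_p|\partial_p\dot\varphi_t|^2\big)$ times the volume, hence vanishes if and only if $\partial\dot\varphi_t=0$ (here $\alpha$ being K\"ahler is essential, as it forces every $a_p>0$). Therefore $\pi^*\alpha\w\Omega^n=0$ gives $\partial\dot\varphi_t=0$, i.e.\ $\dot\varphi_t$ is constant on $X$ for a.e.\ $t$; integrating in $t$ shows $\varphi_t-\varphi_0$ is constant on $X$ for each $t$, and in particular $\varphi_1-\varphi_0$ is constant.

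The main obstacle is the passage from the weak vanishing $\pi^*\alpha\w\Omega^n=0$ to the pointwise conclusion $\partial\dot\varphi_t=0$, since the geodesic is only a finite-energy object and the equality-case computation above is pointwise in nature. This is precisely where the standing hypothesis that $\o_{\varphi_t}^n$ is subordinate to $\o^n$ should be used: it controls the degeneracy locus of $\o_{\varphi_t}$ and supplies enough regularity of the slices (as in Lemmas \ref{lem: bounds for AM} and \ref{lem: derivative of AM along geodesic}) to make the limiting mixed Monge--Amp\`ere manipulations and the equality-case argument rigorous. A secondary technical point is the two-layer approximation of the $\Ec^1$-geodesic (first by bounded, then by smooth subgeodesics) needed to justify the convexity identity and the convergence of the second-derivative measures.
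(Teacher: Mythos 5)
Your overall heuristic is the right one (linearity of $\AM_\alpha$ should kill the strict-convexity defect, and for K\"ahler $\alpha$ that defect is a weighted Dirichlet energy of $\dot\varphi_t$), and the pointwise linear algebra you do for smooth curves is correct. The problem is the step you yourself flag as the ``main obstacle'': passing from the weak vanishing $\pi^*\alpha\wedge\Omega^n=0$ to $\partial\dot\varphi_t=0$. This is not a technical footnote that the subordination hypothesis takes care of --- it is a genuine gap, and the role you assign to that hypothesis is wrong. A finite energy geodesic (even the $C^{1,\bar1}$ geodesic between smooth endpoints) has $\o_{\varphi_t}$ degenerating on sets of positive Lebesgue measure; there the Monge--Amp\`ere relation $\ddot\varphi_t=|\partial\dot\varphi_t|^2_{\o_{\varphi_t}}$ that you use to reduce the density of $\pi^*\alpha\wedge\Omega^n$ to the quadratic form $\sum_p a_p|\partial_p\dot\varphi_t|^2$ is unavailable, and for general $\Ec^1$ endpoints $\dot\varphi_t$ exists only as one-sided limits and $\partial\dot\varphi_t$ has no pointwise meaning, so the non-pluripolar measure $\pi^*\alpha\wedge\Omega^n$ admits no density formula of the kind you invoke. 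The hypothesis that $\o_{\varphi_t}^n$ is absolutely continuous with respect to $\o^n$ gives no control whatsoever on the degeneracy locus of $\o_{\varphi_t}$ nor any regularity of the slices; in the paper it is used for a purely measure-theoretic purpose (Lemma \ref{lem: derivative of AM along geodesic}: via Fubini, the set where the left and right time-derivatives of the convex function $t\mapsto\varphi_t(x)$ disagree is $dt\times\o^n$-null, hence $dt\times\o^{n}_{\varphi_t}$-null, so $\int\dot\varphi_t^+\o_{\varphi_t}^n=\int\dot\varphi_t^-\o_{\varphi_t}^n$ for a.e.\ $t$). Even granting $\partial\dot\varphi_t=0$ $\o_{\varphi_t}^n$-a.e., you would still need an argument to upgrade this to $\dot\varphi_t$ being constant on all of $X$.

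The paper's proof sidesteps all pointwise analysis. It first observes that since $\AM_\alpha=\AM_\beta+\AM_{\alpha-\beta}$ with both summands convex along the geodesic, linearity of $\AM_\alpha$ forces linearity of $\AM_\beta$ for every K\"ahler $\beta$, hence of $\AM_{\o_\psi}$ for every $\psi\in\Ec^1$ by approximation; it then takes $\psi=\varphi_t$ and combines the one-sided derivative identities of Lemmas \ref{lem: bounds for AM} and \ref{lem: derivative of AM along geodesic} to produce the two integral identities $\int_X(\varphi_1-\varphi_t)\o_{\varphi_1}^n=\int_X(\varphi_1-\varphi_t)\o_{\varphi_t}^n=0$, i.e.\ $I(\varphi_1,\varphi_t)=0$, and concludes by the uniqueness/domination results of \cite{bbegz}. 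If you want to pursue your route, you would essentially have to reprove an equality-case statement for the convexity of $\AM_\alpha$ at the level of non-pluripolar products on the product space, which is a substantial piece of work not contained in your sketch.
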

\begin{proof}
	We can assume that $\AM(\varphi_0)=\AM(\varphi_1)$ and we normalize $\omega$ so that $V=1$.  We claim that $\AM_{\beta}$ is also linear along $\varphi_t$, where $\beta$ is any K\"ahler form. Indeed, multiplying $\beta$ by some small positive constant we can assume that $\gamma:=\alpha-\beta>0$. It follows from Proposition \ref{AMtwistConvexity} that both $t \to \AM_{\gamma}(\varphi_t)$ and $t \to \AM_{\beta}(\varphi_t)$ are convex.  Because $\AM_{\alpha}=\AM_{\beta}+\AM_{\gamma}$ is linear along $\varphi_t$, it follows that in fact $t \to \AM_{\beta}(\varphi_t)$ is linear as well. 
By approximation it follows that $\AM_{\omega_{\psi}}$ is linear along $\varphi_t$ for any $\psi\in \Ec^1$. 
	
Fix $t \in (0,1)$ such that \eqref{eq: derivative of geodesic} holds in Lemma \ref{lem: derivative of AM along geodesic}. For $h>0$ small enough we have
\begin{eqnarray*}
	\int_X \frac{\varphi_{t+h}-\varphi_t}{h}\omega_{\varphi_{t}}^n &\geq & \frac{\AM_{\omega_{\varphi_t}}(\varphi_{t+h}) - \AM_{\omega_{\varphi_t}}(\varphi_{t})}{h} \\
	&=& -\frac{1}{n} \int_X \frac{\varphi_{t+h}-\varphi_t}{h}\omega_{\varphi_{t+h}}^n \\ 
	&\geq & -\frac{1}{n} \int_X \frac{\varphi_{t+h}-\varphi_t}{h}\omega_{\varphi_{t}}^n.
\end{eqnarray*}
	In the first line we have used Lemma \ref{lem: bounds for AM}. In the second line we have used the assumption that $\AM$ is constant along $\varphi_t$. In the last line we have used again Lemma \ref{lem: bounds for AM}. Now, letting $h \to 0$ and using Lemma \ref{lem: derivative of AM along geodesic} we see that the right derivative of $l \to \AM_{\o_{\varphi_t}}(\varphi_l)$ at $t$ is zero. Thus $l \to \AM_{\o_{\varphi_t}}(\varphi_l)$ is in fact constant. This combined  with $l \to \AM(\varphi_l)$ being constant imply that
\begin{equation}\label{eq: integral equals zero}
0=(n+1)(\AM(\varphi_1)-\AM(\varphi_t))-n(\AM_{\o_{\varphi_t}}(\varphi_1)-\AM_{\o_{\varphi_t}}(\varphi_t))=\int_X (\varphi_1-\varphi_t) \omega_{\varphi_1}^n.
\end{equation}
A computation similar to the one in Lemma \ref{lem: bounds for AM} gives that all terms in the expression of $\AM(\varphi_1)-AM(\varphi_t)$ from \eqref{AMDifferenceEq} are greater than
$\int_X (\varphi_1-\varphi_t) \omega_{\varphi_1}^n$. Using this, \eqref{eq: integral equals zero}  and $\AM(\varphi_1)-AM(\varphi_t) =0$ we obtain $\int_X (\varphi_1-\varphi_t) \omega_{\varphi_t}^n =0$. Together with \eqref{eq: integral equals zero} this gives 
$$I(\varphi_1,\varphi_t)=\int_X (\varphi_1-\varphi_t)(\o^n_{\varphi_t}-\o^n_{\varphi_1})=0.$$ 

Hence, by the results in \cite[Section 2.1]{bbegz}, the difference $\varphi_t-\varphi_1$ is constant. In fact $\varphi_t = \varphi_1$, as the Aubin-Mabuchi energy is constant along the geodesic $l \to \varphi_l$. This implies $\varphi_s=\varphi_1, \forall s\in [0,1]$, as $(\Ec^1,d_1)$ is a geodesic space.
\end{proof}
We are now ready to prove the uniqueness result.
\begin{theorem}\label{thm: uniqueness of twisted minimizer}
	Assume that $\chi$ is a K\"ahler form. If $\varphi_0$ and $\varphi_1$ are minimizers in $\Ec^1$ of the twisted Mabuchi energy $\Kcc$ then
	$\varphi_1-\varphi_0$ is constant.
\end{theorem}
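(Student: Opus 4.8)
The plan is to connect the two minimizers by the finite energy geodesic $[0,1]\ni t\to\varphi_t\in\Ec^1$ and to reduce the statement to Theorem \ref{thm: linear along geodesic}. First I would record that, since $\varphi_0,\varphi_1$ are minimizers sharing the common (finite) value $m:=\inf_{\Ec^1}\Kcc$, the convexity of $\Kcc$ along finite energy geodesics (Theorem \ref{ExtKEnergyThm}) gives $\Kcc(\varphi_t)\le(1-t)m+tm=m$, while $m$ being the global infimum forces $\Kcc(\varphi_t)\ge m$; hence $t\to\Kcc(\varphi_t)$ is constant, in particular affine. Because $\Kcc(\varphi_i)$ is finite and both $\AM$ and $\AM_{\Ric\o-\beta}$ are finite on $\Ec^1$, the twisted entropy of $\varphi_0,\varphi_1$ is finite, so $\varphi_0,\varphi_1\in\textup{Ent}_\chi(X,\o)$. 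By the geodesic closedness established in Theorem \ref{FinEntGeodConvThm}, the entire segment stays in $\textup{Ent}_\chi(X,\o)$, so $\o_{\varphi_t}^n$ is subordinate to $e^{-f}\o^n$, and hence to $\o^n$, for every $t\in[0,1]$. This is precisely the hypothesis required to invoke Theorem \ref{thm: linear along geodesic}.

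Next I would isolate the $\chi$-term. Setting $c:=nV^{-1}\int_X\chi\wedge\o^{n-1}$ so that $\bar S_\chi=\bar S-c$, the alternative formula \eqref{KEnTwistAltDef} together with the linearity of $\gamma\to\AM_\gamma$ (which gives $\AM_{\Ric\o-\chi}=\AM_{\Ric\o}-\AM_\chi$) rearranges into
\[
\Kcc(u)=\mathcal K(u)-c\,\AM(u)+n\,\AM_\chi(u),
\]
where $\mathcal K$ is the untwisted K-energy. Along $\varphi_t$ the term $\AM(\varphi_t)$ is affine (it is linear along every finite energy geodesic), $\mathcal K(\varphi_t)$ is convex (Theorem \ref{ExtKEnergyThm} with $\chi=0$), and $\AM_\chi(\varphi_t)$ is convex since $\chi>0$ (Proposition \ref{AMtwistConvexity}, extended to finite energy geodesics by the $\vep$-geodesic approximation used in the proof of Theorem \ref{thm: linear along geodesic}). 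Since $\Kcc(\varphi_t)$ is constant and $\AM(\varphi_t)$ is affine, the sum $\mathcal K(\varphi_t)+n\,\AM_\chi(\varphi_t)$ is affine; and a sum of two convex functions that is affine forces each summand to be affine, as one sees by writing the midpoint inequality for both and noting that equality in the sum forces equality in each. In particular $t\to\AM_\chi(\varphi_t)$ is linear along the geodesic.

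Finally, with $\alpha=\chi$ a K\"ahler form, $\AM_\alpha$ linear along $\varphi_t$, and $\o_{\varphi_t}^n$ subordinate to $\o^n$ for all $t$, Theorem \ref{thm: linear along geodesic} yields that $\varphi_1-\varphi_0$ is constant, which is the assertion. I expect the only genuinely delicate point to be the propagation of subordination (equivalently, finiteness of the entropy) to the interior of the geodesic: this is where minimality is used a second time, through the finiteness of $\Kcc(\varphi_t)$ fed into Theorem \ref{FinEntGeodConvThm}, and without it Theorem \ref{thm: linear along geodesic} would be inapplicable. The convexity bookkeeping and the ``convex $+$ convex $=$ affine $\Rightarrow$ each affine'' step are routine once the decomposition above is in place.
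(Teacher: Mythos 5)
Your proposal is correct and follows essentially the same route as the paper: connect the minimizers by a finite energy geodesic, note $\Kcc$ is constant (hence affine) along it, use the decomposition $\Kcc=\Kc+(\bar S_\chi-\bar S)\AM+n\AM_\chi$ together with convexity of $\Kc$ and $\AM_\chi$ and linearity of $\AM$ to force $\AM_\chi$ to be linear, and then invoke Theorem \ref{thm: linear along geodesic}. The only cosmetic difference is that you justify the subordination of $\o_{\varphi_t}^n$ to $\o^n$ via Theorem \ref{FinEntGeodConvThm}, while the paper reads it off directly from the finiteness of the entropy term in the same decomposition; these are the same underlying argument.
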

\begin{proof}Let $t \to \varphi_t$ be the finite energy geodesic connecting $\varphi_0,\varphi_1$. By the convexity of $\Kcc$ it follows that $\Kcc$ is linear along $t \to \varphi_t$. Since $t \to \AM(\varphi_t), \Kcc(\varphi_t)$ are linear and $t \to \AM_{\chi}(\varphi_t), \mathcal K(\varphi_t)$ are convex, the decomposition $\Kcc=\Kc + (\bar{S}_{\chi}-\bar{S})\AM +  n\AM_{\chi}$ then reveals  that $\AM_{\chi}$ is also linear along $t \to \varphi_t$ and $\o_{\varphi_t}^n$ is subordinate to $\o^n$. The result now follows from Theorem \ref{thm: linear along geodesic}.
\end{proof}

\begin{remark}\label{remark: DRremark} When $\chi$ is a K\"ahler form, using this last theorem, it  can be seen that the conditions (A1)-(A4) and (P1)-(P7) are verified in \cite[Theorem 3.4]{dr} for the data $(\mathcal E^1, d_1, \mathcal K_\chi, \{ Id \})$ to give that a minimizer of $\mathcal K_\chi$ exists in $\mathcal E^1$ if and only if there exists $C,D>0$ such that
$$\mathcal K_\chi(u) \geq C d_1(0,u) -D, \ \ u \in \mathcal H_\o.$$
This verifies a weak version of \cite[Conjecture 1.21]{c4} going back to \cite[Conjecture 6.1]{c3}. For related partial results, see also \cite{de}.
\end{remark}

\section{Relating $d_1$-convergence to weak $d_2$-convergence}
\label{sec: weak d2 convergence}
Before we get into the details of our particular situation, we start with a pedagogical example: suppose $(M,\mu)$ is a measure space with finite volume. By $(L^p(M,\mu),\| \cdot \|_p)$ we denote the usual $L^p$ spaces on $M$. From H\"older's inequality it follows that on $L^2(M,\mu)$ the $\| \cdot \|_2$ norm dominates the $\| \cdot \|_1$ norm. Our focus however is on the weak-$L^2$ topology. As it turns, the $L^1$-topology dominates the weak-$L^2$ topology. The simple explanation for this is that $L^1$-balls inside $L^2(M,\mu)$ are closed convex sets, and it is a classical fact that weak $L^2$-limits do not exit closed convex sets. 
Though much simplified, as it turns out, these idea generalizes to the setting of the metric spaces $(\mathcal E^p,d_p)$. As we show below, the $d_1$-metric balls have a certain convexity property that will make these sets $d_2$-convex and closed inside $\mathcal E^2$. This will imply that $d_1$-convergence dominates weak $d_2$-convergence.  In the next section, coupled with Theorem \ref{BacakThm}, this fact will have implications on the convergence of the weak twisted Calabi flow.

As advocated in \cite{da1,da2}, a proper understanding of the 'rooftop' envelopes $P(u_0,u_1)$ gives insight into the geometry of the spaces $(\mathcal E^p,d_p)$. Furthering this relationship, we state the following proposition:

\begin{proposition} Suppose $[0,1] \ni t \to u_t,v_t \in \mathcal E^1$ are finite energy geodesics. Then the map $t \to \textup{AM}(P(u_t,v_t))$ is concave. Consequently, the map $t \to d_1(u_t,v_t)$ is convex. 
\end{proposition}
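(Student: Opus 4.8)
The plan is to establish the concavity of $t\mapsto\textup{AM}(P(u_t,v_t))$ directly, by a comparison argument between finite energy geodesics, and then to read off the convexity of $t\mapsto d_1(u_t,v_t)$ from the envelope representation of the $d_1$-metric. Throughout I use that the rooftop envelope of two potentials in $\mathcal E^1$ again lies in $\mathcal E^1$ (as already exploited in Proposition \ref{MonotoneDominationProp}), so that $\textup{AM}$ is defined on it, together with the two basic features of $\textup{AM}$ recorded earlier: its monotonicity (from \eqref{AMDifferenceEq} and Proposition \ref{AMextension}) and its linearity along finite energy geodesics.

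For the concavity, fix $t_0,t_2\in[0,1]$ and $\lambda\in[0,1]$, and write $t_\lambda=(1-\lambda)t_0+\lambda t_2$. Set $A=P(u_{t_0},v_{t_0})$ and $B=P(u_{t_2},v_{t_2})$ in $\mathcal E^1$, and let $[0,1]\ni s\to\eta_s\in\mathcal E^1$ be the finite energy geodesic joining $A$ and $B$. The crux is the comparison (maximum) principle for finite energy geodesics with ordered endpoints, the same tool invoked in the proof of Proposition \ref{FinEnGeodStabProp}. By definition of the rooftop envelope $A\leq u_{t_0}$ and $B\leq u_{t_2}$; since the finite energy geodesic connecting $u_{t_0}$ and $u_{t_2}$ is exactly the reparametrized subarc $s\mapsto u_{t_0+s(t_2-t_0)}$ of $t\mapsto u_t$, the comparison principle yields $\eta_\lambda\leq u_{t_\lambda}$. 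Running the same argument with $v$ in place of $u$ gives $\eta_\lambda\leq v_{t_\lambda}$. Thus $\eta_\lambda$ is an $\omega$-psh function lying below both $u_{t_\lambda}$ and $v_{t_\lambda}$, whence $\eta_\lambda\leq P(u_{t_\lambda},v_{t_\lambda})$.

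Now I combine the two properties of $\textup{AM}$. Monotonicity applied to $\eta_\lambda\leq P(u_{t_\lambda},v_{t_\lambda})$ gives $\textup{AM}(\eta_\lambda)\leq\textup{AM}(P(u_{t_\lambda},v_{t_\lambda}))$, while linearity along $\eta_s$ gives $\textup{AM}(\eta_\lambda)=(1-\lambda)\textup{AM}(A)+\lambda\,\textup{AM}(B)$. Hence
\[
\textup{AM}(P(u_{t_\lambda},v_{t_\lambda}))\geq(1-\lambda)\,\textup{AM}(P(u_{t_0},v_{t_0}))+\lambda\,\textup{AM}(P(u_{t_2},v_{t_2})),
\]
which is the desired concavity; note it is obtained for every $\lambda$ simultaneously, so no separate continuity argument is needed. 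For the final assertion I use the explicit envelope formula $d_1(\phi,\psi)=\textup{AM}(\phi)+\textup{AM}(\psi)-2\,\textup{AM}(P(\phi,\psi))$ from \cite{da2}. Along the two geodesics this reads $d_1(u_t,v_t)=\textup{AM}(u_t)+\textup{AM}(v_t)-2\,\textup{AM}(P(u_t,v_t))$, where the first two terms are affine in $t$ by linearity of $\textup{AM}$ and the last term is convex by the concavity just proved; being the sum of an affine and a convex function, $t\mapsto d_1(u_t,v_t)$ is convex.

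The only genuinely delicate points, which should be cited rather than recomputed, are the comparison principle for finite energy geodesics with ordered endpoints and the identification of $s\mapsto u_{t_0+s(t_2-t_0)}$ as the finite energy geodesic between $u_{t_0}$ and $u_{t_2}$; both follow from \cite{da1,da2} (the latter via the decreasing-approximation construction of finite energy geodesics together with their uniqueness). Everything else is bookkeeping with the monotonicity and linearity of $\textup{AM}$, and the observation that $P(u_t,v_t)\in\mathcal E^1$ so that all the quantities above are well defined.
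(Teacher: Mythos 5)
Your argument is correct and coincides with the paper's own proof: both join the rooftop envelopes at the two endpoints by a finite energy geodesic, use the maximum principle to place it below $P(u_{t_\lambda},v_{t_\lambda})$, and then combine monotonicity and linearity of $\textup{AM}$ with the envelope formula $d_1(\phi,\psi)=\textup{AM}(\phi)+\textup{AM}(\psi)-2\,\textup{AM}(P(\phi,\psi))$. The only difference is cosmetic bookkeeping (your explicit remark on reparametrized subarcs being geodesics), which the paper leaves implicit.
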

\begin{proof} Let $a,b \in [0,1]$. As shown in \cite[Theorem 3]{da1} it follows that $P(u_a,v_a),P(u_b,v_b) \in \mathcal E^1(X,\o)$. Let $[0,1] \ni t \to w_t \in \mathcal E^1(X,\o)$ be a finite energy geodesic connecting $w_0=P(u_a,v_a)$ and $w_1=P(u_b,v_b)$. By the maximimum principle of finite energy geodesics we have $w_t \leq u_{ta + (1-t)b},v_{ta + (1-t)b}$, hence also $w_t \leq P(u_{ta + (1-t)b},v_{ta + (1-t)b})$. By the monotonicity of the Aubin-Mabuchi energy and since $t \to \textup{AM}(w_t)$ is linear we obtain
$$t \textup{AM}(P(u_a,v_a)) + (1-t)\textup{AM}(P(u_b,v_b))=\textup{AM}(w_t) \leq \textup{AM}(P(u_{ta + (1-t)b},v_{ta + (1-t)b})).$$
The last statement of the proposition follows from the linearity of $\textup{AM}$ along finite energy geodesics, the concavity we just established and the explicit formula for $d_1$ given in \cite[Corollary 4.14]{da2}, according to which $d_1(u_t,v_t)=\textup{AM}(u_t)+\textup{AM}(v_t)-2\textup{AM}(P(u_t,v_t)).$
\end{proof}

The geodesic convexity and closedness of $d_1$-balls inside $\mathcal E^2$ is an immediate consequence:

\begin{corollary} For any $\rho > 0$ and $u \in \mathcal E^2(X,\o)$, the set 
$$B_\rho(u)=\{ v \in \mathcal E^2(X,\o), \ d_1(v,u) \leq \rho\}$$ 
is $d_2$--closed and $d_2$--convex, i.e., for any $v_0,v_1 \in B_\rho(u)$ the finite energy geodesic $[0,1] \ni t \to v_t \in \mathcal E^2$ connecting $v_0,v_1$ is contained in $B_\rho(u)$. 
\end{corollary}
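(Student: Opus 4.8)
The plan is to read off both assertions from the proposition immediately above, whose content is that $t\mapsto d_1(u_t,v_t)$ is convex whenever $t\mapsto u_t$ and $t\mapsto v_t$ are finite energy geodesics. The convexity statement is where all the real work sits, so the corollary should be almost entirely formal.

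For $d_2$-convexity, I would fix $v_0,v_1\in B_\rho(u)$ and let $[0,1]\ni t\to v_t\in\mathcal E^2$ be the finite energy geodesic joining them (which by Theorem \ref{dpCompletionThm} is also the $d_2$-geodesic). I would then pair it with the \emph{constant} curve $u_t\equiv u$, viewed as a degenerate finite energy geodesic, and apply the preceding proposition to the pair $(u_t,v_t)$. This yields that $t\mapsto d_1(u,v_t)$ is convex on $[0,1]$, whence
$$d_1(u,v_t)\leq (1-t)\,d_1(u,v_0)+t\,d_1(u,v_1)\leq\rho,$$
so $v_t\in B_\rho(u)$ for every $t$, which is exactly the required $d_2$-convexity.

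For $d_2$-closedness I would invoke the elementary domination of $d_1$ by $d_2$ on $\mathcal E^2$: H\"older's inequality together with the mass identity $\int_X\omega_u^n=V$ turns the right-hand estimate in \eqref{dpCharFormula} into $d_1(u,v)\leq C\,d_2(u,v)$. Thus if $v_k\in B_\rho(u)$ with $d_2(v_k,v)\to 0$, then $d_1(v_k,v)\to 0$, and the triangle inequality gives $d_1(u,v)\leq d_1(u,v_k)+d_1(v_k,v)\leq\rho+d_1(v_k,v)$; letting $k\to\infty$ forces $d_1(u,v)\leq\rho$, i.e. $v\in B_\rho(u)$.

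The one point deserving a line of justification — and the closest thing to an obstacle — is checking that the constant curve $u_t\equiv u$ genuinely qualifies as a finite energy geodesic, so that the preceding proposition applies verbatim. This is immediate: taking $u^k\in\mathcal H_\omega$ decreasing pointwise to $u$, the weak geodesic connecting $u^k$ to itself is the constant curve $u^k\in\mathcal H^\Delta_\omega$, and these decrease to $u$, so by the construction in Theorem \ref{dpCompletionThm} the finite energy geodesic from $u$ to $u$ is indeed the constant $u$ (and $\textup{AM}$ is trivially linear along it). Everything else reduces to the convexity of $t\mapsto d_1(u,v_t)$ and the comparison $d_1\leq C\,d_2$, so no further estimates are needed.
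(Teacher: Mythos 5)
Your proof is correct and follows essentially the same route as the paper: convexity of $t\mapsto d_1(u,v_t)$ via the preceding proposition applied with the constant geodesic $u_t\equiv u$, and closedness from the domination $d_1\leq C\,d_2$. Your extra check that the constant curve is a legitimate finite energy geodesic is a reasonable point the paper leaves implicit.
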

\begin{proof} $d_2$--closedness follows from the fact that $d_2$ dominates $d_1$. Let $[0,1]\ni t \to v_t \in \mathcal E^2$ is a finite energy geodesic with $v_0,v_1\in B_{\rho}(u)$. By definition, since $\mathcal E^2 \subset \mathcal E^1$, the curve $t \to v_t$ is a finite energy geodesic inside $\mathcal E^1$ as well. By the previous proposition $t \to d_1(u,v_t)$ is convex, hence  $d_1(u,v_t) \leq \rho$. 
\end{proof}

The main result of this subsection is the following: 

\begin{theorem} \label{d1weakd2dominatethm} Suppose $\{u_k\}_k \subset \mathcal E^2$  is $d_2$-bounded and $u \in \mathcal E^2$. Then $d_1(u_k,u) \to 0$ if and only if $\| u_j - u\|_{L^1(X)} \to 0$ and  $u_k$ converges to $u$ $d_2$-weakly.  
\end{theorem}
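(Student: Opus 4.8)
The plan is to prove both implications by reducing $d_1$-convergence to the behaviour of the Aubin--Mabuchi energy along the sequence, exploiting the explicit formula $d_1(\phi,\psi)=\AM(\phi)+\AM(\psi)-2\AM(P(\phi,\psi))$ from \cite[Corollary 4.14]{da2} together with the $d_2$-closedness and $d_2$-convexity of the $d_1$-balls $B_\rho(u)$ established above. Since every assertion is stable under passing to subsequences, I would repeatedly use the principle that a bounded sequence of reals converges to a limit once every subsequence admits a further subsequence converging to that limit; this lets me assume, whenever convenient, that $u_k\to u$ pointwise $\omega^n$-a.e. (available along a subsequence from $L^1$-convergence) while retaining weak $d_2$-convergence and $d_2$-boundedness, both of which pass to subsequences.

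For the forward implication, assume $d_1(u_k,u)\to0$. To obtain $\|u_k-u\|_{L^1}\to0$ I would invoke Proposition \ref{MonotoneDominationProp} to sandwich a subsequence as $v_{k_j}\le u_{k_j}\le w_{k_j}$ with $v_{k_j}\uparrow u$, $w_{k_j}\downarrow u$ and $d_1(v_{k_j},u),d_1(w_{k_j},u)\to0$. Monotonicity and boundedness give $v_{k_j},w_{k_j}\to u$ in $L^1(\omega^n)$ by dominated convergence, and the squeeze $|u_{k_j}-u|\le w_{k_j}-v_{k_j}$ forces $L^1$-convergence along the subsequence, hence (subsequence principle) for the whole sequence. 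For weak $d_2$-convergence I must show that the asymptotic center of every subsequence of $\{u_k\}$ is $u$. Fixing $\epsilon>0$, the tail of any subsequence eventually lies in $B_\epsilon(u)$, which is $d_2$-closed and $d_2$-convex; the nearest-point-projection argument underlying Proposition \ref{WeakdConvProp}(ii) shows that the asymptotic center of a sequence eventually contained in a $d_2$-closed convex set lies in that set. Letting $\epsilon\downarrow0$ and using $\bigcap_{\epsilon>0}B_\epsilon(u)=\{u\}$ pins the asymptotic center to $u$.

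For the (harder) backward implication, assume $\|u_k-u\|_{L^1}\to0$ and $u_k\to u$ weakly in $d_2$. After passing to a subsequence with $u_k\to u$ a.e., set $W_m:=\textup{usc}(\sup_{l\ge m}u_l)$. As in the proof of Proposition \ref{d2weaklimits} these belong to $\mathcal E^2$ and are $d_2$-bounded, and they decrease to $u$ a.e., so $d_1(W_m,u)\to0$ and $\AM(W_m)\to\AM(u)$ by Propositions \ref{prop: monotone implies dp convergence} and \ref{AMextension}. Since $u_m\le W_m$ we have $P(u_m,W_m)=u_m$, so the $d_1$-formula collapses to $d_1(u_m,W_m)=\AM(W_m)-\AM(u_m)$, and the triangle inequality gives
\[
\limsup_m d_1(u_m,u)\le\limsup_m\big(\AM(W_m)-\AM(u_m)\big)=\AM(u)-\liminf_m\AM(u_m).
\]
Everything thus reduces to the single inequality $\liminf_m\AM(u_m)\ge\AM(u)$.

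This last inequality is where weak $d_2$-convergence is indispensable, and I expect it to be the main obstacle: $L^1$-convergence alone does not bound $\AM$ from below (energy can leak away), so the weak hypothesis cannot be dropped. I would deduce it from the fact that $\AM$ is affine along finite energy geodesics and $d_2$-continuous (Proposition \ref{AMextension}, as $d_2$ dominates $d_1$), so each sublevel set $\{v\in\mathcal E^2:\AM(v)\le c\}$ is $d_2$-closed and geodesically convex. If $\liminf_m\AM(u_m)<\AM(u)$, a subsequence would sit inside such a set for some $c<\AM(u)$ while still converging weakly in $d_2$ to $u$; Proposition \ref{WeakdConvProp}(ii) would then force $\AM(u)\le c$, a contradiction. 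Hence $\liminf_m\AM(u_m)\ge\AM(u)$, so $\limsup_m d_1(u_m,u)\le0$, and the subsequence principle upgrades this to $d_1(u_k,u)\to0$, completing the proof.
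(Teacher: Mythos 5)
Your proof is correct, and the forward implication is essentially the paper's (the paper simply cites \cite[Theorem 5(ii)]{da2} for the $L^1$-convergence where you re-derive it from Proposition \ref{MonotoneDominationProp}, and it extracts weakly convergent subsubsequences via Proposition \ref{WeakdConvProp}(i) before applying part (ii), where you argue directly with asymptotic centers of tails --- both routes rest on the same $d_2$-closed convex balls $B_\rho(u)$). The backward implication is where you genuinely diverge. The paper invokes \cite[Proposition 5.9]{da2} as a black box (``$L^1$-convergence plus $\AM(u_j)\to\AM(u)$ implies $d_1$-convergence'') and then proves full convergence of $\AM(u_j)$ using the two-sided sets $E_\vep=\{c-\vep\le\AM\le c+\vep\}$. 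You instead make the reduction self-contained: the upper envelopes $W_m$ decrease to $u$, the identity $P(u_m,W_m)=u_m$ collapses the $d_1$-formula to $d_1(u_m,W_m)=\AM(W_m)-\AM(u_m)$, and the triangle inequality reduces everything to $\liminf_m\AM(u_m)\ge\AM(u)$, which you get from the $d_2$-closed convex sublevel sets of $\AM$ and Proposition \ref{WeakdConvProp}(ii). This buys two things: you avoid quoting \cite[Proposition 5.9]{da2}, and you isolate precisely which half of the energy convergence actually requires the weak $d_2$-hypothesis (the lower bound; the upper bound $\AM(u_m)\le\AM(W_m)$ is free from monotonicity of $\AM$), which is consistent with Remark \ref{rem: weakd2convergence}. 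The paper's version is shorter given its references; yours is more transparent. The only informality worth flagging is your appeal to ``the nearest-point-projection argument underlying Proposition \ref{WeakdConvProp}(ii)'' for sequences that are merely eventually contained in a closed convex set rather than weakly convergent --- this is a true and standard CAT(0) fact (project the putative center onto the set and use the projection inequality to strictly decrease the asymptotic radius), but it is not the literal statement of the cited proposition, so either prove it or first pass to a weakly convergent subsubsequence as the paper does.
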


\begin{proof} Assume first that $d_1(u_k,u) \to 0$. From \cite[Theorem 5(ii)]{da2} it follows that $\| u_j - u\|_{L^1(X)} \to 0$. As recalled in Proposition \ref{WeakdConvProp}, any subsequence of $\{ u_k\}_k$ contains a $d_2$--weakly convergent subsubsequence $u_{k_l}$, converging $d_2$-weakly to some $v \in \mathcal E^2$. We show that $v=u$. Indeed, for any $j \in \Bbb N$ the set  $B_{\frac{1}{j}}(u)$ is $d_2$--closed and $d_2$--convex by the previous corollary and for high enough $k_l$ we have $u_{k_l} \in B_{\frac{1}{j}}(u)$. As recalled in  Proposition \ref{WeakdConvProp}, it follows now that $v \in B_{\frac{1}{j}}(u)$ for all $j$, hence $v=u$. 

For the reverse direction, as $d_2$-boundedness gives that  $\AM(u_j)$ is uniformly bounded, by \cite[Proposition 5.9]{da2} it suffices to show that any convergent subsequence of $\AM(u_j)$ converges to $\AM(u)$. Assume that $u_{j_k}$ is such a subsequence and set $c=\lim_k \AM(u_{j_k})$. By definition, $u_{j_k}$ still converges $d_2$-weakly to $u$. For each $\vep>0$, consider the set 
$$
E_{\vep}:=\{\phi \in \Ec^2 :  c - \vep \leq \AM(\phi) \leq c + \vep\}.
$$
Since $d_2$ dominates $d_1$ and $\AM$ is $d_1$-continuous and linear along finite energy geodesics, it follows that $E_{\vep}$ is $d_2$-closed and $d_2$-convex. By Proposition \ref{WeakdConvProp}, it follows that $u\in E_{\vep}$. Letting $\vep\to 0$ we get $\AM(u)= c$, finishing the proof. 
\end{proof}

\begin{remark}\label{rem: weakd2convergence}
Using (the proof of) this last result it is possible to construct a $d_2$-bounded sequence $u_j \in \mathcal E^2$ converging $d_2$-weakly to some $u\in \Ec^2$, but $\Vert u_j-u\Vert_{L^1(X)} \not \to 0$. Indeed, one can construct a $d_2$-bounded sequence $u_j \in \mathcal E^2$ such that $\|u_j-v\|_{L^1(X)} \to 0$ for some $v \in \mathcal E^2$ but $\o^n_{u_j}$ does not converge weakly to $\o^n_v$, in particular $\AM(u_j)$ can not converge to $\AM(v)$. By Proposition \ref{WeakdConvProp} we can extract a subsequence, again denoted by $u_j$, such that  $u_j$ converges $d_2$-weakly to some $u \in \mathcal E^2$. By the last step in the proof of the previous theorem $\AM$ is weak $d_2$-continuous, hence $\AM(u_j) \to \AM(u)$, but we cannot have $u=v$ as $\AM(u) \neq \AM(v)$.
\end{remark}

\section{The weak twisted Calabi flow}
\label{sec: weak twisted Calabi}
As shown in \cite{da1}, the metric completion $(\mathcal E^2,d_2) = \overline{(\mathcal{H},d_2)}$ is a CAT(0) space. Suppose $\chi$ satisfies \eqref{eq: ChiProp}. By Theorem \ref{ExtKEnergyThm}, the extended $\mathcal K_\chi$ is $d_2$-lsc and convex on $\mathcal E^2$. By Theorem \ref{MayerThm} and Remark \ref{remark: initial potential}, the weak gradient flow $t \to c_{t}$ of $\mathcal K_\chi$ emanating from any $c_{0}\in  \mathcal{E}^2$ is well-defined and uniquely determined by the evolution variational inequality (\ref{eq:evi}).

When $\chi$ is smooth, the \emph{smooth twisted Calabi flow} is just a simple generalization of the usual smooth Calabi flow:
\begin{equation*}
\frac{d}{dt} c_t = S_{\o_{c_t}} - \bar S_{\chi} - \textup{Tr}^{\o_{c_t}}\chi.
\end{equation*}
\paragraph{Comparison with Streets' setting.}
In \cite{st1} another (a priori different) extension $\overline{\Kc}$
of the Mabuchi functional $\mathcal{M}$ on $\mathcal{H}$ to the
completion $\overline{(\mathcal{H},d_2)}=(\Ec^2,d_2)$ was considered, defined by
\[
\overline{\Kc}(\bar{u}):=\liminf_{d(u_{j},\bar{u}) \to 0}\mathcal{K}(u_{j})
\]
 where the infimum is taken over all sequences $u_{j}$ in $\mathcal{H}$
converging to $\bar{u}$ in $\overline{(\mathcal{H},d_2)}$. It is shown in \cite{st1}, that the functional $\overline{\mathcal{K}}$ thus
defined is $d_2$-lsc on $\overline{(\mathcal{H},d_2)}$, and then the author proceeds to study the gradient flow of $\overline{\mathcal K}$, dubbed the \emph{minimizing movement Calabi flow}.
By Theorem \ref{ExtKEnergyThm} we actually have $\overline{\mathcal{K}}=\Kc$, thus our finite energy Calabi flow coincides with the minimizing movement Calabi flow considered in \cite{st1}. One of the advantages of our consideration is that computations in $\Ec^2$ are explicit and avoid the difficulties of using Cauchy sequences.

\medskip
We show that the weak version of the twisted Calabi flow agrees with the smooth version as long as the latter exists. 
The following result was proved by J. Streets in the case $\chi=0$ using different methods.
\begin{proposition}
\label{prop:consistence}Suppose $\chi \geq 0$ is a smooth closed (1,1)-form. Given any initial point $c_{0}\in\mathcal{H}_\o$, the corresponding weak twisted Calabi flow $t \to c_t$ coincides with the smooth twisted Calabi flow, as long as the latter exists. 
\end{proposition}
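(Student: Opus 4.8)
The plan is to verify that the smooth twisted Calabi flow satisfies the evolution variational inequality \eqref{eq:evi} and then to invoke the uniqueness of its solutions. Recall from Theorem \ref{MayerThm} and the discussion following it that the weak twisted Calabi flow $t \to c_t$ is the unique locally absolutely continuous curve in $(\Ec^2, d_2)$ emanating from $c_0$ that satisfies
$$\frac{1}{2}\frac{d}{dt}d_2^2(c_t, v) \leq \mathcal{K}_\chi(v) - \mathcal{K}_\chi(c_t), \quad \textup{for a.e. } t>0 \textup{ and every } v \textup{ with } \mathcal{K}_\chi(v)<\infty.$$
Hence it suffices to take the smooth twisted Calabi flow $t \to \tilde c_t \in \mathcal{H}_\o$ on its maximal interval of existence (where automatically $\mathcal{K}_\chi(\tilde c_t) < \infty$) and to check that it obeys this inequality; uniqueness then forces $\tilde c_t = c_t$ for as long as the smooth flow exists.

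First I would record the variational identity that exhibits $\tilde c_t$ as a negative gradient flow. By the first variation formula for $\mathcal{K}_\chi$ recalled in Section \ref{sect: the twisted K-energy}, one has $\langle D\mathcal{K}_\chi(u), \delta u\rangle = V^{-1}\int_X \delta u\,(\bar S_\chi - S_{\o_u} + \textup{Tr}^{\o_u}\chi)\,\o_u^n$ for smooth $u$ and $\delta u$, so the defining equation $\frac{d}{dt}\tilde c_t = S_{\o_{\tilde c_t}} - \bar S_\chi - \textup{Tr}^{\o_{\tilde c_t}}\chi$ says precisely that $\dot{\tilde c}_t$ is the negative Mabuchi gradient of $\mathcal{K}_\chi$ at $\tilde c_t$.

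Next, I would fix $v \in \Ec^2$ with $\mathcal{K}_\chi(v) < \infty$ and let $[0,1] \ni s \to \gamma_s$ be the finite energy $d_2$-geodesic from $\gamma_0 = \tilde c_t$ to $\gamma_1 = v$. The key computation is the first variation of the squared distance along the smooth flow: since the variation field at the smooth endpoint $\tilde c_t$ is $\dot{\tilde c}_t$, the Mabuchi-geometry first variation formula gives
$$\frac{1}{2}\frac{d}{dt}d_2^2(\tilde c_t, v) = -V^{-1}\int_X \dot{\tilde c}_t\,\dot\gamma_0\,\o_{\tilde c_t}^n,$$
where $\dot\gamma_0$ is the initial velocity of $\gamma$ pointing toward $v$. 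Substituting the gradient identity of the previous step, the right-hand side equals $V^{-1}\int_X (\bar S_\chi - S_{\o_{\tilde c_t}} + \textup{Tr}^{\o_{\tilde c_t}}\chi)\,\dot\gamma_0\,\o_{\tilde c_t}^n = \langle D\mathcal{K}_\chi(\tilde c_t), \dot\gamma_0\rangle$, which is nothing but the right derivative at $s=0$ of the map $s \to \mathcal{K}_\chi(\gamma_s)$. By Theorem \ref{ExtKEnergyThm} this map is convex, so $\mathcal{K}_\chi(v) - \mathcal{K}_\chi(\tilde c_t) \geq \frac{d}{ds}\big|_{s=0^+}\mathcal{K}_\chi(\gamma_s)$, and combining the last two facts yields exactly \eqref{eq:evi}.

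The main obstacle is the rigorous justification of the first variation formula for $t \to d_2^2(\tilde c_t, v)$ in this singular setting. The connecting geodesic $\gamma$ is only a finite energy geodesic (at best $C^{1,\bar 1}$ even when $v$ is smooth), so $\dot\gamma_0$ lives only in $L^2(\o_{\tilde c_t}^n)$ and the classical boundary-term computation for the first variation of arc length must be obtained by approximation, e.g. through the $C^{1,\bar 1}$-approximating weak geodesics together with the comparison estimate \eqref{dpCharFormula}. One must also check that the directional derivative of $\mathcal{K}_\chi$ at the smooth point $\tilde c_t$ in the direction $\dot\gamma_0$ coincides with the right derivative of the convex function $s \to \mathcal{K}_\chi(\gamma_s)$ at $s=0$, which is where smoothness of $\tilde c_t$ and finiteness of $\mathcal{K}_\chi(v)$ are used. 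If one prefers to verify \eqref{eq:evi} first only for smooth $v$, the general case then follows by approximating $v$ in $d_2$ along a recovery sequence on which $\mathcal{K}_\chi$ converges (such a sequence exists since, by Theorem \ref{ExtKEnergyThm}, $\mathcal{K}_\chi$ is the greatest $d_2$-lsc extension) and passing to the limit in the integrated form of the inequality.
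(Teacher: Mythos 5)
Your proposal follows essentially the same route as the paper: both verify that the smooth flow satisfies the evolution variational inequality \eqref{eq:evi} and then invoke the uniqueness statement of \cite[Theorem 4.0.4]{ags}, with the key inequality for smooth $v$ being the slope inequality of \cite[Lemma 3.5]{bb} (which is precisely the identification, flagged in your proposal as needing justification, of the right derivative of $s\mapsto \mathcal K_\chi(\gamma_s)$ at $s=0$ with the gradient pairing at the smooth endpoint) together with Chen's first variation formula for $t\mapsto d_2^2(h_t,v)$ when $v\in\mathcal H_\o$, and the general $v\in\mathcal E^2$ handled by the recovery-sequence argument you describe at the end. The paper carries out exactly your ``fallback'' version --- proving \eqref{eq:evi} for $v\in\mathcal{H}_\o$ first and then passing to the limit in the integrated inequality \eqref{eviintegrated} using Theorem \ref{thm: Ep approximation with entropy} --- rather than attempting the first variation of $d_2^2$ along a finite energy geodesic to a possibly singular endpoint.
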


\begin{proof}
By the uniqueness property in \cite[Theorem 4.0.4]{ags} for curves
$t \to c_{t}$ satisfying the \emph{evolution variational inequality \eqref{eq:evi}}
(which is shown by differentiating $d(c^1_{t},c^2_{t})$ for two different solutions $t\to c^1_{t}$ and $t \to c^2_{t})$ it is enough to show that a solution
$t \to h_{t}$ to the ordinary twisted Calabi flow with starting point $h_0 = c_0$ satisfies the inequality {\eqref{eq:evi}.}

Suppose $v\in\mathcal{H}_\o$ is arbitrary, fix a time $t=t_{0}$ and let $[0,1] \ni s \to u_s \in \mathcal H^\Delta_\o$ be the weak geodesic connecting $u_0=h_{t_0}$ and $u_1=v$. From \cite[Lemma 3.5]{bb} we get the following ``slope inequality'':
\begin{equation*}
\mathcal{K_\chi}(v)-\mathcal{K_\chi}(h_{t_0})\geq\int_{X}(\bar{S}-S_{\omega_{h_{t_0}}}+\textup{Tr}^{\o_{h_{t_0}}}\chi)\frac{du_{s}}{ds}_{|s=0}\omega_{h_{t_0}}^{n}.
\end{equation*}
Now, by the definition of the twisted Calabi flow the r.h.s above may be written
as minus the scalar product $\int_{X}\frac{dh_{t}}{dt}_{|t=t_{0}}\frac{du_{s}}{ds}_{|s=0}\omega_{h_{t_{0}}}^{n}.$
Since $v\in\mathcal{H}_\o$, the latter scalar product coincides with the
derivative at $t=t_{0}$ of the function $t\to d^{2}_2(h_{t},v)/2$
(by  \cite[Theorem 6]{c}, or rather by a formula appearing in
the proof of the latter theorem). This concludes the proof in the
case when $v\in\mathcal{H}_\o.$ 

We handle the general case: suppose $v \in \mathcal E^2$ and $\Kcc(v)<+\infty$.  Notice that it is enough to show the following 'integral' version of \eqref{eq:evi} (with $G=\Kcc$):
\begin{equation}\label{eviintegrated}
\frac{1}{2} (d^{2}_2(c_{t_1},v)-d^{2}_2(c_{t_0},v))\leq  (t_1-t_0)\Kcc(v)-\int^{t_1}_{t_0}\Kcc(c_t)dt,
\end{equation}
for any $t_0,t_1 \in [0,\infty)$, $t_0 \leq t_1$. Indeed, the l.h.s. is locally Lipschitz, whereas $t \to \Kcc(c_t)$ is smooth, hence we may divide both sides by $t_1 - t_0$ and take the limit $t_1 \to t_0$ to obtain \eqref{eq:evi}. By Theorem \ref{thm: Ep approximation with entropy} there exists a sequence $v_j \in \mathcal H_\o$  that $d_2$-converges to $v$ such that $\Kcc(v_j)$ converges to $\Kcc(v)$.  After integrating, by the first part of the proof estimate \eqref{eviintegrated} holds for $v_j$ in place of $v$. Letting $j \to \infty$, we obtain \eqref{eviintegrated} for $v$ as well.
\end{proof}

\begin{lemma}
\label{lem:energi func is constant along cal fl}The functional $\AM$
is constant along any weak twisted Calabi flow trajectory $t \to c_t$.
\end{lemma}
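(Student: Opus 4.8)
The plan is to verify that $\AM$ is preserved at every discrete step of the minimizing movement scheme \eqref{eq: minimizing movement eq} defining $t\to c_t$, and then pass to the limit. Two facts drive the argument. First, $\Kcc$ is invariant under adding constants: since $\o_{u+s}=\o_u$ both the entropy and $\int_X f\o^n$ are unchanged, while $\AM(u+s)=\AM(u)+s$ and $\AM_{\Ric\o-\beta}(u+s)=\AM_{\Ric\o-\beta}(u)+sV^{-1}\int_X(\Ric\o-\beta)\w\o^{n-1}$; because $[\chi]=[\beta]$ we have $\bar S_\chi=nV^{-1}\int_X(\Ric\o-\beta)\w\o^{n-1}$, so the two linear-in-$s$ contributions cancel and $\Kcc(u+s)=\Kcc(u)$ for every $s\in\RR$. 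Second, $\AM$ is affine along weak and finite energy geodesics with $\frac{d}{dr}\AM(\gamma_r)=V^{-1}\int_X\dot\gamma_r\,\o_{\gamma_r}^n$, while the $d_2$-speed $\|\dot\gamma_r\|_{2,\gamma_r}$ is constant by Theorem \ref{d1Thm}.

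Fix one step: let $q=c^{m,j}$ and let $w=c^{m,j+1}$ be the minimizer of $v\mapsto\frac12 d_2(v,q)^2+\frac{t}{m}\Kcc(v)$. Comparing $w$ with $w+s$ and using $\Kcc(w+s)=\Kcc(w)$ yields $d_2(w,q)^2\le d_2(w+s,q)^2$ for every $s$. To bound the right-hand side from above I would build an explicit competitor path from $w+s$ to $q$: if $\gamma:[0,1]\to\Ec^2$ is the finite energy geodesic from $w$ to $q$, set $\eta^s_r=\gamma_r+(1-r)s$, a curve joining $w+s$ to $q$. Since $\o_{\eta^s_r}=\o_{\gamma_r}$ and $\dot\eta^s_r=\dot\gamma_r-s$, the (constant-in-$r$) integrand of its $L^2$-length is
\[
V^{-1}\!\int_X(\dot\gamma_r-s)^2\o_{\gamma_r}^n=d_2(w,q)^2-2s\big(\AM(q)-\AM(w)\big)+s^2 ,
\]
using $\|\dot\gamma_r\|_{2,\gamma_r}^2=d_2(w,q)^2$ and $V^{-1}\int_X\dot\gamma_r\o_{\gamma_r}^n=\AM(q)-\AM(w)=:A$. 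Hence $d_2(w+s,q)^2\le d_2(w,q)^2-2sA+s^2$, and combining with minimality gives $s^2-2sA\ge 0$ for all $s$, which forces $A=0$, i.e. $\AM(c^{m,j+1})=\AM(c^{m,j})$.

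By induction $\AM(c^m_t)=\AM(c^{m,m}_t)=\AM(c_0)$ for every $m$; since $c_t=\lim_m c^m_t$ in $d_2$ (hence in $d_1$) and $\AM$ is $d_1$-continuous by Proposition \ref{AMextension}, we conclude $\AM(c_t)=\AM(c_0)$ for all $t$. For a general starting point $c_0\in\Ec^2$ (where $\Kcc(c_0)$ may be infinite) I would approximate $c_0$ in $d_2$ by potentials of finite K-energy and invoke the contractivity of the flow (Remark \ref{remark: initial potential}) together with the $d_1$-continuity of $\AM$ to transport the identity $\AM(c_t)=\AM(c_0)$ to the limit.

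The main obstacle is making the competitor estimate rigorous in the completion $\Ec^2$, since $\gamma$ is only a decreasing limit of $C^{1,\bar1}$ weak geodesics and its velocity is not classically defined. I would therefore carry out the computation first for the weak geodesics $u^k$ joining smooth endpoints $u^k_0\downarrow w$, $u^k_1\downarrow q$ — where Theorem \ref{d1Thm} gives constant speed, the $\AM$-linearity gives $V^{-1}\int_X\dot u^k_r\o_{u^k_r}^n=\AM(u^k_1)-\AM(u^k_0)$, and the curves $u^k_r+(1-r)s$ are genuine admissible paths — and then let $k\to\infty$, using $d_2(u^k_i,\cdot)\to0$ and the $d_1$-continuity of $\AM$ to recover $d_2(w+s,q)^2\le d_2(w,q)^2-2sA+s^2$. (Applying the same estimate to the pair $(w+s,q)$ shifted back by $-s$ in fact upgrades this to an equality, though only the upper bound is needed above.)
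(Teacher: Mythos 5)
Your argument is correct, but it takes a different route from the paper. The paper's proof is a two-line slicing argument: since $\Kcc$ is invariant under adding constants and the normalization map $u\mapsto u-\AM(u)$ is a $d_2$-contraction (i.e.\ $d_2(u-\AM(u),v-\AM(v))\le d_2(u,v)$), each step of the minimizing-movement scheme can be run entirely inside the slice $\{\AM=0\}$; uniqueness of the minimizer of $v\mapsto\tfrac12 d_2(v,q)^2+\tfrac{t}{m}\Kcc(v)$ in the CAT(0) space then forces $\AM(c^{m,j})=0$ for all $j$, and $d_2$-continuity of $\AM$ passes this to the limit. You instead perform a first-variation argument at each discrete minimizer, comparing $w$ with $w+s$ and deriving the quadratic expansion $d_2(w+s,q)^2\le d_2(w,q)^2-2s(\AM(q)-\AM(w))+s^2$. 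Both proofs ultimately rest on the same two facts (constant-invariance of $\Kcc$ and the variance-type behaviour of the $L^2$ speed under constant shifts — your cross term $-2sA$ is exactly what makes the paper's contraction inequality true), but yours buys a slightly stronger statement: it shows that \emph{every} minimizer of the one-step functional preserves $\AM$, without appealing to uniqueness, at the cost of a longer computation. One small rigor point: when $\gamma$ is only a $C^{1,\bar 1}$ weak geodesic, the shifted curve $r\mapsto u^k_r+(1-r)s$ is not a smooth curve in $\Hco$, so it is not literally admissible in the definition of $d_2$ as an infimum over smooth paths; you should run the computation along the smooth $\varepsilon$-geodesics of \cite{c} (whose energies converge to $d_2(w,q)^2$ and along which $\frac{d}{dr}\AM$ still integrates to $\AM(q)-\AM(w)$) and then let $\varepsilon\to 0$, exactly as you already propose for the passage from smooth to finite-energy endpoints. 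With that adjustment the proof is complete.
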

\begin{proof}
For a \emph{smooth} Calabi flow this follows directly from differentiating
along the flow, but here we have to proceed in a different manner. We can assume that $\AM(c_0)=0$, as  $\Kcc$ is invariant under adding constants. On the other hand for any $u,v\in \Ec^2$, $d_2(u-\AM(u),v-\AM(v))\leq d_2(u,v)$. Thus the variational construction of the weak Calabi flow (see Section \ref{subsect: gradient flow}) gives 'minimizing movement' $c_t^m$ with $\AM(c_t^m)=0, \forall m$. Since $\AM$ is continuous with respect to $d_2$ it follows that $\AM(c_t)=0$ for all $t$.
\end{proof}

Now we arrive at the main result of this section:

\begin{theorem}\label{thm: large time Calabi} Suppose $(X,\o)$ is a compact connected K\"ahler manifold and $\chi=\beta + i\ddbar f$ satisfies \eqref{eq: ChiProp}. The following statements are equivalent:
\vspace{-0.1in}
\begin{itemize}
\setlength{\itemsep}{1pt}
    \setlength{\parskip}{1pt}
    \setlength{\parsep}{1pt}  
\item[(i)] $\mathcal M^2_\chi \neq \emptyset$.
\item[(ii)] For any weak twisted Calabi flow trajectory $t \to c_t$ there exists $c_\infty \in \mathcal M^2_\chi$ such that $d_1(c_t,c_\infty) \to 0$ and $\textup{Ent}(e^{-f}\o^n,\o_{c_t}^n) \to \textup{Ent}(e^{-f}\o^n,\o_{c_\infty}^n)$.
\item[(iii)] Any weak twisted Calabi flow trajectory $t \to c_t$ is $d_2$-bounded.
\item[(iv)] There exists a weak twisted Calabi flow trajectory $t \to c_t$ and $t_j \to \infty$ for which the sequence $\{ c_{t_j}\}_j$ is $d_2$-bounded.
\end{itemize}
\end{theorem}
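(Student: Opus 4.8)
The plan is to close the loop $(i)\Rightarrow(iii)\Rightarrow(iv)\Rightarrow(i)$ and separately prove $(i)\Leftrightarrow(ii)$. Since $(ii)$ trivially implies $(i)$ (the limit $c_\infty$ it produces lies in $\mathcal M^2_\chi$, so that set is nonempty), the real content is concentrated in three steps: the contractivity argument $(i)\Rightarrow(iii)$, the compactness argument $(iv)\Rightarrow(i)$, and the convergence statement $(i)\Rightarrow(ii)$; the implication $(iii)\Rightarrow(iv)$ is immediate.

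For $(i)\Rightarrow(iii)$ I would use that, by Mayer's theory (Theorem \ref{MayerThm}) and the remark recalled after it, a minimizer $c_\infty\in\mathcal M^2_\chi$ is a stationary point of the flow. The flow being a contractive semigroup, comparing an arbitrary trajectory $t\to c_t$ with the constant trajectory at $c_\infty$ yields $d_2(c_t,c_\infty)\le d_2(c_0,c_\infty)$ for all $t$, so $t\to c_t$ is $d_2$-bounded; $(iii)\Rightarrow(iv)$ then needs nothing. For $(iv)\Rightarrow(i)$, given $t_j\to\infty$ with $\{c_{t_j}\}_j$ $d_2$-bounded, Hölder's inequality through \eqref{dpCharFormula} shows $\{c_{t_j}\}_j$ is also $d_1$-bounded, while \eqref{eq:evi} forces $\mathcal K_\chi(c_t)$ to decrease to $\inf_{\mathcal E^2}\mathcal K_\chi$, so $\mathcal K_\chi(c_{t_j})$ is bounded. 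Corollary \ref{KChiCompactnessCor} then extracts a $d_1$-convergent subsequence $c_{t_{j_k}}\to c_\infty\in\mathcal E^1$; since $d_1$-convergence forces $L^1$-convergence and the sequence is $d_2$-bounded, Proposition \ref{d2weaklimits} improves this to $c_\infty\in\mathcal E^2$, and $d_1$-lower semicontinuity of $\mathcal K_\chi$ gives $\mathcal K_\chi(c_\infty)\le\inf_{\mathcal E^2}\mathcal K_\chi$, so $c_\infty\in\mathcal M^2_\chi$.

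The main work, and where I expect the principal obstacle, is $(i)\Rightarrow(ii)$: the difficulty is upgrading the purely abstract weak $d_2$-convergence coming from Ba\u c\'ak's theorem to genuine $d_1$-convergence, since weak $d_2$-convergence alone controls essentially nothing in pluripotential theory (cf. Remark \ref{rem: weakd2convergence}). Having $(iii)$ in hand, the trajectory is $d_2$-bounded, and as $\mathcal K_\chi$ attains its minimum, Theorem \ref{BacakThm} gives $c_t\to c_\infty$ $d_2$-weakly for some $c_\infty\in\mathcal M^2_\chi$. Exactly as in $(iv)\Rightarrow(i)$, the flow satisfies $d_1(0,c_t)\le C$ and $\mathcal K_\chi(c_t)\le C$, so Corollary \ref{KChiCompactnessCor} makes $\{c_t\}$ $d_1$-precompact. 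The key point I would then argue is that every $d_1$-limit point coincides with $c_\infty$: if $c_{t_{j_k}}\to w$ in $d_1$, then $w\in\mathcal E^2$ by Proposition \ref{d2weaklimits}, and the forward direction of Theorem \ref{d1weakd2dominatethm} shows $c_{t_{j_k}}\to w$ $d_2$-weakly, so uniqueness of the asymptotic center forces $w=c_\infty$. Precompactness together with a unique limit point yields $d_1(c_t,c_\infty)\to0$.

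Finally, the entropy convergence follows by subtraction. Along the flow $\mathcal K_\chi(c_t)$ decreases to $\mathcal K_\chi(c_\infty)=\inf_{\mathcal E^2}\mathcal K_\chi$, while $\AM$ and $\AM_{\Ric\,\omega-\beta}$ are $d_1$-continuous (Propositions \ref{AMextension} and \ref{AMtwistextension}) and $c_t\to c_\infty$ in $d_1$; reading off the defining formula \eqref{KenExtThmFormula} then forces $\textup{Ent}(e^{-f}\omega^n,\omega_{c_t}^n)\to\textup{Ent}(e^{-f}\omega^n,\omega_{c_\infty}^n)$. This establishes $(i)\Rightarrow(ii)$, and as $(ii)\Rightarrow(i)$ is trivial, all four statements are equivalent.
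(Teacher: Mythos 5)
Your proposal is correct and follows essentially the same route as the paper: $d_2$-boundedness from Mayer's theory, Ba\u c\'ak's theorem for weak $d_2$-convergence, Corollary \ref{KChiCompactnessCor} for $d_1$-precompactness, Theorem \ref{d1weakd2dominatethm} to identify the limit, and lower semicontinuity plus $d_1$-continuity of the energy terms to extract entropy convergence. The only cosmetic difference is that you derive $(i)\Rightarrow(iii)$ from contractivity against the stationary trajectory at a minimizer, whereas the paper reads it off directly from the evolution variational inequality \eqref{eq:evi} with $v\in\mathcal M^2_\chi$; both are immediate consequences of the material in Section \ref{subsect: gradient flow}.
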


\begin{proof} We start with the direction (i)$\to$(ii). Let $t \to c_t$ be a weak twisted Calabi flow trajectory. Let $v \in \mathcal M^2_\chi$. From \eqref{eq:evi}  it follows that $d_2(v,c_t) \leq d_2(v,c_0)$, hence $t \to c_t$ is a $d_2$-bounded curve.

As observed in \cite{st2} Theorem \ref{BacakThm} guarantees the existence of $c_{\infty} \in \mathcal M^2_\chi$ such that $c_t \to c_\infty$ $d_2$-weakly. But $\{c_t\}_t$ is bounded in the $d_2$ metric and also $\mathcal K_\chi(c_t)$ is bounded. By Corollary \ref{KChiCompactnessCor} it follows that $\{c_t\}_t$ is $d_1$--relatively compact, i.e. each subsequence has a $d_1$--convergent subsubsequence. 
By Theorem \ref{d1weakd2dominatethm} we must have $d_1(c_t,c_\infty) \to 0$. 

In the definition of $\mathcal K_\chi$ all terms are  $d_1$-continuous except for the entropy term. Since $c_\infty$ is a minimizer, lower semi-continuity gives $\lim_{t\to \infty} \mathcal K_\chi(c_t)= \mathcal K_\chi(c_\infty)$. All this additionally implies $\textup{Ent}(e^{-f}\o^n,\o_{c_t}^n) \to \textup{Ent}(e^{-f}\o^n,\o_{c_\infty}^n)$.

The directions (ii)$\to$(iii)$\to$(iv) are trivial. We finish the proof by arguing that (iv)$\to$(i). Let $t \to c_t$ be a weak twisted Calabi flow trajectory and $\{c_{t_j}\}_j$ be a $d_2$-bounded sequence with $t_j \to \infty$. From Proposition \ref{d2weaklimits} and Corollary \ref{KChiCompactnessCor} it follows that there exists $c_\infty \in \mathcal E^2$ such that $d_1(c_{t_j},c_\infty) \to 0$ and by the lower semi-continuity of $\mathcal K_\chi$, we get that in fact $c_\infty \in \mathcal M^2_\chi$.
\end{proof}
In Theorem \ref{thm: large time Calabi}(ii) one would like to have convergence with respect to $d_2$. The next result confirms this in the case when the flow is bounded from below by some potential: 
\begin{proposition}
Suppose $(X,\omega)$ is a compact connected K\"ahler manifold and $\chi$ satisfies (\ref{eq: ChiProp}). Let $t\to c_t$ be a weak twisted  Calabi flow trajectory. If there exists $\psi\in \Ec^2$ such that $c_t\geq \psi, \forall t$ then $c_t$ converges in $d_2$ to a minimizer of $\Kcc$.	
\end{proposition}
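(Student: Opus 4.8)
The plan is to reduce the statement to two tasks: first, to show that the lower barrier $\psi$ forces the whole trajectory to be $d_2$-bounded, so that Theorem \ref{thm: large time Calabi} applies and yields a minimizer $c_\infty\in\mathcal M^2_\chi$ with $d_1(c_t,c_\infty)\to 0$; and second, to upgrade this $d_1$-convergence to $d_2$-convergence by squeezing $c_t$ between two monotone sequences that both $d_2$-converge to $c_\infty$. Throughout I normalize so that $\AM(c_0)=0$, which is harmless since $\mathcal K_\chi$ is invariant under adding constants; by Lemma \ref{lem:energi func is constant along cal fl} this gives $\AM(c_t)=0$ for all $t$, and by the evolution variational inequality $t\to\mathcal K_\chi(c_t)$ is nonincreasing.

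For $d_2$-boundedness I first note that the barrier pins down the $d_1$-size of the flow. Since $c_t\ge\psi$ we have $P(c_t,\psi)=\psi$, so by the formula $d_1(u,v)=\AM(u)+\AM(v)-2\AM(P(u,v))$ from \cite[Corollary 4.14]{da2} the quantity $d_1(c_t,\psi)=\AM(c_t)-\AM(\psi)$ is constant in $t$. Hence $d_1(0,c_t)$ is bounded, and \eqref{dpCharFormula} gives a uniform bound $\sup_X c_t\le C_0$. The envelope $w:=\textup{usc}\big(\sup_{t\ge 0}c_t\big)$ then satisfies $\psi\le w\le C_0$; applying the self-energy inequality \cite[Lemma 3.5]{gz1} exactly as in the proof of Proposition \ref{d2weaklimits} (after subtracting the constant $C_0$) shows $w\in\Ec^2$. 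Now $\psi\le c_t\le w$ with $\psi,w\in\Ec^2$, so \cite[Lemma 4.2]{da2} yields $d_2(\psi,c_t)\le d_2(\psi,w)<\infty$ for all $t$. Thus $t\to c_t$ is $d_2$-bounded, and Theorem \ref{thm: large time Calabi} (implications (iv)$\Rightarrow$(i)$\Rightarrow$(ii)) provides $c_\infty\in\mathcal M^2_\chi\subset\Ec^2$ with $d_1(c_t,c_\infty)\to 0$.

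To promote this to $d_2$-convergence it suffices, by the usual subsequence principle, to show that every sequence $t_j\to\infty$ admits a subsequence along which $d_2(c_{t_{j_l}},c_\infty)\to 0$. Given such a sequence, $d_1(c_{t_j},c_\infty)\to 0$, so Proposition \ref{MonotoneDominationProp} supplies a subsequence $t_{j_l}$ together with a decreasing sequence $w_l$ and an increasing sequence $v_l$ satisfying $v_l\le c_{t_{j_l}}\le w_l$ and $d_1(v_l,c_\infty),d_1(w_l,c_\infty)\to 0$. By their construction from the $c_{t_j}$ (a supremum, respectively a rooftop envelope $P$) we still have $\psi\le v_l\le w_l\le C_0$, so the self-energy inequality \cite[Lemma 3.5]{gz1} gives $v_l,w_l\in\Ec^2$. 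Since $v_l\uparrow c_\infty$ and $w_l\downarrow c_\infty$ with $c_\infty\in\Ec^2$, Proposition \ref{prop: monotone implies dp convergence} upgrades the convergence to $d_2(v_l,c_\infty),d_2(w_l,c_\infty)\to 0$. Finally, from $v_l\le c_{t_{j_l}}\le w_l$ and \cite[Lemma 4.2]{da2} we get $d_2(c_{t_{j_l}},c_\infty)\le d_2(c_{t_{j_l}},v_l)+d_2(v_l,c_\infty)\le d_2(v_l,w_l)+d_2(v_l,c_\infty)\to 0$, which closes the argument.

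The main obstacle, and the reason the barrier hypothesis is indispensable, is precisely the passage from $d_1$- to $d_2$-convergence: by Remark \ref{rem: weakd2convergence} a $d_2$-bounded sequence that converges in $d_1$ (equivalently, by Theorem \ref{d1weakd2dominatethm}, in $L^1$ together with weak $d_2$-convergence) need not converge in $d_2$. The role of $\psi\in\Ec^2$ is to confine the trajectory to an order interval $[\psi,w]$ inside $\Ec^2$, which is what makes both the uniform $d_2$-bound and the monotone squeezing available; the only point requiring care is to verify that the sup- and $P$-envelopes produced along the way indeed remain trapped between $\psi$ and $C_0$, so that the self-energy inequality keeps them in $\Ec^2$ with controlled energy.
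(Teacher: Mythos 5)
Your argument is correct, and while the opening step coincides with the paper's, the heart of the proof --- upgrading $d_1$- to $d_2$-convergence --- goes by a genuinely different route. Like the paper, you first get $\sup_X c_t\le C_0$ from the constancy of $\AM$ along the flow (Lemma \ref{lem:energi func is constant along cal fl}) and then $d_2$-boundedness from the order interval $\psi\le c_t\le C_0$, so that Theorem \ref{thm: large time Calabi} yields $c_\infty\in\mathcal M^2_\chi$ with $d_1(c_t,c_\infty)\to 0$; in fact you make this boundedness step more explicit than the printed proof, which invokes Theorem \ref{thm: large time Calabi} first and only afterwards records the bound $\sup_X c_t\le C_1$. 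For the upgrade, the paper reduces via \eqref{dpCharFormula} and dominated convergence to showing $\int_X(c_t-c_\infty)^2\o_{c_t}^n\to 0$ and proves this by a uniform-integrability estimate: the region $\{|c_t-c_\infty|\le s\}$ is handled by $d_1$-convergence, while the tail $\sup_t\int_{\{|c_t-c_\infty|>s\}}(c_t-c_\infty)^2\o_{c_t}^n$ is shown to vanish as $s\to\infty$ using the comparison principle in $\Ec$, with the barrier $c_t\ge\psi$ controlling $\{c_t-c_\infty<-r\}$ and the sup bound controlling $\{c_t-c_\infty>r\}$. You instead sandwich a subsequence between monotone envelopes $v_l\le c_{t_{j_l}}\le w_l$ via Proposition \ref{MonotoneDominationProp}, check that these envelopes stay in $[\psi,C_0]$ and hence in $\Ec^2$, and conclude with Proposition \ref{prop: monotone implies dp convergence} applied with $p=2$, \cite[Lemma 4.2]{da2}, and the subsequence principle. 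Both arguments ultimately rest on the same two-sided barrier $\psi\le c_t\le C_0$; yours is shorter and recycles the order-theoretic toolkit already established in the paper, whereas the paper's is more quantitative and avoids passing to subsequences. The one delicate point in your version --- which you do address --- is verifying that the $\sup$- and $P$-envelopes produced by Proposition \ref{MonotoneDominationProp} really remain trapped between $\psi$ and $C_0$, since that is precisely what places them in $\Ec^2$ and lets the monotone convergence be read in the $d_2$-metric.
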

\begin{proof}
	By Theorem \ref{thm: large time Calabi} we know that $t \to c_t$ converges in $d_1$ to some $u\in \Ec^2$, a minimizer of $\Kcc$. As $c_t \geq \psi$, by dominated convergence theorem and Theorem \ref{thm: comparison d2 and I2} we only have to prove that $ \int_X (c_t-c)^2\omega_{c_t}^n \rightarrow 0.$ For a fixed $s>0$ we have 
$$ \int_{\{|c_t-c|\leq s\}} (c_t-c)^2 \omega_{c_t}^n \leq s \int_X |c_t-c| \omega_{c_t}^n \rightarrow 0, $$
as $t\to \infty$, since $d_1(c_t,c)\to 0$. Thus it suffices to show that
\begin{equation} \label{eq: convergent imporovement}
\sup_{t>0} \int_{\{|c_t-c|>s\}} (c_t-c)^2 \omega_{c_t}^n \rightarrow 0
\end{equation}
as $s\to +\infty$. Since $d^2(c,c_t)$ is bounded, by Theorem \ref{thm: comparison d2 and I2} one can find a positive constant $C_1$ such that $\sup_X c_t\leq C_1$ for all $t>0$. By the comparison principle in $\Ec$ (see \cite{gz1}) one has 
	$$
	\int_{\{c_t-c>s\}}\omega_{c_t}^n\leq \int_{\{c_t-c>s\}} \omega_c^n \leq \int_{\{c<C_1-s\}} \omega_c^n,
	$$
	which yields
	\begin{equation*}
		\int_s^{\infty} \omega_{c_t}^n(c_t-c>r)rdr \leq \int_s^{\infty} \omega_c^n(c<C_1-r) rdr.
	\end{equation*}
	The right-hand side converges to $0$ as $s\to +\infty$ because $c\in \Ec^2$. Therefore, to prove (\ref{eq: convergent imporovement}) it remains to show that
	\begin{equation}
		\label{eq: convergent imporovement 2}
		\sup_{t>0} \int_s^{\infty} \omega_{c_t}^n (c_t-c<-r) rdr  \rightarrow 0.
	\end{equation}
	Since $\sup_X c_t$ is bounded from above and $c_t\geq \psi$, we can find $C_2>0$ such that 
	$$
	\{c_t-c<-r\}\subset \{\psi \leq C_2+(c_t-r)/2\}.
	$$
	Using $\omega_{c_t}^n \leq 2^n \omega_{c_t/2}^n$ and the comparison principle we arrive at
	\begin{eqnarray*}
	\int_s^{\infty} \omega_{c_t}^n(c_t-c<-r)rdr &\leq &  \int_s^{\infty} \omega_{c_t}^n(\psi<C_2+ (c_t-r)/2)rdr\\
	&\leq &  \int_s^{\infty} \omega_{\psi}^n(\psi<C_3-r/2)rdr,
	\end{eqnarray*}
	where $C_3=C_2+ C_1/2$.
	The last term converges to $0$ as $s\to +\infty$ because $\psi\in \Ec^2$. This proves (\ref{eq: convergent imporovement 2}) and completes the proof.
\end{proof}

Finally, we prove a result about geodesic rays weakly asymptotic to diverging weak Calabi flow trajectories. 

\begin{theorem}\label{thm: analog of Donaldson conjecture}  Suppose $(X,\o)$ is a compact connected K\"ahler manifold, $\chi\geq 0$ is smooth and Conjecture \ref{DRConj} holds. Let $[0,\infty) \ni t \to c_t \in \mathcal E^2$ be a weak twisted Calabi flow trajectory. Exactly one of the following holds: 
\vspace{-0.05in}
\begin{itemize}
\setlength{\itemsep}{1pt}
    \setlength{\parskip}{1pt}
    \setlength{\parsep}{1pt}  
\item[(i)] The curve $t \to c_t$ $d_1$-converges to a smooth twisted csc-K potential $c_\infty$.
\item[(ii)] $d_1(c_0,c_t) \to \infty$ as $t \to \infty$ and the curve $t \to c_t$ is $d_1$-weakly asymptotic to a finite energy geodesic $[0,\infty) \ni t \to u_t \in \mathcal E^1$ along which $\mathcal K_\chi$ decreases.
\end{itemize}
\vspace{-0.05in}
If  $\chi>0$, then independently of Conjecture \ref{DRConj} exactly one of the following holds:
\vspace{-0.05in}
\begin{itemize}
\setlength{\itemsep}{1pt}
    \setlength{\parskip}{1pt}
    \setlength{\parsep}{1pt}  
\item[(i')] The curve $t \to c_t$ $d_1$-converges to a unique minimizer in $\mathcal E^1$ of $\mathcal K_\chi$.
\item[(ii')] $d_1(c_0,c_t) \to \infty$ as $t \to \infty$ and the curve $t \to c_t$ is $d_1$-weakly asymptotic to a finite energy geodesic $[0,\infty) \ni t \to u_t \in \mathcal E^1$ along which $\mathcal K_\chi$ striclty decreases.
\end{itemize}
\end{theorem}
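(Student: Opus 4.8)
The plan is to organize everything around the single question of whether the energy infimum is attained, i.e.\ whether $\mathcal M^1_\chi$ is empty, and to derive the two displayed alternatives in each half of the statement from this question. Recall that by Theorem~\ref{MayerThm} and the discussion around \eqref{eq:evi} the value $\Kcc(c_t)$ is non-increasing and converges to $m:=\inf_{\Ec^2}\Kcc$, while by Lemma~\ref{lem:energi func is constant along cal fl} we may normalize so that $\AM(c_t)=0$ for all $t$. Using the approximation Theorem~\ref{thm: Ep approximation with entropy} one checks $\inf_{\Ec^2}\Kcc=\inf_{\Ec^1}\Kcc$, so that $\mathcal M^2_\chi=\mathcal M^1_\chi\cap\Ec^2$ and every subsequential $d_1$-limit of the flow with bounded $\Kcc$ lands in $\mathcal M^1_\chi$. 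For the convergence alternative (i), if $\mathcal M^1_\chi\neq\emptyset$ then Conjecture~\ref{DRConj} forces $\mathcal M^1_\chi\subset\mathcal H_\o\subset\Ec^2$, hence $\mathcal M^2_\chi\neq\emptyset$, and Theorem~\ref{thm: large time Calabi}(i)$\Rightarrow$(ii) gives $d_1$-convergence to some $c_\infty\in\mathcal M^2_\chi\subset\mathcal H_\o$, a smooth twisted csc-K potential. For (i'), when $\chi>0$ one does not know a priori that $c_\infty\in\Ec^2$, so I would instead invoke Remark~\ref{remark: DRremark}: nonemptiness of $\mathcal M^1_\chi$ yields the coercivity estimate $\Kcc(u)\geq Cd_1(0,u)-D$, which (using $\AM(c_t)=0$) bounds $d_1(0,c_t)\leq (\Kcc(c_0)+D)/C$ along the flow; then Corollary~\ref{KChiCompactnessCor} makes $\{c_t\}_t$ $d_1$-relatively compact, and the uniqueness Theorem~\ref{thm: uniqueness of twisted minimizer} forces all subsequential limits to coincide with the single minimizer, yielding full $d_1$-convergence.

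Next I would treat the divergent alternative. Assuming $\mathcal M^1_\chi=\emptyset$, I claim $d_1(0,c_t)\to\infty$: otherwise some $c_{t_j}$ with $t_j\to\infty$ would be $d_1$-bounded, and since $\Kcc(c_{t_j})\to m$, Corollary~\ref{KChiCompactnessCor} would extract a $d_1$-limit $c_\infty$ with $\Kcc(c_\infty)\leq m$ by lower semicontinuity, i.e.\ $c_\infty\in\mathcal M^1_\chi$, a contradiction. Hence $d_1(c_0,c_t)\to\infty$, which is the first assertion in (ii)/(ii') and also shows these alternatives exclude (i)/(i'), giving ``exactly one''.

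The hard part will be constructing the weakly asymptotic geodesic ray, along the lines of \cite{dh}. I would pick $t_j\to\infty$ and let $[0,L_j]\ni t\to g^j_t\in\Ec^1$ be the unit-speed finite energy geodesics from $c_0$ to $c_{t_j}$, where $L_j=d_1(c_0,c_{t_j})\to\infty$. For fixed $\tau$, convexity of $\Kcc$ along $g^j$ (Theorem~\ref{ExtKEnergyThm}) together with $\Kcc(c_{t_j})\leq\Kcc(c_0)$ gives $\Kcc(g^j_\tau)\leq\Kcc(c_0)$, while $d_1(0,g^j_\tau)\leq\tau+d_1(0,c_0)$ is bounded; Corollary~\ref{KChiCompactnessCor} then makes $\{g^j_\tau\}_j$ $d_1$-precompact. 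A diagonal extraction over rational $\tau$ produces a $1$-Lipschitz (hence unit-speed) limit curve $t\to g_t$ with $g^j_\tau\to_{d_1} g_\tau$ for every $\tau$; the endpoint-stability Proposition~\ref{FinEnGeodStabProp} applied to the restrictions $g^j|_{[0,T]}$ (whose endpoints $c_0$ and $g^j_T$ converge) identifies each limit with the finite energy geodesic from $c_0$ to $g_T$, so $t\to g_t$ is a genuine finite energy geodesic ray weakly asymptotic to $t\to c_t$. Lower semicontinuity gives $\Kcc(g_\tau)\leq\liminf_j\Kcc(g^j_\tau)\leq\Kcc(c_0)$, so the convex function $\tau\to\Kcc(g_\tau)$ is bounded above on $[0,\infty)$ and therefore non-increasing, which is exactly the ``$\Kcc$ decreases'' claim of (ii).

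Finally, for the strict decrease in (ii') I would argue by contradiction using the linearity machinery of Section~\ref{sect: twisted K-energy}. If $\tau\to\Kcc(g_\tau)$ failed to be strictly decreasing, convexity and monotonicity would force it to be affine on some $[\tau_1,\tau_2]$. Since $\Kcc(g_\tau)<\infty$ the measures $\o^n_{g_\tau}$ are subordinate to $\o^n$, and the decomposition $\Kcc=\Kc+(\bar S_\chi-\bar S)\AM+n\AM_\chi$ (in which $\Kc$ and $\AM_\chi$ are convex and $\AM$ is linear) forces $\AM_\chi$ to be affine on $[\tau_1,\tau_2]$. Theorem~\ref{thm: linear along geodesic} with $\alpha=\chi$ then yields $g_{\tau_1}-g_{\tau_2}\equiv\text{const}$; but $\AM(g_{\tau_1})=\AM(g_{\tau_2})=0$ pins this constant to zero, so $g_{\tau_1}=g_{\tau_2}$, contradicting unit speed. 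I expect the genuine subtleties to be: verifying $\inf_{\Ec^2}\Kcc=\inf_{\Ec^1}\Kcc$ and the correct normalization in the coercivity estimate, the subordination of $\o^n_{g_t}$ needed to apply Theorem~\ref{thm: linear along geodesic}, and making the diagonal/endpoint-stability passage rigorous so that $g$ is a finite energy (not merely metric) geodesic ray along which convexity of $\Kcc$ is available.
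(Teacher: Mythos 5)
Your proposal is correct and follows essentially the same route as the paper's proof: the dichotomy is driven by existence of minimizers plus Corollary \ref{KChiCompactnessCor}, the asymptotic ray is built from unit-speed geodesics $c_0\to c_{t_j}$ via convexity of $\Kcc$, compactness, a diagonal extraction and Proposition \ref{FinEnGeodStabProp}, and strict decrease for $\chi>0$ is obtained from the decomposition of $\Kcc$ and Theorem \ref{thm: linear along geodesic}. The only notable difference is that you spell out a few points the paper leaves implicit (that $\inf_{\Ec^1}\Kcc=\inf_{\Ec^2}\Kcc$, and the coercivity-plus-uniqueness argument for (i')), which is a welcome clarification rather than a deviation.
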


\begin{proof} Suppose (i) holds. Then $t \to c_t$ is $d_2$-bounded hence also $d_1$-bounded, hence it is impossible for (ii) to hold.

Now suppose (i) does not hold. By Corollary \ref{KChiCompactnessCor}  we must have $d_1(c_0,c_t) \to \infty$, otherwise there would exist $c_\infty \in \mathcal M^1_\chi$ smooth twisted csc-K, in particular $c_\infty \in \mathcal M^2_\chi$. By Theorem \ref{WeakCalFlowConv} this would imply that $(i)$ holds, a contradiction.

Let $[0,d_1(c_0,c_t)] \ni l \to u^t_l \in \mathcal E^1$ be the $d_1$--unit finite energy geodesic connecting $c_0,c_t$. By convexity of $l \to \mathcal K_\chi(u^t_l)$ it follows that 
$$\frac{\mathcal K_\chi(u^t_l) - \mathcal K_\chi(c_0)}{l}=\frac{\mathcal K_\chi(u^t_l) - \mathcal K_\chi(u^t_0)}{l} \leq \frac{\mathcal K_\chi (u^t_{d_t}) - \mathcal K_\chi (u^t_0)}{d_1(c_0,c_t)} =\frac{\mathcal K_\chi (c_t) - \mathcal K_\chi(c_0)}{d_1(c_0,c_t)}\leq 0,$$
hence $\{\mathcal K_\chi(u^t_l)\}_{t \in [0,\infty)}$ is uniformly bounded. As $d_1(u^t_l,u^t_0)=l$, we can apply Corollary \ref{KChiCompactnessCor} to give a $d_1$--converging subsequence to some $u_l \in \mathcal E^1$. Using a Cantor process, we can arrange for a subsequence $t_k$ such that for all $l \in \Bbb Q$ there exists $u_l \in \mathcal E^1$ such that
$d_1(u^{t_k}_l,u_l)\to 0$ as $k\to +\infty$ for each $l$. As we are dealing with the limit of $d_1$--unit speed geodesic segments, we will clearly have
$$d_1(u_{l_1},u_{l_2})=|l_1-l_2|, \ l_1,l_2 \in \Bbb Q_+.$$
Using equicontinuity, in the complete metric space $\mathcal E^1$ we can extend the curve $\Bbb Q_+ \ni l \to u_l \in \mathcal E^1$ to $d_1$-geodesic ray $[0,\infty) \ni l \to u_l \in \mathcal E^1$, satisfying $d_1(u^{t_k}_l,u_l) \to 0$ for all $l \in [0,\infty)$.

Using Proposition \ref{FinEnGeodStabProp} we additionally  obtain that $l \to u_l$ is in fact a finite energy geodesic. Because all functions $l \to \mathcal K_\chi(u^{t_k}_l)$  are uniformly bounded above and $\mathcal K_\chi$ is $d_1$-lsc, it necessarily follows that $l \to \mathcal K_\chi(u_l)$ is also bounded above. Convexity and boundedness now give that $l \to \mathcal K_\chi(u_l)$ is actually decreasing. 

Lastly, we focus on the case when $\chi >0$ is K\"ahler. In Theorem \ref{thm: uniqueness of twisted minimizer} we have proved that a minimizer of the twisted Mabuchi functional is unique if exists. Also, when (i') holds then by Remark \ref{remark: DRremark} the curve $t \to c_t$ is $d_1$-bounded, hence it is impossible for (ii') to hold. 

We assume that (i') does not hold. Let $t \to c_t$ be a weak twisted Calabi flow trajectory. We can assume that $c_t$ is $d_1$-divergent, otherwise Theorem \ref{EntropyCompactnessThm}
 would imply existence of a minimizer in $\mathcal E^1$. By the same argument as above, we can construct  a  weakly asymptotic finite energy geodesic ray $t\to u_t$ along which $\Kcc$ is decreasing. We claim that in fact $\Kcc$ is strictly decreasing along $t  \to u_t$. Indeed, if it were not the case, by convexity of $t \to \Kcc(u_t)$,  we would obtain that $t \to \Kcc(u_t)$ is constant  for $t$ greater than some $t_0>0$. By Lemma \ref{lem:energi func is constant along cal fl} $\AM$ is constant along $t \to c_t$, hence also along $t \to u_t$. As both $t \to \mathcal K(u_t),\AM_{\chi}(u_t)$ are convex, we obtain that $t \to \AM_{\chi}(u_t)$ is in fact linear and Theorem \ref{thm: linear along geodesic} then reveals that $u_t$ is stationary after $t_0$, contradicting the $d_1$-divergence of the ray $t \to u_t$.	
\end{proof}

\let\OLDthebibliography\thebibliography 
\renewcommand\thebibliography[1]{
  \OLDthebibliography{#1}
  \setlength{\parskip}{1pt}
  \setlength{\itemsep}{1pt plus 0.3ex}
}

\bigskip

{\sc Chalmers University of Technology}

{\tt robertb@chalmers.se} \\

{\sc University of Maryland}

{\tt tdarvas@math.umd.edu}\\

{\sc Scuola Normale Superiore, Pisa, Italy}

{\tt chinh.lu@sns.it}

\begin{thebibliography}{1}
\begin{small}
\bibitem[AGS]{ags} L. Ambrosio, N. Gigli, G. Savare, Gradient flows in metric spaces and in the space of probability measures. Lectures in Mathematics ETH Z\"urich, Birkh\"auser Verlag, Basel (2008) x+334.
\bibitem[Au]{Aub} T. Aubin, 
\'Equations du type Monge-Amp\`ere sur les vari\'et\'es k\"ahl\'eriennes compactes. Bull. Sci. Math. (2) 102 (1978), no. 1, 63--95. 
\bibitem[Ba]{ba} M. Ba\u c\'ak, The proximal point algorithm in metric spaces, Israel Journal of Mathematics (2012).
\bibitem[BT]{bt} E. Bedford, B.A. Taylor, Fine topology, Silov boundary, and $(dd\sp c)\sp n$. J. Funct. Anal. 72 (1987), no. 2, 225--251.
\bibitem[Brm]{brm1} R. Berman, A thermodynamical formalism for 
Monge--Amp\`ere equations, Moser--Trudinger inequalities and 
K\"ahler--Einstein metrics. Adv. Math. 248 (2013), 1254--1297.

\bibitem[BB]{bb} R. Berman, R. Berndtsson,  Convexity of the K-energy on the space of K\"ahler metrics, arXiv:1405.0401.
\bibitem[BBEGZ]{bbegz}  R. Berman, S. Boucksom, P. Eyssidieux, V. Guedj, A. Zeriahi, K\"ahler-Einstein metrics and the K\"ahler-Ricci flow on log Fano varieties, arXiv:1111.7158.
\bibitem[BBGZ]{bbgz} R. Berman, S. Boucksom, V. Guedj, A. Zeriahi, A variational approach to complex Monge-Amp\`ere equations,  Publ. Math. Inst. Hautes Etudes Sci. 117 (2013), 179--245.
\bibitem[BDL2]{bdl2} R. Berman, T. Darvas, C.H. Lu, The regularity of weak minimizers of the K-energy and applications to stability, in preparation.
\bibitem[BG]{bg} R. J. Berman, H. Guenancia, {K\" ahler-Einstein metrics on stable varieties and log canonical pairs}, Geom. Funct. Anal. Vol. 24 (2014) 1683--1730.
\bibitem[Brn2]{Bern13} B. Berndtsson, {The openness conjecture and complex Brunn-Minkowski inequalities}, arXiv:1405.0989, to appear in {\it Proceeding of the Abel Symposium 2013}.
\bibitem[Bl1]{bl1} Z. B{\l}ocki, On geodesics in the space of K\"ahler metrics, Proceedings of the "Conference in Geometry" dedicated to Shing-Tung Yau (Warsaw, April 2009), in "Advances in Geometric Analysis", ed. S. Janeczko, J. Li, D. Phong, Advanced Lectures in Mathematics 21, pp. 3-20, International Press, 2012.
\bibitem[BK]{bk} Z. B\l ocki, S.Ko\l odziej, On regularization of plurisubharmonic functions on manifolds, Proceedings of the American Mathematical Society 135 (2007), 2089--2093.
\bibitem[BlLe]{blle} T. Bloom, N. Levenberg, Pluripotential energy, Potential Analysis, Volume 36, Issue 1 (2012),
155-176.
\bibitem[BEGZ]{begz} S. Boucksom, P. Eyssidieux, V. Guedj, A. Zeriahi, Monge-Amp\`ere equations in big cohomology classes, Acta. Math., 205 (2010).
\bibitem[BH]{bh} M. Bridson, A. Haefliger, Metric spaces of non-positive curvature. Grundlehren der Mathematischen Wissenschaften, 319. Springer-Verlag, Berlin, 1999.
\bibitem[CC]{cc} E. Calabi, X.X. Chen, The space of K\"ahler metrics.II., J. Differential Geom. 61 (2002), no. 2, 173-193.
\bibitem[C1]{c} X.X. Chen, The space of K\"ahler metrics, J. Differential Geom. 56 (2000), no. 2, 189--234.
\bibitem[C2]{c5} On lower bound of the Mabuchi energy and its application
International Mathematics Research Notices , p. 607-623, vol 12, 2000
\bibitem[C3]{c2} X.X. Chen, A new parabolic flow in K\"ahler manifolds, Comm. Anal. Geom.
12, no. 4 (2004).
\bibitem[C4]{c3} X.X. Chen,  Space of K\"ahler metrics (IV) - On the lower bound of the K-energy, arXiv:0809.4081.
\bibitem[C5]{c4} X.X. Chen, On the existence of constant scalar curvature K\"ahler metric: a new perspective, 	arXiv:1506.06423.
\bibitem[CH1]{ch1} X.X. Chen, W.Y. He, On the Calabi flow. Amer. J. Math. 130 (2008), no. 2.
\bibitem[CH2]{ch2} X.X. Chen, W.Y. He, The Calabi flow on toric Fano surfaces, Math. Res. Lett., 17 (2010), no. 2, 231-241.
\bibitem[CPZ]{cpz} X.X. Chen, M. Paun, Y. Zeng,  On deformation of extremal metrics, arXiv:1506.01290.
\bibitem[CS]{cs} X.X. Chen, S. Sun, Calabi flow, geodesic rays, and uniqueness of constant scalar curvature K\"ahler metrics, Ann. of Math. (2) 180 (2014), no. 2, 407-454.
\bibitem[CT]{ct} X.X. Chen, G. Tian, Geometry of K\"ahler metrics and foliations by holomorphic discs, Publications Math\'ematiques de l'IH\'ES, Volume 107, Issue 1, pp 1-107. 
\bibitem[Ch]{ch} P. T. Chruciel, Semi-global existence and convergence of solutions of the Robinson-Trautman
(2-dimensional Calabi) equation, Comm. Math. Phys., 137 (1991), 289-313.
\bibitem[Dem]{Dem} J. P. Demailly, {Regularization of closed positive currents of type $(1,1)$ by the flow of a Chern connection}, Contributions to complex analysis and analytic geometry, 105-126, Aspects Math., E26, Vieweg, Braunschweig, 1994.
\bibitem[De]{de} R. Dervan, Uniform stability of twisted constant scalar curvature K\"ahler metrics, arXiv:1412.0648.\
\bibitem[Da1]{da1} T. Darvas, The Mabuchi Completion of the Space of  K\"ahler Potentials, { arXiv:1401.7318}.
\bibitem[Da2]{da2} T. Darvas, The Mabuchi Geometry of Finite Energy Classes, arXiv:1409.2072.
\bibitem[DH]{dh} T. Darvas, W. He, Geodesic Rays and K\"ahler-Ricci Trajectories on Fano Manifolds, {arXiv:1411.0774}.
\bibitem[DR]{dr} T. Darvas, Y.A. Rubinstein, Tian's properness conjecture and Finsler geometry of the space of K\"ahler metrics, arXiv:1506.07129.
\bibitem[DZ]{dz} A.  Dembo,O. Zeitouni, Large deviations techniques and applications. Jones and Bartlett
Publishers, Boston, MA, 1993. 
\bibitem[DiNL]{dinl} E. Di Nezza, C. H. Lu, {Uniqueness and short time regularity of the weak K\"ahler-Ricci flow}, arXiv:1411.7958.
\bibitem[Diw]{Diw} S. Dinew, {Uniqueness in $\Ec(X,\omega)$}, J. Funct. Anal. 256 (2009), no. 7, 2113-2122.
\bibitem[Do1]{do1} S.K. Donaldson, Symmetric spaces, K\"ahler geometry and Hamiltonian dynamics, Amer. Math. Soc. Transl. Ser. 2, vol. 196, Amer. Math. Soc., Providence RI, 1999, 13--33.
\bibitem[Do2]{do2} S.K. Donaldson, Conjectures in K\"ahler geometry, Strings and geometry, 71-78, Clay Math.
Proc., 3, Amer. Math. Soc., Providence, RI, 2004.
\bibitem[FH]{fh} R. Feng, H. Huang, The global existence and convergence of the Calabi flow on $\Bbb C^n / \Bbb Z^n + i\Bbb Z^n$, J. Funct. Anal. 263 (2012), 1129-1146.
\bibitem[Fi1]{fi1}  J. Fine, Constant scalar curvature K\"ahler metrics on fibred complex surfaces. J. Differential
Geom. Volume 68, Number 3(2004), 397-432.
\bibitem[Fi2]{fi} J. Fine, Calabi flow and projective embeddings, J. Diff. Geom. Vol. 84 No. 3 (2010), 489-523.
\bibitem[GZh]{gzh} Q. Guan, X. Zhou, Strong openness conjecture for plurisubharmonic functions, arXiv:1311.3781.
\bibitem[G]{gu} V. Guedj, The metric completion of the Riemannian space of K\"ahler metrics, arXiv:1401.7857.
\bibitem[GZ1]{gz1} V. Guedj, A. Zeriahi, The weighted Monge-Amp\`ere energy of quasiplurisubharmonic functions, 	J. Funct. Anal. 250 (2007), no. 2, 442--482.
\bibitem[GZ2]{gz2} V. Guedj, A. Zeriahi, {Regularizing properties of the twisted K\"ahler-Ricci flow}, arXiv:1306.4089.
\bibitem[GZ3]{GZbook} V. Guedj, A. Zeriahi, Degenerate complex Monge-Amp\`ere equations, book in preparation 2015.
\bibitem[He]{he} W. He, On the convergence of the Calabi flow, Proc. Amer. Math. Soc. 143 (2015), 1273-1281.
\bibitem[Hu1]{hu1} H. Huang, Convergence of the Calabi flow on toric varieties and related K\"ahler manifolds, arXiv:1207.5969.
\bibitem[HZ]{hz} H.N. Huang, K. Zheng, Stability of the Calabi flow near an extremal metric, Ann. Sc. Norm. Super. Pisa Cl. Sci., (5) 11 (2012), no. 1, 167-175.
\bibitem[KP]{kp} W.A. Kirk, B. Panyanak, A concept of convergence in geodesic spaces, Nonlinear Anal. 68 (2008) 3689-3696.
\bibitem[Kol1]{Kol98} S. Ko\l{}odziej, {The complex Monge-Amp{\`e}re equation}, Acta Math. {\bf 180} (1998), no. 1, 69--117.
\bibitem[LWZ]{lwz} H. Li, B. Wang, K. Zheng, Regularity scales and convergence of the Calabi flow, arXiv:1501.01851.
\bibitem[Mab]{mab}T. Mabuchi, Some symplectic geometry on compact K\"ahler manifolds I, Osaka J. Math. 24, 1987, 227--252.
\bibitem[May]{may} U. Mayer, Gradient flows on nonpositively curved metric spaces and harmonic maps, Comm. Anal. Geom., Vol. 6, No. 2, 1998.
\bibitem[Po]{po} J. Pook, Twisted Calabi flow on Riemann surfaces, to appear in  International Mathematics Research Notices.
\bibitem[Se]{se}S. Semmes, Complex Monge-Amp\`ere and symplectic manifolds, Amer. J. Math. 114 (1992), 495-550.
\bibitem[Sto]{sto} J. Stoppa, Twisted constant scalar curvature K\"ahler metrics and K\"ahler slope stability. J. Differential Geom. Volume 83, Number 3(2009), 663-691.
\bibitem[Str1]{st1} J. Streets, Long time existence of Minimizing Movement solutions of Calabi flow, arXiv:1208.2718.
\bibitem[Str2]{st2} J. Streets, The consistency and convergence of K-energy minimizing movements, arXiv:1301.3948.
\bibitem[Sz]{sz} G. Sz\'ekelyhidi, Remark on the Calabi flow with bounded curvature, Univ. Iagel. Acta Math. 50 (2013), 107-115.
\bibitem[SzT]{SzT} G. Sz\'ekelyhidi, V. Tosatti,
Regularity of weak solutions of a complex Monge-Amp\`ere equation.
Anal. PDE 4 (2011), no. 3, 369--378. 
\bibitem[TW]{tw} V. Tosatti, B. Weinkove, The Calabi Flow with small initial energy, Math. Res. Lett. 14 (2007), no.6, 1033-1039.
\bibitem[V]{vil} C. Villani, Topics in optimal transportation, Amer. Math. Soc., Providence, RI, 2003.
\end{small}
\end{thebibliography}
\end{document}